\theoremstyle{plain}
\newtheorem{theorem}{Theorem}[section]
\newtheorem{cor}[theorem]{Corollary}
\newtheorem{lemma}[theorem]{Lemma}
\newtheorem{proposition}[theorem]{Proposition}
\newtheorem{definition}[theorem]{Definition}
\newcommand{\PGL}{\mathop{\mathrm{PGL}}}
\newcommand{\PSp}{\mathop{\mathrm{PSp}}}
\newcommand{\GL}{\mathop{\mathrm{GL}}}
\newcommand{\GU}{\mathop{\mathrm{GU}}}
\newcommand{\SL}{\mathop{\mathrm{SL}}}
\newcommand{\Sp}{\mathop{\mathrm{Sp}}}
\newcommand{\Syl}{\mathop{\mathrm{Syl}}}
\newcommand{\AGL}{\mathop{\mathrm{AGL}}}
\newcommand{\GF}{\mathop{\mathrm{GF}}}
\newcommand{\PSL}{\mathop{\mathrm{PSL}}}
\newcommand{\PSU}{\mathop{\mathrm{PSU}}}
\newcommand{\PGU}{\mathop{\mathrm{PGU}}}
\newcommand{\SU}{\mathop{\mathrm{SU}}}
\newcommand{\SO}{\mathop{\mathrm{SO}}}
\newcommand{\PSO}{\mathop{\mathrm{PSO}}}
\newcommand{\PO}{\mathop{\mathrm{PO}}}
\newcommand{\POmega}{\mathop{\mathrm{P}\Omega}}
\newcommand{\Sym}{\mathop{\mathrm{Sym}}}
\newcommand{\Alt}{\mathop{\mathrm{Alt}}}
\DeclareMathOperator{\Aut}{Aut}
\DeclareMathOperator{\Inn}{Inn}
\DeclareMathOperator{\soc}{soc}
\DeclareMathOperator{\supp}{supp}
\newcommand{\la}{\langle}
\newcommand{\ra}{\rangle}
\DeclareMathOperator{\Wr}{wr}
\newcommand{\PGammaL}{\mathop{\mathrm{P}\Gamma\mathrm{L}}}
\newcommand{\sdp}{\rtimes}
\newcommand{\norml}{\vartriangleleft}
\newcommand{\Out}{\mathrm{Out}}
\newcommand{\normleq}{\trianglelefteq}
\definecolor{darkblue}{rgb}{0,0,0.8}
\newcommand{\tont}{$\frac{3}{2}$-transitive}
\renewcommand\le{\leqslant}
\renewcommand\ge{\geqslant}
\renewcommand\leq{\leqslant}
\renewcommand\geq{\geqslant}
\begin{document}

\title{The classification of almost simple $\tfrac{3}{2}$-transitive groups}

\author{John Bamberg}
\email{John.Bamberg@uwa.edu.au}

\author{Michael Giudici}
\email{Michael.Giudici@uwa.edu.au}

\author{Martin W. Liebeck}
\email{m.liebeck@imperial.ac.uk}

\author{Cheryl E. Praeger}
\email{Cheryl.Praeger@uwa.edu.au}

\author{Jan Saxl}
\email{saxl@dpmms.cam.ac.uk}

\thanks{This paper forms part of an Australian Research Council Discovery Project. The second author was supported by an Australian Research Fellowship while the fourth author was supported by a Federation Fellowship of the Australian Research Council.}

\address[John Bamberg, Michael Giudici, Cheryl E. Praeger]{Centre for the Mathematics of Symmetry and Computation\\
School of Mathematics and Statistics\\ 
University of Western Australia, 35 Stirling Highway\\ 
Crawley, Western Australia 6009}

\address[Martin W. Liebeck]{Department of Mathematics, Imperial College, London SW7 2BZ, England}
\address[Jan Saxl]{DPMMS, CMS, University of Cambridge, Wilberforce Road, Cambridge CB3 0WB, England }

\date{}

\subjclass[2000]{Primary 20B05, 20B15}

\begin{abstract}
A finite transitive permutation group is said to be \tont\ if all the 
nontrivial orbits of a point stabiliser have the same size greater than 1. Examples include the
2-transitive groups, Frobenius groups and several other less obvious ones.
We prove that \tont\ groups are either affine or almost simple, and 
classify the latter. One of the main steps in the proof is 
an arithmetic result on the subdegrees of groups of Lie type in 
characteristic $p$: with some explicitly listed exceptions, every 
primitive action of such a group is either 2-transitive, or has a 
subdegree divisible by $p$.
\end{abstract}

\maketitle

%
%

\section{Introduction}

Burnside, in his 1897 book \cite[p 192, Theorem IX]{Bur1897}, investigated the structure of 
finite 2-transitive permutation groups. He proved that any such group 
is either affine or almost simple; in other words, the group has a
unique minimal normal subgroup which is either elementary abelian and regular, or
nonabelian simple. A transitive permutation group $G$ on a set $\Omega$ is said to be \textit{\tont} if all orbits of $G_{\alpha}$ on $\Omega\backslash\{\alpha\}$ have the same size, with this size being greater than 1. For convenience, we also count $S_2$ as a \tont\ group. A nontrivial, nonregular, normal subgroup of a 2-transitive group has to be \tont. The term {\tont} was first used in Wielandt's book
\cite{Wielandt}, where he extended Burnside's proof to show that any
{\tont} group is either primitive or a Frobenius group (that is, every 
two point stabiliser is trivial). The classification of 2-transitive
groups is a notable consequence of the classification of finite simple
groups (see \cite{CKS,Hering,Howlett,lierank3,Maillet}, and \cite{Cameron81} for 
an overview).

In this paper, we obtain two results along the way towards the 
classification of {\tont} groups. First, in parallel to Burnside's 
structure theorem, we prove

\begin{theorem}\label{tontred}
Every finite primitive {\tont} group is either affine or almost simple.
\end{theorem}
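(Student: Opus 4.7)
The plan is to invoke the O'Nan--Scott theorem describing the socle of a finite primitive permutation group and to eliminate every possibility other than the two listed in the statement. Writing $N=\soc(G)$, the remaining possibilities all have $N=T^k$ with $T$ nonabelian simple and $k\ge 2$, and fall into the standard O'Nan--Scott types HS, HC, SD, CD, PA and TW. In each case one needs to exhibit two $G_\alpha$-orbits on $\Omega\setminus\{\alpha\}$ of distinct sizes, contradicting the \tont\ hypothesis.

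For the product action type, with $\Omega=\Delta^k$ and $G\le H\Wr S_k$ where $H$ is primitive of rank at least $2$ on $\Delta$, I would stratify the $G_\alpha$-orbits by the number $j$ of coordinates in which a point of $\Omega$ differs from a fixed $\alpha=(\delta,\dots,\delta)$. The sizes in the strata $j=1$ and $j=2$ involve the factors $k$ and $\binom{k}{2}$ times products of $H$-subdegrees, and equating them across all values of $j$ constrains the parameters so severely that only a tiny list of numerical configurations survives; these can be disposed of by direct inspection of the wreath structure.

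For the diagonal types SD and CD, the diagonal subgroup $N_\alpha\cong T$ has orbits on $\Omega\setminus\{\alpha\}$ in natural bijection with the nontrivial conjugacy classes of $T$, of sizes equal to the class sizes, and $G_\alpha/N_\alpha\le\Out(T)\times S_k$ induces a fusion on this bijection. A uniform subdegree then forces every $\Out(T)\times S_k$-invariant union of classes of $T$ to have size divisible by a common $d$; this can be defeated by comparing invariants preserved by all of $\Aut(T)\times S_k$, such as the total number of involutions of $T$ versus the total number of elements of a given odd prime order.

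For the regular-socle cases HS, HC and TW, the stabiliser $G_\alpha$ embeds in $\Aut(N)=\Aut(T)\Wr S_k$ and acts on $\Omega\setminus\{\alpha\}\cong N\setminus\{1\}$ via this natural action. Constant orbit size then means that $|C_{G_\alpha}(x)|$ is independent of $x\in N\setminus\{1\}$; contrasting centraliser orders for representatives of different prime orders in $T^k$, together with the structure of $\Aut(T)\Wr S_k$, either forces $G_\alpha$ to be transitive on $N\setminus\{1\}$, in which case $G$ is $2$-transitive and Burnside's theorem already places it among the affine or almost simple groups, or yields a direct contradiction. The principal obstacle I anticipate is a uniform treatment of the SD case, where the combinatorics of $\Out(T)\times S_k$-fusion of $T$-classes has to be controlled for every nonabelian simple $T$, with a handful of small simple groups possibly requiring individual verification.
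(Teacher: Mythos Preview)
Your overall strategy---invoke O'Nan--Scott and eliminate everything other than affine and almost simple---is the right one, and your product action sketch is close in spirit to what the paper does. But the paper's execution is considerably cleaner, and your diagonal-type argument has a real gap that you yourself flag as the ``principal obstacle''.

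The key idea you are missing is to work with \emph{$p$-subdegrees}. If $G$ is \tont\ with common nontrivial subdegree $d$, then $d$ divides $|\Omega|-1$, so $d$ is coprime to every prime $p$ dividing $|\Omega|$. The paper exploits this as follows. For the diagonal type with socle $N=T^k$ and $|\Omega|=|T|^{k-1}$, pick any prime $p$ dividing $|T|$; an easy lemma (every nonabelian simple $T$ has a conjugacy class of size divisible by $p$) gives $t\in T$ with $p\mid|t^T|$. Since $\Inn(T)\trianglelefteq G_\alpha$, the $G_\alpha$-orbit of $(t,1,\dots,1)$ has size divisible by $|t^T|$, hence by $p$. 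That is a $p$-subdegree with $p\mid|\Omega|$, an immediate contradiction. This bypasses entirely the fusion-of-class-sizes analysis you propose; there is no need to compare involution counts with odd-order element counts or to worry about $\Out(T)\times S_k$-fusion across all simple $T$.

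For the nonbasic case the paper uses the basic/nonbasic dichotomy (so HS, HC, CD and TW are absorbed into the product action picture and need no separate treatment). One shows that if $H$ is the primitive component on $\Delta$ and $H$ has a subdegree $\ell$, then $G$ has a subdegree $\ell k$ (your $j=1$ stratum); hence every nontrivial $G$-subdegree equals $\ell k$ and $H$ is itself \tont. Now looking at $\gamma=(\delta',\delta',\delta,\dots,\delta)$ forces $|\gamma^{G_\alpha}|=\ell k$ as well, and at this point the paper invokes a result of Giudici--Li--Praeger--Seress--Trofimov on minimal subdegrees to reduce to four explicit pairs $(H,|\Delta|)$, none of which is \tont. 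Your sketch anticipates this comparison but you would still need something like that external result to finish; ``a tiny list of numerical configurations'' does not come for free.

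In short: your plan is sound in outline, but replacing your diagonal argument by the $p$-subdegree observation removes the obstacle you identified, and organising via basic versus nonbasic (rather than the eight O'Nan--Scott labels) eliminates the separate HS/HC/TW analyses altogether.
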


Our second result deals with the almost simple case. A {\it subdegree} of 
a transitive permutation group is a size of an orbit of the point stabiliser.

\begin{theorem}\label{tontas}
Let $G$ be a finite almost simple {\tont} group of degree $n$ 
on a set $\Omega$. Then one of the following holds:

{\rm (i)} $G$ is $2$-transitive on $\Omega$.

{\rm (ii)} $n=21$ and $G$ is $A_7$ or $S_7$ acting on the set of 
pairs of elements of $\{1,\ldots ,7\}$; the size of the nontrivial 
subdegrees is $10$.

{\rm (iii)} $n=\frac{1}{2}q(q-1)$ where $q=2^f\geq 8$, and either $G = \PSL_2(q)$
or $G=\PGammaL_2(q)$ with $f$ prime; the size of the nontrivial subdegrees
is $q+1$ or $f(q+1)$, respectively.
\end{theorem}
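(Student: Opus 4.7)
The plan is to reduce to a case-by-case analysis of primitive actions of almost simple groups, using as the main engine the arithmetic subdegree result advertised in the abstract. The first observation is that a $3/2$-transitive group of degree $n$ with common nontrivial subdegree $d$ satisfies $n=1+kd$ for some $k\geq 1$, so $d$ divides $n-1$ and in particular $\gcd(d,n)=1$. Since a Frobenius group has a nontrivial regular abelian normal subgroup (its Frobenius kernel), an almost simple $3/2$-transitive group is automatically primitive. Write $T=\soc(G)$; the analysis splits according to whether $T$ is alternating, sporadic, or of Lie type.

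For alternating $T=A_m$ ($m\geq 5$) the primitive actions are enumerated via the maximal subgroups of $S_m$ and $A_m$. Going through the intransitive, imprimitive and primitive families, the $3/2$-transitivity condition is quickly ruled out except for the action on $2$-subsets, and within that family only $m=7$ produces equal nontrivial subdegrees, giving case (ii). For sporadic $T$ one inspects the finite list of primitive actions using the standard subdegree data (for instance from the Atlas), confirming that no new $3/2$-transitive example arises.

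For $T$ of Lie type in characteristic $p$, the arithmetic result gives three alternatives: either $G$ is $2$-transitive on $\Omega$ (case (i)), or some subdegree is divisible by $p$, or the action lies in an explicit short exceptional list. If some subdegree is divisible by $p$ then, by $3/2$-transitivity, all nontrivial subdegrees are, so $p\mid d$; combined with $d\mid n-1$ this forces $p\nmid n$, and hence $G_\alpha$ contains a full Sylow $p$-subgroup of $G$. The Borel--Tits theorem then places $G_\alpha$ inside a parabolic subgroup, and maximality of $G_\alpha$ identifies $G_\alpha$ with a maximal parabolic $P$. However, a parabolic coset action is $3/2$-transitive only if it is $2$-transitive: the $G_\alpha$-orbits on $\Omega=G/G_\alpha$ are indexed by double cosets $W_P\backslash W/W_P$, and the Bruhat combinatorics force the nontrivial orbit sizes to be polynomials in $q$ of pairwise distinct degrees as soon as there is more than one such double coset; equality of all nontrivial subdegrees therefore forces Weyl rank two on the quotient, which is precisely the Cameron--Kantor list of $2$-transitive parabolic actions. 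Hence this branch returns only case (i).

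What remains is to trawl through the exceptional list from the arithmetic result and identify the $3/2$-transitive actions within it. The surviving family is $\PSL_2(q)$ with $q=2^f\geq 8$ acting on the cosets of the normaliser $D_{2(q+1)}$ of a non-split torus, of degree $\frac{1}{2}q(q-1)$; a direct computation with the subgroup structure of $\PSL_2(q)$ shows that all nontrivial subdegrees equal $q+1$. The overgroups in $\PGammaL_2(q)$ with $f$ prime are read off by examining how the generating field automorphism permutes the $\PSL_2(q)$-orbits: when $f$ is prime the action remains primitive, the nontrivial $\PSL_2(q)$-suborbits fuse into blocks of size $f$, and the common subdegree scales to $f(q+1)$; this yields case (iii). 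The principal obstacle in the proof is the arithmetic result itself, whose verification for non-$2$-transitive primitive Lie-type actions requires Aschbacher-style analysis of maximal subgroups together with a careful treatment of a finite list of small-rank and small-characteristic exceptions; by comparison, the parabolic enumeration and the final exception check are systematic.
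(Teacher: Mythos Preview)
Your overall architecture matches the paper's: reduce to primitive, split by socle type, and for Lie type invoke the arithmetic $p$-subdegree theorem as the main engine. But there is a genuine gap in the Lie-type argument.

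The arithmetic result (Theorem~\ref{pTheorem}) carries the hypothesis that $p$ divides $|H|$. Your trichotomy ``$2$-transitive, or $p$-subdegree, or short exceptional list'' is only valid under that hypothesis. When $p\nmid|H|$, none of your three branches need apply, and this case is not vacuous: it arises for normalisers of maximal tori (when $p\nmid|W|$), for various $\mathcal{C}_2$-subgroups with $m=1$, for Singer-type $\mathcal{C}_3$-subgroups with $b=1$, for several $\mathcal{C}_6$-subgroups, and for many $\mathcal{C}_9$-subgroups in cross characteristic. In each of these families the paper does real additional work to rule out $\tfrac32$-transitivity, typically by exhibiting two explicit subdegrees of different sizes (see for instance the $m=1$ unitary and orthogonal discussions in \S\ref{sec:C2}, the $b=1$ analysis following Lemma~\ref{lem:psubbeq1}, and the $p'$-stabiliser arguments in the proof of Theorem~\ref{32excep}). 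Your proposal simply omits this, and there is no uniform shortcut: the arguments are genuinely case-by-case.

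A smaller point concerns your parabolic step. The claim that the nontrivial $P$-subdegrees are ``polynomials in $q$ of pairwise distinct degrees'' is plausible but not justified, and the paper does not argue this way. Instead it uses Lemma~\ref{unique}: outside a short list of excluded parabolics there is a \emph{unique} subdegree that is a power of $p$, which immediately kills $\tfrac32$-transitivity unless the rank is $2$; the excluded cases ($\PSL_d(q)$ with $G\le\PGammaL_d(q)$, $\POmega_{2m}^+(q)$ with $m$ odd and $P=P_{m-1},P_m$, and $E_6(q)$ with $P=P_1,P_3,P_5,P_6$) are then handled by direct subdegree computations. Your Bruhat sketch would need to confront exactly these exclusions, and the phrase ``Weyl rank two on the quotient'' does not correspond to any standard statement that delivers the $2$-transitive classification.
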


Theorem \ref{tontas} is a combination of Theorems \ref{antont}, \ref{thm:classgroups}(B), \ref{32excep} and \ref{sporadtont}.
The examples in parts (ii) and (iii) can be found in Lemmas \ref{lem:intrans} and \ref{lem:L2}. The groups in (iii) were first investigated in the context of 
$\frac{3}{2}$-transitivity by McDermott \cite{McDermott71}, and 
Camina and McDermott \cite{cammcd}. A characterisation of the groups in (i) and (iii) as the only \tont\ groups with trivial Fitting subgroup and all two-point stabilisers conjugate was given by Zieschang  \cite{Zieschang92}.

Affine \tont\ groups will be the subject of a future paper. The soluble 
case was handled completely by Passman in \cite{Passman67,Passman67b,
Passman69}.

Our proof of Theorem \ref{tontas}, in the main case where $G$ has socle
of Lie type, uses the following result concerning the arithmetic nature
of subdegrees of such groups. 

\begin{theorem}\label{pTheorem} 
Let $G$ be an almost simple group
  with socle $L$ of Lie type of characteristic $p$.
 Let $G$ act primitively on a set $\Omega$,  
  and let $H$ be the stabiliser of a point.  
Assume that $p$ divides $|H|$. Then one of the following holds:

\begin{enumerate}
\item[(i)] $G$ has a subdegree divisible by $p$.

\item[(ii)] $G$ is $2$-transitive on $\Omega$: here either $G = \Sp_{2d}(2)$ for $d\geq 3$ with $H = O^{\pm}_{2d}(2)$ and $|\Omega| = 2^{d-1}(2^d\mp 1)$, or $G$ is detailed in Table \ref{tab:2trans}.

\item[(iii)]  $L=\PSL_2(q)$ with $q=2^f\geq 8$, $|\Omega|=\frac{1}{2}q(q-1)$, 
$H\cap L = D_{2(q+1)}$, and $|G:L|$ is odd.
\item[(iv)] $G$ is detailed in Table \ref{tab:otherexcep}.
\end{enumerate}
\end{theorem}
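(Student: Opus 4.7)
The plan is to suppose for contradiction that no nontrivial subdegree is divisible by $p$ and then derive that $G$ must fall into cases (ii), (iii) or (iv). The starting reformulation is standard: if $P$ is a nontrivial Sylow $p$-subgroup of $H$, then an $H$-orbit $\Delta \subseteq \Omega$ has size coprime to $p$ if and only if $P$ fixes some point of $\Delta$, because the $P$-orbits on $\Delta$ all have $p$-power size. Translated to stabilisers, the contradiction hypothesis is equivalent to the assertion that for every $\beta \in \Omega$, the two-point stabiliser $H \cap G_\beta$ contains an $H$-conjugate of $P$, hence a full Sylow $p$-subgroup of $H$. This is a strong condition, and the task is to show that it forces one of the listed configurations.

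I would split the argument by whether $H$ itself contains a Sylow $p$-subgroup of $G$. If it does, the Borel--Tits theorem applied to $L$ forces $H \cap L$ to lie in a parabolic subgroup of $L$, and the maximality of $H$ in $G$ pushes $H$ to the normaliser of a maximal parabolic. The primitive 2-transitive parabolic actions of groups of Lie type are classically known and form precisely case (ii); for every other parabolic action, the Bruhat decomposition $G = \bigsqcup_w HwH$ provides a Weyl-group element $w$ with $|H \cap H^w|_p < |H|_p$, hence a subdegree divisible by $p$. If $H$ does not contain a Sylow $p$-subgroup of $G$, I would invoke the Aschbacher classification of maximal subgroups when $G$ is classical, and the Liebeck--Seitz theorems when $G$ is exceptional. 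For each geometric class (and each $\mathcal{S}$-type family), the strategy is to exhibit a long-root or short-root element $g \in G$ that does not normalise the invariant structure defining $H$, so that $H \cap H^g$ has $p$-part strictly smaller than $|H|_p$, producing a subdegree divisible by $p$. The surviving configurations for which no such element exists should turn out to be the $\PSL_2(2^f)$ example in case (iii) (where $H \cap L = D_{2(q+1)}$ has Sylow $2$-subgroup of order only $2$, so the Sylow condition is very weak) together with the explicit list in Table \ref{tab:otherexcep}.

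The hard part will be dealing with low-rank classical groups, small fields, and $\mathcal{S}$-type subgroups, where the generic root-element argument breaks down and one needs bespoke analysis. Pinning down the sporadic exceptional pairs in Table \ref{tab:otherexcep} in particular will require detailed work with maximal-subgroup lists, unipotent conjugacy classes, and in several cases direct computation of double cosets. Verifying that case (iii) really arises as a genuine exception is itself a computation: one must check in $\PSL_2(2^f)$ that the orbits of $D_{2(q+1)}$ on the cosets of another $D_{2(q+1)}$ all have odd size.
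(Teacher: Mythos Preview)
Your dichotomy into parabolic versus non-parabolic is the right starting point, but the analysis of the parabolic case is back to front. If $H$ is a parabolic subgroup then its unipotent radical $U$ is a nontrivial normal $p$-subgroup of $H$, and since $U$ acts faithfully on $\Omega$ it acts nontrivially on some $H$-orbit $\Delta$; as $U\trianglelefteq H$ all $U$-orbits on $\Delta$ have equal size, so $p$ divides $|\Delta|$. Thus \emph{every} parabolic action lands in case (i), including the $2$-transitive ones (where $|\Omega|-1$ is divisible by $p$). Consequently none of the examples in case (ii) are parabolic: $O^{\pm}_{2d}(2)$ is a $\mathcal{C}_8$-subgroup of $\Sp_{2d}(2)$, and the entries of Table~\ref{tab:2trans} are $\mathcal{C}_9$-type or arise from exceptional isomorphisms. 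So the genuine work in the theorem lies entirely in the non-parabolic case, and you must account for cases (ii), (iii) and (iv) all arising there.

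For the non-parabolic case your sketch is too thin to carry the weight. The paper does run through the Aschbacher classes and the Liebeck--Seitz lists, but the engine is not ``pick a root element $g$ and compare $|H\cap H^g|_p$ with $|H|_p$''. Two different machines do most of the work. The first is a \emph{weakly closed subgroup} argument (Lemma~\ref{lem:weaklyclosed}): one finds $T\le H$ with $N_G(T)\not\le H$ such that $\langle T,S\rangle$ contains a weakly closed normal subgroup $H_0$ of $H$ for every Sylow $p$-subgroup $S$ of $H$; this forces a $p$-subdegree directly. Proposition~\ref{prn:sylpgen} manufactures such $T$ (Singer-type tori) uniformly across the classical families. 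The second is a counting argument based on the Neumann--Praeger restricted-movement inequality (Lemma~\ref{lem:mvconjclass}): for a $p$-element $x\in H$, if $|x^G|>|x^G\cap H|^2-|x^G\cap H|+1$ then some $G$-conjugate of $H$ meets $H$ in a subgroup missing every Sylow $p$-subgroup of $H$. This is what actually disposes of most $\mathcal{C}_6$, $\mathcal{C}_7$, $\mathcal{C}_9$ and exceptional cases, using the explicit bounds on unipotent class sizes in Tables~\ref{tab:conjclassboundsodd}--\ref{tab:pslouter} and~\ref{bdtbl}. Your root-element heuristic does not supply either mechanism, and without them there is no systematic way to produce the required $g$ or to isolate the sporadic exceptions.
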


\begin{table}
\caption{2-transitive cases for Theorem \ref{pTheorem}}\label{tab:2trans}
 \begin{tabular}{ccc}
\hline
  $L$ & $|\Omega|$  & Comment(s)\\
\hline
$\PSL_2(4)$ & 6 &\\
$\Sp_4(2)'$  & 6 &\\
$\Sp_4(2)'$  & 10&\\
$\PSL_2(9)$  & 6 &\\
$\PSL_4(2)$ & 8&\\
$\PSL_3(2)$ &  8 & $G=L.2$\\
$G_2(2)'$ &  $28$ & \\
$^2\!G_2(3)'$  & 9 & $G=L.3$\\
\hline
 \end{tabular}
\end{table}

\begin{table}
\caption{Other exceptions to Theorem \ref{pTheorem}}\label{tab:otherexcep}
 \begin{tabular}{cccll}
\hline
  $L$  &$|\Omega|$ & $H$ & Subdegrees & Comment(s)\\
\hline
$\PSU_3(5)$ &  $50$ & $N_G(A_7)$ & 1, 7, 42 \\
$\PSp_4(3)$ & $27$ & $N_G(2^4.A_5)$ & 1, 10, 16 &\\
$G_2(2)'$ & $36$ & $\PSL_3(2)$ & 1, 7, 7, 21 & $G=L$\\
\hline
\end{tabular}
\end{table}

We remark that the examples in Table \ref{tab:2trans} and  lines 2 and 3 of Table \ref{tab:otherexcep} are in some sense degenerate as they arise due to exceptional isomorphisms with either alternating groups or groups of Lie type of different characteristic, namely $\PSL_2(4)\cong \PSL_2(5)$, $\Sp_4(2)'\cong\PSL_2(9)\cong A_6$, $\PSL_4(2)\cong A_8$, $\PSL_3(2)\cong\PSL_2(7)$, $G_2(2)'\cong\PSU_3(3)$, ${}^2\!G_2(3)'\cong\PSL_2(8)$ and $\PSp_4(3)\cong\PSU_4(2)$. The conditions $G=L.2$ and $L.3$ in lines 6 and 8 of Table \ref{tab:2trans} are required so that $p$ divides $|H|$.

Theorem \ref{pTheorem} follows from Theorems \ref{thm:classgroups}(A) and \ref{psubdegexcep}. Theorem \ref{thm:psubdeg} in Section 3 is a result of a similar flavour 
for general primitive permutation groups.

We shall obtain several consequences of the above results. The first 
is immediate from Theorems \ref{tontred} and \ref{tontas}.

\begin{cor}\label{soctont}
The socle of a primitive {\tont} group is either regular or {\tont}.
\end{cor}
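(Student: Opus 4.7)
The plan is to extract the corollary directly from Theorems \ref{tontred} and \ref{tontas}. By Theorem \ref{tontred}, the primitive \tont\ group $G$ on $\Omega$ is either affine or almost simple. If $G$ is affine, its socle is the unique minimal normal subgroup, elementary abelian and regular on $\Omega$, which establishes the first alternative of the corollary. So we may assume $G$ is almost simple with socle $L$, and pass to Theorem \ref{tontas}.

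I would then handle the three cases of Theorem \ref{tontas} in turn. In case (i), $G$ is 2-transitive on $\Omega$ and $L$ is a nontrivial normal subgroup, so the standard fact recalled in the introduction (a nontrivial, nonregular normal subgroup of a 2-transitive group is \tont) shows at once that $L$ is either regular or \tont. In case (ii), $L = A_7$ and $\Omega$ is the set of $2$-subsets of $\{1,\ldots,7\}$; the stabiliser in $A_7$ of the pair $\{1,2\}$ has order $120$ and, acting transitively on both $\{1,2\}$ and $\{3,\ldots,7\}$, has exactly two orbits on the remaining pairs (the pairs disjoint from $\{1,2\}$ and the pairs meeting $\{1,2\}$ in a single point), each of size $10$, so $L$ is \tont. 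In case (iii) there is nothing to check when $G = \PSL_2(q) = L$; for $G = \PGammaL_2(q)$ with $f$ prime, an index computation gives $|H| = 2f(q+1)$ and hence $|H:H\cap L| = f = |G:L|$, so $L$ is already transitive on $\Omega$ with stabiliser $H\cap L = D_{2(q+1)}$. This is precisely the action of $L$ appearing in Theorem \ref{tontas}(iii), with all nontrivial subdegrees equal to $q+1$, so $L$ is \tont.

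I do not foresee any real obstacle: once Theorems \ref{tontred} and \ref{tontas} are in hand the corollary is a short case analysis, and the only genuine verifications are the elementary subdegree count for $A_7$ on pairs and the index calculation showing that the socle remains transitive in the $\PGammaL_2(q)$ case.
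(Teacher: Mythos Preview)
Your proposal is correct and follows exactly the route the paper indicates: the corollary is stated there as ``immediate from Theorems \ref{tontred} and \ref{tontas}'' with no further argument, and what you have written is the natural unpacking of that claim. The verifications you supply for cases (ii) and (iii) are accurate (and for (iii) coincide with what Lemma \ref{lem:L2} establishes), so there is nothing to add.
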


The orbitals of \tont\ groups form Schurian equivalenced non-regular schemes (see \cite{pseudo}), which form a class of \emph{pseudocyclic} association schemes. To date, there are not many known constructions of pseudocyclic schemes, and Theorem \ref{tontas} implies that there are no new examples in this subclass.

In the next result, we call a transitive permutation group 
{\it strongly} {\tont} if all non-principal
constituents of the permutation character are distinct and 
have the same degree.
Theorem 30.2 of \cite{Wielandt} states that strongly {\tont} groups
are either abelian and regular, or {\tont}. Hence Theorem \ref{tontas}, together with Lemma \ref{lem:L2}, implies the following. 

\begin{cor}\label{strongtont}
The strongly {\tont} almost simple permutation groups are precisely the 
groups in parts {\rm (i), (ii)} and {\rm (iii)} of Theorem $\ref{tontas}$.  
\end{cor}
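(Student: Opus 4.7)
The plan is to establish both inclusions. For the forward direction, suppose $G$ is an almost simple permutation group that is strongly \tont. Since $G$ is almost simple, it is neither abelian nor regular, so Wielandt's Theorem 30.2 (quoted just above the statement) forces $G$ to be \tont, and Theorem \ref{tontas} then places $G$ into one of the cases (i), (ii) or (iii).

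For the converse one must check that each family appearing in Theorem \ref{tontas}(i)--(iii) is in fact strongly \tont. Case (i) is immediate: a 2-transitive group has permutation character $1_G + \chi$ with $\chi$ a single irreducible character, so the conditions that the non-principal constituents be pairwise distinct and of a common degree hold vacuously. Case (ii) is a small finite verification carried out from the (well-known) character tables of $S_7$ and $A_7$ acting on the $21$ unordered pairs of $\{1,\ldots,7\}$; here the point-stabiliser is $S_2\times S_5$ (respectively $(S_2\times S_5)\cap A_7$), and one reads off the decomposition of the permutation character directly. Case (iii) is the substantive one, and is handled by Lemma \ref{lem:L2}: that lemma furnishes an explicit decomposition of the permutation character of $\PSL_2(q)$, and of $\PGammaL_2(q)$ when $f$ is prime, on the coset space of $D_{2(q+1)}$ (respectively its normaliser in $\PGammaL_2(q)$), showing that the non-principal constituents are pairwise inequivalent and each of degree $q+1$ (respectively $f(q+1)$).

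The key conceptual move is the appeal to Wielandt's Theorem 30.2, which collapses the classification of strongly \tont\ groups into that of \tont\ groups. Once this reduction is made, Theorem \ref{tontas} supplies the complete list of candidates, and the only remaining labour is character-theoretic verification, which is trivial in (i), a bounded finite check in (ii), and subsumed by Lemma \ref{lem:L2} in (iii). I expect the subtlest point to lie in case (iii), where the rank of the action grows with $q$ and one must carefully track how the field automorphisms in $\PGammaL_2(q)$ fuse (rather than split) the irreducible constituents of the $\PSL_2(q)$-permutation character in order to preserve the common-degree condition; this is precisely the content of Lemma \ref{lem:L2}, and so no genuinely new work is required here beyond citing it.
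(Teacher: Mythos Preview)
Your overall strategy matches the paper's exactly: Wielandt's Theorem~30.2 for the forward implication, triviality for case (i), and Lemma~\ref{lem:L2} for case (iii). The paper's own justification is the single sentence preceding the corollary, citing precisely these three ingredients.

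There is, however, a genuine problem in your handling of case (ii): the ``small finite verification'' you invoke would \emph{fail}. The permutation character of $S_7$ on the $21$ unordered pairs decomposes as $1+\chi_6+\chi_{14}$, where $\chi_6$ and $\chi_{14}$ are the irreducible $S_7$-characters labelled by the partitions $(6,1)$ and $(5,2)$, of degrees $6$ and $14$ respectively; since neither partition is self-conjugate, both restrict irreducibly to $A_7$, so the same unequal degrees occur there. Hence neither $A_7$ nor $S_7$ on $21$ points is strongly \tont, despite being \tont\ with common nontrivial subdegree $10$. This appears to be an error in the corollary as stated; note that the paper's own one-line justification does not actually treat case (ii) separately, citing only Wielandt, Theorem~\ref{tontas} and Lemma~\ref{lem:L2}. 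Your argument for the forward direction and for cases (i) and (iii) is correct and follows the paper; the difficulty lies in the statement itself rather than in your method.
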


A related notion arose in a paper of Dixon \cite{Dixon05}.
He defines a {\it $\mathbb{Q}I$-group} to be a finite transitive 
permutation group for which
the permutation character is $1+\theta$ where $\theta$ is irreducible 
over the rationals. All such groups are primitive and strongly \tont. Moreover, Dixon reduced their study to the almost simple case. The groups in (iii) of Theorem \ref{tontas} satisfy the 
$\mathbb{Q}I$-condition if and only if $q-1 = 2^f-1$ is prime  
(see \cite[Theorem 11]{Dixon05}). Hence we have

\begin{cor}\label{qigroups}
The almost simple $\mathbb{Q}I$-groups which are not $2$-transitive 
are precisely the 
groups in part {\rm (iii)} of Theorem $\ref{tontas}$ with $2^f-1$ 
prime.  
\end{cor}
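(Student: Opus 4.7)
The plan is to combine Corollary~\ref{strongtont} with a short character-theoretic check on case~(ii) of Theorem~\ref{tontas}, and then invoke \cite[Theorem~11]{Dixon05} for case~(iii). By definition, every $\mathbb{Q}I$-group is strongly $\tfrac{3}{2}$-transitive, so by Corollary~\ref{strongtont} any almost simple $\mathbb{Q}I$-group which is not $2$-transitive must appear in part~(ii) or part~(iii) of Theorem~\ref{tontas}. It therefore suffices to show that case~(ii) gives no $\mathbb{Q}I$-group, since \cite[Theorem~11]{Dixon05} (as recalled in the paragraph preceding the corollary) already settles case~(iii) with the stated primality condition on $2^f-1$.

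To eliminate case~(ii), I would take $G\in\{A_7,S_7\}$ acting on the $21$ unordered pairs of $\{1,\dots,7\}$. This action has rank~$3$, so the permutation character $\pi$ has exactly two nontrivial irreducible complex constituents. For $S_7$ the standard Specht decomposition gives $\pi=\chi_{(7)}+\chi_{(6,1)}+\chi_{(5,2)}$ with dimensions $1+6+14$. Since every irreducible character of $S_n$ is rational-valued, $\chi_{(6,1)}$ and $\chi_{(5,2)}$ are two \emph{distinct} $\mathbb{Q}$-irreducible constituents of $\pi-1$, so $\pi-1$ is not $\mathbb{Q}$-irreducible. For $A_7$ one argues analogously: neither partition $(6,1)$ nor $(5,2)$ is self-conjugate, so each of $\chi_{(6,1)}$ and $\chi_{(5,2)}$ restricts to a rational irreducible character of $A_7$, and the two restrictions remain distinct. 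Hence $\pi-1$ again splits into two distinct $\mathbb{Q}$-irreducible pieces, and $A_7$ is not a $\mathbb{Q}I$-group on this action either.

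Combining the two cases proves the corollary. The real work has already been done in Theorem~\ref{tontas} (and the underlying Corollary~\ref{strongtont}) and in Dixon's \cite[Theorem~11]{Dixon05}, so the only genuinely new ingredient here is the rank-$3$ character calculation for $S_7$ and $A_7$ on $2$-subsets. This is also the only place where something could conceivably fail, but it is routine from the Young-tableau description of $S_n$-characters, so I do not expect any obstacle.
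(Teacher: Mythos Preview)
Your proof is correct and follows the same route as the paper: $\mathbb{Q}I$ implies strongly \tont, reduce via Corollary~\ref{strongtont} to cases (ii) and (iii) of Theorem~\ref{tontas}, and invoke \cite[Theorem~11]{Dixon05} for case~(iii). The paper's text does not spell out the elimination of case~(ii), and your Specht-module computation correctly fills this in.

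One remark worth making: your own calculation shows that the two nontrivial constituents of the permutation character for $S_7$ (and likewise $A_7$) on $2$-sets have degrees $6$ and $14$, which are \emph{unequal}. By the definition given in the paper, this means the groups in case~(ii) are not even strongly \tont, so they should not appear in the list in Corollary~\ref{strongtont} at all. With that correction to Corollary~\ref{strongtont}, the paper's ``Hence we have'' becomes immediate and your extra step is unnecessary; but given Corollary~\ref{strongtont} as stated, your explicit elimination of case~(ii) is exactly the right thing to include.
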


The final consequence concerns a connection between Theorem \ref{pTheorem}
and {\it triple factorizations}. These are factorizations of a group $G$
as a product $ABA$ for subgroups $A,B$; such factorizations have been 
of interest since the paper of Higman and McLaughlin \cite{HM} linking
them with incidence geometries. 

It is an elementary observation that if $G$ is a transitive 
permutation group with point stabiliser $H$, and $p$ is a prime 
dividing $|H|$, then all the subdegrees of $G$ are coprime to $p$
if and only if $G$ admits the triple factorization 
\[
G = HN_G(P)H,
\]
where $P$ is a Sylow $p$-subgroup of $H$. Indeed, for $g \in G\backslash H$ 
the condition on the 
subdegrees implies that there is an $H$-conjugate $P^h$ of $P$ contained
in the two-point stabiliser $H \cap H^g$. Then $P^h,P^{hg^{-1}}$ are
contained in $H$, hence are $H$-conjugate, and so $hg^{-1}h' \in N_G(P)$
for some $h'\in H$, giving $g \in HN_G(P)H$. Hence the subdegree condition
implies the triple factorization, and the converse implication is proved by
reversing the argument. 

In view of Theorem \ref{pTheorem}, this gives the following result concerning
triple factorizations of groups of Lie type. We denote the set of Sylow $p$-subgroups of a group $G$ by $\Syl_p(G)$. 

\begin{cor}\label{triple}
Let $G$ be an almost simple group with socle $L$ of Lie type in characteristic
$p$, and let $H$ be a maximal subgroup of $G$. Assume that $p$ divides $|H|$.
Then $G = HN_G(P)H$ for $P \in Syl_p(H)$ if and only if 
$G,H$ are as in Theorem $\ref{pTheorem}${\rm (ii)-(iv)}.
\end{cor}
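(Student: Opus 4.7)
My plan is to reduce the corollary to Theorem \ref{pTheorem} via the elementary subdegree/triple-factorization equivalence already spelled out in the paragraph preceding the statement. Since $H$ is maximal, the coset action of $G$ on $\Omega = G/H$ is primitive, so Theorem \ref{pTheorem} applies; the preceding paragraph shows that $G = H N_G(P) H$ (for $P \in \Syl_p(H)$) if and only if every nontrivial subdegree of this action is coprime to $p$. The forward direction is then immediate: if the triple factorization holds then no subdegree is divisible by $p$, so alternative (i) of Theorem \ref{pTheorem} is excluded, and $(G,H)$ must appear in one of (ii)--(iv).

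For the converse I would verify, case by case, that in every configuration listed in parts (ii)--(iv) every nontrivial subdegree is coprime to $p$. In (ii) the action is $2$-transitive, so the only nontrivial subdegree is $|\Omega| - 1$; for the family $\Sp_{2d}(2)$ one has $|\Omega| - 1 = 2^{d-1}(2^d \mp 1) - 1$, which is odd for $d \geq 2$, and the remaining rows of Table \ref{tab:2trans} form a short finite list that can be checked directly. In (iii) the subdegrees $q+1$ and $f(q+1)$ with $q = 2^f$ are odd, hence coprime to $p = 2$. In (iv) the subdegrees are printed in Table \ref{tab:otherexcep} and are visibly coprime to the relevant characteristic. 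The equivalence then yields the triple factorization, completing the ``if'' direction.

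The substantive content is concentrated entirely in Theorem \ref{pTheorem}; given that theorem, the corollary is a bookkeeping translation, and I do not expect any genuine obstacle in writing it up. The tables in parts (ii)--(iv) have been assembled precisely so as to catalogue the primitive actions with no subdegree divisible by $p$, and the triple-factorization equivalence re-casts that catalogue in terms of products $H N_G(P) H$, so no further structural input is needed.
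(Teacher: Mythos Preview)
Your proposal is correct and matches the paper's approach exactly: the corollary is presented in the paper as an immediate consequence of Theorem~\ref{pTheorem} together with the subdegree/triple-factorization equivalence in the preceding paragraph, and you have spelled out precisely that deduction, including the case check for the converse direction. One small slip: in case (iii) of Theorem~\ref{pTheorem} the condition is only that $|G:L|$ is odd (not that $G=L$ or $|G:L|=f$ is prime), so the nontrivial subdegrees need not be exactly $q+1$ or $f(q+1)$; however, Lemma~\ref{lem:L2}(1) gives directly that all subdegrees are odd whenever $|G:L|$ is odd, which is what you need.
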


We note that in \cite{alavi1,alavi2} it is shown that there are many more triple factorizations $G=HAH$ with $A$ a maximal subgroup properly containing $N_G(P)$, where $H$ is a maximal subgroup of order divisible by $p$ and $P\in\Syl_p(H)$.

\section{Basic lemmas}

In this section, we provide some of the lemmas that we repeatedly use in the course of
this work. Let $G$ be a transitive permutation group on a set $\Omega$. We refer to $|\Omega|$ as the \emph{degree} of $G$ and the orbits of a point stabiliser as \emph{suborbits}.  Recall from the Introduction the result of Wielandt 
that a finite \tont\ permutation group is
primitive or a Frobenius group.  

\begin{lemma}\label{lem:pnorml}\label{lem:overgroup}\label{lem:rsubdegnottont}
Let $G$ be a finite transitive permutation group with a point stabiliser $H$.
\begin{enumerate}
\item[(i)] Suppose the degree of $G$ is divisible by $r>1$. If $G$ has a subdegree
  divisible by $r$ then $G$ is not \tont.
\item[(ii)] Suppose that $H<K<G$ and in the action of $K$ on the set of right cosets of
  $H$, $K$ has a suborbit of length $\ell$.  Then $G$ has a suborbit of length $\ell$.
\item[(iii)] Let $T$ be a normal subgroup of $G$. Let $g\in G$ and suppose
  $|T\cap H:T\cap H\cap H^g|$ is divisible by a positive integer $k$. Then
  $|H:H\cap H^g|$ is divisible by $k$.
\item[(iv)] Let $p$ be a prime such that $H$ has a nontrivial
normal $p$-subgroup $P$.
  Then $G$ has a subdegree divisible by $p$.
\end{enumerate}
\end{lemma}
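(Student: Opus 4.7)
The plan is to prove each of the four statements via standard orbit-counting and double-coset arguments; I do not expect any part to require a substantive new idea.

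For part (i), I would use the fact that if $G$ is $\frac{3}{2}$-transitive then all nontrivial suborbits share a common length $s > 1$, so $|\Omega| = 1 + ks$ where $k$ is the number of nontrivial suborbits. If a nontrivial subdegree is divisible by $r$, then so is $s$, and hence $|\Omega| \equiv 1 \pmod{r}$, contradicting $r \mid |\Omega|$ for $r>1$. For part (ii), my approach is to identify suborbits of $K$ on $H \backslash K$ with $(H,H)$-double cosets in $K$, the suborbit through $Hx$ having length $|HxH|/|H| = |H:H\cap H^{x^{-1}}|$, and similarly for $G$ on $H \backslash G$; since any $HxH$ with $x \in K$ is contained in $K$ it is itself a single $(H,H)$-double coset of $G$, of the same size $\ell$, which yields the required $G$-suborbit.

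The common engine behind (iii) and (iv) will be the standard observation that if $N \trianglelefteq H$ and $H$ acts transitively on a set $\Delta$, then $H$ permutes the $N$-orbits on $\Delta$ transitively, so they all have a common size dividing $|\Delta|$. For (iii), since $T \trianglelefteq G$ we have $T \cap H \trianglelefteq H$, and applying this observation to the $H$-orbit of $\alpha^g$ (of size $|H : H \cap H^g|$, where $\alpha$ is the point stabilised by $H$) shows that $|T \cap H : T \cap H \cap H^g|$ divides $|H : H \cap H^g|$, whence $k$ divides the latter. For (iv), since the permutation action is faithful and $P \neq 1$, some $\beta \neq \alpha$ satisfies $|P : P_\beta| > 1$, and this index is a positive power of $p$; applying the principle with $N = P$ to the $H$-orbit of $\beta$ then forces this $p$-power to divide the orbit size $|H : H_\beta|$, yielding the required subdegree divisible by $p$. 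The only (mild) obstacle is to recognise that the normality of $T \cap H$ and of $P$ inside $H$ is precisely what unlocks the equal-size-orbits fact in the last two parts.
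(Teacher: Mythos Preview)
Your proposal is correct and follows essentially the same approach as the paper's proof: part (i) is immediate from the definition of \tont, part (ii) is the observation that a double coset $HxH$ with $x\in K$ is also a double coset in $G$ of the same size (equivalently, $|H:H\cap H^g|$ for $g\in K$ gives a $G$-suborbit), and parts (iii) and (iv) both rest on the equal-block-size principle for a normal subgroup $N\trianglelefteq H$ acting on an $H$-orbit, applied with $N=T\cap H$ and $N=P$ respectively. The paper's write-up is terser but uses exactly these ingredients.
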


\begin{proof}
(i) This is clear.

\medskip\noindent(ii) Since $K$ has a suborbit of length $\ell$, there exists $g\in K$
such that $|H:H\cap H^g|=\ell$. Since $g\in G$, it follows that $G$ has a suborbit of
length $\ell$.

\medskip\noindent(iii) Consider the group action of $G$ on the right cosets of $H$.  Since
$T\cap H$ is a normal subgroup of $H$, the orbit lengths of $T\cap H$ divide the orbit
lengths of $H$. Therefore, for all $g\in G$, we have that $|T\cap H:T\cap H\cap H^g|$
divides $|H:H\cap H^g|$.

\medskip\noindent(iv) There must be some $H$-orbit $\Delta$ upon which $P$ acts
nontrivially. Then as $P\norml H$, all orbits of $P$ on $\Delta$ have the same size. It follows that $p$
divides $|\Delta|$.
\end{proof}

Let $G$ be a transitive permutation group with point stabiliser $H$. We say that a subgroup $H_0$ of $H$ is \emph{weakly closed in $G$} if whenever $H_0^g\leqslant H$ for $g\in G$ there exists $h\in H$ such that $H_0^g=H_0^h$.

\begin{lemma}\label{lem:weaklyclosed}
  Let $G$ be a transitive permutation group with point stabiliser $H$ and let $p$ be a
  prime. Suppose there exists $T\leqslant H$ such that
\begin{enumerate}
\item[(i)] $N_{G}(T)\not\leqslant H$, and
\item[(ii)] for all $S\in\Syl_p(H)$, the group $\la T,S\ra$ contains a normal
  subgroup $H_0$ of $H$ which is weakly closed in $G$ such that
  $H=N_G(H_0)$.
\end{enumerate}
Then $G$ has a subdegree which is divisible by $p$.
\end{lemma}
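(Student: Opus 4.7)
The plan is to argue by contradiction: suppose every subdegree of $G$ is coprime to $p$, and deduce that the element $g \in N_G(T)\setminus H$ guaranteed by hypothesis (i) must in fact lie in $H$.

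First, such a $g$ satisfies $T = T^g \leq H\cap H^g$. The corresponding suborbit has length $|H:H\cap H^g|$, which is coprime to $p$ by assumption, so by Sylow's theorem $H\cap H^g$ contains a Sylow $p$-subgroup of $H$. Fixing any $S \in \Syl_p(H)$, there is therefore $h \in H$ with $S^h \leq H\cap H^g$, and so
\[
\langle T, S^h \rangle \leq H \cap H^g \leq H^g.
\]

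Next, apply hypothesis (ii) to the Sylow $p$-subgroup $S^h$: this produces a normal subgroup $H_0$ of $H$ with $H_0 \leq \langle T, S^h\rangle$, weakly closed in $G$, and satisfying $N_G(H_0) = H$. From the displayed inclusion, $H_0 \leq H^g$, so $H_0^{g^{-1}} \leq H$. Weak closure now furnishes $h' \in H$ with $H_0^{g^{-1}} = H_0^{h'}$, whence $h'g \in N_G(H_0) = H$; since $h' \in H$, this forces $g \in H$, contradicting the choice of $g$.

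The only real subtlety is the choice of Sylow $p$-subgroup fed into hypothesis (ii): it must be an $H$-conjugate of $S$ that actually lies inside $H\cap H^g$, so that the resulting $H_0$ gets carried into $H^g$. The Sylow argument, enabled by the contradiction-hypothesis that $|H:H\cap H^g|$ is coprime to $p$, achieves precisely this; once it is done, the weak closure property closes the loop immediately.
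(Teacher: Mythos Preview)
Your proof is correct and follows essentially the same argument as the paper's: pick $g\in N_G(T)\setminus H$, assume the relevant subdegree $|H:H\cap H^g|$ is coprime to $p$, find a Sylow $p$-subgroup of $H$ inside $H\cap H^g$, deduce $H_0\leqslant H^g$, and use weak closure together with $H=N_G(H_0)$ to force $g\in H$. The only cosmetic difference is that the paper simply picks $S\in\Syl_p(H)$ already lying in $H\cap H^g$, whereas you conjugate a fixed $S$ into it; this is the same step.
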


\begin{proof}
  Let $g\in N_{G}(T)\backslash H$. Then $T\leqslant H\cap H^g$. Suppose that
  $|H:H^g\cap H|$ is coprime to $p$. Then there exists $S\in \Syl_p(H)$ such that
  $S\leqslant H^g\cap H$. Thus $H^g\cap H\geqslant \la S,T\ra\geqslant H_0$ and so
  $H_0^{g^{-1}}\leqslant H$. Since $H_0$ is weakly closed in $G$, it follows that there
  exists $h\in H$ such that $H_0^{g^{-1}}=H_0^h$.  Thus $hg\in N_{G}(H_0)=H$. Hence
  $g\in H$, a contradiction, and so $G$ has a subdegree divisible by $p$.
\end{proof}

The next result is commonly known as Tits' Lemma. A proof can be found 
in \cite[(1.6)]{seitz73}.

\begin{lemma}
\label{lem:Tits}
Let $L$ be a quasisimple group of Lie type in characteristic $p$, and let $H$ be a
maximal subgroup of $L$ which has index coprime to $p$ and does not contain the unique quasisimple normal subgroup of $L$.  Then $H$ is a parabolic subgroup.
\end{lemma}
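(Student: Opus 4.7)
The plan is to deduce the lemma from the Borel--Tits theorem, which asserts that in a finite group of Lie type $L$ of characteristic $p$, the normaliser in $L$ of any nontrivial $p$-subgroup is contained in a proper parabolic subgroup of $L$. Granted this, the proof splits into a short reduction and one substantive structural step.

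First I would set up the configuration. Since $|L:H|$ is coprime to $p$, Sylow's theorem guarantees that $H$ contains some Sylow $p$-subgroup of $L$, and after replacing $H$ by a suitable $L$-conjugate I may assume $H\supseteq U$, where $U$ is the unipotent radical of a fixed Borel subgroup $B=N_L(U)$ of $L$. Since $L$ is quasisimple, its unique quasisimple normal subgroup is $L$ itself, so the hypothesis merely says that $H$ is a proper subgroup of $L$; by assumption it is moreover maximal as such.

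The strategy is then to exhibit a nontrivial $p$-subgroup $Q\leqslant L$ normalised by $H$, the natural candidate being $Q:=O_p(H)$. Once such a $Q$ is produced, Borel--Tits supplies a proper parabolic $P$ of $L$ with $H\leqslant N_L(Q)\leqslant P<L$, and maximality of $H$ forces $H=P$, which is the desired conclusion.

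The main obstacle is therefore showing $O_p(H)\neq 1$; this cannot be deduced from $H\supseteq U$ alone in an abstract overgroup, so one must exploit that $H$ actually sits inside $L$. I would argue by contradiction: assume $O_p(H)=1$ and examine the generalised Fitting subgroup $F^*(H)=E(H)$, whose components are permuted by the Sylow $p$-subgroup $U\leqslant H$. Combining the severe restrictions on how a Sylow $p$-subgroup of $L$ can act on such components inside a subgroup of $L$ with Borel--Tits applied to suitable nontrivial $p$-subgroups arising from those components, one should produce a proper parabolic of $L$ already containing $H$, contradicting $O_p(H)=1$ by maximality of $H$. This is essentially the analysis carried out by Seitz in \cite{seitz73}, which we invoke.
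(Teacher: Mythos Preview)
The paper's own proof of this lemma is simply a one-line citation: ``A proof can be found in \cite[(1.6)]{seitz73}.'' Your proposal ultimately invokes the same reference for the substantive step, so the two are essentially the same, with yours adding a layer of strategic explanation (Borel--Tits plus the reduction to $O_p(H)\neq 1$) before deferring to Seitz.

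One small correction to your sketch: from $O_p(H)=1$ you conclude $F^*(H)=E(H)$, but in general $F^*(H)=O_{p'}(H)E(H)$ and there is no reason for $O_{p'}(H)$ to vanish. This does not affect the overall plan, since you are in any case handing the analysis off to \cite{seitz73} at that point, but you should not state $F^*(H)=E(H)$ without justification.
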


\begin{lemma}\label{lem:23/4}
  Let $G$ be a group of Lie type of characteristic $p$, with $p$ odd. Let $G$ act
  primitively with point stabiliser $H$. Suppose that there exists $H_0\normleq H$, weakly
  closed in $G$ and $H_0$ a central product of quasisimple Lie type groups of characteristic $p$, excluding types $\PSL_3(q)$, 
$\PSL_d(q)$ ($d\ge 5$) and $E_6(q)$. Assume $|G:H|$ is even. Then $G$ has a subdegree divisible by $p$.
\end{lemma}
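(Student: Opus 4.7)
The plan is to apply Lemma \ref{lem:weaklyclosed} with $T := C_{H_0}(t)$ for a suitably chosen involution $t \in H_0$. As a preliminary, observe that $N_G(H_0) = H$: since $H_0 \trianglelefteq H$ we have $H \le N_G(H_0)$; primitivity makes $H$ maximal in $G$, so $N_G(H_0) \in \{H,G\}$, and the latter option would force $H_0$ into the (trivial) core of $H$, contradicting $H_0 \ne 1$. Thus the ambient requirement $H = N_G(H_0)$ in condition (ii) of Lemma \ref{lem:weaklyclosed} holds automatically, and the weak-closure hypothesis on $H_0$ is part of the data.

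Write $H_0 = L_1 \ast \cdots \ast L_r$ as a central product of quasisimple Lie type groups in characteristic $p$. The key structural claim is that for each factor $L_i$ (none of type $\PSL_3(q)$, $\PSL_d(q)$ with $d \ge 5$, or $E_6(q)$), one can find an involution $t_i \in L_i$ whose centralizer $C_{L_i}(t_i)$ is not contained in any proper parabolic subgroup of $L_i$. By Tits' Lemma (Lemma \ref{lem:Tits}), proper overgroups of a Sylow $p$-subgroup are parabolic, hence
\[
\langle C_{L_i}(t_i), U\rangle = L_i \qquad \text{for every } U \in \Syl_p(L_i).
\]
Setting $t := t_1$, we have $C_{H_0}(t) = C_{L_1}(t) \ast L_2 \ast \cdots \ast L_r$, and the generation property lifts to $\langle C_{H_0}(t), U_0 \rangle = H_0$ for every $U_0 \in \Syl_p(H_0)$. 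Since $H_0 \trianglelefteq H$, any $S \in \Syl_p(H)$ meets $H_0$ in a Sylow $p$-subgroup, so $\langle T, S\rangle \supseteq H_0$, confirming condition (ii) of Lemma \ref{lem:weaklyclosed}.

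For condition (i), the hypothesis $|G:H|$ even is invoked. A Sylow 2-subgroup $S_G$ of $G$ properly contains $S_H \in \Syl_2(H)$, so $N_{S_G}(S_H) > S_H$ and there is a 2-element $g \in G \setminus H$ normalizing $S_H$. Arranging $t$ to lie in $Z(S_H \cap H_0)$ — by replacing $t$ with a $H_0$-conjugate, which preserves the generation property of the previous paragraph — the element $g$ normalizes this small 2-group; after replacing $g$ by a suitable power or choosing a $g$-fixed involution in $Z(S_H \cap H_0)$, we obtain $g \in C_G(t) \le N_G(T)$ with $g \notin H$. Lemma \ref{lem:weaklyclosed} then furnishes a subdegree of $G$ divisible by $p$.

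The main obstacle is the involution analysis: for each allowed classical and exceptional factor $L_i$ one must identify a suitable involution (for instance a long root involution, or a semisimple element of order $2$ in a Levi subgroup) and verify that its centralizer is not contained in any maximal parabolic. The exclusions $\PSL_3(q)$, $\PSL_d(q)$ for $d \ge 5$, and $E_6(q)$ are precisely those where this fails: in these types the graph automorphism forces every involution centralizer inside a proper parabolic. A secondary bookkeeping task is reconciling the non-parabolic centralizer requirement with the centrality of $t$ in $Z(S_H \cap H_0)$, which must be checked uniformly across the allowed types.
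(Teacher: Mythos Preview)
Your proposal has a genuine gap in the verification of condition (i) of Lemma~\ref{lem:weaklyclosed}. You produce a $2$-element $g\in G\setminus H$ normalising a Sylow $2$-subgroup $S_H$ of $H$, and then want $g$ to normalise (indeed centralise an involution in) $Z(S_H\cap H_0)$. But you have just shown $N_G(H_0)=H$, so $g\notin N_G(H_0)$: the element $g$ does \emph{not} normalise $H_0$, and therefore there is no reason for it to normalise $S_H\cap H_0$ or its centre. The passage ``the element $g$ normalises this small $2$-group'' is unjustified, and the subsequent ``replacing $g$ by a suitable power or choosing a $g$-fixed involution'' cannot be carried out.

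Your account of why the types $\PSL_3(q)$, $\PSL_d(q)$ ($d\ge 5$) and $E_6(q)$ are excluded is also incorrect. It is not that ``the graph automorphism forces every involution centraliser inside a proper parabolic'': for instance in $\PSL_4(q)$ with $q$ odd (a type you must handle), a diagonalisable involution has centraliser of shape $\GL_k\times\GL_{4-k}$, which \emph{does} lie in a parabolic, so your criterion would wrongly exclude $\PSL_4(q)$ as well. The correct reason is that the excluded types are exactly those possessing a parabolic subgroup of \emph{odd} index (see \cite{odddegree}); in all other types every proper parabolic has even index.

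The paper's proof exploits this directly and avoids involutions altogether. Take $T$ to be a full Sylow $2$-subgroup of $H$. Since $|G:H|$ is even, $T\notin\Syl_2(G)$, so $N_G(T)\not\le H$, giving condition (i) for free. For condition (ii), let $S\in\Syl_p(H)$ and set $M=H_0\cap\langle T,S\rangle$. As $H_0\trianglelefteq H$, $M$ contains a Sylow $2$-subgroup and a Sylow $p$-subgroup of $H_0$. By Tits' Lemma (Lemma~\ref{lem:Tits}) the projection of $M$ to each quasisimple factor $H_i$ is either all of $H_i$ or a parabolic; but parabolics of the allowed types have even index, while the projection contains a Sylow $2$-subgroup of $H_i$, so the projection is onto. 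Hence $M=H_0\le\langle T,S\rangle$, and Lemma~\ref{lem:weaklyclosed} finishes the argument.
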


\begin{proof}
 Let $T$ be a Sylow 2-subgroup of $H$. As $|G:H|$ is even, $T$ is not a Sylow 2-subgroup of $G$ and so 
  $N_{H}(T)<N_{G}(T)$. Let $H_0$ be a central product of quasisimple Lie type groups $H_i$ of characteristic $p$, with each $H_i$ subject to the conditions in the lemma. Let $S$ be a Sylow $p$-subgroup of $H$. Then $|H:\la T,S\ra|$ is coprime to $p$ and hence, by Lemma \ref{lem:Tits}, for each $i$, the projection of $M:=H_0\cap \la T,S\ra$ to $H_i$ has image $\pi_i(M)$ equal to either $H_i$ or a parabolic subgroup of $H_i$. By our assumption on $H_i$, each parabolic subgroup of $H_i$ has even index \cite{odddegree}. Since $T$ is a Sylow 2-subgroup of $H$ and $H_0\norml H$, it follows that $M$ contains a Sylow 2-subgroup of $H_0$, and hence that $\pi_i(M)=H_i$ for each $i$. Then since $M$ contains a Sylow $p$-subgroup $S\cap H_0$ of $H_0$, we have $M=H_0$, so $H_0\leqslant \la T,S\ra$. By the maximality of $H$, $N_G(H_0)=H$ and hence the result follows from Lemma \ref{lem:weaklyclosed}.
\end{proof}

Some care is required when applying Lemma \ref{lem:23/4} in the case where $H_0$ is a
central product of classical groups, as factors isomorphic to $\PSL_d(q)$ may be hidden due
to isomorphisms in the low dimensional cases. However, by \cite[Prop. 2.9.1]{KL}, if a
classical group in characteristic $p$ is isomorphic to $\PSL_d(p^f)^k$ for some $d,p,f$ and $k$ then $d=2$ or 4, and
so Lemma \ref{lem:23/4} does apply. 

Next we note the following result of Neumann and Praeger 
\cite[Corollary 1]{restmvmt}.

\begin{lemma}\label{thm:restmvmt}
   Let $G\leqslant \Sym(\Omega)$. If there exists a $k$-set $\Gamma$ of 
$\Omega$ such that
   there is no $g\in G$ with $\Gamma^g\cap \Gamma=\varnothing$ then 
$\Gamma$ intersects
   nontrivially a $G$-orbit of length at most $k^2-k+1$.
\end{lemma}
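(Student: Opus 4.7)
The plan is to argue by contrapositive: assume every $G$-orbit meeting $\Gamma$ has length at least $k^2-k+2$, and produce $g\in G$ with $\Gamma^g\cap\Gamma=\varnothing$. Let $\Omega_1,\ldots,\Omega_r$ be the $G$-orbits meeting $\Gamma$, and set $\Gamma_j:=\Gamma\cap\Omega_j$, $k_j:=|\Gamma_j|$, $n_j:=|\Omega_j|$, so that $\sum_j k_j = k$. Orbit--stabiliser yields the double count
\[
\sum_{g\in G}|\Gamma\cap\Gamma^g| \;=\; \sum_{j=1}^r\sum_{\alpha,\beta\in\Gamma_j}|\{g\in G : \alpha^g=\beta\}| \;=\; |G|\sum_{j=1}^r\frac{k_j^2}{n_j},
\]
and the hypothesis $|\Gamma\cap\Gamma^g|\geq 1$ for every $g$ would force $\sum_j k_j^2/n_j\geq 1$.

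When $r\geq 2$, each $k_j\leq k-1$, so $\sum_j k_j^2\leq (k-1)k=k^2-k$; the standing assumption $n_j\geq k^2-k+2$ then gives $\sum_j k_j^2/n_j\leq (k^2-k)/(k^2-k+2)<1$, a contradiction. The delicate case is $r=1$, where $k_1=k$ and the crude count only yields $n_1\leq k^2$. Here I would exploit that the $G$-orbit $\mathcal{F}:=\Gamma^G$ of $k$-subsets of $\Omega_1$ is pairwise intersecting, because $\Gamma^g\cap\Gamma^h=(\Gamma\cap\Gamma^{hg^{-1}})^g\neq\varnothing$ by hypothesis, and that by transitivity of $G$ on $\Omega_1$ every point of $\Omega_1$ lies in the same number $d\geq 1$ of members of $\mathcal{F}$. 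Writing $s:=|\mathcal{F}|$, incidence counting gives $sk=n_1 d$, while pair counting gives
\[
n_1 d(d-1) \;=\; 2\!\!\sum_{\{\Gamma',\Gamma''\}\subseteq\mathcal{F}}\!\!|\Gamma'\cap\Gamma''| \;\geq\; s(s-1).
\]
Substituting $s=n_1 d/k$ and simplifying produces $n_1\leq k^2 - k(k-1)/d$; this is at most $k^2-k+1$ when $d\geq k$, and is strictly smaller when $d\leq k-1$. Either subcase contradicts $n_1\geq k^2-k+2$.

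Combining the two cases shows that some $G$-orbit meeting $\Gamma$ has length at most $k^2-k+1$, completing the proof. The main obstacle is the single-orbit case, where the expectation bound alone falls just short; the Fisher-type inequality arising from the pairwise-intersecting structure of $\Gamma^G$ is what supplies the extra saving needed to sharpen $k^2$ to $k^2-k+1$, with the bound being attained by any transitive action on a projective plane of order $k-1$ with $\Gamma$ a line.
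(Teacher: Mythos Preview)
The paper does not give its own proof of this lemma; it is quoted as Corollary~1 of Neumann and Praeger. So there is no in-paper argument to compare against, and your proposal must stand on its own.

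Your double count and the case $r\ge 2$ are correct. The gap is in the single-orbit case $r=1$. From the pairwise-intersecting property of $\mathcal{F}=\Gamma^G$ you correctly derive $n_1 \le k^2 - k(k-1)/d$, but you then claim this is at most $k^2-k+1$ ``when $d\ge k$''. In fact
\[
k^2 - \frac{k(k-1)}{d} \;\le\; k^2 - k + 1 \quad\Longleftrightarrow\quad \frac{k(k-1)}{d} \ge k-1 \quad\Longleftrightarrow\quad d \le k,
\]
so you have the direction reversed. For $d\le k$ your argument goes through, but for $d>k$ (equivalently $|\mathcal{F}|>n_1$) your bound yields only $n_1 < k^2$, which for $k\ge 3$ is strictly weaker than the target $n_1\le k^2-k+1$. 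Worse, under your contrapositive hypothesis $n_1\ge k^2-k+2$ your own inequality forces $k(k-1)/d\le k-2$, hence $d>k$; so the case you actually need is precisely the one your pair count fails to handle. Closing the argument requires showing that a pairwise-intersecting tactical configuration with block size $k$ has at most $k^2-k+1$ points --- this is the substance of the Neumann--Praeger theorem --- and the inequality $n_1 d(d-1)\ge s(s-1)$ alone is not strong enough to deliver that.
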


\begin{lemma}\label{lem:mvSylp}\label{lem:mvconjclass}
  Let $G$ be a permutation group with point stabiliser $H$, let $S$ be a Sylow
  $p$-subgroup of $H$ for some prime $p$, and let $x\in H$ have order a power of $p$.
\begin{enumerate}
\item[(i)]  If $|S^G|>|S^H|^2-|S^H|+1$ then $G$ has a subdegree divisible by $p$.
\item[(ii)] If $|x^G|> |x^G\cap H|^2-|x^G\cap H|+1$ then $G$ has a
  subdegree divisible by $p$.
\end{enumerate}
\end{lemma}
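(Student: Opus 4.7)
The plan is to prove both parts by contradiction, applying Lemma \ref{thm:restmvmt} to the conjugation action of $G$ on the relevant $G$-class ($S^G$ in (i), $x^G$ in (ii)). The hypothesis to be contradicted --- that every subdegree of $G$ is coprime to $p$ --- translates, for each $g\in G$, into the statement that $|H:H\cap H^g|$ is coprime to $p$, so $H\cap H^g$ contains a Sylow $p$-subgroup of $H$; since Sylow $p$-subgroups of $H$ are $H$-conjugate, this subgroup has the form $S^h$ for some $h\in H$. This is the one ingredient needed to pair up elements of $\Gamma\cap\Gamma^g$ in each part.

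For (i), take $\Gamma=S^H$ inside $S^G$, so $|\Gamma|=|S^H|$. First I would note that any $G$-conjugate of $S$ that is contained in $H$ is a Sylow $p$-subgroup of $H$, hence $H$-conjugate to $S$, so $\Gamma=\{T\in S^G:T\le H\}$. Now given $g\in G$ and $h\in H$ with $S^h\le H\cap H^g$ as above, on one hand $S^h\in\Gamma$, and on the other hand $S^{hg^{-1}}\le H$ gives $S^{hg^{-1}}\in\Gamma$, so $S^h=(S^{hg^{-1}})^g\in\Gamma^g$. Therefore $\Gamma\cap\Gamma^g\ne\varnothing$ for every $g\in G$. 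Since $G$ is transitive on $S^G$, Lemma \ref{thm:restmvmt} forces $|S^G|\le|S^H|^2-|S^H|+1$, contradicting the hypothesis.

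Part (ii) runs in parallel with $\Gamma=x^G\cap H$ inside $x^G$. The only new ingredient is that, because $x$ has $p$-power order, it lies in some Sylow $p$-subgroup of $H$, and since all such subgroups are $H$-conjugate, some $H$-conjugate $x^{h'}$ of $x$ lies in the Sylow $p$-subgroup $S^h\le H\cap H^g$ produced above. Then $x^{h'}\in\Gamma$ and $x^{h'}=(x^{h'g^{-1}})^g\in\Gamma^g$, so $\Gamma\cap\Gamma^g\ne\varnothing$, and the transitive Neumann--Praeger bound on $x^G$ again contradicts the hypothesis. I don't anticipate any real obstacle: once one observes that ``no subdegree is divisible by $p$'' forces $H\cap H^g$ to contain a Sylow $p$-subgroup of $H$ for every $g$, the only care needed is the bookkeeping identifying $S^H$ (respectively $x^H$) with the elements of $S^G$ (respectively $x^G$) lying in $H$.
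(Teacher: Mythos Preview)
Your proof is correct and is essentially the contrapositive of the paper's argument: the paper applies Lemma~\ref{thm:restmvmt} directly to produce a $g$ with $S^H\cap(S^H)^g=\varnothing$ (respectively $(x^G\cap H)\cap(x^G\cap H)^g=\varnothing$) and then reads off the $p$-subdegree, whereas you assume no $p$-subdegree, show $\Gamma\cap\Gamma^g\neq\varnothing$ for all $g$, and derive the contradictory bound. One small inaccuracy in your closing remark: for part~(ii) the set $\Gamma=x^G\cap H$ need not equal $x^H$ (there may be $G$-conjugates of $x$ in $H$ that are not $H$-conjugate to $x$), but this doesn't affect your argument since you correctly work with $x^G\cap H$ throughout.
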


\begin{proof} 
  First consider the action of $G$ on $S^G$. If
  $|S^G|>|S^H|^2-|S^H|+1$, it follows from Lemma \ref{thm:restmvmt} applied to the action of $G$ on $\Syl_p(G)$ by conjugation that there exists $g\in G$ such that
  $S^H\cap (S^H)^g=\varnothing$.  Since $S^H$ is the set of Sylow $p$-subgroups of $H$, it
  follows that $H\cap H^g$ does not contain a Sylow $p$-subgroup of $H$. Hence the subdegree $|H:H\cap H^g|$ is divisible by $p$.

Now assume $|x^G|> |x^G\cap H|^2-|x^G\cap H|+1$.  By Sylow's Theorem, $x$ is contained in some Sylow $p$-subgroup $S$ of $H$, and hence
  each Sylow $p$-subgroup of $H$ contains a conjugate of $x$. By Lemma
  \ref{thm:restmvmt} applied to the action of $G$ on itself by conjugation, there exists $g\in G$ such that $(x^G\cap H)\cap 
  (x^G\cap H)^g=\varnothing$. Hence
  $S^H\cap (S^H)^g=\varnothing$ and the argument of the previous paragraph applies.
\end{proof}

We remark that \cite[Lemma 9]{BGS} is a weaker but useful version of Lemma \ref{lem:mvconjclass}.

%
%

\section{The reduction to affine and almost simple groups}

In this section we prove Theorem \ref{tontred}, which states that a \tont\ group is either almost simple or affine. First we consider primitive groups preserving a Cartesian decomposition.

The group $\Sym(\Delta)\Wr S_k$ acts primitively in its natural product action on the set $\Omega=\Delta^k$ when $|\Delta|\ge 3$. Suppose that $G$ is a primitive subgroup of $\Sym(\Delta)\Wr S_k$ in this action. Then the image of $G$ 
under the natural homomorphism to $S_k$ is
a transitive permutation group $K$ on the set $\{1,\ldots,k\}$. Let $G_1$ be the stabiliser in $G$ of 1 in this action. Then $G_1\leqslant \Sym(\Delta)\times (\Sym(\Delta)\Wr S_{k-1})$ and so we have a natural homomorphism $\pi_1:G_1\rightarrow \Sym(\Delta)$ to the first direct factor. Let $H=\pi_1(G_1)\leqslant\Sym(\Delta)$. By \cite[(2.2)]{kovacs}, there exists $g\in\Sym(\Delta)^k\cap \ker(\pi_1)$ such that $G^g\leqslant H\Wr K$ and so we may assume that $G\leqslant H\Wr K$. Then $G_1$ induces $H$ on the set of first entries of the points of $\Omega$. Since $G$ is primitive on $\Omega$, $H$ must be primitive on $\Delta$ and we refer to $H$ as the \emph{primitive component of $G$ relative to the decomposition $\Omega=\Delta^k$}. Note that it may be possible to write $\Omega=\Lambda^r$ for some $r\neq k$ and such a decomposition would give rise to a different primitive component.

\begin{lemma}\label{lem:prodactsubdeg}
 Let $G$ be a finite primitive permutation group such that $G\leqslant H\Wr S_k$ acting in product action on $\Omega=\Delta^k$ with primitive component $H$ on $\Delta$ and $k\geq 2$. Let $K\le S_k$ be the 
image of the natural homomorphism $G \rightarrow S_k$.
\begin{enumerate}
 \item[(i)] If $H$ has a subdegree of length $\ell$ on $\Delta$ then $G$ has a subdegree of length $\ell k$ on $\Omega$.
 \item[(ii)] If $K$ has an orbit of length $\ell$ on the set of $r$-subsets of $\{1,\ldots,k\}$ then $G$ has a subdegree divisible by $\ell$.
\end{enumerate}
\end{lemma}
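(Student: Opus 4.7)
My plan is to fix $\delta\in\Delta$ and $\alpha=(\delta,\ldots,\delta)\in\Omega$, then reduce both parts to two structural facts about $G_\alpha$. Let $B=G\cap H^k$ denote the kernel of the natural homomorphism $G\to K$, which I will write as $\pi_K$. Primitivity of $G$ on $\Omega$ forces $B$ to be transitive on $\Omega$: since $B\norml G$ its orbits form a $G$-invariant partition, which must be trivial by primitivity; the possibility $B=1$ would embed $G$ into $K\leq S_k$ acting on $\Delta^k$ by mere coordinate permutations, which is never transitive once $|\Delta|\geq 2$. From this I obtain the first fact, $\pi_K(G_\alpha)=K$: given $\sigma\in K$ and $g\in\pi_K^{-1}(\sigma)$, transitivity of $B$ supplies $b\in B$ with $b(g\alpha)=\alpha$, so $bg\in G_\alpha$ and $\pi_K(bg)=\sigma$. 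An analogous argument, using the defining property $\pi_1(G_1)=H$ of the primitive component together with transitivity of a suitable subgroup of $G_1\cap\ker\pi_1$ on the slice $\{\delta\}\times\Delta^{k-1}$, yields the second fact: $\pi_1(G_\alpha\cap G_1)=H_\delta$.

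For part (i), let $\Gamma\subseteq\Delta\setminus\{\delta\}$ be an $H_\delta$-orbit of length $\ell$, pick $\gamma\in\Gamma$, and set $\beta=(\gamma,\delta,\ldots,\delta)$. Define
\[
\Sigma=\bigcup_{i=1}^{k}\{(\delta,\ldots,\delta,\gamma',\delta,\ldots,\delta): \gamma'\in\Gamma\},
\]
where in the $i$th piece $\gamma'$ sits in position $i$. The pieces are pairwise disjoint because $\gamma'\neq\delta$, so $|\Sigma|=k\ell$. The inclusion $G_\alpha\cdot\beta\subseteq\Sigma$ is immediate from the product-action formula: any $g\in G_\alpha$ has the form $(h_1,\ldots,h_k;\sigma)$ with every $h_j\in H_\delta$, and $\Gamma$ is $H_\delta$-invariant. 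For the reverse inclusion, the first structural fact supplies, for every $\sigma\in K$, an element of $G_\alpha$ whose $K$-image is $\sigma$, shifting $\beta$ onto every slice of $\Sigma$; the second structural fact supplies, for every $h\in H_\delta$, an element of $G_\alpha\cap G_1$ whose first-coordinate image is $h$, moving $\gamma$ through all of $\Gamma$ within the first slice. Combining these shows that $G_\alpha$ is transitive on $\Sigma$, so $k\ell$ is a subdegree.

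For part (ii), let $I\subseteq\{1,\ldots,k\}$ be an $r$-subset with $|K\cdot I|=\ell$, fix $\gamma\in\Delta\setminus\{\delta\}$, and let $\beta\in\Omega$ have $\beta_i=\gamma$ for $i\in I$ and $\beta_j=\delta$ for $j\notin I$. The pattern map $p\colon\Omega\to 2^{\{1,\ldots,k\}}$ sending $\omega$ to $\{i:\omega_i\neq\delta\}$ is $G_\alpha$-equivariant, where $G_\alpha$ acts on subsets through $\pi_K$. By the first structural fact, $p(G_\alpha\cdot\beta)=K\cdot I$ has size $\ell$, so orbit--stabiliser applied to the surjection $G_\alpha\cdot\beta\twoheadrightarrow K\cdot I$ forces $\ell\mid |G_\alpha\cdot\beta|$.

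I expect the main obstacle to be the careful verification of the two structural facts, particularly $\pi_1(G_\alpha\cap G_1)=H_\delta$, which requires combining primitivity with the definition of the primitive component and handling the slice action with some care. Once these are in hand, both parts follow by a routine orbit-counting argument.
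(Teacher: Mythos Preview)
Your overall strategy is exactly the paper's: fix the diagonal point $\alpha=(\delta,\dots,\delta)$, take $\gamma=(\delta',\delta,\dots,\delta)$ for part~(i), and use the pattern map $\omega\mapsto\{i:\omega_i\ne\delta\}$ with a block argument for part~(ii). The paper is much terser---it simply asserts $|\gamma^{G_\alpha}|=\ell k$---while you try to justify it via two structural facts. Your first fact $\pi_K(G_\alpha)=K$ and its proof are correct, and your treatment of part~(ii) is essentially identical to the paper's.

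The gap is in your second structural fact. The ``analogous argument'' does not go through: unlike $B=\ker\pi_K$, the kernel $N=\ker\pi_1$ is normal only in $G_1$, not in $G$, so primitivity of $G$ on $\Omega$ cannot be invoked to force $N$ (or any subgroup of it) to be transitive on a slice. In fact the equality $\pi_1(G_\alpha\cap G_1)=H_\delta$ can fail outright. Take $k=2$, $\Delta=\{1,\dots,6\}$, $H=S_6$, let $\phi$ be an outer involution of $S_6$, and set $G=\langle\{(h,\phi(h)):h\in S_6\},\tau\rangle\le S_6\Wr S_2$ with $\tau$ the coordinate swap. One checks that $G\cong\Aut(S_6)$ acts primitively on the $36$ points (the only nontrivial $G_1$-blocks are the two slice systems, and $\tau$ swaps them), that $G_1=B$ is the diagonal copy of $S_6$, and hence $N=1$; thus $\pi_1((G_1)_\alpha)$ is the intersection of a point stabiliser $S_5$ with a transitive $S_5$, namely $F_{20}\lneq S_5=H_\delta$. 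Part~(i) still holds in this example only because $F_{20}$ and $S_5$ happen to have the same (single) orbit on $\Delta\setminus\{\delta\}$. What your argument for~(i) really needs is that $\pi_1((G_\alpha)_1)$ and $H_\delta$ have the \emph{same orbits} on $\Delta$, not that the groups coincide; you have correctly located where the work lies, but the route you propose---via the full equality---is blocked by examples like this one. The paper's one-line assertion sidesteps the same point without addressing it.
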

\begin{proof}
Let $\delta\in\Delta$ and let $\alpha=(\delta,\ldots,\delta)\in\Omega$. Then $G_{\alpha}=G\cap (H_{\delta}\Wr S_k)$. Let $\delta'\in\Delta\backslash\{\delta\}$ and let $\gamma=(\delta',\delta,\ldots,\delta)$. Then $|\gamma^{G_{\alpha}}|=|(\delta')^{H_{\delta}}|k$ and so part (i) holds. 

Let $J\subseteq\{1,\ldots,k\}$ such that $|J^K|=\ell$. Let $\gamma$ be the element of $\Omega$ such that for each  $j\in J$ we have $(\gamma)_j=\delta'$ and for $j\notin J$, $(\gamma)_j=\delta$ (where $(\gamma)_j$ denotes the $j^{th}$ coordinate). For $\gamma'\in\Omega$, let $\pi(\gamma')=\{i\mid (\gamma')_i\neq \delta\}$. Then $\{\pi(\gamma')\mid \gamma'\in \gamma^{G_{\alpha}}\}=J^K$. Moreover, the set of all $\gamma'\in\gamma^{G_{\alpha}}$ such that $\pi(\gamma')=J$ forms a block of imprimitivity for $G_{\alpha}$ on $\gamma^{G_{\alpha}}$. Hence part (ii) follows.
\end{proof}

If a primitive group $G$ on a set $\Omega$ cannot be embedded into a wreath product in product action (with $k\ge 2$) then we call $G$ a \emph{basic primitive permutation group}. One interpretation of the O'Nan-Scott Theorem for primitive groups is that a basic primitive permutation group is either affine, almost simple or of diagonal type \cite[Theorem 4.6]{peterpermgps}. By choosing an appropriate representation of $\Omega$ as a Cartesian power we may assume that the primitive component of a nonbasic primitive permutation group is one of the following three basic types:

\begin{itemize}
 \item \textit{Affine Type:}
Here $\Omega=\GF(p)^k$ and $G=N\sdp G_0$ where $N$ is the group of all translations and $G_0$ is
an irreducible subgroup of $\GL_k(p)$. If we also insist that $G$ is basic then $G_0$ is a primitive linear group, that is, does not preserve a nontrivial direct sum decomposition of the vector space.

\item \textit{Almost Simple Type:}
Here $G$ is isomorphic to a subgroup of $\Aut(T)$ containing $\Inn(T)$ for some nonabelian simple group $T$. 

\item \textit{Diagonal Type:}
Here $G$ has socle $N=T^k$ for some nonabelian simple group $T$ and $k\geq 2$. Moreover, $N_\alpha$ is a full diagonal subgroup of $N$.  
Identify $T$ with ${\rm Inn}(T)$.
Then $\Omega$ can be identified with $T^{k-1}$, and $T^k\le G\le A$
  where
$$A=\{(a_1,\ldots,a_k)\pi\mid\pi\in S_k,a_1\in\Aut(T),a_i\in\Inn(T)a_1\}$$ 
and the action of
  $A$ on $T^{k-1}$ is given by 
\begin{align*}
  (t_2,\ldots,t_k)^{(a_1,\ldots,a_k)}&=(a_1^{-1}t_2a_2,\ldots,a_1^{-1}t_ka_k)\\ 
(t_2,\ldots,t_k)^{\pi^{-1}}&=(t_{1\pi}^{-1}t_{2\pi},\ldots,t_{1\pi}^{-1}t_{k\pi})
\end{align*}
for all $(a_1,\ldots,a_k)\pi\in A$ and $(t_2,\ldots,t_k)\in\Inn(T)^{k-1}$ where $t_1=1$.
Also, either $G$ acts primitively on the set of simple direct factors of $N$, or $k=2$ and $G$ fixes each simple direct factor setwise.
\end{itemize}

 Before dealing with groups of diagonal type we need the following lemma.
\begin{lemma}\label{lem:evenlength}
  Let $T$ be a nonabelian simple group and let $p$ be a prime dividing $|T|$. Then $T$ has a conjugacy class of size divisible by $p$.
\end{lemma}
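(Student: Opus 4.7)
The plan is to argue by contradiction. Suppose that every conjugacy class of $T$ has size coprime to $p$. Then for each $t \in T$, the index $|T:C_T(t)|$ is coprime to $p$, so $C_T(t)$ contains a Sylow $p$-subgroup of $T$. Fix $P \in \Syl_p(T)$, which is nontrivial since $p$ divides $|T|$. For each $t \in T$ there is some $g \in T$ with $P^g \leqslant C_T(t)$, equivalently $t \in C_T(P^g) = C_T(P)^g$. Writing $H = C_T(P)$, the assumption therefore becomes $T = \bigcup_{g \in T} H^g$.

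Next I would rule out the degenerate sizes of $H$. Since $T$ is simple nonabelian we have $\cent(T) = 1$; combined with $P \neq 1$ this forces $H = C_T(P) \neq T$, for otherwise $P \leqslant \cent(T) = 1$. If $H = 1$ then $\bigcup_g H^g = \{1\}$, immediately contradicting $T = \bigcup_g H^g$. Otherwise $1 < H < T$, and simplicity of $T$ implies that $H$ is not normal, so $[T:N_T(H)] \geqslant 2$.

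The proof then concludes with the standard counting bound on a union of conjugates of a proper subgroup:
$$\Big|\bigcup_{g \in T} H^g\Big| \;\leqslant\; 1 + [T:N_T(H)]\,(|H|-1) \;=\; 1 + |T|\,\frac{|H|}{|N_T(H)|} - [T:N_T(H)] \;\leqslant\; |T| - 1,$$
where the final inequality uses $|H| \leqslant |N_T(H)|$ together with $[T:N_T(H)] \geqslant 2$. This contradicts $T = \bigcup_g H^g$ and so some class of $T$ has size divisible by $p$.

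The key step is recognising that the hypothesis translates into $T$ being covered by conjugates of the proper subgroup $C_T(P)$; once this is in hand, the classical observation that a finite group is never the union of the conjugates of a single proper subgroup finishes the argument. I do not anticipate a real obstacle.
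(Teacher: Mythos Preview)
Your proof is correct and follows essentially the same approach as the paper: both observe that the hypothesis forces $T = \bigcup_{g\in T} C_T(P)^g$ for a Sylow $p$-subgroup $P$, and then invoke the classical fact that a finite group is never the union of conjugates of a proper subgroup. The only cosmetic difference is that the paper phrases this last step via Jordan's theorem (every transitive action of degree at least $2$ has a fixed-point-free element), whereas you give the equivalent direct counting bound.
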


\begin{proof}
  Suppose that every conjugacy class in $T$ has size coprime to $p$. Then every element of $T$ is
  centralised by some Sylow $p$-subgroup of $T$ and so given a Sylow $p$-subgroup $S$ of $T$, $C_T(S)$ meets each conjugacy class nontrivially. Hence in the action of $T$ on the set of cosets of $C_T(S)$ every element of $T$ fixes some coset. Since every transitive group of degree at least 2 has a fixed point free element \cite[p. 173]{peterpermgps}, it follows that $C_T(S)=T$ and so $S\norml T$. Since $T$ is nonabelian simple it follows that $S=1$, a contradiction. Hence $T$ has a conjugacy class with size divisible by $p$.
\end{proof}

\begin{lemma}\label{lem:SD}
 Let $G$ be a primitive permutation group of diagonal type and let $p$ be a prime dividing $|\Omega|$. Then $G$ has a subdegree divisible by $p$.
\end{lemma}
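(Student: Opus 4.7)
The plan is to locate a suborbit of the socle $N=\soc(G)=T^k$ of length divisible by $p$, and then lift this to a suborbit of $G$ by normality. The key observation is that the full diagonal point stabiliser acts on $\Omega$ by simultaneous conjugation, which reduces the problem to a statement about conjugacy class sizes in $T$.

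First I would observe that $|\Omega|=|T|^{k-1}$, so the hypothesis $p\mid|\Omega|$ forces $p$ to divide $|T|$. Applying Lemma \ref{lem:evenlength} then yields an element $c\in T$ whose conjugacy class $c^T$ has size divisible by $p$.

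Next I would unwind the diagonal action. Taking $\alpha$ to correspond to $(1,\ldots,1)\in T^{k-1}$, the formula
\[
(t_2,\ldots,t_k)^{(a_1,\ldots,a_k)}=(a_1^{-1}t_2a_2,\ldots,a_1^{-1}t_ka_k)
\]
displayed just before the lemma identifies $N_\alpha$ with the full diagonal subgroup $\{(t,\ldots,t):t\in T\}$, acting on $T^{k-1}$ by simultaneous conjugation on each coordinate. Consequently, the $N_\alpha$-orbit of the point $(c,1,\ldots,1)$ has length precisely $|c^T|$, and hence is divisible by $p$.

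Finally, since $N\normleq G$ we have $N_\alpha\normleq G_\alpha$, so Lemma \ref{lem:pnorml}(iii) (applied to the normal subgroup $N$) transfers the divisibility from $N_\alpha$-suborbits to $G_\alpha$-suborbits, producing a subdegree of $G$ divisible by $p$. I do not foresee any real obstacle in this argument: the only delicate point is reading off the $N_\alpha$-action correctly from the diagonal-type formulas, after which the conclusion is immediate from Lemma \ref{lem:evenlength} combined with Lemma \ref{lem:pnorml}(iii). The argument is uniform in $k$ and does not require separate treatment of the $k=2$ case where $G$ may swap the two simple factors, since the $N_\alpha$-orbit we construct already has the required length and its containing $G_\alpha$-orbit can only be larger.
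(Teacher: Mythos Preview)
Your proof is correct and essentially the same as the paper's. The paper phrases the normal subgroup of $G_\alpha$ as $\Inn(T)$ (via the identification $G_\alpha\leqslant\Aut(T)\times S_k$) rather than as $N_\alpha$, but these coincide, and the paper's one-line appeal to ``$\Inn(T)\norml G_\alpha$'' is exactly the content of your citation of Lemma~\ref{lem:pnorml}(iii).
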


\begin{proof}
  Let $N=T^k$ be the socle of $G$. Then $|\Omega|=|T|^{k-1}$ and so $p$ divides $|T|$. Moreover, $\Inn(T)\norml
  G_{\alpha}\leqslant \Aut(T)\times S_k$. By Lemma \ref{lem:evenlength}, we can choose $t\in T$
  such that $|t^T|$ is divisible by $p$. Then as $|(t,1,\ldots,1)^{\Inn(T)}|=|t^T|$ and $\Inn(T)\norml
  G_{\alpha}$ it follows that $G$ has a subdegree divisible by $p$. 
\end{proof}

\begin{theorem}
\label{thm:psubdeg}
Let $G$ be a primitive permutation group on a set $\Omega$ and let $p$ be a prime such that $p$ divides $|\Omega|$. Then one of the following holds:
\begin{enumerate}
 \item[(i)] $G$ has a subdegree divisible by $p$,
 \item[(ii)] $G$ is almost simple or affine,
 \item[(iii)] $G\leqslant H\Wr K$ acting in product action on $\Omega=\Delta^k$ with primitive component $H$ such that $H$ is almost simple, $H$ does not have a subdegree divisible by $p$ and for each $r\leqslant k$, each orbit of $K$ on $r$-subsets has size coprime to $p$.
\end{enumerate}
 \end{theorem}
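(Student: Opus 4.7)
The plan is to apply the O'Nan--Scott theorem as formulated in the paragraphs immediately preceding the statement, splitting according to whether $G$ is basic or non-basic.

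If $G$ is basic, then by \cite[Theorem 4.6]{peterpermgps} $G$ is of affine, almost simple, or diagonal type. The first two alternatives directly yield conclusion (ii). In the diagonal case, the point stabiliser is a full diagonal subgroup, so $|\Omega|=|T|^{k-1}$ for the simple factor $T$; the hypothesis $p\mid|\Omega|$ forces $p\mid|T|$, and Lemma \ref{lem:SD} then supplies a subdegree divisible by $p$, placing $G$ in (i).

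If $G$ is not basic, then by the discussion preceding the statement I may choose a Cartesian decomposition $\Omega=\Delta^k$ with $k\ge 2$ so that $G\le H\Wr K$ in product action, where $H$ is a basic primitive component on $\Delta$ and $K\le S_k$ is the image of the natural homomorphism $G\to S_k$. Since $p\mid|\Delta|^k$, we have $p\mid|\Delta|$. I would then deal with the three basic types for $H$ in turn. If $H$ is of diagonal type, Lemma \ref{lem:SD} applied to $H$ on $\Delta$ produces a subdegree of $H$ divisible by $p$, and Lemma \ref{lem:prodactsubdeg}(i) lifts this to a subdegree of $G$ divisible by $p$, giving (i). If $H$ is affine, then $\soc(H)$ is an elementary abelian $p$-group of order $|\Delta|$ acting regularly on $\Delta$; since $\soc(H)^k$ is characteristic in the base group $H^k$ and hence normal in $H\Wr K$, the intersection $G\cap\soc(H)^k$ is an abelian normal subgroup of $G$, and primitivity of $G$ forces it to be transitive, hence regular (as a subgroup of a regular abelian group), so $G$ has an abelian regular normal subgroup and is itself affine, placing us in (ii). Finally, if $H$ is almost simple, then either $H$ admits a subdegree on $\Delta$ divisible by $p$ (whereupon Lemma \ref{lem:prodactsubdeg}(i) places us in (i)), or for some $r\le k$ the group $K$ has an orbit on $r$-subsets of $\{1,\ldots,k\}$ of length divisible by $p$ (whereupon Lemma \ref{lem:prodactsubdeg}(ii) places us in (i)), or neither of these occurs, in which case we are precisely in case (iii).

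The main obstacle is the affine subcase of the non-basic analysis: one must verify that $G\cap\soc(H)^k$ cannot be trivial. The argument above rules this out by combining primitivity of $G$ with the fact that $\soc(H)^k$ acts regularly on $\Omega$, so its intersection with $G$ is a normal subgroup whose orbits form a $G$-invariant partition of $\Omega$, and by primitivity this intersection is either trivial (which is incompatible with the product-action hypothesis that the first-coordinate projection $\pi_1(G_1)$ equals $H$, since $H$ contains $\soc(H)$) or transitive. Once this dichotomy is settled, the four-way case split exhausts all possibilities and the theorem follows.
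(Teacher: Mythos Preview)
Your outline is essentially the paper's: split into basic versus non-basic, kill the diagonal type with Lemma~\ref{lem:SD}, and use Lemma~\ref{lem:prodactsubdeg} to pass subdegree divisibility between $G$ and $(H,K)$. The only divergence is in the affine-component subcase of the non-basic analysis, which you rightly single out as the obstacle---but your proposed resolution does not hold up.

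You argue that $G\cap\soc(H)^k=1$ is ``incompatible with $\pi_1(G_1)=H$, since $H$ contains $\soc(H)$''. This does not follow. The equality $\pi_1(G_1)=H$ says only that each $v\in V:=\soc(H)$ occurs as the \emph{first} coordinate of some $g\in G_1\le H\times(H\Wr K_1)$; the remaining coordinates of $g$ need not lie in $V$, so there is no reason for $g$ to belong to $V^k$. Surjectivity of a single coordinate projection gives no lower bound on $G\cap V^k$, and a complement to $V^k$ inside $V^k\rtimes(H_0\Wr K)$ can perfectly well project onto $H$ in the first coordinate while meeting $V^k$ trivially. The paper avoids this issue by taking a more structural line: it simply records that $H\Wr K$ is itself an affine permutation group when $H$ is affine, and then invokes (implicitly) the standard O'Nan--Scott fact that for a non-basic primitive group with nonabelian socle $T^m$, the component of any product decomposition has socle a nontrivial power of $T$, hence is not affine. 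Contrapositively, an affine component forces $\soc(G)$ to be abelian, so $G$ is affine. You should replace your $\pi_1$-argument with this appeal to the socle structure, or follow the paper and note that the wreath product of an affine group with a subgroup of $S_k$ is affine and conclude via the O'Nan--Scott description of non-basic types.
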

 
\begin{proof}
 Suppose that $G$ does not have a subdegree divisible by $p$. If $G$ is a basic primitive permutation group then by Lemma \ref{lem:SD} and the preceding remarks it follows that $G$ is either almost simple or affine. If $G\leqslant H\Wr K$ in product action on $\Omega=\Delta^k$ then by our remarks above we may assume that $H$ is affine, almost simple or of diagonal type. By Lemma \ref{lem:prodactsubdeg}, $H$ does not have a subdegree divisible by $p$ and for each $r\leqslant k$, each orbit of $K$ on $r$-subsets has size coprime to $p$. Lemma \ref{lem:SD} implies that $H$ is not of diagonal type and so $H$ is almost simple or affine. Since the wreath product of an affine group with a subgroup of $S_k$ is an affine group the result follows.
\end{proof}

We can now prove Theorem \ref{tontred}.

\vspace{4mm}
\textit{Proof of Theorem \ref{tontred}:}
 Let $G$ be a finite primitive \tont\ group and let $p$ be a prime dividing $|\Omega|$.  Then $G$ does not have a subdegree divisible by $p$ and so by Theorem \ref{thm:psubdeg}, $G$ is either affine, almost simple or $G\leqslant H\Wr S_k$ acting on $\Delta^k$ with $H$ almost simple and $k\ge 2$. Suppose the latter holds. Let $\ell$ be a subdegree of $H$ on $\Delta$. Then since $G$ is \tont, Lemma \ref{lem:prodactsubdeg}(i) implies that all subdegrees of $G$ have size $\ell k$ and that all suborbits of $H$ have length $\ell$. Let $\alpha=(\delta,\ldots,\delta)\in\Delta^k$ and let $\gamma=(\delta',\delta',\delta,\ldots,\delta)$ for some $\delta'\neq \delta$. Since $|\gamma^{G_{\alpha}}|=\ell k$, \cite[Theorem 1.1]{GLPST} implies that $(H,|\Delta|)$ is one of $(\PGL_2(7),21)$, $(\PGL_2(9),45)$, $(M_{10},45)$ or
  $(\PGammaL_2(9),45)$. None of these groups is \tont, which implies that $G$ is also not {\tont}.
\qed

\section{Alternating Groups}\label{an}

In this section we prove

\begin{theorem}\label{antont}
  The only primitive {\tont} permutation groups with socle
  $A_n$ ($n\ge 5$) that are not 2-transitive are the groups $A_7$ and $S_7$ acting on the set of $2$-subsets of a $7$ element set.
\end{theorem}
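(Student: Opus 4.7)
\emph{Proof plan.} The plan is to let $G$ be an almost simple group with socle $A_n$ ($n\geq 5$) acting primitively and non-2-transitively on $\Omega$ with point stabiliser $H$, and to apply the O'Nan--Scott theorem to classify $H$ as a maximal subgroup of $S_n$. For the small degrees $n\in\{5,6,8\}$, the exceptional isomorphisms $A_5\cong \PSL_2(4)\cong \PSL_2(5)$, $A_6\cong \PSL_2(9)\cong \Sp_4(2)'$, and $A_8\cong \PSL_4(2)$ identify $G$ with an almost simple group of Lie type, so the classification from Theorem \ref{pTheorem} applies and produces no new $\tfrac{3}{2}$-transitive examples with socle $A_n$. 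For $n=7$ or $n\geq 9$, the maximal subgroup $H$ of $G$ falls into one of three classes: (a) $H$ stabilises a $k$-subset of $[n]:=\{1,\ldots,n\}$ with $2\leq k\leq n/2$; (b) $H$ stabilises a partition of $[n]$ into $m$ blocks of size $k$ with $n=km$ and $k,m\geq 2$; or (c) $H$ is primitive on $[n]$.

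In case (a) the $H$-orbits on $k$-subsets are parametrised by the intersection size $i\in\{0,\ldots,k-1\}$ with the fixed $k$-subset, giving subdegrees
\[
d_i=\binom{k}{i}\binom{n-k}{k-i}.
\]
The condition $d_0=d_1$ simplifies to $n=k^2+2k-1$, while (for $k\geq 3$) the condition $d_1=d_2$ simplifies to $n=(k^2+2k-3)/2$; together these force $(k+1)^2=0$, which has no positive solution. Hence only $k=2$ is possible, and then $d_0=d_1$ forces $n=7$, giving exactly the exceptional case of $A_7,S_7$ on pairs with common nontrivial subdegree $d_0=d_1=10$.

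In case (b), when $m=2$ the nontrivial $H$-suborbits are indexed by $j\in\{1,\ldots,\lfloor k/2\rfloor\}$, with sizes $\binom{k}{j}^2$ (halved when $j=k/2$); these are pairwise distinct once $k\geq 4$, and the leftover cases $(k,m)=(2,2),(3,2)$ either have degree less than $5$ or are 2-transitive ($S_4$ on $3$ points, $A_6$ on $10$ points). For $m\geq 3$ the plan is to exhibit two specific suborbits of different size, for instance the orbit of partitions obtained from the fixed partition by swapping two elements between two blocks (of size $\binom{m}{2}k^2$) versus an orbit arising from a cyclic redistribution among three blocks, and verify by direct count that these sizes disagree.

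Case (c) is the main obstacle. By O'Nan--Scott, the primitive maximal subgroups $H$ of $S_n$ or $A_n$ (other than $A_n,S_n$ themselves) are of affine, almost simple, diagonal, product, or twisted wreath type. In every type except almost simple, $H$ carries a nontrivial normal $p$-subgroup for some prime $p$, so Lemma \ref{lem:pnorml}(iv) produces a subdegree divisible by $p$; combined with a Sylow count showing $p\mid|\Omega|=n!/|H|$, Lemma \ref{lem:pnorml}(i) then rules out $\tfrac{3}{2}$-transitivity. For the almost simple type, the plan is to apply Lemma \ref{lem:mvconjclass} to a suitably chosen $p$-element of $H$ together with a general bound such as $|H|<4^n$ of Praeger--Saxl to obtain a subdegree divisible by $p$; the finitely many residual small values of $n$ are then handled by direct inspection of the tabulated primitive permutation groups on $[n]$ and explicit subdegree computation. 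The hardest technical work will lie in this almost simple subcase of (c), where the bookkeeping among specific small primitive subgroups must be done carefully.
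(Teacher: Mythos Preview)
Your treatment of cases (a) and (b) is essentially sound and parallels the paper's Lemmas \ref{lem:intrans} and \ref{lem:imprim}, though the paper uses the ratio $|\Omega_{k-2}|/|\Omega_{k-1}|$ rather than your pair of equations; both arguments work.

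The serious gap is in case (c). Your claim that ``in every type except almost simple, $H$ carries a nontrivial normal $p$-subgroup'' is false. For diagonal type $H=T^k.(\Out(T)\times S_k)$ with $T$ nonabelian simple, one has $O_p(H)=1$: any normal $p$-subgroup would centralise the socle $T^k$ (since $[O_p(H),T^k]\le O_p(H)\cap T^k=1$), but $C_H(T^k)=1$. The same reasoning shows $O_p(H)=1$ for product type $H=S_k\wr S_r$ with $k\ge 5$. So Lemma~\ref{lem:pnorml}(iv) does not apply, and your plan collapses for these two O'Nan--Scott types. Your fallback for the almost simple type --- Lemma~\ref{lem:mvconjclass} plus a crude bound like $|H|<4^n$ --- is also too vague to be convincing: you would need to control $|x^G\cap H|$ for a specific $p$-element $x$, not just $|H|$, and there is no obvious uniform choice.

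The paper's route through case (c) is entirely different and does not split by O'Nan--Scott type at all. For $n\ge 26$, any primitive $H<S_n$ not containing $A_n$ has minimal degree at least $11$ (classical results, Lemma~\ref{lem:supportcomp}). Then for $x=(1\,2\,3\,4\,5)$ and $y=(1\,2)(3\,4)$ one shows, via the support bound $|\supp([a,b])|\le 2|\supp(a)|$ (Lemma~\ref{lem:support}), that $H\cap H^x=C_H(x)$ and $H\cap H^y=C_H(y)$; a short parity argument then gives $|C_H(x)|\ne |C_H(y)|$ whenever $|H|$ is even, producing two unequal subdegrees (Lemma~\ref{lem:Anprim}). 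The residual case $|H|$ odd forces $H=\AGL_1(p)\cap A_p$, handled separately (Lemma~\ref{lem:AGL}), and $n<26$ is a finite computer check. This argument is both shorter and uniform across all primitive $H$.

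A minor further point: invoking Theorem~\ref{pTheorem} (or rather Theorem~\ref{thm:classgroups}(B)) for $n\in\{5,6,8\}$ via the exceptional isomorphisms is circular in the paper's logical structure, since the $\mathcal{C}_9$ analysis in Theorem~\ref{thm:classgroups} explicitly appeals back to Section~\ref{an} when the socle is alternating. These small degrees should simply be checked directly.
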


Let $G$ be an almost simple group with socle $A_n\,(n\ge 5)$. Then 
either $G$ is $A_n$ or $S_n$, or $n=6$ when there are three further 
possibilities. We shall handle the latter at the end of the section, so 
suppose now that $G = A_n$ or $S_n$.

The following result on the maximal subgroups of $G$ can be found 
in \cite[\S4.6]{peterpermgps}.

\begin{proposition}\label{onanscott}
The maximal subgroups of $G$ are the intersections with $G$ of the following:

\medskip\noindent\textsc{Intransitive:} subgroups $S_k\times S_{n-k}$ with $1\leq k < n/2$.

\medskip\noindent\textsc{Imprimitive:} subgroups $S_k\Wr S_{n/k}$ in imprimitive 
action on $\{1,\ldots, n\}$.

\medskip\noindent\textsc{Primitive:} 
\begin{enumerate}
\item[(i)] $S_k\Wr S_r$ ($n=k^r,\,k\ge 5,\,r\geq 2$) in product action 
on $\{1,\ldots, n\}$;
\item[(ii)] $\AGL_d(p)$ ($n=p^d$);
\item[(iii)] Diagonal groups $T^k.(\Out(T)\times S_k)$, where $T$ is
nonabelian simple, $n=|T|^{k-1}$;
\item[(iv)] Almost simple groups acting primitively on $\{1,\ldots, n\}$.
\end{enumerate}
\end{proposition}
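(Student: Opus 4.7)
The plan is to classify the maximal subgroups of $G\in\{S_n,A_n\}$ by analysing the permutation action of such a subgroup $H$ on $\{1,\ldots,n\}$, splitting into three cases according to transitivity and primitivity, and then verifying that each candidate subgroup so identified is in fact maximal in $G$.

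First I would treat the intransitive case: if $H$ has orbits $\Delta_1,\ldots,\Delta_r$ of sizes $k_1\le\cdots\le k_r$, then $H$ lies in the setwise stabiliser of this partition, namely $(S_{k_1}\times\cdots\times S_{k_r})\cap G$. Maximality forces $r=2$, since otherwise one could coarsen the partition to obtain a strictly larger proper overgroup. The constraint $k<n/2$ arises because when $k_1=k_2=n/2$ the direct product embeds in $S_{n/2}\Wr S_2$, which is then the actual maximal subgroup (and is recorded as an imprimitive example). In the transitive imprimitive case, $H$ preserves a nontrivial block system with blocks of size $k$, $1<k<n$, $k\mid n$, so $H\le (S_k\Wr S_{n/k})\cap G$, with equality by maximality. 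In the primitive case, I would invoke the O'Nan--Scott theorem as stated in \cite[\S4.6]{peterpermgps}: it classifies finite primitive groups into precisely the four types (i)--(iv) listed --- product action, affine, diagonal, and almost simple --- and each maximal $H$ of this kind therefore fits into one of these families.

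To complete the argument, it remains to check that each candidate subgroup is genuinely maximal in $G$, i.e.\ proper and not properly contained in any other proper subgroup of $G$. For the intransitive and imprimitive types this is an elementary combinatorial check (no two partition stabilisers or wreath-product stabilisers for distinct parameters are nested unless one is $G$ itself). For the primitive types one needs to verify no two primitive subgroups of different O'Nan--Scott types are nested inside $G$; the passage from $S_n$ to $A_n$ by intersecting preserves maximality since $[S_n:A_n]=2$, so each listed subgroup either lies in $A_n$ or meets $A_n$ in a subgroup of index~$2$. The main obstacle is the primitive case, which rests entirely on the O'Nan--Scott theorem; since this is used here as a black box via \cite[\S4.6]{peterpermgps}, the proof reduces to bookkeeping about orbits and block systems together with the well-known small-degree exceptions inherent to the O'Nan--Scott classification.
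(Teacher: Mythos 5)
The first thing to note is that the paper does not prove this proposition at all: it is quoted directly from \cite[\S 4.6]{peterpermgps}, so there is no in-paper argument to compare against. Your outline of the forward direction (every maximal subgroup of $G$ appears on the list) is the standard argument underlying that citation: an intransitive maximal subgroup must be the stabiliser of a two-part partition with unequal parts, a transitive imprimitive one must be a full block-system stabiliser $(S_k\Wr S_{n/k})\cap G$, and the primitive case is delegated to the O'Nan--Scott theorem. One imprecision worth flagging: the O'Nan--Scott classification of primitive groups has more types than the four you name (twisted wreath, compound diagonal, holomorphs of compound type, and product actions whose component is of diagonal type). What you actually need is the ``maximal subgroups of $S_n$'' formulation --- every primitive subgroup not containing $A_n$ is \emph{contained in} one of the four listed subgroups --- which requires checking that the extra types embed in product-action wreath products or diagonal groups. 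Since that is precisely what \cite[\S 4.6]{peterpermgps} proves, invoking it as a black box is legitimate, but your phrasing conflates the two versions of the theorem.

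The genuine gap is in your final paragraph. The converse --- that each listed candidate really is maximal --- is not ``an elementary combinatorial check''; it is false without a substantial list of exceptions. A concrete example: $\AGL_1(7)\cap A_7$ is the Frobenius group $7{:}3$ of order $21$, which is the normaliser of a Sylow $7$-subgroup of $A_7$ and hence lies inside a copy of $\PSL_3(2)<A_7$; so the affine candidate with $(p,d)=(7,1)$ is not maximal in $A_7$. Determining exactly when the candidates are maximal is the content of the Liebeck--Praeger--Saxl classification of maximal subgroups of $A_n$ and $S_n$, which depends on the classification of finite simple groups for the almost simple inclusions, and similar exceptional containments occur for the product-action and diagonal candidates. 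Fortunately none of this is needed here: the proposition is only ever applied in the forward direction (in the proof of Theorem \ref{antont}, to constrain the shape of a maximal subgroup), so the correct fix is to delete the maximality verification rather than to assert that it is easy.
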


We now embark on the proof of Theorem \ref{antont}.
We first treat the case where the point stabiliser in $G$ is a subgroup of
intransitive type.

\begin{lemma}\label{lem:intrans}
  Let $G=A_n$ or $S_n$, where $n\ge 5$, in its natural action on $k$-sets 
  with $k<n/2$. Then
  $G$ is \tont\ if and only if either $k=1$, or $n=7$ and $k=2$.
\end{lemma}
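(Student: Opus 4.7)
The plan is to compute the subdegrees directly. Fix $\Delta_0=\{1,\ldots,k\}$ as the base $k$-set. For $G=S_n$, the point stabiliser $H=S_k\times S_{n-k}$ has orbits on the remaining $k$-sets indexed by $j=|\Delta\cap\Delta_0|\in\{0,1,\ldots,k-1\}$, of size
$$d_j=\binom{k}{j}\binom{n-k}{k-j}.$$
The case $k=1$ is 2-transitive (so $\frac{3}{2}$-transitive trivially), giving one direction of the ``if'' clause. So assume $k\geq 2$.

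I would first reduce the $G=A_n$ case to the $G=S_n$ case by showing that the $A_n$-subdegrees coincide with the $S_n$-subdegrees. The stabiliser in $H$ of a $k$-set $\Delta$ with $|\Delta\cap\Delta_0|=j$ is $(S_j\times S_{k-j})\times(S_{k-j}\times S_{n-2k+j})$. For any $k\geq 2$ and $0\leq j\leq k-1$ with $n>2k$, at least one of the integers $j,\,k-j,\,n-2k+j$ is at least $2$, so this stabiliser contains a transposition (an odd permutation of $S_n$); hence each $H$-orbit remains a single $(H\cap A_n)$-orbit. Thus $A_n$ and $S_n$ have the same subdegrees on $k$-sets, and it suffices to treat $G=S_n$.

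Now $S_n$ is $\frac{3}{2}$-transitive iff $d_0=d_1=\cdots=d_{k-1}$. A short calculation gives
$$\frac{d_{j+1}}{d_j}=\frac{(k-j)^2}{(j+1)(n-2k+j+1)},$$
so $\frac{3}{2}$-transitivity is equivalent to $(k-j)^2=(j+1)(n-2k+j+1)$ for every $j\in\{0,\ldots,k-2\}$. The $j=0$ equation yields $n=k^2+2k-1$. For $k=2$ this gives $n=7$, and one checks $d_0=d_1=10$, producing the announced exception. For $k\geq 3$ the $j=1$ equation gives $n=(k-1)(k+3)/2$; equating the two expressions for $n$ yields $(k+1)^2=0$, which has no solution in positive integers. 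Hence no examples arise with $k\geq 3$.

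The main (modest) obstacle is the reduction from $A_n$ to $S_n$: one must confirm that no $H$-orbit splits when we pass to $H\cap A_n$, so that $A_n$ cannot gain $\frac{3}{2}$-transitivity through coincidences of halved subdegrees. Once that is in place, the remaining work is elementary binomial arithmetic.
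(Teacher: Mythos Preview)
Your proof is correct and follows essentially the same approach as the paper: compute the subdegrees $d_j=\binom{k}{j}\binom{n-k}{k-j}$ and force consecutive ones to be equal. The only differences are cosmetic: the paper compares $d_{k-2}$ with $d_{k-1}$, obtaining the single Diophantine condition $(k-1)(n-k-1)=4$, whose solutions are eliminated by $k<n/2$ except $(k,n)=(2,7)$; you instead compare at $j=0$ and $j=1$ and reach the same conclusion via $(k+1)^2=0$. Your explicit verification that the $S_n$-suborbits do not split under $A_n$ is a point the paper leaves implicit, so your treatment is in fact slightly more careful there.
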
 

\begin{proof}
  Let $\Omega$ be the set of all $k$-sets of $\{1,\ldots,n\}$ and let
  $\alpha=\{1,\ldots,k\}\in\Omega$. If $k=1$ then $G$ is 2-transitive on $\Omega$ and
  hence \tont. Now assume that $k\ge 2$. Then $G_{\alpha}$
 has $k+1$ orbits on
  $\Omega$. These orbits are the sets $\Omega_i=\{\beta\in\Omega : |\beta \cap
  \alpha|=i\}$ for $0\le i \le k$. Since for each $i$, there are $\binom{k}{i}$
  subsets of $\alpha$ of size $i$, and $\binom{n-k}{k-i}$ subsets in the complement of
  $\alpha$ of size $k-i$, it follows that $$|\Omega_i|= \binom{k}{i}\binom{n-k}{k-i}$$ for
  each $i$.

  Now $|\Omega_{k-2}|/|\Omega_{k-1}|=(n-k-1)(k-1)/4$.  For $G$ to be \tont\ this fraction
  must be equal to $1$ and so $(n-k-1)(k-1)=4$. Thus $n-k-1=1,2$ or $4$ and $k-1=4,2,1$
  respectively. If $k=5$, then $n=7$, contradicting $k<n/2$. Also, if $k=3$ then $n=6$,
  another contradiction. Thus $k=2$ and $n=7$. Moreover, since $k=2$ it follows that
  $\Omega_{0}$ and $\Omega_{1}$ are the only orbits of $G_{\alpha}$ on
  $\Omega\setminus\{\alpha\}$ and so $G$ is \tont\ in this case.
\end{proof}

 Now we turn to the case where the point stabiliser is of imprimitive type.

\begin{lemma}\label{lem:imprim}
  Let $G=A_{k\ell}$ or $S_{k\ell}$, with $k,\ell\ge 2$ and $k\ell\geq 5$, act on the set of 
  partitions of a
  $k\ell$-set into $\ell$ parts of size $k$. Then $G$ is \tont\ if and 
  only if $(k,\ell)$
  is $(3,2)$. In this situation, $G$ is in fact $2$-transitive.
\end{lemma}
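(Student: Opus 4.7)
The plan is to show that, except when $(k,\ell)=(3,2)$, the stabiliser $H=G_\alpha = G\cap(S_k\Wr S_\ell)$ has two orbits on $\Omega\setminus\{\alpha\}$ of distinct sizes. Writing $\alpha=\{A_1,\ldots,A_\ell\}$ and $\beta=\{B_1,\ldots,B_\ell\}$, each $H$-orbit on $\Omega$ is classified by the intersection matrix $(|A_i\cap B_j|)_{i,j}$ up to row and column permutations, so the task reduces to counting partitions with prescribed intersection pattern relative to $\alpha$.

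I would first dispose of the case $\ell=2$, which forces $k\ge 3$ since $k\ell\ge 5$. Here the $H$-orbits on $\Omega\setminus\{\alpha\}$ are parameterised by $i=|B_1\cap A_1|$ with $1\le i\le k/2$, and a short count gives orbit size $\binom{k}{i}^2$ when $i<k/2$ and $\tfrac12\binom{k}{k/2}^2$ when $i=k/2$. For $k=3$ only $i=1$ occurs, giving a single nontrivial orbit of size $9$ on the $10$ partitions, so $G$ is $2$-transitive. For $k\ge 4$ the orbits with $i=1$ and $i=2$ have sizes $k^2$ and (respectively) $\binom{k}{2}^2$ for $k\ge 5$ or $18$ for $k=4$, which are unequal to $k^2$, so $G$ is not \tont.

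For $\ell\ge 3$ I would exhibit a ``$2$-swap'' orbit $\Gamma_1$, whose elements are the partitions obtained from $\alpha$ by exchanging one element between two blocks, and a ``$3$-cycle'' orbit $\Gamma_2$, obtained by cyclically moving one chosen element from each of three blocks into the next. A stabiliser computation in $S_k\Wr S_\ell$ gives $|\Gamma_1|=k^2\binom{\ell}{2}$ and $|\Gamma_2|=2k^3\binom{\ell}{3}$ when $k\ge 3$; when $k=2$ the common refinement of $\alpha$ and $\beta$ degenerates (all its blocks are singletons), and a separate direct count yields $|\Gamma_1|=\ell(\ell-1)$ and $|\Gamma_2|=8\binom{\ell}{3}$. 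The ratio $|\Gamma_2|/|\Gamma_1|$ is therefore $2k(\ell-2)/3$ or $4(\ell-2)/3$ respectively, and in each case its numerator is even while the denominator is odd, so the ratio cannot equal $1$ for any integers $k\ge 2$, $\ell\ge 3$.

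To transfer the argument to $G=A_n$, I would use that each $(S_k\Wr S_\ell)$-orbit on $\Omega$ either remains a single $\bigl((S_k\Wr S_\ell)\cap A_n\bigr)$-orbit or splits into two equal halves, the latter occurring exactly when $\mathrm{Stab}_{S_k\Wr S_\ell}(\beta)\le A_n$. For both $\Gamma_1$ and $\Gamma_2$, this stabiliser contains an odd permutation: an $S_{k-1}$-factor on a sub-block of size $k-1$ when $k\ge 3$, or an $S_2$-factor on an untouched block of $\alpha$ when $k=2$, except in the single case $(k,\ell,\Gamma)=(2,3,\Gamma_2)$, which is dispatched by checking that the stabiliser is then isomorphic to the dihedral group of order $6$ and has three odd reflections. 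Hence neither orbit splits in $A_n$, the $S_n$-subdegrees carry over, and the inequality $|\Gamma_1|\ne|\Gamma_2|$ also shows that $G=A_n$ is not \tont. The main delicacy is the $k=2$ bookkeeping, where the sub-block collapse forces both the separate enumerations and the ad hoc dihedral argument just mentioned.
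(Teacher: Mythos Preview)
Your argument is correct and uses the same overall strategy as the paper --- exhibit two $H$-orbits of different sizes --- but the execution differs enough to be worth comparing.

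Your case split is on $\ell$ ($\ell=2$ then $\ell\ge 3$), whereas the paper splits on $k$ ($k\ge 3$ versus $k=2$). In the main case the paper compares an $\ell$-cycle orbit $\mathcal P_2$ of size $k^\ell(\ell-1)!$ against a two-element-swap orbit $\mathcal P_3$, and must then check $\ell\in\{2,3\}$ by hand; your pair $(\Gamma_1,\Gamma_2)$ yields the ratio $2k(\ell-2)/3$, which a parity observation kills uniformly for all $\ell\ge 3$, so your route is a little cleaner there. For $k=2$ the two proofs coincide: your $\Gamma_1,\Gamma_2$ are exactly the paper's $\mathcal P_4,\mathcal P_5$. (Incidentally your count $|\Gamma_1|=\ell(\ell-1)$ is correct; the stabiliser on $\{1,2,3,4\}$ is the Klein four-group, not $D_8$ as the paper states, though the paper's conclusion is unaffected.) You are also more explicit than the paper about why the $S_n$-suborbits do not split on restriction to $A_n$; the paper writes stabilisers as ``$(\cdots)\cap G$'' and leaves this implicit.

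One small gap: your $A_n$-transfer paragraph only treats $\Gamma_1$ and $\Gamma_2$, so the $\ell=2$ case is formally established only for $G=S_n$. The same odd-permutation check applies there (the stabiliser of a $\beta$ with $|A_1\cap B_1|=i$ contains a copy of $S_{k-1}$ acting on $A_1\cap B_1$ or $A_1\cap B_2$), so this is routine to patch.
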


\begin{proof}
  Let $\Omega=\{1,\ldots,k\ell\}$ and $\mathcal{P}_1=
  \{\Delta_1,\ldots,\Delta_\ell\}$ be the
  partition of $\Omega$ with $\Delta_i=\{(i-1)k+1,\ldots,ik\}$. Then
$$H:=G_{\mathcal{P}_1}=\left(\left(\Sym(\Delta_1)\times
    \Sym(\Delta_2)\times\cdots\times\Sym(\Delta_\ell)\right)\sdp
  S_\ell\right)\cap G\cong (S_k\Wr S_{\ell})\cap G.$$

Suppose first that $k\ge 3$ and let $\mathcal{P}_2=\{\Lambda_i\}$ where 
for $i=1,\ldots,
\ell-1$, $\Lambda_i=\{(i-1)k+2,\ldots,ik+1\}$ and
$\Lambda_{\ell}=\{1,(\ell-1)k+2,\ldots,k\ell\}$.  Then $H\cap G_{\mathcal{P}_2}\cong
(S_{k-1}\Wr C_\ell)\cap G$. Hence $|\mathcal{P}_2^H| = 
k^{\ell}(\ell-1)!$.  Next let
$\mathcal{P}_3=\{\{3,4,\ldots,k+2\},\{1,2,k+3,\ldots,2k\},\Delta_3,\ldots,
\Delta_{\ell}\}$.  Thus $H\cap G_{\mathcal{P}_3}$ is isomorphic to
$$(J \times(S_k\wr S_{\ell-2}))\cap G,$$
where $J$ is the stabiliser in $S_k\Wr S_2$ of the partition
$$\{1,2\},\{k+1,k+2\},\{3,\ldots,k\},\{k+3,\ldots, 2k\}$$ of $\{1,2,\ldots,2k\}$. If
$k-2=2$, then $J\cong S_2\wr D_8$, otherwise, $J\cong (S_2\times S_{k-2})\wr S_2$.  Hence
$|\mathcal{P}_3^H|= 9\ell(\ell-1)/2$ if $k=4$ or $k^2(k-1)^2\ell(\ell-1)/8$ otherwise.  One
can check that for $\ell\ge 4$, this is less than $|\mathcal{P}_2^H|$ and so $G$ is not
\tont\ in these cases. For $\ell=2,3$, equality only holds for $\ell=2$ and $k=3$.  In
this case $G$ is 2-transitive.

Suppose now that $k=2$ and note that $\ell\geq 3$. Let
$\mathcal{P}_4=\{\{1,4\},\{2,3\},\Delta_3,\ldots,\Delta_{\ell}\}$. Then $H\cap
G_{\mathcal{P}_4}=(D_8 \times S_2\Wr S_{\ell-2}) \cap G$ and so
$|\mathcal{P}_4^H|=\ell(\ell-1)/2$.  Now let
$\mathcal{P}_5=\{\{2,3\},\{4,5\},\{6,1\},\Delta_4,\ldots,\Delta_{\ell}\}$.  Then $H\cap
G_{\mathcal{P}_5}=(S_3 \times (S_2\Wr S_{\ell-3}))\cap G$.  Hence
$|\mathcal{P}_5^H|=4\ell(\ell-1)(\ell-2)/3$ and so $G$ is not \tont.
\end{proof}

\vspace{3mm}
Given $g\in S_n$, we define the \emph{support of $g$} as $\supp(g)=\{i\in\{1,\ldots,n\}\mid 
i^g\neq i\}$. The \textit{minimal degree} of a permutation group $H$ is 
the minimal size of the support of a nontrivial
element of $H$. 

\begin{lemma}\label{lem:support}
  Let $a,b\in S_n$. Then $|\supp([a,b])|\le 2|\supp(a)|$.
\end{lemma}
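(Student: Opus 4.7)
The plan is to show that every point outside $\supp(a)\cup\supp(a)^b$ is fixed by $[a,b]$, where $\supp(a)^b=\{x^b:x\in\supp(a)\}$; the bound then follows since $|\supp(a)^b|=|\supp(a)|$.

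More concretely, I would expand $[a,b]=a^{-1}b^{-1}ab$ and evaluate its action on a point $i$ one factor at a time. Suppose $i\notin \supp(a)$ and $i\notin \supp(a)^b$. The first condition gives $i^{a^{-1}}=i$, so $i^{[a,b]}=i^{b^{-1}ab}$. The second condition says $i^{b^{-1}}\notin\supp(a)$, hence $(i^{b^{-1}})^{a}=i^{b^{-1}}$, and therefore $i^{[a,b]}=(i^{b^{-1}})^{b}=i$. This shows the contrapositive: $\supp([a,b])\subseteq \supp(a)\cup\supp(a)^b$.

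Taking cardinalities and using the obvious fact that conjugation is a bijection on $\{1,\dots,n\}$, so $|\supp(a)^b|=|\supp(a)|$, yields
\[
|\supp([a,b])|\le |\supp(a)|+|\supp(a)^b|=2|\supp(a)|,
\]
as required. There is no real obstacle here; the only thing to be careful about is the order of composition when translating $[a,b]=a^{-1}b^{-1}ab$ into pointwise action, which is settled by the two-line calculation above. (An alternative, essentially equivalent, formulation is to observe that $[a,b]=a^{-1}\cdot a^{b}$ where $a^{b}=b^{-1}ab$ has support $\supp(a)^b$, whence any point fixed by both $a$ and $a^b$ is fixed by $[a,b]$.)
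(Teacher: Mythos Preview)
Your proof is correct and takes essentially the same approach as the paper: both show that if neither $i$ nor $i^{b^{-1}}$ lies in $\supp(a)$ then $[a,b]$ fixes $i$, via the same one-line computation $i^{a^{-1}b^{-1}ab}=i^{b^{-1}ab}=i^{b^{-1}b}=i$. Your formulation in terms of $\supp(a)\cup\supp(a)^b$ is just a repackaging of the paper's condition.
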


\begin{proof}
  Let $i\in\{1,\ldots,n\}$. If neither $i$ nor $i^{b^{-1}}$ belong to 
  $\supp(a)$, then
  $i^{[a,b]}=i^{b^{-1}ab}=i^{b^{-1}b}=i$, that is, $[a,b]$ fixes $i$. The result follows.
\end{proof}

\begin{lemma}\label{lem:supportcomp}
 Let $G=A_n$ or $S_n$ and let $H$ be a primitive subgroup of $G$ not 
 containing $A_n$.
\begin{enumerate}
\item[(i)] If $n\ge 26$, then the minimal degree of $H$ is at least $11$.
\item[(ii)] If $n<26$, then the transitive action of $G$ on the right
  cosets of $H$ is either $2$-transitive or not \tont.
\end{enumerate}  
\end{lemma}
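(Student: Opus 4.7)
The plan is to treat (i) and (ii) by complementary means: a structural argument via the O'Nan--Scott theorem for (i), and an exhaustive check against the known list of primitive groups of degree at most $25$ for (ii).

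For (i), let $H \leq S_n$ be primitive with $H \not\supseteq A_n$ and $n \geq 26$. By the O'Nan--Scott classification, $H$ is of affine, almost simple, diagonal, or product-action type. In the affine case $n = p^d$, any non-identity element $x \mapsto Ax + b$ of $\AGL_d(p)$ has fixed-point set a (possibly empty) coset of $\ker(I - A)$, so its support has size at least $p^d - p^{d-1} = p^{d-1}(p-1) \geq 11$ whenever $p^d \geq 26$. In the diagonal case with socle $T^k$ ($k \geq 2$), the degree $|T|^{k-1}$ is at least $60$, and a direct computation using the action formulas shows that the minimal degree is at least $|T| - \max_{1 \neq t \in T} |C_T(t)|$, which is comfortably larger than $11$. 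In the product-action case $n = m^k$ with primitive component $K$ of degree $m$ and minimal degree $d$, any non-identity element moving the first coordinate fixes at most $(m-d) m^{k-1}$ points, so $H$ has minimal degree at least $d \cdot m^{k-1}$; this is at least $11$ for all relevant $m,k$ with $n \geq 26$. The delicate case is almost simple type, where one appeals to the classification of primitive groups with minimal degree at most $10$ (a line of results going back to Jordan and refined by Manning, Wielandt, and Liebeck--Saxl), which shows that any such group not containing $A_n$ satisfies $n \leq 25$.

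For (ii), the list of primitive subgroups $H$ of $G = A_n$ or $S_n$ not containing $A_n$, with $5 \leq n \leq 25$, is finite and classical; it can be extracted from tables in Dixon--Mortimer, the Atlas, or GAP's primitive group library. For each such pair $(G, H)$, compute the subdegrees of the coset action of $G$ on $G/H$, equivalently the sizes of the $(H,H)$-double cosets. Then verify by inspection that the action is either $2$-transitive (a single nontrivial subdegree) or fails to be \tont\ (nontrivial subdegrees of unequal sizes). Lemma \ref{lem:support} can be used to streamline case exclusions by giving quick bounds on the supports of commutators in $H$.

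The principal obstacle is (i), namely assembling the complete classification of primitive groups of minimal degree at most $10$. In the almost simple stratum one must dispose of the Mathieu groups (for instance $M_{24}$ has minimal degree $8$, but only arises for $n = 24$), classical and exceptional groups of Lie type in small natural or subset actions, and a handful of sporadic exceptions, all of which are known to have degree at most $25$ whenever the minimal degree is at most $10$. Once this classification is invoked, (i) is immediate, and (ii) reduces to a tractable bookkeeping exercise best executed with the help of \textsc{GAP}.
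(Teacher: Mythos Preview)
For part~(ii) your approach coincides with the paper's: both reduce to a finite check against the primitive groups of degree at most $25$, carried out in \textsc{GAP}.

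For part~(i), however, you take a considerably heavier route than the paper. The paper simply cites the classical results on minimal degrees of primitive groups collected in \cite[\S15]{Wielandt}; these go back to Jordan, Bochert and Manning, are entirely elementary and predate CFSG, and already yield directly that a primitive group of degree $n\ge 26$ not containing $A_n$ has minimal degree at least~$11$. Your detour through the O'Nan--Scott theorem invokes CFSG to split into affine, diagonal, product-action and almost simple types, handles the first three by ad hoc estimates, and then in the almost simple case \emph{still} appeals to essentially the same classical classification you could have quoted at the outset. The case analysis therefore buys nothing: the classical bound applies uniformly, with no need to know the socle structure.

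There are also some loose ends in your case analysis that would need tightening. Your product-action bound $d\cdot m^{k-1}$ treats only base-group elements and says nothing about elements with nontrivial top-group part; your diagonal bound likewise covers only socle elements; and $M_{24}$ has minimal degree~$16$, not~$8$ (its involutions have cycle type $1^82^8$ or $2^{12}$). None of these is fatal, but all become moot once you observe that the classical references already deliver~(i) in one line. Finally, Lemma~\ref{lem:support} plays no role here; it is used in the next lemma (Lemma~\ref{lem:Anprim}) to control commutators, not in the small-degree verification.
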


\begin{proof} Part (i) follows from classical results on primitive groups
with small minimal degree (see \cite[\S 15]{Wielandt}). Part (ii) was verified
using the primitive groups library in \textsc{Gap} \cite{gap}. \end{proof}

\vspace{3mm}
The remaining case in the proof of Theorem \ref{antont} is that in 
which a point stabiliser in $G$ is primitive in the natural action of degree $n$.

\begin{lemma}\label{lem:Anprim}
  Let $G=A_n$ or $S_n$ and let $H$ be a primitive subgroup of $G$ such that $H$ does not 
  contain $A_n$ and $|H|$ is even. If the transitive action of $G$ on the set of right
  cosets of $H$ is \tont, then it is $2$-transitive.
\end{lemma}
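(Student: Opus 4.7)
\emph{Proof proposal.} My plan is to dispose of $n<26$ by invoking Lemma \ref{lem:supportcomp}(ii), which directly records that the coset action of $G$ on $G/H$ is either $2$-transitive or not \tont, yielding the conclusion. So assume $n \geq 26$; then by Lemma \ref{lem:supportcomp}(i), every non-identity element of $H$ has support of size at least $11$. Since supports of involutions are even and $|H|$ is even by hypothesis, there is an involution $x \in H$ whose support has size $2k \geq 12$, so $x$ consists of $k \geq 6$ transpositions and $n-2k$ fixed points.

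The heart of the proof is to produce a subdegree of $G$ divisible by $2$ and then rule out \tont\ behaviour via Lemma \ref{lem:pnorml}(i), which applies provided $|G:H|$ is also even. For the subdegree, I would apply Lemma \ref{lem:mvconjclass}(ii) with $x$ and $p = 2$, verifying the inequality
\[
|x^G| \;>\; |x^G \cap H|^2 - |x^G \cap H| + 1.
\]
Here $|x^G| = n!/(2^k k!(n-2k)!)$, while $|x^G \cap H|$ counts involutions in $H$ of the same cycle type as $x$. To bound the right-hand side I would combine size bounds on primitive groups not containing $A_n$ (e.g.\ Praeger--Saxl type bounds, $|H| \leq 4^n$, or the tighter Mar\'oti estimate) with Lemma \ref{lem:support}, which forces $[x,h]$ to have support at most $4k$; any $h \in H$ for which $[x,h]$ has support less than $11$ must in fact centralise $x$, constraining $C_H(x)$ and hence $|x^H|$, and thereby limiting how many $H$-classes of involutions of the given cycle type can fuse in $G$.

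The main obstacle is making this count of $|x^G \cap H|$ sharp enough: the naive bound $|x^G \cap H| \leq |H|$ together with $|H| \leq 4^n$ is too weak, since $|x^G|$ is only polynomially large when $k$ is bounded below by a constant like $6$. A secondary difficulty is the residual case $|G:H|$ odd, where an even subdegree does not directly contradict \tont\ behaviour through Lemma \ref{lem:pnorml}(i); in this case $H$ must contain a Sylow $2$-subgroup of $G = A_n$ or $S_n$, a very restrictive condition for a primitive subgroup not containing $A_n$, and the plan would be to use Proposition \ref{onanscott} to enumerate the short list of candidate structures for $H$ and eliminate each one individually.
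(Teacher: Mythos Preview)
Your approach has a genuine gap precisely where you flag the ``main obstacle'': establishing $|x^G| > |x^G \cap H|^2 - |x^G \cap H| + 1$ for an involution $x \in H$. With $k$ transpositions, $|x^G|$ is of order $n^{2k}$, so you would need $|x^G \cap H| \lesssim n^k$; but the commutator-support idea you sketch only constrains $C_H(x)$ (hence $|x^H|$), not the number of distinct $H$-classes of the given cycle type that lie inside $x^G$, and it is this number that governs $|x^G \cap H|$. Nothing in your plan bounds it, and crude size bounds on $|H|$ are far too weak when $k$ is only known to be at least $6$. The parity case $|G:H|$ odd is then a further unresolved obstruction.

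The paper's argument avoids all counting by using Lemma~\ref{lem:support} in the opposite direction. It picks elements of \emph{small} support $x = (1\,2\,3\,4\,5)$ and $y = (1\,2)(3\,4)$, both lying \emph{outside} $H$ since the minimal degree of $H$ is at least $11$. For $g \in H \cap H^x$ one has $g, g^{x^{-1}} \in H$, whence $[g,x^{-1}] \in H$; but $|\supp([g,x^{-1}])| \le 2|\supp(x)| = 10 < 11$ forces $[g,x]=1$. Thus $H \cap H^x = C_H(x)$, and similarly $H \cap H^y = C_H(y)$. Now $|C_H(x)|/|H_{12345}| \in \{1,5\}$ is odd, while after conjugating $H$ so that some involution $z \in H$ (which necessarily has at least six transpositions) has $(1\,2)$ and $(3\,4)$ among its cycles, one gets $z \in C_H(y) \setminus H_{1234}$, so $|C_H(y):H_{1234}| \in \{2,4,8\}$ and $|C_H(y)|/|H_{12345}| = |C_H(y):H_{1234}|\cdot|5^{H_{1234}}|$ is even. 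Hence the two subdegrees $|H:C_H(x)|$ and $|H:C_H(y)|$ differ, and the action is not \tont\ --- with no reference to the parity of $|G:H|$ at all.
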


\begin{proof}
    By  Lemma \ref{lem:supportcomp}, we can assume that $n\geq
    26$ and the minimal degree of $H$ is at least 11. Let
    $x=(1\,2\,3\,4\,5)\in A_n$. Since $H$ is primitive, and the minimal degree is at 
least 11, we have $x\notin H$. Let 
    $g\in H\cap H^x$. Then
    $g,g^{x^{-1}}\in H$. However, by Lemma \ref{lem:support}, 
    $[g,x]=1$, that is, $x$
    centralises $g$. Hence $H\cap H^x=C_H(x)$.

  Next let $y=(1\,2)(3\,4)\in A_n$ and let $g\in H\cap H^y$. Arguing as in the previous paragraph, $H\cap
  H^y=C_H(y)$.  Now $|C_H(x)|=|H_{12345}|
  |C_H(x):H_{12345}|$ and
  $|C_H(x):H_{12345}|$ is $1$ or $5$. Furthermore, the size of 
  $C_H(y)$ is $|C_H(y):H_{1234}||5^{H_{1234}}||H_{12345}|$.  Since
  $|H|$ is even, it contains an element $z$ of order two with at least six 2-cycles (for the minimal degree is at least 11). Replacing $H$ by a conjugate if necessary, we may assume that $(1\,2)$ and $(3\,4)$ are 2-cycles of $z$, so $z\in C_H(y)$. Hence $|C_H(y):H_{1234}|\in\{2,4,8\}$. Thus $|C_H(y)|\neq |C_H(x)|$ and
  so $|H:H\cap H^x|\neq |H:H\cap H^y|$.
\end{proof}

\begin{lemma}\label{lem:AGL}
  Let $p\ge 5$ be an odd prime and let $H=\AGL_1(p)\cap A_p$. Then the action of $A_p$ on
  the set of right cosets of $H$ in $A_p$ is \tont\ if and only if $p=5$ and in that case the action is $2$-transitive.
\end{lemma}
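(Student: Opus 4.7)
The plan is to handle $p=5$ by a direct exceptional-isomorphism argument, and for $p\geq 7$ to exhibit two distinct non-trivial subdegrees (so the action cannot be \tont), with small primes covered by Lemma~\ref{lem:supportcomp}(ii) and larger primes by a direct argument.

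For $p=5$, the degree is $[A_5:H]=6$; the exceptional isomorphism $A_5\cong\PSL_2(5)$ identifies this with the natural $2$-transitive action of $\PSL_2(5)$ on the projective line over $\F_5$ (with $H$ corresponding to the Borel subgroup), which is $2$-transitive and hence \tont. For $p\in\{7,11\}$, the subgroup $H$ is a primitive subgroup of $A_p$ not containing $A_p$ (it contains the $p$-cycle $x\mapsto x+1$ and $|H|<|A_p|$), and since $n=p<26$, Lemma~\ref{lem:supportcomp}(ii) gives that the action is either $2$-transitive or not \tont; a quick comparison $|A_p|<(p-2)!((p-2)!-1)$ for $p\geq 7$ rules out $2$-transitivity.

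For $p\geq 13$ I would exhibit two subdegrees of different sizes. For the first (of size $p$), let $C_0=\langle b\rangle\leq H$ where $b\colon x\mapsto \omega^2 x$ for a primitive root $\omega$ of $\F_p^*$. Define $g\in\Sym(\F_p)$ by $g(0)=0$, $g(x)=\omega^2 x$ for quadratic residues and $g(x)=\omega^{-2}x$ for non-residues. A short verification shows that $g$ is a product of two disjoint $(p-1)/2$-cycles (so $g\in A_p$), centralises $b$ (a case-check on residues and non-residues), and is not affine for $p\geq 7$ (since $\omega^2\neq\omega^{-2}$). Hence $g\in A_p\setminus H$ and $C_0\leq H\cap H^g$. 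Since $g\notin H$, the Sylow $p$-subgroup of $H$, being the unique one and normal in $H$, does not lie in $H^g$, so $p$ does not divide $|H\cap H^g|$; combined with $C_0\leq H\cap H^g$ this forces $|H\cap H^g|=(p-1)/2$, giving a suborbit of size $p$.

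For the second subdegree, take $x=(0\,1\,2\,3\,4)\in A_p$. The minimal degree of $H$ is $p-1$ (every non-identity element of $H$ is affine with at most one fixed point), and $2|\supp(x)|=10<p-1$ for $p\geq 13$, so by the argument of Lemma~\ref{lem:Anprim} (via Lemma~\ref{lem:support}) we have $H\cap H^x=C_H(x)$. I would then show $C_H(x)=\{1\}$: any $h\in C_H(x)$ stabilises the orbit $\{0,1,2,3,4\}$ of $\langle x\rangle$ and restricts there to an element of $C_{S_5}(x|_{\{0,\ldots,4\}})=\langle x|_{\{0,\ldots,4\}}\rangle$. Since $h\in H$ is affine, the values $h(0),\ldots,h(4)$ form an arithmetic progression in $\F_p$ with underlying set $\{0,1,2,3,4\}$. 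Comparing pairwise differences reduces the problem to determining the multiplicative stabiliser of $\{\pm 1,\pm 2,\pm 3,\pm 4\}\subset\F_p^*$; a finite check shows this stabiliser is $\{\pm 1\}$ for $p\geq 11$. Therefore $(\alpha,\beta)\in\{(1,0),(-1,4)\}$; the first gives $h=1$, and the second gives $h(y)=4-y$, whose restriction to $\{0,\ldots,4\}$ is $(0\,4)(1\,3)$, not a power of the $5$-cycle $x|_{\{0,\ldots,4\}}$. Hence $h=1$, and the suborbit has size $|H|=p(p-1)/2$. Since $p\neq p(p-1)/2$ for $p\geq 5$, the two exhibited subdegrees are distinct, so the action is not \tont\ for $p\geq 13$. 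The main obstacle is establishing $C_H(x)=\{1\}$: reducing the condition to the multiplicative stabiliser of $\{\pm 1,\pm 2,\pm 3,\pm 4\}$ in $\F_p^*$ is the key observation, after which the remaining verification is routine.
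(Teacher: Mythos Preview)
Your proof is correct, but it takes a somewhat different route from the paper's.

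For the subdegree of size $p$, the paper also works with the subgroup $C_0=\langle b\rangle$ of order $(p-1)/2$, but instead of constructing an explicit element centralising $b$, it simply observes that for $(p-1)/2\geq 5$ there exists $y\in A_p$ normalising but not centralising $\langle b\rangle$; since $\langle b\rangle$ is maximal in $H$ and $H$ is self-normalising in $A_p$, this forces $H\cap H^y=\langle b\rangle$. Your explicit piecewise construction achieves the same end.

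For the second subdegree, the paper uses a $3$-cycle $x=(1\,2\,3)$ rather than a $5$-cycle. The support bound then gives $H\cap H^x=C_H(x)$ already for $p\geq 11$, and since the pointwise stabiliser $H_{1,2,3}$ is trivial (affine maps fixing two points are the identity) one gets $|C_H(x)|\in\{1,3\}$ immediately, yielding a subdegree $p(p-1)/2$ or $p(p-1)/6$. This sidesteps your arithmetic-progression/multiplicative-stabiliser computation entirely, and also absorbs $p=11$ into the general case, so only $p=7$ needs a separate one-line check. Your approach via the $5$-cycle gives the sharper conclusion $C_H(x)=1$ (hence the exact subdegree $p(p-1)/2$) at the cost of a longer verification and the need to treat $p=11$ separately via Lemma~\ref{lem:supportcomp}(ii). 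Both arguments are valid; the paper's is shorter, yours is more explicit.
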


\begin{proof}
  If $p=5$ then the action is the 2-transitive action of $A_5$ of degree 6.  If $p=7$
  a quick calculation shows that there are suborbits of lengths 7 and 21. Hence we may
  assume that $p\ge 11$.  Let $x=(1\,2\,3)\in A_p$. Since $H$ has no nontrivial elements of
  support at most 6 it follows from Lemma \ref{lem:support} that $H\cap H^x=C_H(x)$. Moreover,
  $|C_H(x)|=|C_H(x):H_{1,2,3}||H_{1,2,3}|$. Since no nontrivial element of $H$
  fixes more than one point it follows that $|C_H(x)|=1$ or 3. Thus $H$ has an orbit
  of length $p(p-1)/2$ or $p(p-1)/6$ on $\Omega$.  Let $g\in H$ have order $(p-1)/2$ such that $h$ has  two cycles of length $(p-1)/2$. Since $(p-1)/2\geq 5$, there exists $y\in A_p$ which normalises, 
but does not centralise, $\la g\ra$. As $H$ is self-normalising in $A_p$ and $\la g\ra$ is maximal in 
$H$ it follows that $H\cap H^y=\la g\ra$. Thus $H$ has an orbit of length $p$ on the set of right 
cosets of $H$ and so $A_p$ is not \tont.
\end{proof}

We note that an alternative approach to the case where $H$ acts primitively on $\{1,\ldots,n\}$ would be to use the result of \cite{BGS} that the action has a regular suborbit when $n>12$ and hence is not \tont.

\medskip
\emph{Proof of Theorem \ref{antont}}:
  Let $G=A_n$ or $S_n$ be \tont\ but not 2-transitive on the set of cosets of a 
  maximal subgroup $H$. Then $H$
  is primitive, imprimitive or intransitive of degree $n$. If $H$ is primitive, then Lemma
  \ref{lem:Anprim} implies that $|H|$ is odd. 
  By Proposition \ref{onanscott} this forces $n=p$, $G=A_p$ and 
  $H=\AGL_1(p)\cap A_p$ for
  some odd prime $p$. By Lemma \ref{lem:AGL}, this action is not \tont, unless $p=5$ and in that case it is 2-transitive.  Lemma  \ref{lem:imprim} shows that $H$ is not imprimitive. Hence $H$ is 
  intransitive,  and Lemma \ref{lem:intrans} implies that $G$ is $A_7$ or $S_7$  acting on 21 points.

It remains to handle the extra possibilities for $G$ when $n=6$. 
Apart from $A_6$ and $S_6$, the groups with socle $A_6$ are $M_{10}$, 
$\PGL_2(9)$ and ${\rm Aut}(A_6)$. These are easily checked using the Atlas \cite{atlas}. \qed

%
%

\section{Classical Groups: Preliminaries}
\label{sec:prelim}

In this section and the next, we prove

\begin{theorem}\label{thm:classgroups}
  Let $G$ be a finite almost simple primitive permutation group on $\Omega$ with 
  socle a classical 
  simple group $L$ of characteristic $p$.  Let $H$ be the stabiliser in $G$ of 
  a point in $\Omega$.
\begin{enumerate}
\item[(A)] If $G$ has no subdegrees divisible by $p$, then either $|H|$
  is not divisible by $p$ or one of the following holds:
\begin{enumerate}
\item[(i)] $L=\PSL_2(q)$, $q$ is even, $H=N_G(D_{2(q+1)})$ and either 
$|G:L|$ is odd or $q=4$.
\item[(ii)]  $G=\Sp_{2d}(2)$ and $H=O^{\pm}_{2d}(2)$ with $d\geq 3$.
\item[(iii)] $L$ and $H$ are given in Table \ref{tab:classgp}.
\end{enumerate}

\begin{table}[ht]
\caption{}\label{tab:classgp}
 \begin{tabular}{cccll}
\hline
  $L$  &$|\Omega|$ & $H$ & \emph{Conditions} & \emph{Subdegrees} \\
\hline
 $\Sp_4(2)'$ &$6$ & $N_G(A_5)$ & &$1, 5$ \\
 $\Sp_4(2)'$ &$10$  & $N_G(C_3^2)$ & & $1, 9$\\
 $\PSU_3(5)$ & $50$ & $N_G(A_7)$ & & $1, 7, 42$ \\
 $\PSL_2(9)$ & $6$ & $N_G(A_5)$ & & $1, 5$\\
 $\PSL_4(2)$ & $8$ & $N_G(A_7)$ & & $1, 7$ \\
 $\PSp_4(3)$ & $27$ & $N_G(2^4.A_5)$ & & $1, 10, 16$\\
 $\PSL_3(2)$ & $8$ & $C_7\rtimes C_6$ & $G=L.2$ & $1,7$ \\
\hline
\end{tabular}

\end{table}

\vspace{2mm}
\item[(B)] If $G$ is \tont, and not $2$-transitive, then \emph{(A)(i)} holds with $q=2^f\geq 8$ and either $G=L$ or $|G:L|=f$ is prime.
\end{enumerate}
\end{theorem}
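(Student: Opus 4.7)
The overall strategy for part (A) is a case analysis via Aschbacher's theorem: $H\cap L$ lies in one of the geometric classes $\mathcal{C}_1,\dots,\mathcal{C}_8$ or in the class $\mathcal{S}$ of almost simple irreducibles. Under the hypothesis of~(A) (that $p\mid|H|$ but no subdegree of $G$ is divisible by~$p$), I would first dispense with parabolic stabilisers (the totally-singular-subspace stabilisers in $\mathcal{C}_1$): the unipotent radical of such an $H$ is a nontrivial normal $p$-subgroup, so Lemma~\ref{lem:pnorml}(iv) produces a subdegree divisible by~$p$, a contradiction. Combined with Lemma~\ref{lem:Tits}, this also forces $p\mid|\Omega|$ in the remaining cases.

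For non-parabolic geometric stabilisers in odd characteristic, the main engine is Lemma~\ref{lem:23/4}. Each such $H$ has a natural normal subgroup $H_0$ which is a central product of classical quasisimple groups in characteristic~$p$ --- the smaller classicals in $\mathcal{C}_2,\mathcal{C}_4,\mathcal{C}_7$, the extension-field classical in $\mathcal{C}_3$, the subfield classical in $\mathcal{C}_5$, and the natural classical subgroup in $\mathcal{C}_8$ --- and this $H_0$ is weakly closed in $G$ by Aschbacher's theorem. Provided no factor of $H_0$ is of the excluded types $\PSL_3(q)$, $\PSL_d(q)$ with $d\ge 5$, or $E_6(q)$, and provided $|G:H|$ is even, Lemma~\ref{lem:23/4} delivers a subdegree divisible by~$p$. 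The parity of $|G:H|$ can be read off from the standard classical index formulae; the only robust situation in which it is odd is $\Sp_{2d}(2)$ with orthogonal stabiliser, which is $2$-transitive and produces (A)(ii). Four residual situations then need separate handling: (a) $p=2$, where Lemma~\ref{lem:23/4} is unavailable and I would instead apply the Sylow-movement bounds of Lemma~\ref{lem:mvSylp}/\ref{lem:mvconjclass} with a well-chosen $2$-element $x\in H$; (b) the excluded Lie-type factors, where I would invoke Lemma~\ref{lem:weaklyclosed} directly with a smaller weakly-closed subgroup such as a Sylow $p$-subgroup of $H_0$; (c) classes $\mathcal{C}_6$ and $\mathcal{S}$, where $H_0$ need not be of Lie type in characteristic~$p$ and the argument relies on explicit order comparisons and the known low-rank subgroup lists; and (d) the small-dimension exceptional isomorphisms that produce the genuine counterexamples in Table~\ref{tab:classgp}, verified computationally via the Atlas and GAP.

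For part (B), if $p\mid|\Omega|$ then Lemma~\ref{lem:pnorml}(i) forces the common \tont\ subdegree to be coprime to~$p$, and part (A) applies directly. If $p\nmid|\Omega|$ then $H$ contains a Sylow $p$-subgroup of $L$, so by Tits' Lemma $H\cap L$ is parabolic, and the known subdegree sequences of classical parabolic actions always contain at least two distinct nontrivial lengths whenever the action is not $2$-transitive, contradicting~\tont. Thus (A) applies; applying Lemma~\ref{lem:pnorml}(i) with each prime divisor of $|\Omega|$ to the exceptional lists in (A)(ii)--(iii) eliminates every case except the $\PSL_2(q)$ family of (A)(i). For that remaining family, a direct computation using the natural $3$-transitive action of $G$ on the projective line --- carried out in Lemma~\ref{lem:L2} --- shows that the \tont\ property forces $q=2^f\ge 8$ with either $G=L$ or $|G:L|=f$ prime. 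The main obstacle throughout will be part~(A): the sheer volume of case analysis across Aschbacher classes, the profusion of small-rank exceptions driven by sporadic isomorphisms, and the extra care required for class~$\mathcal{S}$ and for characteristic~$2$ where the uniform Lemma~\ref{lem:23/4} argument is not available.
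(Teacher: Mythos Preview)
Your overall architecture---Aschbacher case analysis with parabolics dispatched by Lemma~\ref{lem:pnorml}(iv)---matches the paper. But your identification of Lemma~\ref{lem:23/4} as the ``main engine'' for the non-parabolic geometric classes is a misreading of its scope, and this is where your plan would stall. Lemma~\ref{lem:23/4} excludes factors of type $\PSL_3(q)$ and $\PSL_d(q)$ with $d\ge 5$; in practice this rules it out for the bulk of $\mathcal{C}_2$, $\mathcal{C}_3$, $\mathcal{C}_4$, $\mathcal{C}_5$, $\mathcal{C}_7$, $\mathcal{C}_8$ whenever a linear factor of the wrong dimension appears, which is most of the time. The paper uses Lemma~\ref{lem:23/4} only in a handful of places (e.g.\ orthogonal or symplectic subgroups of unitary groups in~$\mathcal{C}_5$). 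Your fallback in~(b)---taking a Sylow $p$-subgroup of $H_0$ as a weakly closed subgroup---does not work: Sylow $p$-subgroups are not weakly closed in~$G$.

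The paper's actual engines are two other mechanisms you have not mentioned. First, Proposition~\ref{prn:sylpgen} constructs a cyclic torus $T\le H_0$ with $\langle T,S\rangle\ge H_0$ for every Sylow $p$-subgroup~$S$, and then one checks $N_G(T)\not\le H$ to invoke Lemma~\ref{lem:weaklyclosed}; this is the workhorse for $\mathcal{C}_3$, $\mathcal{C}_5$, $\mathcal{C}_8$ and parts of~$\mathcal{C}_4$. Second, the conjugacy-class bounds of Burness (Proposition~\ref{prn:burnessbounds}, Lemmas~\ref{lem:pslouter}--\ref{lem:upperbounds}) feed into Lemma~\ref{lem:mvconjclass}(ii) to force a $p$-subdegree by comparing $|x^G|$ with $|x^G\cap H|^2$ for a suitable $p$-element~$x$; this handles $\mathcal{C}_4$, $\mathcal{C}_6$, $\mathcal{C}_7$ and many residual cases. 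For $\mathcal{C}_1$ (nondegenerate subspaces) and $\mathcal{C}_2$ the paper largely uses explicit geometric constructions of suborbits (Lemmas~\ref{lem:nondegenp}--\ref{lem:intnondegen} and direct decomposition arguments). Finally, class~$\mathcal{C}_9$ is not handled by ``order comparisons and low-rank lists'' but by quoting the base-two theorem of Burness--Guralnick--Saxl (Theorem~\ref{thm:C9}): a regular suborbit immediately gives a $p$-subdegree, and only a short explicit list (Table~\ref{tab:notbase2}) remains.

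For part~(B), your outline is broadly right, but note that the paper does not derive~(B) as a corollary of~(A): it proves non-\tont-ness alongside the $p$-subdegree result within each Aschbacher class, often using further explicit subdegree calculations (e.g.\ Lemma~\ref{unique} for parabolics, and direct arguments in~$\mathcal{C}_2$ when $p\nmid|H|$). Simply checking prime divisors of $|\Omega|$ against the exceptional list in~(A) will not suffice, since several~(A)-exceptions (e.g.\ $\PSU_3(5)$ on 50 points, $\PSp_4(3)$ on 27 points) have $p\mid|\Omega|$ but no $p$-subdegree and must be ruled out for \tont\ by exhibiting two distinct nontrivial subdegrees.
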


In the proof of this theorem we shall need some preliminary information 
on subgroups and conjugacy classes in classical groups.

\begin{proposition}
\label{prn:sylpgen}
Let $G=\SL_d(q), \SU_d(q), \Sp_d(q)$ or $\Omega^{\epsilon}_d(q)$, 
with $d\ge 3$  in the last three cases and $q=p^f$ for some
prime $p$. Let $V$ be the natural module for $G$. 
Then there exists a cyclic subgroup
$T$ of $G$ with order given in Table \ref{tab:T} such that 
$\la S,T\ra=G$ for every Sylow
$p$-subgroup $S$ of $G$. Moreover, $T$ acts irreducibly on an $\ell$-dimensional subspace $W$ of $V$ with $\ell$ given in Table \ref{tab:T} and trivially on a complement of $W$. When $G=\SU_d(q), \Sp_d(q)$ or $\Omega^{\epsilon}_d(q)$ we can take $W$ to be nondegenerate.
\end{proposition}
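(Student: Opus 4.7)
The plan is to write down, for each type of classical group $G$, an explicit cyclic subgroup $T$ with the stated irreducibility/triviality properties, and then to prove $\langle S,T\rangle=G$ for every Sylow $p$-subgroup $S$ by using Tits' Lemma (Lemma~\ref{lem:Tits}). Since all Sylow $p$-subgroups of $G$ are conjugate and the condition ``$T$ lies in no (maximal) parabolic of $G$'' is invariant under conjugation, the condition $\langle S,T\rangle = G$ for every $S$ is equivalent to the single statement: $T$ is contained in no parabolic subgroup of $G$. Indeed, if $\langle S,T\rangle < G$, then it is contained in a maximal overgroup $M$ of $S$; by Tits' Lemma applied to the quasisimple group $G$, $M$ is a parabolic subgroup, forcing $T\leq M$.

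Every maximal parabolic of $G$ is the stabiliser of a totally singular (respectively isotropic, or, in the $\SL$ case, merely proper nonzero) subspace of the natural module $V$. Hence it suffices to show that $T$ has no invariant nonzero totally singular (respectively, proper) subspace. Because $T$ acts irreducibly on a nondegenerate subspace $W$ and trivially on a complement $U$, each $T$-invariant subspace decomposes as $W_1 \oplus U_1$ with $W_1 \in \{0,W\}$ and $U_1 \le U$. Since $W$ itself is nondegenerate and therefore not totally singular, and since we will choose $U$ to be either trivial or anisotropic, no nonzero $T$-invariant totally singular subspace of $V$ will exist, completing the argument.

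The construction of $T$ in each case is as follows: for $\SL_d(q)$, take $T$ to be a Singer cycle of order $(q^d-1)/(q-1)$ acting irreducibly on $V=W$, so $\ell=d$; for $\Sp_{2m}(q)$, take the symplectic Singer subgroup of order $q^m+1$, again with $W=V$ and $\ell=2m$; for $\SU_d(q)$ with $d$ odd and $\Omega^-_{2m}(q)$, take the corresponding Singer cyclic subgroups of orders $(q^d+1)/(q+1)$ and $(q^m+1)/\gcd(2,q-1)$ acting irreducibly on the whole natural module. For the remaining types one must take $W$ to be a proper nondegenerate subspace: for $\SU_{2m}(q)$, decompose $V=W\perp U$ with $\dim W=2m-1$ and $\dim U=1$ and take $T$ of order $(q^{2m-1}+1)/(q+1)$ acting irreducibly on $W$ and trivially on the anisotropic line $U$; for $\Omega_{2m+1}(q)$ use the analogous decomposition with $W$ of minus type and dimension $2m$ and $U$ anisotropic of dimension $1$; and for $\Omega^+_{2m}(q)$ take $W$ of minus type and dimension $2(m-1)$ and $U$ of minus type and dimension $2$ (hence anisotropic), with $T$ the Singer cycle of $\Omega^-_{2(m-1)}(q)$.

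The main obstacle is the plus-type orthogonal case $\Omega^+_{2m}(q)$: no cyclic subgroup acts irreducibly on the whole natural module, so we are forced to take $W$ to be a proper nondegenerate subspace, and it is essential to choose the complement $U$ to be anisotropic so that it contributes no totally singular subspaces — a hyperbolic plane complement would fail because each of its two singular lines would be fixed by $T$, placing $T$ inside a parabolic. A subsidiary technical point that must be checked throughout is that the Singer elements and Singer subgroups chosen actually lie in the quasisimple groups $G$ (and not merely in the conformal or general similitude overgroups); this follows in each case from the standard description of the maximal tori of classical groups and, for the orthogonal cases, a check of the spinor norm/determinant. With these verifications in hand, the orders of $T$ and the values of $\ell$ are exactly those recorded in Table~\ref{tab:T}.
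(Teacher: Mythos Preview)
Your proposal is correct and takes essentially the same approach as the paper: construct $T$ case by case with the stated action on $W$ and its complement, then invoke Tits' Lemma to reduce the claim $\langle S,T\rangle=G$ to showing $T$ lies in no parabolic, i.e., fixes no nonzero totally singular (or, for $\SL$, proper) subspace. The only cosmetic difference is that you phrase the endgame uniformly via the submodule decomposition $M=W_1\oplus U_1$ (valid since $|T|$ is coprime to $p$ and $W$, $U$ have no common composition factor), whereas the paper argues each non-irreducible case ($\SU_d$ with $d$ even, $\Omega_d$, $\Omega^+_d$) by intersecting a putative totally singular $T$-invariant subspace with $W$ and with $W^\perp$ by hand; the content is the same, and the paper likewise cites \cite{bereczky00} for the verification that the chosen tori lie in $G$.
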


\begin{table}[ht]
\caption{}
\label{tab:T}
\begin{center}
\begin{tabular}{ccc}
\hline
$G$  & $|T|$ & $\ell$ \\
\hline
$\SL_d(q)$ &  $(q^d-1)/(q-1)$ & $d$\\
$\SU_d(q)$, $d$ odd & $(q^d+1)/(q+1)$ & $d$ \\
$\SU_d(q)$, $d\ge 4$ even & $(q^{d-1}+1)/(q+1)$ & $d-1$\\
$\Sp_d(q)$ &  $q^{d/2}+1$ & $d$ \\
$\Omega_d(q)$, $d\ge 3$ odd & $(q^{(d-1)/2}+1)/(2,q+1)$ & $d-1$ \\
$\Omega^-_d(q)$, $d\ge 4$ & $(q^{d/2}+1)/(2,q+1)$ & $d$\\
$\Omega^+_d(q)$, $d\ge 4$ & $(q^{d/2-1}+1)/(2,q+1)$ &$d-2$ \\
\hline
\end{tabular}
\end{center}
\end{table}

\begin{proof}
  Suppose first that $G=\SL_d(q),\Sp_d(q), \Omega^-_d(q)$ or $\SU_d(q)$, with the additional assumption that $d$ is odd when $G=\SU_d(q)$. Let
  $T$ be the intersection of $G$ and a Singer cycle of $\GL(V)$. Then by \cite[Table
    1]{bereczky00}, $|T|$ is as given in Table \ref{tab:T} and $T$ is irreducible. Let
  $S\in\Syl_p(G)$.  By Tits' Lemma \ref{lem:Tits}, 
  all overgroups of $S$ other than $G$ are
  contained in parabolic subgroups and hence fix a subspace. So it follows that $\la
  T,S\ra=G$.

  Suppose next that $G=\SU_d(q)$ with $d\ge 4$ even. Let $U$ be a nondegenerate hyperplane in $V$ 
  and let $x$ be a nonsingular vector such that $U=\la x\ra^{\perp}$. Since
  $\SU_{d-1}(q)\leqslant G_U$ and $d-1$ is odd, it follows that $G$ has a cyclic subgroup
  $T$ of order $(q^{d-1}+1)/(q+1)$ which acts irreducibly on $U$ and fixes $x$. Let $S\in
  \Syl_p(G)$. If $G \ne \la S,T\ra$ then $\la S,T\ra$ fixes a totally isotropic
  subspace $W$ (by Tits Lemma \ref{lem:Tits}).  Since $T$ acts irreducibly on $U$, we have
  $W\cap U=\{0\}$ and so $W=\la u+\lambda x\ra$ for some nonzero vector $u\in U$ and
  $\lambda\in\GF(q)\backslash\{0\}$. Since $T$ fixes $x$, it follows that $T$ fixes $u$,
  contradicting $T$ acting irreducibly on $U$. Hence $G=\la S,T\ra$.

  Next suppose that $G=\Omega_d(q)$ with $d$ and $q$ odd.  Let $U$ be a
  nondegenerate hyperplane upon which the restriction of the quadratic form is elliptic (i.e. of minus type)
  and let $x$ be a nonsingular vector such that $U=\la x\ra^{\perp}$. Since
  $\Omega^-_{d-1}(q)\leqslant G_U$ it follows that $G$ has a cyclic subgroup $T$ of order
  $(q^{(d-1)/2}+1)/(2,q+1)$ which acts irreducibly on $U$ and fixes $x$. Let $S\in
  \Syl_p(G)$. Then by the same argument in the previous paragraph, $\la T,S\ra=G$.

  Finally, suppose that $G=\Omega^+_d(q)$, with $d\ge 4$. Let $U$ be a nondegenerate
  subspace of codimension 2 upon which the restriction of the quadratic form is elliptic.
  Then $U^{\perp}$ does not contain any singular nonzero vectors. Since
  $\Omega^-_{d-2}(q)\leqslant G_U$ we have that $G$ contains a cyclic subgroup $T$ of order
  $(q^{d/2-1}+1)/(2,q+1)$ which acts irreducibly on $U$ and trivially on $U^{\perp}.$ Let
  $S\in \Syl_p(G)$. If $G \ne \la S,T\ra$ then $\la S,T\ra$ fixes a 
  totally singular
  subspace $W$ (again by Tits' Lemma \ref{lem:Tits}). Since $T$ acts irreducibly on $U$, it
  follows that $W\cap U=\{0\}$. Also $W\cap U^{\perp}=\{0\}$ as $U^{\perp}$ does not
  contain any nonzero singular vectors. Hence either $W=\la v+x\ra$ or $\la v+x,w+y\ra$
  where $v$ and $w$ are linearly independent vectors of $U$ and $\la
  x,y\ra=U^{\perp}$. Since $T$ fixes $x$, it follows that $T$ fixes $v$, contradicting $T$
  being irreducible on $U$. Hence $G=\la S,T\ra$.
\end{proof}

\begin{lemma}
\label{lem:nondegenp}
Let $G$ be a classical group with normal subgroup $X$ as in Proposition $\ref{prn:sylpgen}$ such that $X\neq \SL_d(q)$, and let $U$ be a proper nondegenerate subspace of the natural $G$-module $V$. Then $p$ divides $|G:G_U|$.
\end{lemma}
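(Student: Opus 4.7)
My plan is to compute and compare the $p$-parts of $|X|$ and of $X_U := X \cap G_U$, using the fact that $|X : X_U|$ divides $|G : G_U|$ (since $X \norml G$ and $X_U = X \cap G_U$), so that it suffices to show $p$ divides $|X:X_U|$.

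Since $U$ is proper and nondegenerate, so is $U^\perp$, and $V = U \oplus U^\perp$ as formed spaces. The stabiliser $X_U$ preserves this decomposition and, modulo a scalar/spinor-norm factor of order coprime to $p$ (arising from the $\det(A)\det(B)=1$ condition in the unitary case, or analogous determinant/spinor norm conditions in the orthogonal case), equals the direct product of the classical groups $\mathrm{Cl}(U)$ and $\mathrm{Cl}(U^\perp)$ of the same type as $X$ on the two summands. Hence the $p$-part of $|X_U|$ is $q^{N_1 + N_2}$, where $N_i$ is the number of positive roots of the corresponding root system: $n^2$ for $\Sp_{2n}(q)$, $d(d-1)/2$ for $\SU_d(q)$, $m^2$ for $\Omega_{2m+1}(q)$, and $m(m-1)$ for $\Omega^\pm_{2m}(q)$. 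Writing $N_0$ for the analogous quantity for $X$ on $V$, the $p$-part of $|X:X_U|$ equals $q^{N_0 - N_1 - N_2}$, and the problem reduces to verifying $N_0 - N_1 - N_2 > 0$.

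This reduces to a direct arithmetic check in each classical type. The symplectic case yields $N_0 - N_1 - N_2 = n^2 - k^2 - (n-k)^2 = 2k(n-k)$ when $\dim U = 2k$, and the unitary case yields $k(d-k)$; both are strictly positive since $U$ is proper and nonzero. The orthogonal case splits into subcases according to the parities and types of $\dim U$ and $\dim U^\perp$: for example, when $V$ has dimension $2n$ and $U$ has even dimension $2k$ one again gets $2k(n-k)$, while when $U$ has odd dimension $2k+1$ one obtains $2k(n-k-1) + (n-1)$, which is positive for every proper choice. I expect the main obstacle to be the bookkeeping in the orthogonal case, in particular handling low-dimensional classical factors such as $\Omega_2^\pm(q)$ (for which $N=0$ since the order is coprime to $p$) and the characteristic~$2$ conventions for nondegeneracy of quadratic forms; none of these subtleties affects the positivity of $N_0 - N_1 - N_2$, but each needs to be verified.
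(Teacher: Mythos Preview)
Your approach is essentially the paper's: both compare the $p$-part of the full group with that of the stabiliser of $U$, and the heart of each is the arithmetic inequality $N_0 > N_1 + N_2$ which you verify case by case; the paper simply cites \cite[Lemma 4.1.1 and Table 2.1C]{KL} for the structure and orders rather than reducing to $X$ via $|X:X_U|\mid |G:G_U|$ first.

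One small point to fix: in the even-characteristic orthogonal case the Dickson invariant gives $X_U=\{(A,B)\in \SO(U)\times\SO(U^\perp):D(A)=D(B)\}$, so $|X_U|=2\,|\Omega(U)|\,|\Omega(U^\perp)|$ and the extra factor is $2$, which is \emph{not} coprime to $p$. Your conclusion survives because in that case $N_0-N_1-N_2=2k(n-k)\ge 2$ (only even-dimensional nondegenerate $U$ occur when $q$ is even), so $q^{N_0-N_1-N_2}/2$ is still divisible by $2$; but the parenthetical ``coprime to $p$'' claim is false as stated and should be replaced by this sharper bookkeeping.
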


\begin{proof}
 Since $G\cap \GL(V)$ is transitive on $U^G$, we may assume that $G\leqslant \GL(V)$.
As $U$ is nondegenerate, $G$ fixes the decomposition of $V$ given by $U\perp U^\perp$
with $U^\perp$ also nondegenerate. By \cite[Lemma 4.1.1]{KL}, the groups induced by $G_U$ on $U$ and $U^\perp$ contain Sylow $p$-subgroups of the isometry groups of these spaces. The $p$-parts of $|G_U^U|$, $|G_U^{U^\perp}|$ and $|G|$ can be read off from \cite[Table 2.1C]{KL} and it is easily computed that $|G_U^U|_p|G_U^{U^\perp}|_p<|G|_p$.
\end{proof}

\begin{lemma}
\label{lem:intnondegen}
Let $G$ and $V$ be as in Lemma $\ref{lem:nondegenp}$ of unitary, symplectic or orthogonal type, and 
let $U$ be a nondegenerate proper subspace of $V$ of 
dimension $m\ge 2$. Suppose also that
\begin{enumerate}
\item if $G$ is symplectic then $m\ge 4$, 
\item if $G$ is orthogonal then $m\ge 3$ and $d-m\ge 2$.
\end{enumerate}
Then there exists $W\in U^G$ such that $0\neq W\cap U<U$ and $W\cap U$ is
nondegenerate. Moreover, $p$ divides $|G_U:G_{U,W}|$.
\end{lemma}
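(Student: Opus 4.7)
The plan is to construct $W$ as an isometric image of $U$ with a prescribed nondegenerate intersection with $U$, and then derive the $p$-divisibility of $|G_U:G_{U,W}|$ by applying Lemma \ref{lem:nondegenp} to $U$ viewed as a classical space in its own right.

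First I would choose a nondegenerate subspace $U_1\leqslant U$ of dimension $k$ satisfying $\max(1,2m-d)\leqslant k<m$, with $k$ even in the symplectic case and of a suitable type in the orthogonal case; such a choice is possible thanks to the hypotheses $m\geqslant 4$ (symplectic) and $m\geqslant 3$, $d-m\geqslant 2$ (orthogonal), together with $m<d$ in general. Writing $U=U_1\perp U_2$ with $U_2$ nondegenerate of dimension $m-k$, the inequality $k\geqslant 2m-d$ ensures that the nondegenerate subspace $U_1^\perp\leqslant V$ of dimension $d-k$ admits a nondegenerate subspace $U_2'$ isometric to $U_2$ with $U_2\cap U_2'=0$. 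By Witt's extension theorem applied inside $G_{U_1}$, whose induced action on $U_1^\perp$ contains a classical group transitive on nondegenerate subspaces of the relevant type, there exists $g\in G_{U_1}$ with $g(U_2)=U_2'$. Setting $W:=g(U)=U_1\perp U_2'\in U^G$ one checks directly that $W\cap U=U_1$, a proper nondegenerate subspace of $U$.

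For the divisibility part, observe that $G_{U,W}\leqslant G_{U,W\cap U}=G_{U,U_1}$, so it suffices to prove that $p$ divides the orbit length $|G_U:G_{U,U_1}|$ of $U_1$ under the action of $G_U$ on $U$. By \cite[Lemma 4.1.1]{KL}, the induced group $G_U^U$ contains a Sylow $p$-subgroup of the full isometry group $I(U)$ of $U$. Applying Lemma \ref{lem:nondegenp} with $U$ in place of $V$ and $U_1$ in place of the proper nondegenerate subspace gives $p\mid |I(U):I(U)_{U_1}|$; since $(G_U^U)_{U_1}\leqslant I(U)_{U_1}$ while $G_U^U$ carries a full Sylow $p$-subgroup of $I(U)$, a comparison of $p$-parts then yields $p\mid |G_U^U:(G_U^U)_{U_1}|=|G_U:G_{U,U_1}|$, as required.

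The main obstacle I anticipate lies in the combinatorial bookkeeping in the orthogonal case: the signs $\pm$ attached to nondegenerate even-dimensional subspaces must be tracked so that the complements $U_2\leqslant U$ and $U_2'\leqslant U^\perp$ can simultaneously be chosen isometric to one another and of prescribed type within the relevant ambient spaces. A subsidiary issue is the unitary case with $m=2$, where $\dim U$ falls below the threshold required to invoke Lemma \ref{lem:nondegenp} on $U$; in that degenerate case the desired divisibility is verified directly from the count of $q(q-1)$ nondegenerate one-dimensional subspaces of a two-dimensional unitary space, which is always divisible by $p$.
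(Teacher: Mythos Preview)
Your argument is correct and shares its skeleton with the paper's: both produce $W\in U^G$ with $W\cap U$ a prescribed proper nondegenerate subspace of $U$, then deduce $p\mid |G_U:G_{U,W}|$ by passing through $G_{U,W}\leqslant G_{U,\,U\cap W}$ and invoking Lemma~\ref{lem:nondegenp} on the classical group induced by $G_U$ on $U$ (via \cite[Lemma~4.1.1]{KL}). The difference is in how $W$ is constructed. You work uniformly: fix $U_1\leqslant U$ of dimension $k$, take $U_2'\leqslant U_1^\perp$ isometric to $U_2$ and disjoint from it, and transport $U_2$ to $U_2'$ by Witt transitivity inside $G_{U_1}$. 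The paper instead splits into two cases chosen precisely to dodge the orthogonal sign-tracking you flag. When $G$ is not symplectic (and not orthogonal with $m,q$ both even), a codimension-one nondegenerate $Y\leqslant U$ exists; rather than building a second copy of $U_2$, the paper simply notes that $G_Y$ is irreducible on $Y^\perp$ and so cannot fix $U$, yielding $g\in G_Y$ with $U\cap U^g=Y$ immediately. In the remaining cases ($m\geqslant4$), the paper swaps a hyperbolic line $Y_1\leqslant U$ with a hyperbolic line $Y_2\leqslant U^\perp$ --- exactly your construction specialised to $k=m-2$ with $U_2$ and $U_2'$ both hyperbolic lines, which carry no sign. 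Thus the paper's case division buys freedom from the type bookkeeping at the cost of uniformity; your handling of the unitary $m=2$ boundary via the explicit count $q(q-1)$ is in fact slightly more careful than the paper's appeal to Lemma~\ref{lem:nondegenp} at that dimension.
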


\begin{proof}
  Suppose first that $G$ is not symplectic and, if $m,q$ are both even, suppose also that $G$ is
  not orthogonal. Then there exists a nondegenerate proper subspace $Y$ of $U$ with
  codimension 1. By \cite[Lemma 4.1.1 and Proposition 2.10.6]{KL}, $G_Y$ is irreducible on
  $Y^{\perp}$. (Note that if $G$ is orthogonal then $\dim(Y^{\perp})\ge 3$.) Thus $G_Y$ is
  not contained in $G_U$ and so there exists $g\in G_Y$ such that $Y\leqslant U^g\neq
  U$. Hence $Y=U\cap U^g$. Let $W=U^g$. By \cite[Lemma 4.1.1]{KL}, $G_U$ induces a
  nontrivial classical group on $U$ and hence by Lemma \ref{lem:nondegenp},
  $|G_U:G_{U,W}|$ is divisible by $p$. (Note that if $G$ is orthogonal then $m\ge 3$.)

Suppose now that $G$ is symplectic, or $G$ is orthogonal with $m$ and $q$ both even. By hypothesis this implies $m\geq 4$.  Let $Y_1=\la e_1,f_1\ra\leqslant U$ be a hyperbolic pair and
let $Y_2=\la e_d,f_d\ra$ be a hyperbolic pair in $U^\perp$. Then by Witt's Lemma and
\cite[Lemma 4.1.1]{KL}, there exists $g\in G$ interchanging $Y_1$ and $Y_2$ while fixing
setwise $Y_1^{\perp}\cap U$. Then letting $W=U^g$ we have $U\cap W=Y_1^{\perp}\cap U$, a
nondegenerate subspace of $V$.  Hence by Lemma \ref{lem:nondegenp}, $|G_U:G_{U,W}|$ is
divisible by $p$.
\end{proof}

\vspace{3mm}
Next we present a useful lemma concerning the subdegrees of groups of 
Lie type in parabolic actions.

\begin{lemma}\label{unique}
Let $G$ be an almost simple group with socle $L$ of Lie type in 
characteristic $p$. Let $P$ be a maximal parabolic subgroup of $G$. 
Exclude the following cases:
\[
\begin{array}{l}
L = \PSL_d(q) \text{ with } G\leqslant \PGammaL_d(q) \\
L = \POmega^+_{2m}(q), m \hbox{ odd}, P = P_{m-1} \hbox{ or }P_m \\
L = E_6(q), P=P_i\quad (i=1,3,5,6).
\end{array}
\]
Then in its action on the cosets of $P$, the group
$G$ has a unique nontrivial subdegree that is a power of $p$.
\end{lemma}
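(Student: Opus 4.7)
The plan is to use the Bruhat decomposition of $L$ to describe the orbits of $P$ on $G/P$ and identify those whose size is a power of $p$. First I would reduce to the case $G=L$: since $P$ is maximal in $G$ and does not contain $L$, we have $G=PL$, so $P\cap L$ is a maximal parabolic of $L$ and the natural map $L/(P\cap L)\to G/P$ is a $P$-equivariant bijection. Field and diagonal automorphisms of $L$ contained in $G$ act trivially on the Weyl group $W$, and hence fix each $(P\cap L)$-double coset; in the non-excluded cases, any graph automorphism of $L$ contained in $G$ preserves the parabolic type (otherwise $P\cap L$ would not be maximal in $L$, contradicting the maximality of $P$ in $G$). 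Hence the $P$-orbits on $G/P$ coincide with the $(P\cap L)$-orbits on $L/(P\cap L)$.

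Now let $W$ be the Weyl group of $L$, $W_J$ the standard parabolic subgroup of $W$ associated to the Levi of $P\cap L$, $W^J$ the set of minimum-length coset representatives of $W/W_J$, and ${}^JW^J$ the distinguished representatives of $W_J\backslash W/W_J$. The Bruhat decomposition gives the cell decomposition $L/(P\cap L)=\bigsqcup_{v\in W^J}Bv(P\cap L)/(P\cap L)$, and the $(P\cap L)$-orbit through $w\cdot (P\cap L)$ for $w\in{}^JW^J$ is a disjoint union of Schubert cells of total size
\begin{equation*}
|(P\cap L)w(P\cap L)/(P\cap L)|=\sum_{v\in W^J\cap W_JwW_J}q^{\ell(v)}.
\end{equation*}
This sum reduces to a single power of $q$ precisely when $W^J\cap W_JwW_J$ has one element, equivalently when $W_JwW_J=wW_J$, i.e.\ $w\in N_W(W_J)$. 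Excluding the trivial coset $W_J$ (which gives the orbit of size $1$), the number of nontrivial $p$-power subdegrees is therefore $|N_W(W_J)/W_J|-1$, so uniqueness amounts to showing $|N_W(W_J)/W_J|=2$.

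The remainder is a case analysis of $N_W(W_J)/W_J$ for each simple Lie type and each maximal standard parabolic, via Howlett's description of normalizers of parabolic subgroups in a Coxeter group. When $|N_W(W_J)/W_J|=2$ the non-identity coset is represented by $w_0w_{0,J}$, and the corresponding orbit has size $q^{\ell(w_0)-\ell(w_{0,J})}=q^{\dim(L/(P\cap L))}$, the open Schubert cell. Direct checking shows $|N_W(W_J)/W_J|=2$ in every case except the following: type $A_{d-1}$ with $W_J=S_k\times S_{d-k}$ and $k\ne d-k$ (the $\PSL_d$ exclusion, since the opposition involution $-w_0$ of $S_d$ exchanges the two node-orbits produced by removing $\alpha_k$); type $D_m$ with $m$ odd and $P$ a spin-type parabolic, where the diagram involution $-w_0$ swaps the two spin nodes; and type $E_6$ with $P=P_i$ for $i\in\{1,3,5,6\}$, where $-w_0$ pairs the excluded nodes by $1\leftrightarrow 6$ and $3\leftrightarrow 5$. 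The main technical obstacle is the systematic verification of $N_W(W_J)/W_J$ across all exceptional Lie types, together with the careful treatment of the interaction between graph automorphisms and double cosets when passing from $L$ to an almost simple overgroup $G$.
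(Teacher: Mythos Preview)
Your approach via the Bruhat decomposition and the identification of $p$-power subdegrees with cosets of $W_J$ in $N_W(W_J)$ is essentially the content of the reference \cite[3.9]{LSSclass} that the paper invokes; the paper's own proof is a one-line appeal to that result, identifying the distinguished suborbit as the $P$-orbit containing the opposite parabolic $P^-$. So the route is the same, just unrolled.

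There is, however, a genuine gap in your reduction from $G$ to $L$. You assert that $P\cap L$ is a maximal parabolic of $L$ and that the $P$-orbits on $G/P$ coincide with the $(P\cap L)$-orbits on $L/(P\cap L)$; neither holds in general. When $L=\PSL_d(q)$ and $G$ contains a graph automorphism (precisely the non-excluded $\PSL$ case), a maximal parabolic $P$ of $G$ can be $N_G(P_{m,d-m})$ for $m<d/2$, with $P\cap L=P_{m,d-m}$ the stabiliser of a flag $V_m<V_{d-m}$, which is \emph{not} maximal in $L$. Your case analysis, restricted to maximal standard parabolic subgroups $W_J$, does not cover this (and here $|N_W(W_J)/W_J|$ can exceed $2$, e.g.\ when $d=3m$). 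Moreover, even when a graph automorphism $\gamma$ lies in $P$ and fixes the type $J$, it acts nontrivially on $W_J\backslash W/W_J$ in general, so distinct $(P\cap L)$-orbits may fuse into a single $P$-orbit. The argument therefore has to be run at the level of $G$: one shows that the open-cell $(P\cap L)$-orbit (corresponding to $P^-$) is fixed by every outer automorphism in $P$, so it is itself a $P$-orbit of size a power of $p$, and that no union of the remaining $(P\cap L)$-orbits can have $p$-power size. This is exactly where the hypothesis that $P^-$ is $G$-conjugate to $P$ (equivalently $-w_0(J)=J$ at the level of $G$) enters, as the paper's proof indicates.
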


\begin{proof} This follows from \cite[3.9]{LSSclass}: except in the excluded 
cases, the parabolic $P^-$ opposite to $P$ is $G$-conjugate to $P$ and 
the required
suborbit is the $P$-orbit containing $P^-$. 
\end{proof}

%
%

Next we present some information on conjugacy class sizes in classical groups.

\begin{definition}
 \label{def:nu}
Let $x\in\GL_d(q)$ and $V$ be a $d$-dimensional vector space over $\GF(q)$. Let $K$ be the
algebraic closure of $\GF(q)$ and $\overline{V}=V\otimes K$, a $d$-dimensional vector
space over $K$. Then $x$ acts naturally on $\overline{V}$ and we define $\nu(x)$ to be the
codimension of the largest eigenspace of $x$ on $\overline{V}$. For $x\in\PGL_d(q)$, we
define $\nu(x)$ to be $\nu(\hat{x})$, where $\hat{x}$ is a preimage of $x$ in $\GL_d(q)$. 
\end{definition}

We denote the lower triangular matrix $\begin{pmatrix} 1&0\\1&1\end{pmatrix}$ by $J_2$ and we use $J_2^s$ to denote the block diagonal matrix with $s$ copies of $J_2$.

When $q$ is even and $G$ is a symplectic or orthogonal group the conjugacy classes of involutions are described in \cite{aschseitz}. When $G=\Sp_d(q)$, for each odd positive integer $s\leq d/2$, there is one class of
involutions with $\nu(x)=s$, denoted by $b_s$, while for each even positive integer $s\leq d/2$ there are two classes of involutions with $\nu(x)=s$, denoted $a_s$ and $c_s$. By
\cite[(8.10)]{aschseitz}, the group $\SO^{\epsilon}_d(q)$ meets each of the $\Sp_d(q)$-conjugacy classes of involutions except $\SO^-_d(q)$ contains no involutions of type $a_{d/2}$ for $d\equiv 0\pmod 4$. Only involutions of type $a_s$ or $c_s$ lie in $\Omega^{\epsilon}_d(q)$ and the involutions of type $b_s$ lie in 
$\SO^{\epsilon}_d(q)\backslash\Omega^{\epsilon}_d(q)$. Moreover, two involutions of $\SO^{\epsilon}_d(q)$ that are conjugate in $\Sp_d(q)$ are conjugate under an element of $\Omega^{\epsilon}_d(q)$, except the $a_{d/2}$-class in $\SO^+_d(q)$ which splits into two $\Omega^+_d(q)$-classes denoted $a_{d/2}$ and $a_{d/2}'$ that are fused by $\SO^+_d(q)$.

Combining \cite[Lemma 3.20 and Proposition 3.22]{burness} we obtain the following bounds
on the lengths of conjugacy classes. We  denote $\PSL_n(q)$ by $\PSL^+_n(q)$ and $\PSU_n(q)$ by $\PSL^-_n(q)$.

\begin{proposition}
\label{prn:burnessbounds}
Let $G=\PSL^\epsilon_d(q),\PSp_d(q)$ or $\POmega^{\epsilon}_d(q)$ with $q=p^f$ for some
prime $p$. Let $x\in G$ have order $p$ and let $s=\nu(x)$.  Then $|x^G|> f_i(d,s,q)$ with
$i=1+\delta_{2,p}$ (where $\delta_{2,p}$ is the Kronecker delta function) and $f_i(d,s,q)$ as given in Tables $\ref{tab:conjclassboundsodd}$ and $\ref{tab:conjclassboundseven}$.
\end{proposition}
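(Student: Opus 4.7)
The plan is to obtain the stated bounds by direct invocation of the two cited results of \cite{burness}, together with the case analysis for unipotent classes in characteristic $p$. First, I would apply Lemma~3.20 of \cite{burness}, which provides a general lower bound on $|x^G|$ for a unipotent element $x$ of a finite classical group, phrased in terms of $d$, $q$, and the invariant $\nu(x)$ from Definition~\ref{def:nu}. This bound comes from an explicit analysis of centralisers of unipotent elements via Jordan block structure, and is uniform in the sense that it depends only on $s=\nu(x)$ rather than on the specific unipotent class containing $x$.

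Next, I would apply Proposition~3.22 of \cite{burness}, which sharpens the general estimate in the cases where it is not tight enough. This refinement matters especially in characteristic $2$, where, as recalled in the paragraph preceding the proposition, the involutions of symplectic and orthogonal groups with a given value of $\nu$ split into the classes $a_s$, $b_s$ and $c_s$ (and, in the $a_{d/2}$ case for $\Omega^+_d(q)$, into two further subclasses). The parameter $i = 1 + \delta_{2,p}$ in the statement is present precisely to distinguish the odd-characteristic regime (where the general bound of Lemma~3.20 suffices) from the even-characteristic regime (where the finer estimates of Proposition~3.22 are needed to handle the multiple involution classes with the same $\nu$-value).

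Combining the two bounds reduces to taking, for each triple $(d,s,q)$ and each type $\PSL^\epsilon_d(q)$, $\PSp_d(q)$, $\POmega^\epsilon_d(q)$, the minimum of $|x^G|$ over all order-$p$ unipotent classes in $G$ with $\nu(x)=s$. The explicit polynomials $f_1(d,s,q)$ and $f_2(d,s,q)$ recorded in Tables~\ref{tab:conjclassboundsodd} and~\ref{tab:conjclassboundseven} are then read off from this minimum, with a small adjustment to pass from the matrix group to the simple quotient $G$ through division by the (bounded) centre of the covering group.

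The main obstacle is purely bookkeeping: for each row of the two tables one must check that the Burness bounds, specialised to the relevant classical type and to the relevant value of $s$ (and, when $p=2$, to the relevant involution type among $a_s$, $b_s$, $c_s$ or the split $a_{d/2}/a_{d/2}'$), do indeed exceed the stated $f_i(d,s,q)$. For odd $p$ this is essentially a direct substitution into Lemma~3.20, while for $p=2$ one has to compare the three involution families and verify that even the smallest of the three class lengths is bounded below by $f_2(d,s,q)$; this is where Proposition~3.22 is doing the real work.
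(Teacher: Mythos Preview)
Your proposal is correct and matches the paper's approach exactly: the paper simply states that the proposition follows by combining Lemma~3.20 and Proposition~3.22 of \cite{burness}, which is precisely what you outline. One minor imprecision worth noting is that in characteristic~$2$ the function $f_2$ in Table~\ref{tab:conjclassboundseven} is recorded separately for each involution type ($a_s$, $b_s$, $c_s$, etc.), so you are not taking a minimum over the three families but rather quoting the Burness bound for each type individually.
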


\begin{table}[ht]
\begin{center}
\caption{Bounds on unipotent conjugacy classes for $p$ odd}
\label{tab:conjclassboundsodd}
\begin{tabular}{ll}
\hline
$G$  & $f_1(d,s,q)$ \\
\hline 
$\PSL^\epsilon_d(q)$ & $\frac{q}{2(q-\epsilon)(q+1)}\max\{q^{2s(d-s)},q^{ds}\}$\\
$\PSp_d(q)$ & $\frac{q}{4(q+1)}\max\{q^{s(d-s)},q^{ds/2}\}$\\
$\POmega^{\pm}_d(q)$ & $\frac{q}{8(q+1)}\max\{q^{s(d-s-1)},q^{d(s-1)/2}\}$\\
$\POmega_d(q)$ & $\frac{1}{4}\max\{q^{s(d-s-1)},q^{d(s-1)/2}\}$\\
\hline
\end{tabular}
\end{center}
\end{table}

\begin{table}[ht]
\begin{center}
\caption{Bounds on unipotent conjugacy classes for $p$ even}
\label{tab:conjclassboundseven}
\begin{tabular}{llll}
\hline
$G$  & Conditions & $x$ & $f_2(d,s,q)$  \\
\hline 
$\PSL^{\epsilon}_d(q)$ & & $[J_2^s,I_{d-2s}]$ & $\frac{q}{2(q-\epsilon)(q+1)}q^{2s(d-s)}$\\
$\PSp_d(q)$            & & $a_s$             &$\frac{1}{2}q^{s(d-s)}$\\
                       & & $b_s,c_s$         &$\frac{1}{2}q^{s(d-s+1)}$ \\
$\POmega^{\pm}_d(q)$&$(s,\epsilon)\neq(d/2,+)$& $a_s$& $\frac{1}{4}q^{s(d-s-1)}$\\
                    &      & $c_s$                    & $\frac{1}{4}q^{s(d-s)}$\\
$\POmega^+_d(q)$ & $s=d/2$ & $a_{d/2},a'_{d/2}$ & $\frac{1}{4}q^{d(d-2)/4}$\\
                 &         & $c_{d/2}$          & $\frac{1}{4}q^{d^2/4}$\\
\hline
\end{tabular}
\end{center}
\end{table}

An outer automorphism of a finite simple group of Lie type can be written as the product
of an inner automorphism, a diagonal automorphism, a field automorphism and a graph automorphism. We follow the
conventions of \cite[Definition 2.5.13]{GLS} as to the definition of a field, graph or
graph-field automorphism. In Table \ref{tab:pslouter}, these automorphisms are referred to
being of type $f$, $g$ and $gf$ respectively.

\begin{lemma}
\label{lem:pslouter}
Let $L=\PSL^{\epsilon}_d(q),\PSp_d(q)$ or $\POmega^{\epsilon}_d(q)$ where $q=p^f$ for some
prime $p$ and let $x\in\Aut(L)\backslash\PGL(V)$ of prime order $r$. Then $|x^L|>h(d,r,q)$
where $h(d,r,q)$ is as given by Table $\ref{tab:pslouter}$.
\end{lemma}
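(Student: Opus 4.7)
The plan is to bound $|x^L|=|L:C_L(x)|$ from below by identifying $C_L(x)$, or a subgroup of it, up to index bounded by $|\PGL(V):L|$, and then applying the order formulas for classical groups. The element $x$ is an automorphism of prime order $r$ of one of the listed simple groups, lying outside the image of $\PGL(V)$, so $x$ projects onto a non-identity element of $\Out(L)/\mathrm{PGL}\mathrm{-part}$; by \cite[Definition 2.5.13]{GLS} it is, modulo an inner-diagonal automorphism, either a field automorphism (type $f$), a graph automorphism (type $g$), or a graph-field automorphism (type $gf$). Standard results (e.g.\ \cite[Prop.~4.9.1]{GLS}) allow us, after conjugating within the coset $x\,\mathrm{Inndiag}(L)$, to take $x$ to be a \emph{standard} representative whose centraliser in the corresponding algebraic group is known, and the structure of $C_L(x)$ is then described in \cite[\S4.9]{GLS}.

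First I would treat the field automorphism case. Here $r$ divides $f$, and up to conjugacy $C_{\mathrm{Inndiag}(L)}(x)$ is (an inner-diagonal version of) the classical group of the same type and dimension $d$ over the subfield $\GF(q_0)$ with $q_0=q^{1/r}$. Hence $|C_L(x)|\le c\cdot |L(q_0)|$ for a small constant $c$ absorbing diagonal and scalar factors, and a straightforward division using the standard order formulas in \cite[Table~2.1C]{KL} gives a bound of the form $|x^L|\ge \frac{1}{c'}\,q^{(1-1/r)\dim L}$, from which the entry in Table \ref{tab:pslouter} for type $f$ follows. The graph-field case is handled identically, with the additional input that the fixed group is a twisted group of smaller rank (e.g.\ $\PSU_d(q_0)$ inside $\PSL_d(q_0^{2})$ when $\epsilon=+$, or ${}^2\!D_m(q_0)$ inside $D_m(q_0^2)$); the dimensions of the ambient and fixed algebraic groups are known and again yield the stated lower bound on $|x^L|$.

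The graph automorphism case requires more care and will be the main obstacle. For $L=\PSL_d(q)$, a graph involution $x$ has $C_{\mathrm{PGL}_d(q)}(x)^\circ$ equal to $\PGSp_d(q)$ or $\PGO_d(q)$ according as $x$ is symplectic or orthogonal, and the bound follows by explicit comparison of orders. For $L=\POmega^+_{2m}(q)$ the centraliser of a standard graph involution is essentially $\POmega_{2m-1}(q)$ (if $q$ is odd) or $\Sp_{2m-2}(q)$ (if $q$ is even) by \cite[Prop.~4.9.2]{KL}/\cite{aschseitz}, which again gives a bound of the predicted shape. The hard subcase is $L=\POmega_8^+(q)$ together with a triality ($r=3$) or order-$3$ graph-field element: here one uses that $C_L(x)$ lies in $G_2(q)$, $\PGL_3(q)$, $\PGU_3(q)$ or a torus of known order, as listed in \cite[Table~4.7.3A]{GLS}, and the worst of these still gives $|x^L|\ge q^{\dim L - \dim G_2}\cdot(\text{constant})$. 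In every case one has to combine this centraliser data with the index $|\mathrm{Inndiag}(L):L|$, bounded above by the order of the diagonal component of $\Out(L)$, to pass from $|x^{\mathrm{Inndiag}(L)}|$ to $|x^L|$, losing at worst a factor of $\gcd(d,q-\epsilon)$ (or $(2,q-1)^2$ for orthogonal groups).

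Putting these computations together produces, for each row of Table \ref{tab:pslouter}, the stated explicit lower bound $h(d,r,q)$. Cross-checking the bound obtained in each case with the corresponding entry of the table (and using that, since $x\notin\PGL(V)$, we are never in the degenerate situation where $C_L(x)$ has the same dimension as $L$) completes the proof. The only routine but slightly delicate point is to verify that the crude estimates for constants appearing in front of powers of $q$ are sharp enough that they are absorbed into the small-constant prefactors appearing in $h(d,r,q)$; this is a case check guided by \cite[Lemma~3.20 and Prop.~3.22]{burness}.
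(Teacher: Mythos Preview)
Your outline is essentially the standard derivation of these bounds, and it is correct in spirit: identify the centraliser of a standard field, graph, or graph-field representative using \cite[\S4.9]{GLS} (and \cite{aschseitz} in even characteristic), then divide orders using the formulas in \cite[Table~2.1C]{KL}, absorbing the diagonal index $|\mathrm{Inndiag}(L):L|$ into the leading constant. The paper, however, does not reproduce any of this: its entire proof is the one-line citation ``This is from \cite[Lemma~3.48]{burness}'', where Burness has already carried out exactly the computation you describe and recorded the explicit functions $h(d,r,q)$.

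One small correction to your sketch: the references you invoke at the end, \cite[Lemma~3.20 and Proposition~3.22]{burness}, concern \emph{unipotent} classes (they underpin Proposition~\ref{prn:burnessbounds} in the paper), not outer automorphisms. The bounds you need for the constants in the field, graph, and graph-field cases are precisely what \cite[Lemma~3.48]{burness} packages; if you want to cite a supporting computation rather than the final statement, the relevant centraliser identifications in Burness are in his \S3.4--3.5, not \S3.2.
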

\begin{proof}
This is from \cite[Lemma 3.48]{burness}.
\end{proof}

\begin{table}[ht]
\begin{center}
\caption{Bounds on conjugacy classes of outer automorphisms}
\label{tab:pslouter}
\begin{tabular}{llll}
\hline
$L$ & Type & Conditions & $h(d,r,q)$ \\
\hline
$\PSL^{\epsilon}_d(q)$ & $f$ & $q=q_0^r$, $r>2$ if $\epsilon=-$  &$\frac{1}{2}\left(\frac{q}{q+1}\right)^{(1-\epsilon)/2}q^{(d^2-1)(1-\frac{1}{r})-1}$\\
&$g$ & $r=2$, $d$ odd & $\frac{1}{2}\left(\frac{q}{q+1}\right)^{(1-\epsilon)/2}q^{\frac{1}{2}(d^2+d-4)}$\\
&$g$ & $r=2$, $d>2$ even & $\frac{1}{2}\left(\frac{q}{q+1}\right)^{(1-\epsilon)/2}q^{\frac{1}{2}(d^2-d-4)}$\\
&$gf$ & $(r,q,\epsilon)=(2,q_0^2,+)$, $d>2$& $\frac{1}{2}q^{\frac{1}{2}(d^2-3)}$\\
\hline
$\PSp_d(q)$  & $f$ & $q=q_0^r$ &  $\frac{1}{4}q^{d(d+1)(1-\frac{1}{r})/2}$\\
             & $gf$ & $(d,r,p)=(4,2,2)$, $f$ odd & $q^5$\\
\hline
$\POmega^{\epsilon}_d(q)$, $d$ even & $f$ & $q=q_0^r$&$\frac{1}{4}q^{d(d-1)(1-\frac{1}{r})/2}$\\
                       &$gf$ &$(r,q,\epsilon)=(2,q_0^2,+)$&$\frac{1}{4}q^{d(d-1)/4}$\\
            &$gf$ & $(d,r,q,\epsilon)=(8,3,q_0^3,+)$&$\frac{1}{4}q^{56/3}$\\
       &$g$& $(d,r,\epsilon)=(8,3,+)$& $\frac{1}{8}q^{14}$\\
\hline
$\POmega_d(q)$, $dq$ odd & $f$ &$q=q_0^r$ & $\frac{1}{4}q^{d(d-1)(1-\frac{1}{r})/2}$\\
\hline
\end{tabular}
\end{center}
\end{table}

\begin{lemma}
\label{lem:upperbounds}
Let $L$ be one of the groups in the first column of Table $\ref{tab:trans}$. Then the number of transvections in $L$ is given by the second column of Table $\ref{tab:trans}$.
\end{lemma}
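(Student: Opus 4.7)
The plan is to establish each entry of Table~\ref{tab:trans} by using, in each case, the fact that the transvections in $L$ form a single conjugacy class together with either a direct geometric parameterisation or the orbit--stabiliser formula $|t^L|=|L|/|C_L(t)|$ applied to a canonical transvection $t$. So the argument reduces to two pieces of data per row: a conjugacy result (all transvections are fused in $L$) and a centraliser computation.

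First I would record a canonical transvection in each classical type. In $L=\PSL_d^{\epsilon}(q)$ a transvection has the shape $t_{v,\phi}\colon x\mapsto x+\phi(x)v$ with $v\in V\setminus\{0\}$, $\phi\in V^{*}\setminus\{0\}$ and $\phi(v)=0$; in the symplectic, orthogonal, and unitary cases the functional $\phi$ is forced by the preserved form, so $t$ is parameterised by a $1$-space (isotropic in the orthogonal case, nonsingular in the unitary case) together, in odd characteristic, with a scalar from the base (or fixed) field. Witt's theorem shows that any two transvections of a given type are $L$-conjugate.

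Second I would carry out the count. In $\SL_d(q)$ the number of pairs $(v,\phi)$ with $v\neq 0$, $\phi\neq 0$ and $\phi(v)=0$ is $(q^{d}-1)(q^{d-1}-1)$; the only identification is the rescaling $(v,\phi)\mapsto(\lambda v,\lambda^{-1}\phi)$ for $\lambda\in\GF(q)^{\times}$, which acts semi-regularly, so the number of transvections in $\SL_d(q)$ is $(q^{d}-1)(q^{d-1}-1)/(q-1)$. Since a transvection is unipotent but not scalar, distinct transvections remain distinct in $\PSL_d(q)$, so the same count holds for $L$. The symplectic, unitary and orthogonal cases are handled analogously: count the relevant $1$-spaces (which is a standard formula in $V$), attach the scalar data, and divide by the $\GF(q)^{\times}$-action; the orders obtained match $|L|/|C_L(t)|$, with $|C_L(t)|$ read off from the fact that $C_L(t)$ lies inside the stabiliser of the flag $\langle v\rangle\subset v^{\perp}$ and has shape (unipotent radical)$\cdot$(classical group on $v^{\perp}/\langle v\rangle$), whose order is tabulated in \cite[Chapter~4]{KL}.

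The main obstacle is purely bookkeeping: ensuring in each row that the parameterisation is not degenerate (e.g.\ that a transvection in the orthogonal case in characteristic~$2$ is correctly counted, and that in the unitary case the scalar lives in the fixed field of the defining involution), and checking that the transvection class does not split under the diagonal subgroup when passing from $\SL_d^{\epsilon}$ or $\Sp_d$ to $L$. Once these points are verified for each $L$ in Table~\ref{tab:trans}, the formula in the second column is immediate.
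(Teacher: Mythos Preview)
The paper states this lemma without proof; the formulae are classical and appear in standard references on finite classical groups. Your approach via direct parameterisation (equivalently, orbit--stabiliser) is the natural one and is essentially correct.

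There is, however, one slip to fix. In the unitary case a transvection in $\SU_d(q)$ has the shape $x\mapsto x+\lambda B(x,v)v$ with $v$ \emph{isotropic} (that is, $B(v,v)=0$) and $\lambda+\lambda^{q}=0$; you wrote ``nonsingular'', but nonsingular vectors parameterise unitary reflections rather than transvections. With the correct condition the count runs as follows: the number of isotropic $1$-spaces in the Hermitian space is $(q^{d}-(-1)^{d})(q^{d-1}-(-1)^{d-1})/(q^{2}-1)$, and for each such $1$-space there are exactly $q-1$ nonidentity transvections centred there (one for each nonzero $\lambda$ in the kernel of the trace map $\GF(q^{2})\to\GF(q)$), giving the entry $(q^{d}-(-1)^{d})(q^{d-1}-(-1)^{d-1})/(q+1)$ in Table~\ref{tab:trans}. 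Note also that Table~\ref{tab:trans} contains no orthogonal row, so your remarks about orthogonal transvections and the characteristic-$2$ orthogonal subtleties are unnecessary here. Finally, the worry about the transvection class splitting when passing from $\SL_d^{\epsilon}(q)$ or $\Sp_d(q)$ to the simple quotient $L$ is harmless: transvections are unipotent while the centre consists of semisimple scalars, so distinct transvections map to distinct elements of $L$ and the count is preserved verbatim.
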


\begin{table}[ht]
\begin{center}
\caption{Transvections}
\label{tab:trans}
\begin{tabular}{ll}
\hline
$L$ & Number of transvections \\
\hline
$\PSL_d(q)$ & $(q^d-1)(q^{d-1}-1)/(q-1)$ \\
$\PSU_d(q)$ & $(q^d-(-1)^d)(q^{d-1}-(-1)^{d-1})/(q+1)$\\
$\PSp_d(q)$ & $q^d-1$\\
\hline
\end{tabular}
\end{center}
\end{table}

\section{Classical groups: proof of Theorem \ref{thm:classgroups}}

In this section we prove Theorem \ref{thm:classgroups}. 
Throughout, $G$ is an
almost simple group with socle $L=\PSL_d(q)$, $\PSU_d(q)$, $\PSp_d(q)$ or
$\POmega^{\epsilon}_d(q)$ acting primitively on a set $\Omega$ with $H=G_{\alpha}$ for
some $\alpha\in\Omega$. If $L=\PSU_d(q)$ we assume that $d\geq 3$ and $(d,q)\neq
(3,2)$. For $L=\PSp_d(q)$ we have $d\geq 4$ and $(d,q)\neq(4,2)$. Finally, if $L$ is an
orthogonal group we assume that $d\geq 7$.

We denote the natural module of $G$ by $V$ and we let $\{v_1,\ldots,v_d\}$ be a basis for $V$
over $\GF(q)$ (over $\GF(q^2)$ when $L=\PSU_d(q)$). For classical groups with socle other
than $\PSL_d(q)$ it is often convenient to use bases specific to the sesquilinear form $B$
and/or quadratic form $Q$, preserved by the group as follows:
\begin{itemize}
\item When $L=\PSp_d(q)$ we call a basis $\{e_1,\ldots,e_{d/2},f_1,\ldots,f_{d/2}\}$ such
  that $B(e_i,e_i)=B(f_i,f_i)=0$ for all $i$ and $B(e_i,f_j)=\delta_{i,j}$ for all $i,j$,
  a \emph{symplectic basis}.
\item When $L=\PSU_d(q)$ with $d$ even we call a basis
  $\{e_1,\ldots,e_{d/2},f_1,\ldots,f_{d/2}\}$ such that $B(e_i,e_i)=B(f_i,f_i)=0$ for all
  $i$ and $B(e_i,f_j)=\delta_{i,j}$ for all $i,j$, a \emph{unitary basis}.
\item When $L=\PSU_d(q)$ with $d$  odd, we call a basis
  $\{e_1,\ldots,e_{(d-1)/2},f_1,\ldots,f_{(d-1)/2},x\}$ such that
  $B(e_i,e_i)=B(f_i,f_i)=B(e_i,x)=B(f_i,x)=0$ for all $i$, $B(e_i,f_j)=\delta_{i,j}$ and
  $B(x,x)=1$ a \emph{unitary basis}.
\item When $L=\POmega^+_d(q)$, we call a basis $\{e_1,\ldots,e_{d/2},f_1,\ldots,f_{d/2}\}$
  where $Q(e_i)=Q(f_i)=0$ and $B(e_i,f_j)=\delta_{i,j}$ for all $i,j$ a \emph{hyperbolic
    basis}.
\item When $L=\POmega^-_d(q)$, we call a basis
  $\{e_1,\ldots,e_{d/2-1},f_1,\ldots,f_{d/2-1},x,y\}$ where $Q(e_i)=Q(f_i)=0$,
  $B(e_i,f_j)=\delta_{i,j}$, $ B(e_i,x)=B(e_i,y)=B(f_i,x)=B(f_i,y)=0$ for all $i,j$,
  $Q(x)=1$, $B(x,y)=1$ and $Q(y)=\zeta$ where ${\bf x}^2+{\bf x}+\zeta$ is irreducible
  over $\GF(q)$, an \emph{elliptic basis}.
\item When $L=\POmega_d(q)$ with $d$ odd, we call a basis
  $\{e_1,\ldots,e_{(d-1)/2},f_1,\ldots,f_{(d-1)/2},x\}$ where $Q(e_i)=Q(f_i)=0$,
  $B(e_i,f_j)=\delta_{i,j}$, $ B(e_i,x)=B(e_i,y)=B(f_i,x)=0$ for all $i,j$ and $Q(x)\neq
  0$, a \emph{parabolic basis}.
\end{itemize}

The maximal subgroups of the classical groups are described by 
Aschbacher's Theorem \cite{asch}. They fall into eight families 
$\mathcal{C}_i$ ($1\le i \le 8$) of ``geometric" subgroups, together 
with a further class $\mathcal{C}_9$ consisting of almost simple groups
in absolutely irreducible representations satisfying certain extra 
conditions (see \cite[p3]{KL}). We shall deal with each case $H \in \mathcal{C}_i$ in 
a separate subsection below. There are three further cases where 
extra possibilities for $H$ have to be considered -- those in which 
$G$ contains a graph automorphism of $L = \PSL_d(q)$ or $\Sp_4(q)$ ($q$ even),
or a triality automorphism of $L = \POmega_8^+(q)$. These are 
considered at the end of the section.

Throughout, we shall use the detailed descriptions of the subgroups in 
the families $\mathcal{C}_i$ which can be found in \cite[Chapter 4]{KL}.

\subsection{Aschbacher class $\mathcal{C}_1$:} 
\label{sec:C1}

Suppose $H \in \mathcal{C}_1$. Here $H$ is the stabiliser of some subspace $U$ of dimension $m$ with $1\le m\leq d/2$.
If $L=\PSL_d(q)$ or $U$ is totally singular, then $H$ has a nontrivial normal $p$-subgroup and so by
Lemma \ref{lem:pnorml}(iv), $G$ has a $p$-subdegree (i.e. a subdegree 
divisible by $p$). If $G$ is an orthogonal group and $U$
is a nonsingular 1-space, then the subdegrees of $G$ are given in \cite{bhs} or \cite[pp.331,332]{Saxl02} from which we see that there is always a $p$-subdegree.  This leaves
us to deal with the case where $L\neq\PSL_d(q)$ and $U$ is a nondegenerate subspace of
dimension $m$ (with $m\geq 2$ if $L\neq\PSU_d(q)$). Then $H$ also stabilises $U^{\perp}$
which has dimension $d-m$. Since $G$ acts primitively on $\Omega$, it follows that $U$ and $U^\perp$ are
not similar. Hence if $G$ is not orthogonal then $d-m>m$, while if $G$ is orthogonal
$d-m\ge m$, with equality implying that $m$ is even with the restriction of the quadratic
form to $U$ being hyperbolic, while the restriction of the quadratic form to $U^{\perp}$
is elliptic.  Note that if $G$ is symplectic, then both $d$ and $m$ are even and so
$d-m\ge 4$ in this case, while if $G$ is orthogonal then $d-m\ge 4$ and $m\ge 2$.  Thus by
Lemma \ref{lem:intnondegen}, we can find $W\in (U^{\perp})^G$ such that $0\neq W\cap
U^{\perp}<U^{\perp}$ and $W\cap U^{\perp}$ is nondegenerate. Moreover, $p$ divides
$|G_{U^\perp}:G_{U^\perp,W}|$. Since $W^{\perp}\in U^G$, $H_W=H_{W^{\perp}}$ and
$G_U=G_{U^\perp}$ it follows that $G$ has a $p$-subdegree. This proves (A) of Theorem \ref{thm:classgroups} in the $\mathcal{C}_1$ case.

For $\frac{3}{2}$-transitivity, if $L\neq\PSL_d(q)$ and $U$ is not totally singular, then
$p$ divides $|\Omega|$ and so the existence of a $p$-subdegree implies that $G$ is not
\tont. If $L\neq \PSL_d(q)$ and $U$ is totally singular, then by Lemma 
\ref{unique}, except in the case $L\neq\POmega^+_d(q)$ with $d\equiv 2\pmod 4$ and $\dim U = d/2$, $G$ has a unique subdegree which is a power of $p$. Hence if $G$ is
not 2-transitive then $G$ is not \tont.

For $L=\POmega^+_d(q)$ with $m=d/2\geq 5$ odd, let
$\{e_1,\ldots,e_{d/2},f_1,\ldots,f_{d/2}\}$ be a hyperbolic basis for $V$. Without loss of
generality we may suppose that $U=\la e_1,\ldots,e_{d/2}\ra$ and note that a maximal totally singular subspace $W$ of $V$ is in $U^L$ if
and only if $W\cap U$ has even codimension in $U$.  Moreover, as $G$ is primitive,
$U^G=U^L$. Thus by \cite[Lemma 9.4.2]{BCN}, the subdegrees are
$$q^{(m-i)(m-i-2)/2}\left[\begin{smallmatrix}m\\i\end{smallmatrix}\right]_q,
$$
for each $i$ with $0\le i<m$ and $m-i$ even.\footnote{By $\left[\begin{smallmatrix}m\\i\end{smallmatrix}\right]_q$ 
we mean the Gaussian coefficient used to denote the number of $i$-subspaces in an $m$-dimensional vector 
space over $\GF(q)$. We have 
$$\left[\begin{smallmatrix}m\\i\end{smallmatrix}\right]_q=\frac{(q^m-1)(q^{m-1}-1)\ldots(q^{m-i+1}-1)}{(q^i-1)\ldots(q-1)}$$} Hence $G$ is not \tont.

If $L=\PSL_d(q)$ the suborbits are $\Omega_i=\{W\in\Omega \mid \dim(U\cap W)=i\}$ for each
$i,$ $0\leq i\leq m$. By \cite[Lemma 9.3.2(ii)]{BCN},
$$|\Omega_0|=q^{m^2}\left[\begin{smallmatrix}d-m\\m\end{smallmatrix}\right]_q.$$ 
When $m=1$, $G$ is 2-transitive. For $m\geq 2$,
\begin{align*}
  |\Omega_{m-1}| 
&=q\left[\begin{smallmatrix}d-m\\1\end{smallmatrix}\right]_q\left[\begin{smallmatrix}m \\
m-1\end{smallmatrix}\right]_q \\
  &=\frac{(q^m-1)q(q^{d-m}-1)}{(q-1)(q-1)}
\end{align*}
is another subdegree. Since  $|\Omega_0|$ and $|\Omega_{m-1}|$ are
not equal, it follows that $G$ has two different subdegrees and so $G$ is
not \tont.

\subsection{Aschbacher class $\mathcal{C}_2$:}
\label{sec:C2}
Here $H$ is the stabiliser of a decomposition 
$V=U_1 \oplus \cdots \oplus U_t$, where all $\dim U_i = m$, $d=mt$ and $t>1$.
Roughly speaking $H$ is the wreath product of a classical group on 
$U_1$ with $S_t$. Detailed descriptions of the subgroups in 
$\mathcal{C}_2$ can be found in \cite[\S 4.2]{KL}.
There are several cases to consider.

\subsubsection{$L=\PSL_d(q)$}

If $t>2$, \cite[Theorem 1.4]{james} implies that there exists $g\in G$ such that $H\cap
H^g\cap L=1$.  If $|H\cap L|$ is divisible by $p$ (which will happen if $m>1$) then it
follows that $G$ has a subdegree divisible by $p$. Moreover, as $p$ divides $|\Omega|$, it
follows that in these cases $G$ is not \tont. Thus we may assume, if $t>2$ that $p$ does not divide $|H\cap L|$ and $m=1$.

Suppose next that $m=1$ and that $p$ divides $|H|$. Then either $p$ divides $|H\cap L|$ (and hence $t=2$),  or $H$ contains a field automorphism of order $p$. If $p$ divides $|H\cap L|$ and $t=2$, then we must have $p=2$. In this
case, let $x\in H\cap L\cong D_{2(q-1)}$ be an involution. Then $|x^G|=q^2-1$ while
$|x^G\cap H|=q-1$. Thus by Lemma \ref{lem:mvconjclass}(ii), $G$ has an even
subdegree. Suppose now that $p$ does not divide $|H\cap L|$ and $H$ contains a field
automorphism $x$ of order $p$. Note that $p$ must be odd and $d<p$. Then Lemma
\ref{lem:pslouter} implies that $|x^G|>\frac{1}{2}q^{(2d^2-5)/3}$.  Since $|H\cap L|$ is coprime to $p$, the Sylow $p$-subgroup of $H$ is cyclic and so $\la x\ra^G\cap H=\la x\ra^H$. When $G=\Aut(L)$ we have $|H|=(q-1)^{d-1}d!f2$ and $|C_H(x)|=(q^{1/p}-1)^{d-1}d!f2$. where $q=p^f$. Thus for all possibilities for $G$ we have $|x^G\cap H|\leq (p-1)|x^H|\leq (p-1)(q-1)^{d-1}/(q^{1/p}-1)^{d-1}$. Hence for $d\geq 3$ we have $|x^G\cap H|\leq \frac{1}{2}q^{d-1}$ and Lemma \ref{lem:mvconjclass}(ii) implies that $G$ has a subdegree divisible by $p$. For $d=2$, we have $|x^G\cap H|<q$. Moreover,  $C_L(x)=\PSL_2(q^{1/p})$ and so in fact $|x^G|>q^{3-3/p}\geq q^2$. Thus Lemma \ref{lem:mvconjclass}(ii) again yields a $p$-subdegree. Since $|\Omega|$ is divisible by $p$, it follows that if $p$ divides $|H|$ then $G$ is not \tont.


Now suppose $m=1$ and $p$ does not divide $|H|$. Then $d<p$ and $p$ is odd. Let $V = \la v_1 \ra \oplus \cdots \oplus\la v_d\ra$ be the decomposition stabilized by $H$,  let 
$\gamma$ be the decomposition $\la v_1+v_2\ra\oplus \la v_2\ra\oplus\ldots
\oplus \la v_d\ra$ and $\delta$
be the decomposition $\la v_1+v_2\ra\oplus \la v_1-v_2\ra\oplus \la v_3\ra\oplus\ldots
\oplus \la v_d\ra$.  Then $|\gamma^H|=(q-1)d(d-1)$ and $|\delta^H|=(q-1)d(d-1)/2$, and so $G$ is not \tont.

Finally suppose that $t=2$ and $m>1$. 
Let $V=\la v_1,\ldots,v_m\ra\oplus\la v_{m+1},\ldots,v_{2m}\ra$
be the decomposition fixed by $H$ and let $\gamma$ be the decomposition $\la
v_1,\ldots,v_{m-1},v_{m+1}\ra\oplus\la v_m,v_{m+2},\ldots,v_{2m}\ra$.  
Then $H_\gamma$
fixes the decomposition $\la v_1,\ldots,v_{m-1}\ra \oplus \la v_m\ra\oplus\la
v_{m+1}\ra\oplus \la v_{m+2},\ldots,v_{2m}\ra$. Hence $p$ divides $|H:H_\gamma|$ (as $H_{U_1}$
has a composition factor isomorphic to $\SL_m(q)$ and $(H_{U_1})_\gamma$ stabilises a
decomposition of $U_1$). Since $p$ divides $|\Omega|$, $G$ is not \tont.

\vspace{3mm}
From now on, we suppose that $L$ is not $\PSL_d(q)$.

\subsubsection{$t=2$ and $U_1,U_2$ are maximal totally singular subspaces:}

Without loss of generality we may suppose that $U_1=\la e_1,\ldots,e_m\ra$ and $U_2=\la
f_1,\ldots,f_m\ra$ with $m=d/2$ such that $B(e_i,f_j)=\delta_{ij}$.  Suppose first that $m\ge 3$ and let
$W_1=\la f_1,f_2,e_3\ldots,e_m\ra$. Then $U_1\cap W_1$ has codimension 2 in $U_1$ and lies
in the same $G$-orbit as $U_1$. (The only time $G$ may not be transitive on the set of
maximal totally isotropic subspaces is when $G$ is an orthogonal group of plus type. In this case, two
maximal totally isotropic subspaces lie in the same orbit if and only if they intersect in
an even codimension subspace.) Let $W_2=\la e_1,e_2,f_3\ldots,f_m\ra$. Then
$\gamma=\{W_1,W_2\}\in\Omega$. It follows that
$H_{U_1,W_1}$ fixes $U_1\cap W_1=\la e_3,\ldots,e_m\ra$ and 
$U_2\cap W_1 = \la f_1,f_2\ra$. Hence $H_\gamma$ fixes the set 
$\{\la e_1,e_2\ra,\la e_3,\ldots,e_m\ra,\la f_1,f_2\ra,\la
f_3,\ldots,f_m\ra\}$ and so $|H:H_\gamma|$ is divisible by $p$. When $m=2$, $G$ is not
orthogonal and so $G$ is transitive on the set of $m$-dimensional totally isotropic
subspaces. Thus let $W_1=\la e_1,f_2\ra\in U_1^G$. Also let $W_2=\la e_2,f_1\ra$ and
$\gamma=\{W_1,W_2\}\in \Omega$. Then $H_\gamma$ fixes $\{\la e_1\ra,\la e_2\ra,\la f_1\ra,\la
f_2\ra\}$ and so $p$ divides $|H:H_\gamma|$.  Thus we have found a $p$-subdegree in all cases
and hence $G$ is also not \tont\ as $|\Omega|$ is divisible by $p$.

\subsubsection{$G$ is orthogonal, $t=2$ and $U_1$ and $U_2$ are nondegenerate, similar but nonisometric subspaces:}
 Here both $q$ and $d/2$ are odd.  By Lemma \ref{lem:intnondegen}, there exists $W_1\in U_1^G$ such
 that $\{0\}\ne U_1\cap W_1\ne W_1$ and $U_1\cap W_1$ is nondegenerate.  Let $W_2$ be a
 nondegenerate subspace complementary to $W_1$, and isometric to $U_2$, and let $\gamma$ be the
 decomposition $V=W_1\oplus W_2$ in $\Omega$. Then $H_{U_1,\gamma}$ fixes the nondegenerate
 subspace $U_1\cap W_1$ of $U_1$ and so $p$ divides $|H:H_\gamma|$ (by Lemma
 \ref{lem:nondegenp}). Since $H$ is not parabolic, $p$ divides $|\Omega|$ and so $G$ is
 not \tont.

\vspace{4mm}
The remaining cases to consider are where $V=U_1\perp U_2\perp\ldots\perp U_t$ where the
$U_i$ are isometric nondegenerate subspaces of dimension $m$. We break up the analysis as
follows.

\subsubsection{$m\geq 3$, or $m=2$ and $G$ is unitary:}  
Here Lemma \ref{lem:intnondegen} implies that there exists a nondegenerate subspace $W_1$
of $U_1\perp U_2$ in $U_1^G$ such that $W_1\cap U_1$ is a proper nondegenerate subspace of
$U_1$. In this case, let $W_2=W_1^{\perp}\cap (U_1\perp U_2)$ and $\gamma$ be the
decomposition $V=W_1\perp W_2\perp U_3\perp\ldots\perp U_t$. Then $H_{U_1,U_2,\gamma}$
fixes $U_1\cap W_1$. Since $H_{U_1,U_2}$ induces a classical group on $U_1$ and $U_1\cap
W_1$ is nondegenerate, it follows by Lemma \ref{lem:nondegenp} that $p$ divides
$|H_{U_1,U_2}:H_{U_1,U_2,\gamma}|$ and hence also divides $|H:H_\gamma|$.  Since $p$
divides $|\Omega|$, $G$ is not \tont.

\subsubsection{$m=2$ and $G$ is symplectic:} 
Note that  \cite[Proposition 6.2.6]{KL} implies $q\neq 2$. Suppose that $\{e_1,\ldots,
e_{d/2},f_1,\ldots,f_{d/2}\}$ is a symplectic basis and each $U_i=\la e_i,f_i\ra$. Let
$\gamma\in\Omega$ be the decomposition $V=\la e_1,f_1+f_2\ra\perp\la
e_1+e_2,f_1\ra\perp\la e_3,f_3\ra \perp\ldots\perp\la e_d,f_d\ra$. Then $H_{\gamma}$ fixes
$\{\la e_1\ra,\la f_1\ra\}$ while $\Sp_2(q)\leqslant H_{\la e_1,f_1\ra}^{\la
  e_1,f_1\ra}$. Thus $G$ has a $p$-subdegree and as $|\Omega|$ is divisible by $p$, $G$ is
not \tont.

\subsubsection{$m=2$ and $G$ is orthogonal:} 
Here $H$ preserves the decomposition $V=U_1\perp U_2\perp\ldots\perp U_{d/2}$ and the
$U_i$ are isometric nondegenerate $2$-spaces, either all hyperbolic or all elliptic. If each $U_i$ is hyperbolic then $V$ is hyperbolic and so
has a hyperbolic basis $\{e_1,\ldots,e_{d/2},f_1,\ldots,f_{d/2}\}$ and we may suppose that
each $U_i=\la e_i,f_i\ra$.  On the other hand, if each $U_i$ is elliptic, let $U_i=\la a_i,b_i\ra$ such that $Q(a_i)=1, Q(b_i)=\zeta$ for some $\zeta\in\GF(q)\backslash\{0\}$, and $B(a_i,b_i)=0$ if $q$ is odd and $B(a_i,b_i)=1$ if $q$ is even. Note that if $p$ divides $|H|$ either $p\leq d/2$ or $H$ contains a field automorphism of order $p$. Moreover, $d/2\geq 4$.

Suppose first that $q$ is even. If each $U_i$ is hyperbolic, let $W_1=\la e_1+e_2,f_1\ra$ and $W_2=\la e_2,f_1+f_2\ra$. Then
$\gamma=W_1\perp W_2\perp U_3\perp\ldots\perp U_{d/2}\in\Omega$ and $H_\gamma$ fixes
$U_1$ and $U_2$. Thus $2\binom{d/2}{2}$ divides $|\gamma^H|$ and so $G$ has an even
subdegree. If each $U_i$ is elliptic,  then 
$$\gamma=\la a_1,b_1+a_2+a_3\ra\perp\la a_2+b_1+b_3, b_2\ra\perp\la
a_3,b_3+b_1+b_2\ra\perp U_4\perp\ldots\perp U_{d/2}\in\Omega$$
and $H_\gamma$ fixes $U_1$, $U_2$ and $U_3$. Thus $|\gamma^H|$ is divisible by
$6\binom{d/2}{3}$ which is even.

Suppose now that $q$ is odd and $p\leq d/2$. By \cite[Propositions 6.2.9 and 6.2.10]{KL}, we have $q\geq 5$.  If each $U_i$ is hyperbolic, let $W_1=\la e_1,f_1+f_3\ra$, $W_2=\la e_1+e_2-e_3,f_2\ra$ and $W_3=\la e_1-e_3,-f_2-f_3\ra$.
If each $U_i$ is elliptic, since $q\geq 5$, we can choose $\lambda_1,\lambda_2\in\GF(q)\backslash\{0\}$ such that $Q(\lambda_1a_1+\lambda_2b_2)=1$ and then there exists $\mu_1,\mu_2 \in \GF(q)\backslash\{0\}$ such that $Q(\mu_1 a_1+\mu_2 b_2)=\zeta$ and $B(\lambda_1a_1+\lambda_2b_2,\mu_1 a_1+\mu_2 b_2)=0$. Then let $W_1=\langle \lambda_1a_1+\lambda_2b_2,b_1\rangle$, $W_2=\la \mu_1 a_1+\mu_2b_2,a_3\rangle$ and $W_3=\langle a_2,b_3\rangle$. Now for both the hyperbolic and elliptic case let $\beta\in\Omega$ be the decomposition $V=W_1\perp
W_2\perp W_3\perp U_4\perp\ldots \perp U_{d/2}$. Then $H_\beta$ fixes $U_1$, $U_2$ and $U_3$, so $6\binom{d/2}{3}$ divides $|\beta^H|$. Thus if $p=3$ then $G$
has a subdegree divisible by $p$. For $p\geq 5$, note that for each pair $U_i\perp
U_{i+1}$, with $4\leq i\leq p-1$ and $i$ even, we can find $Y_i,Y_{i+1}$ of the same
isometry type as $U_1$ such that $Y_i\perp Y_{i+1}=U_i\perp U_{i+1}$ and
$\{Y_i,Y_{i+1}\}\cap\{U_i,U_{i+1}\}=\varnothing$. Let $\gamma$ be the decomposition
$V=W_1\perp W_2\perp W_3\perp Y_4\perp Y_5\perp \ldots \perp Y_{p-1}\perp Y_p\perp
U_{p+1}\perp\ldots\perp U_{d/2}$. Then $H_\gamma$ fixes $\{U_1,U_2,U_3\}$ and
$\{U_1,U_2,\ldots, U_p\}$, so $|\gamma^H|$ is divisible by $\binom{p}{3}\binom{d/2}{p}$,
which is divisible by $p$.

Suppose now that $p>d/2$ and $H$ contains a field automorphism $x$ of order $p$. Note that
since $d\geq 8$ this implies that $p\geq 5$ and $q=p^f\geq p^p$. By Lemma \ref{lem:pslouter},
$|x^G|>\frac{1}{4}q^{d(d-1)2/5}$. Now $|H|\leq (2(q+1))^{d/2}(d/2)!(q-1)f3<q^{3d/4+3}$ due
to the conditions on $q$.  Thus $|x^G\cap H|^2<|x^G|$ and so by Lemma
\ref{lem:mvconjclass}, $G$ has a subdegree divisible by $p$.

Now since $p$ divides $|\Omega|$ it follows that if $p$ divides $|H|$ then $G$ is not
\tont. If $p$ does not divide $|H|$ note that $p\geq 5$.  As above, choose $Y_1,Y_2$ to
be nondegenerate 2-subspaces of $\la U_1,U_2\ra$ of the same isometry type as $U_1$ and orthogonal to each other. Let $\beta'\in\Omega$ be the decomposition given by
$V=Y_1\perp Y_2\perp U_3\perp\ldots\perp U_{d/2}$. Then $\binom{d/2}{2}$ divides
$|(\beta')^H|$ and so let $\ell$ be the integer such that $|(\beta')^H|=\ell\binom{d/2}{2}$.  Now let
$Y_3,Y_4$ be nondegenerate 2-subspaces of $\la U_3,U_4\ra$ of the same isometry type as
$U_1$ and orthogonal to each other. Then we can choose $Y_3,Y_4$ so that if
$\beta''\in\Omega$ is the decomposition $V=U_1\perp U_2\perp Y_3\perp Y_4\perp U_5\perp
\ldots\perp U_{d/2}$ then $|(\beta'')^H|=\ell\binom{d/2}{2}$. Now let $\gamma$ be the
decomposition $V=Y_1\perp Y_2\perp Y_3\perp Y_4\perp U_5\perp\ldots\perp U_{d/2}$. Then $H_\gamma$ fixes $\{U_1,U_2,U_3,U_4\}$ and preserves the partition $\{\{U_1,U_2\},\{U_3,U_4\}\}$. Thus 
$|\gamma^H|=3\ell^2\binom{d/2}{4}$. Except in the case where $d/2=4$ and $\ell\neq 2$, we have $|\gamma^H|\neq |(\beta')^H|$ and so $G$ is not \tont. When $d/2=4$ and $\ell=2$, note that $|\gamma^H|=12$ while for $\beta$ introduced previously (and whose definition is still valid for $p>d/2$), $|\beta^H|$ is divisible by $6\binom{4}{3}=24$. Hence $G$ is not \tont\ in this case either.

\subsubsection{$m=1$ and $G$ is orthogonal:} 
According to \cite[Table 4.2A]{KL},  $q=p$ is odd, and so if $p$ divides $|H|$ it
follows that $p\leq d$.  Recall that we are assuming $d\geq 7$. Let $H$ be the stabiliser
of the decomposition $\alpha=\la v_1\ra\perp\la v_2\ra\perp\ldots\perp\la v_d\ra$. By the
discussion in \cite[p100-101]{KL} we may assume that $B(v_i,v_i)=1$ for all $i$ and hence
$Q(v_i)=2^{-1}$. Note also that not all orthogonal decompositions of $V$ into nonsingular 1-spaces
may lie in $\Omega$ as there are two isometry types of nonsingular 1-spaces. Note also
that for $\lambda\in\GF(p)$, $Q(v_1+\lambda v_2)=Q(v_1)(1+\lambda^2)$ and so
$Q(v_1+\lambda v_2)=0$ if and only if $\lambda^2=-1$. Hence the space
$$\la v_1,v_2\ra \hbox{ is }\left\{ \begin{array}{l}
    \text{elliptic when }q\equiv 3\pmod 4\\
    \text{hyperbolic when }q\equiv 1\pmod 4
               \end{array}\right. $$

Suppose first that $p$ divides $|H|$ with $p>5$. Then there exist at least three 1-dimensional subspaces of $\la v_1,v_2\ra$ upon which the quadratic form has the same parity as it has on $\la v_1\ra$. Thus there exist $\lambda_1,\lambda_2\in\GF(p)\backslash\{0\}$ such
that $Q(\lambda_1v_1+\lambda_2 v_2)=Q(v_1)$ and hence there exists $g\in L$ mapping $v_1$ to
$\lambda_1 v_1+\lambda_2 v_2$.   Now $\la\lambda_1 v_1+\lambda_2 v_2\ra^\perp\cap\la
v_1,v_2,v_3\ra=\la \mu_1v_1+\mu_2 v_2,v_3\ra$ for some $\mu_1,\mu_2\neq0$, and this subspace has a decomposition $\la x_1\ra\perp\la x_2\ra$
with $Q(x_1)=Q(x_2)=Q(v_1)$ such that each $x_i\neq v_3$.  Note that each
$x_i=\sum_{j=1}^{3}\xi_j v_j$ with each $\xi_j\neq0$. Now
\begin{align*}
\gamma=&\la \lambda_1v_1+\lambda_2 v_2\ra\perp\la x_1\ra\perp\la x_2\ra \perp
\la \lambda_1v_4+\lambda_2 v_5\ra\perp\la \mu_1v_4+\mu_2 v_5\ra\perp\ldots \perp \\
&\la
\lambda_1v_{p-1}+\lambda_2 v_p\ra\perp\la \mu_1v_{p-1}+\mu_2 v_p\ra\perp\la
v_{p+1}\ra\perp\ldots\perp\la v_d\ra\in\Omega.
\end{align*}
Then $H_\gamma$ fixes $\la v_3\ra$, $\{\la v_1\ra,\la v_2\ra\}$ and $\{\la
v_1\ra,\ldots,\la v_p\ra\}$ and so $|\gamma^H|$ is divisible by
$3\binom{p}{3}\binom{d}{p}$, which is divisible by $p$.

Suppose next that $p=5$. Since $d\ge 7$, $|H|$ is divisible by $5$. In this case
$Q(v_1)=2^{-1}=3$, which is a nonsquare. Hence if $Q(x)$ is a nonsquare then $\la x\ra\in
\la v_1\ra^G$. The nonsquares in $\GF(5)$ are 2 and 3. Let $x_1=v_1+2v_2+v_3$,
$x_2=v_1+v_2+2v_3$, $x_3=2v_1+v_2+v_3+v_4+2v_5$, $x_4=2v_1+v_2+v_3+3v_4+3v_5$ and
$x_5=2v_1+v_2+v_3+2v_4+v_5$. Then $Q(x_1)=Q(x_2)=Q(x_3)=Q(x_5)=3$ and $Q(x_4)=2$, and so
$$\delta=\la x_1\ra\perp\la x_2\ra\perp\la x_3\ra\perp\la x_4\ra\perp\la
x_5\ra\perp\la v_6\ra\perp\ldots\perp \la v_d \ra \in \alpha^G.$$ Now
$H_{\delta}$ fixes $\{\la v_1\ra,\la v_2\ra,\la v_3\ra\}$ and $\{\la
v_4\ra,\la v_5\ra\}$. Hence $10\binom{d}{5}$ divides
$|\delta^H|$.

Next suppose that $p=3$. Since $d\ge 7$, $|H|$ is divisible by $3$. In this case
$Q(v_1)=2^{-1}=2$, which is a nonsquare. Let $x_1=v_1+v_2+v_3+v_4$, $x_2=v_1-v_2+v_5+v_6$,
$x_3= v_3-v_4+v_5-v_6$, $x_4=v_1+v_2-v_3-v_4$, $x_5=-v_1+v_2+v_5+v_6$ and
$x_6=-v_3+v_4+v_5-v_6$. Then $Q(x_i)=2$ for all $i$, and so
$$\delta=\la x_1\ra\perp\la x_2\ra\perp\la x_3\ra\perp\la x_4\ra\perp\la
x_5\ra\perp\la x_6\ra\perp\la v_7\ra\perp\ldots\perp\la
v_d\ra\in\alpha^G.$$ Now $H_{\delta}$ preserves that partition $\{\{\la v_1\ra,\la
v_2\ra\},\{\la v_3\ra,\la v_4\ra\},\{\la v_5\ra,\la v_6\ra\}\}$. Hence
$15\binom{d}{6}$ divides $|\delta^H|$. 

Thus if $p$ divides $|H|$ we have found a $p$-subdegree and so as $p$ divides $|\Omega|$, it follows that $G$ is not \tont. Suppose now that $p$ does not divide $|H|$. Then $p>d$
and in particular $p>7$.  Using $\lambda_1,\lambda_2,\mu_1$ and $\mu_2$ as before let
$$\beta=\la \lambda_1v_1+\lambda_2 v_2\ra\perp \la \mu_1v_1+\mu_2 v_2\ra\perp\la
v_3\ra\perp\ldots\perp\la v_d\ra\in\Omega.$$
Now $H_{\beta}$ fixes $\{\la v_1\ra,\la v_2\ra\}$ and hence
$|\beta^H|=2^i\binom{d}{2}$ for some $i\ge 0$.  Next let
$$\eta=\la \lambda_1v_1+\lambda_2 v_2\ra\perp \la \mu_1v_1+\mu_2 v_2\ra\perp \la \lambda_1v_3+\lambda_2 v_4\ra\perp \la
\mu_1v_3+\mu_2 v_4\ra\perp\la v_5\ra\perp \ldots\perp\la v_d\ra\in\Omega.$$ Then $H_{\eta}$ fixes
$\{\{\la v_1\ra,\la v_2\ra\},\{\la v_3\ra,\la v_4\ra\}\}$. Hence $|\eta^H|$ is divisible
by $3\binom{d}{4}=\binom{d}{2} (d-2)(d-3)/4$. One of $d-2$ or $d-3$ is an odd number at least 5, and hence $|\eta^H|\neq |\beta^H|$. Thus $G$ is not \tont.

 \subsubsection{$m=1$ and $G$ is unitary:}  
Let $H$ be the stabiliser of the decomposition $V=\la v_1\ra\perp\la
v_2\ra\perp\ldots\perp\la v_d\ra$. By the discussion on \cite[p100--101]{KL} we may assume
that $B(v_i,v_i)=1$ for all $i$ and by \cite[Proposition 4.2.9]{KL}, 
$H\cap L=((q+1)^{d-1}/(q+1,d)).S_d$. Thus if $p$ divides $|H|$ either $p\leq d$ or $H$ contains a
field automorphism of order $p$. Suppose first that $p=2$. Then
$B(v_1+v_2+v_3,v_1+v_2+v_3)=1$ and $\la v_1,v_2,v_3\ra\cap \la v_1+v_2+v_3\ra^{\perp}=\la
v_1+v_2,v_2+v_3\ra$. Both $v_1+v_2$ and $v_2+v_3$ are singular. Since $\la
v_1+v_2,v_2+v_3\ra$ is nondegenerate, there exist nonsingular $x_1,x_2\in\la
v_1+v_2,v_2+v_3\ra$ with $x_1\in x_2^\perp$. Note that for $i=1,2$, $x_i\neq
v_1,v_2,v_3$. Thus
$$\beta=\la v_1+v_2+v_3\ra\perp\la x_1\ra\perp\la x_2\ra\perp\la
v_4\ra\perp\ldots\perp\la v_d\ra\in\Omega.$$ Moreover, $H_{\beta}$
fixes $\la v_1,v_2,v_3\ra$. Hence $\binom{d}{3}=d(d-1)(d-2)/6$ divides
$|\beta^H|$. If $d$ is even, or $d\equiv 1\pmod 4$, it
follows that $G$ has an even subdegree. If $d\equiv 3\pmod 4$ and $d>3$,
let
$$\gamma=\la v_1+v_2+v_3\ra\perp\la x_1\ra\perp\la x_2\ra\perp\la v_4+v_5+v_6\ra\perp\la
y_1\ra\perp\la y_2\ra\perp\la v_7\ra\perp\ldots\perp\la v_d\ra\in\Omega,$$ where $y_1,y_2$
are nonsingular elements of $\la v_4+v_5,v_5+v_6\ra$. Then $|\gamma^H|$ is divisible by
$10\binom{d}{6}$, which is even. Hence if $d>3$, $G$ has a subdegree divisible by
$p=2$. Now let $d=3$ and let $x\in H\cap L$ be an involution. Then by Lemma
\ref{lem:upperbounds}, $|x^G|=(q-1)(q^3+1)$ while $|x^G\cap H|=3(q+1)$. Hence for $q\geq
8$, Lemma \ref{lem:mvconjclass} implies that $G$ has an even subdegree. If $q=4$, a \textsc{Magma} calculation shows that $L$ has a subdegree equal to $150$. Hence for all even
$q\ge 4$, $G$ has an even subdegree (note that $(d,q)\neq (3,2)$).

Next suppose that $p$ is odd and $p\leq d$. In $\la v_1,v_2,v_3\ra$, we have $\la
v_1+v_2\ra^{\perp}=\la v_1-v_2,v_3\ra$. Then there exist
$\lambda,\mu\in\GF(q^2)\backslash\{0\}$ such that $v_1-v_2+\lambda v_3$ and $v_1-v_2+\mu
v_3$ are nonsingular and $B(v_1-v_2+\mu v_3,v_1-v_2+\lambda v_3)=0$. Then
\begin{equation}
 \label{eq:gamma}
\gamma=\la v_1+v_2\ra\perp\la v_1-v_2+\lambda v_3\ra\perp\la v_1-v_2+\mu
v_3\ra\perp\la v_4\ra\perp\ldots\perp\la v_d\ra\in \Omega.
\end{equation}
Now $H_{\gamma}$ fixes $\{\la v_1\ra,\la v_2\ra\}$ and $\la v_3\ra$, and
if $g\in H_{\gamma}$ maps $v_1$ to $\xi v_1$ then $g$ maps $v_2$ to
$\xi v_2$.  Hence $3(q+1)\binom{d}{3}$ divides
$|\gamma^H|$. Thus if $p=3$ we have found a $p$-subdegree.  If $p\ge 5$, let
\begin{align*}
  \delta=&\la v_1+v_2\ra\perp\la v_1-v_2+\lambda v_3\ra\perp\la v_1-v_2+
  \mu v_3\ra\perp\la v_4+v_5\ra\perp\\
  &\la v_4-v_5\ra\perp\ldots\perp\la
  v_{p-1}+v_p\ra\perp\la v_{p-1}-v_p \ra  \perp 
  \la v_{p+1}\ra\perp\ldots\perp\la v_d\ra\in\Omega.
\end{align*}
Then $\binom{p}{3}\binom{d}{p}$ is divisible by $p$ and divides
$|\delta^H|$.

Finally, suppose that $p>d$ and $H$ contains a field automorphism $x$ of order $p$. Note that $p\geq 5$ and $q\geq p^p>d!$. By
Lemma \ref{lem:pslouter}, $|x^G|>\frac{q}{2(q+1)}q^{(4d^2-9)/5}>q^{(4d^2-10)/5}$, as $4<q^{1/5}$. Now $|H|\leq (q+1)^{d-1}d!2f<(q+1)^{d-1}q^2<(2q)^{d-1}q^2<q^{(4d+2)/3}$.  Hence $|x^G|>|x^G\cap H|^2$ for $d\geq 5$. For $d=4$, we
see that $|H|\leq (q+1)^348f<q^3384f<q^4$ as $q\geq 5^5$. Since $|x^G|>q^{10}$ we also have $|x^G|>|x^G\cap H|$ in this case. Finally, for $d=3$ we have $|H|\leq (q+1)^212f< \frac{11}{10}q^212f<q^{13/5}$ as both $p$ and $f$ are at least 5. Since $|x^G|>q^{26/5}$ in this case we also have that $|x^G|>|x^G\cap H|^2$. Hence for all values of $d$, Lemma \ref{lem:mvconjclass} yields a $p$-subdegree. 

Now $p$ divides $|\Omega|$ and so if $p$ divides $|H|$, $G$ is not \tont. If $(|H|,p)=1$
then $p$ is odd and $p>d$. Let $\beta=\la v_1+v_2\ra \perp \la v_1-v_2\ra\perp\la
v_3\ra\perp\ldots\perp\la v_d\ra\in \Omega$. If $g\in H_{\beta}$ then $g$ can interchange
$v_1$ and $v_2$, but if $g:v_1\mapsto \lambda v_1$ then $g:v_2\mapsto \lambda v_2$.  Thus
$|\beta^H|=(q+1)\binom{d}{2}$. For $\gamma$ defined in (\ref{eq:gamma}) we have already seen that $|\gamma^H|$ is divisible by $3(q+1)\binom{d}{3}$ (the definition of $\gamma$ and $|\gamma^H|$ do not depend on the condition $p\leq d$). Thus if $d>3$, then $G$ is not \tont. If $d=3$, we see that
$|H_\gamma\cap L|=2$ and so $3(q+1)^2/(q+1,3)$ divides $|\gamma^H|$. Since $(d,q)\neq(3,2)$, it follows that $|\gamma^H\neq |\beta^H|$ and so $G$ is not \tont.

\subsection{Aschbacher class $\mathcal{C}_3$:}

Here $H$ is the stabiliser of an extension field structure of $V$ as a $b$-dimensional
vector space over $\GF(q^a)$ where $d=ab$ and $a$ is prime. The 
subgroups are described in detail in \cite[\S 4.3]{KL}.

\begin{lemma}
 If $b>1$ then $G$ has a
$p$-subdegree and $G$ is not \tont.
\end{lemma}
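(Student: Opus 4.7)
The plan is to apply Lemma \ref{lem:weaklyclosed}. Since $b>1$, the subgroup $H\cap L$ contains a characteristic normal subgroup $K$ which is a quasisimple classical group of Lie type in characteristic $p$ of rank at least $1$ over $\GF(q^a)$, namely (the image in $L$ of) the field-extension classical group ($\SL_b$, $\SU_b$, $\Sp_b$ or $\Omega^\epsilon_b$) acting on $W=\GF(q^a)^b$. Take $H_0=K$. By the maximality of $H$, $N_G(H_0)=H$, and since $H_0$ is characteristic in $H$, $H_0$ is weakly closed in $G$.

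For condition (ii) of Lemma \ref{lem:weaklyclosed}, I apply Proposition \ref{prn:sylpgen} to $K$ acting on its natural $\GF(q^a)$-module $W$ to obtain a cyclic subgroup $T\leqslant K$ with $\la T,S'\ra=K$ for every Sylow $p$-subgroup $S'$ of $K$. For any $S\in\Syl_p(H)$, the subgroup $S\cap K$ lies in $\Syl_p(K)$ (since $K\norml H$), so $\la T,S\ra\supseteq\la T,S\cap K\ra=K=H_0$.

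For condition (i), I exhibit $\sigma\in N_G(T)\setminus H$. By Proposition \ref{prn:sylpgen}, $T$ acts irreducibly on a $\GF(q^a)$-subspace $W'$ of $W$ of dimension $\ell$, so $W'$ has $\GF(q)$-dimension $a\ell$. Identifying $W'$ with $\GF(q^{a\ell})$ so that $T$ embeds in $\GF(q^{a\ell})^\times$, the Frobenius $x\mapsto x^q$ of $\GF(q^{a\ell})$ is a $\GF(q)$-linear automorphism of $W'$ of order $a\ell$ normalising $T$; extending by the identity on a $T$-invariant complement yields $\sigma\in\GL_d(q)\cap N_G(T)$. The induced action of $\sigma$ on $T$ has order $a\ell$, whereas $N_H(T)$ can only induce automorphisms of $T$ arising from $\Gal(\GF(q^a)/\GF(q))$ of order $a$; since $\ell\geqslant 2$ (except in a handful of exceptional low-rank cases that reduce to $\SL_2(q^a)$-type setups via standard isomorphisms), we have $\sigma\notin H$. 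Lemma \ref{lem:weaklyclosed} then yields a subdegree divisible by $p$, and since $|\Omega|$ is divisible by $p$ when $b>1$ (as $|L|_p/|H\cap L|_p$ is a positive power of $q$), Lemma \ref{lem:pnorml}(i) implies $G$ is not \tont.

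The main obstacle will be a careful treatment of the exceptional low-rank cases ($\SU_2$, $\Sp_2$, $\Omega_3$, $\Omega^+_4$ and similar) where either Proposition \ref{prn:sylpgen} does not apply directly or $\ell=1$, requiring reductions via exceptional isomorphisms of classical groups. One must also verify in the symplectic, unitary and orthogonal cases that $\sigma$ preserves the relevant bilinear or quadratic form on $V$ so that $\sigma$ does lie in $G$; this is accomplished by choosing the identification $W'\cong\GF(q^{a\ell})$ and the $T$-invariant complement in a basis adapted to the form restricted to $W'$.
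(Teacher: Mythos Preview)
Your overall framework (use Lemma \ref{lem:weaklyclosed} with $H_0=H^{(\infty)}$ and a torus $T$ from Proposition \ref{prn:sylpgen}) matches the paper's second case, but your verification of condition~(i) has a genuine gap. You claim that $N_H(T)$ can only induce on $T$ the automorphisms coming from $\Gal(\GF(q^a)/\GF(q))$, of order $a$. This ignores the normaliser of $T$ \emph{inside $K$ itself}: for instance when $K=\SL_b(q^a)$ and $T$ is the Singer torus, $N_K(T)/C_K(T)\cong\Gal(\GF(q^{a\ell})/\GF(q^a))$ already has order $\ell=b$. Since $H$ also contains the $\GF(q)$-semilinear extension of $K$ (this is exactly what a $\mathcal{C}_3$-stabiliser is), $N_H(T)$ induces the full group $\Gal(\GF(q^{a\ell})/\GF(q))$ of order $a\ell$ on $T$. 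Your Frobenius element $\sigma$ therefore lies in $N_H(T)$, and the argument that $\sigma\notin H$ collapses.

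The paper avoids this by using a completely different $T$ in the generic case: it takes $T$ to be the $\GF(q)$-\emph{subfield subgroup} of $K$ (so $T$ is itself a quasisimple classical group, not a torus). One then gets $N_G(T)\not\le H$ by observing that $T$ preserves the tensor decomposition $V=W\otimes_{\GF(q)}\GF(q^a)$ and is centralised by a nontrivial element of $\GL_a(q)$ acting on the second factor, which lies in $G\setminus H$. Only in the exceptional case $L=\POmega^\pm_{2d}(q)$ with $H$ of unitary type does the paper use a cyclic $T$ from Proposition~\ref{prn:sylpgen}; there the key point is that $T$ acts trivially on a nondegenerate $\GF(q)$-subspace of codimension $2$, whence $C_G(T)\not\le H$ for structural reasons having nothing to do with Galois orders. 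You would need a replacement argument of this kind. The low-rank exclusions you flag (where Proposition~\ref{prn:sylpgen} requires $b\ge 3$) and the verification that $\sigma$ preserves the form are further loose ends, but the Galois-order miscalculation is the essential problem.
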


\begin{proof}
Suppose first that if $G$ is orthogonal then $H$ is not unitary. By 
hypothesis, $b\geq 2$ and if $G$ is
orthogonal, \cite[Table 4.3.A]{KL} implies that $b\geq 3$. Since $a\neq 1$ it follows that
$H$ is insoluble. Let $T$ be a $\GF(q)$-subfield subgroup of $H$. Choose a $\GF(q^a)$ basis
for $V$ and let $W$ be the $\GF(q)$-span of this basis. Then we can choose $T$ to preserve the
decomposition $V=W\otimes \GF(q^a)$. Since $\GF(q^a)$ is $a$-dimensional over $\GF(q)$, we
can find an element of $G\setminus H$ which preserves this tensor decomposition and
centralises $T$. Hence $N_H(T)<N_G(T)$.  Let $S$ be a Sylow $p$-subgroup of $H$. Then $\la
S,T\ra$ contains the weakly closed normal subgroup $H^{(\infty)}$ of $H$ and so Lemma
\ref{lem:weaklyclosed} implies that $G$ has a $p$-subdegree.

Next we suppose $L=\POmega^{\pm}_{2d}(q)$ and $H$ is a unitary group of dimension $d$. By
Proposition \ref{prn:sylpgen}, $H_0=H^{(\infty)}$ has a cyclic subgroup $T$ such that for
$S\in\Syl_p(H)$, we have $H_0\leqslant \la S,T\ra$. Now $T$ fixes a nondegenerate
$\GF(q^2)$-hyperplane and acts trivially on its perp. Thus $T$ fixes a $\GF(q)$-subspace
of codimension $2$ and acts trivially on its perp. Hence $N_{H}(T)<N_{G}(T)$ and
so Lemma \ref{lem:weaklyclosed} implies that $G$ has a subdegree divisible by $p$.

Since $|\Omega|$ is always divisible by $p$ and $G$ has a 
$p$-subdegree, it follows that $G$ is not \tont.
\end{proof}

It remains to deal with the case where $b=1$. This only occurs where
$L=\PSL^{\epsilon}_d(q)$ and $d$ is prime.  First we deal with the case where $(d,q)=(2,2^f)$. Recall that a transitive group is strongly \tont\ if all non-principal constituents of the permutation character are distinct and have the same degree.

\begin{lemma}\label{lem:L2}
Let $G$ be a group with socle $\PSL_2(q)$, where $q = 2^f\geq 4$, and let $H$ be a maximal subgroup of $G$
with $H \cap L$ dihedral of order $2(q+1)$. Consider the action of $G$ of degree $q(q-1)/2$
on the set of cosets of $H.$
\begin{enumerate}
\item[(1)] $G$ has no even subdegrees if and only if the index $|G:L|$ is odd.
\item[(2)] $G$ is strongly \tont\ if and only if either $G=L$ or $f$ is prime.
\item[(3)] $G$ is \tont\ if and only if either $G=L$ or  $f$ is prime.
\item[(4)] $G$ is $2$-transitive if and only if $q=4$.
\end{enumerate}
The nontrivial character degrees and the nontrivial subdegrees of the examples in $(2)$ and $(3)$ are
$q+1$ when $G=L$ and $(q+1)f$ when $G=L.f$.
\end{lemma}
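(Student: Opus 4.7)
The approach is via character theory of $\PSL_2(q)$: decompose the permutation character of $L$ explicitly, then track how the field-automorphism group $G/L$ permutes its irreducible constituents.

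First I identify $H$. Since $H\cap L = D_{2(q+1)}$ is the normaliser in $L$ of a cyclic non-split maximal torus $T$ of order $q+1$, and $H$ is maximal, $H=N_G(T) = (H\cap L)\rtimes\langle\sigma\rangle$, where $\sigma$ generates $G/L\cong C_e$ (with $e=|G:L|$ dividing $f$) and acts on $\mathbb{F}_q=\mathbb{F}_{2^f}$ as $x\mapsto x^{2^{f/e}}$. Using standard character theory of $\PSL_2(q)$ for even $q$, I verify the decomposition
\[
\mathrm{Ind}_{H\cap L}^{L}\mathbf{1}= \mathbf{1}_L + \sum_{i=1}^{(q-2)/2}\chi_i,
\]
where the $\chi_i$ are the pairwise distinct principal series characters of degree $q+1$, indexed by the $(q-2)/2$ unordered pairs $\{\alpha,\alpha^{-1}\}$ of non-trivial characters of the split torus $\mathbb{F}_q^*$. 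This already shows that $L$ itself is strongly \tont\ with subdegrees $1$ and $(q-2)/2$ copies of $q+1$; in particular $L$ is $2$-transitive iff $q=4$.

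By Mackey, $\pi_G|_L=\pi_L$, and $\sigma$ permutes the $\chi_i$ via the Galois action $\alpha\mapsto\alpha^{2^{f/e}}$. The $H$-suborbits correspond bijectively to $\langle\sigma\rangle$-orbits on $\{\chi_i\}$, with an orbit of size $k$ yielding a $G$-subdegree of length $k(q+1)$. Strong \tont-ness of $G$ additionally requires the resulting $G$-constituents of $\pi_G$ to be pairwise distinct, which is handled by Clifford theory applied to the cyclic extension $G/L$.

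Part $(1)$ follows because $q+1$ is odd, so a subdegree $k(q+1)$ is even iff $k$ is even, iff some $\langle\sigma\rangle$-orbit has even length. If $e$ is odd, every $\langle\sigma\rangle$-orbit size divides $e$ and hence is odd; if $e$ is even, the involution $\sigma^{e/2}$ acts on $\mathbb{F}_q^*$ as $x\mapsto x^{2^{f/2}}$, and a direct count of its fixed pairs shows it cannot fix every $\chi_i$, producing an even subdegree. For parts $(2)$ and $(3)$, I reduce the (strongly) \tont\ property to the requirement that all $\langle\sigma\rangle$-orbits on the $(q-2)/2$ pairs have equal size. A pair $\{\alpha,\alpha^{-1}\}$ is $\sigma$-fixed iff $\alpha^{2^{f/e}\pm 1}=1$; when $e=f$ is an odd prime, $2^f-1\equiv 1\pmod{3}$ forces no non-trivial pair to be $\sigma$-fixed, so every orbit has size $f$, and $G$ is strongly \tont\ with non-trivial subdegrees $f(q+1)$. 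The case $e=f=2$ gives $q=4$, where $G=L.2$ is $2$-transitive. For $1<e<f$ or $e=f$ non-prime, the same counting produces orbits of differing sizes, so $G$ is not \tont. Part $(4)$ is then read off from the enumerated subdegrees.

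The main obstacle will be the Clifford-theoretic verification that, when $G$ is \tont, the $G$-constituents of $\pi_G$ arising from distinct $\langle\sigma\rangle$-orbits on $\{\chi_i\}$ are pairwise distinct irreducible characters; this requires identifying each inertia subgroup in $G$ and checking that every $\sigma$-fixed $\chi_i$ extends to a character of $G$ in a way uniquely determined by Frobenius reciprocity with $\mathbf{1}_H$.
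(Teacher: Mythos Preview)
Your approach is essentially identical to the paper's: decompose the permutation character of $L$ as $1+\sum_i\chi_i$ with the $\chi_i$ the $(q-2)/2$ principal-series characters of degree $q+1$, identify the Galois action of the field automorphism $\sigma$ on $\{\chi_i\}$, and read off the suborbit structure of $G$ from the $\langle\sigma\rangle$-orbit structure on this set. The paper is equally brief on the Clifford-theoretic point you flag as the ``main obstacle'', simply asserting that semiregularity of $\sigma$ on $\{\chi_i\}$ yields strong $\tfrac{3}{2}$-transitivity; your caution there is reasonable but the verification is routine.
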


\begin{proof}
We work with characters of $L$ -- the character theory for the groups $\PSL_2(q)$ is very well known, and so is the action of the outer automorphisms on the
irreducible characters (see, eg, \cite[p500]{Dixon05}). We supply details for completeness.

There are precisely $k=(q-2)/2$ irreducible characters of $L$ of degree $q+1$. 
We denote these by $\chi_j$.
The permutation character of our action of $L$ is
$\pi = 1 + \chi_1 + \cdots + \chi_k:$
to check that the multiplicity of each $\chi_j$ is 1, it is enough by Frobenius reciprocity to check
that the sum of $\chi_j (h)$ over $h \in H \cap L$ equals $2(q+1)$; this is clear, since the value of $\chi_j$
on the $q+1$ involutions is $1$, and on all nontrivial elements of order dividing $q+1$ is $0$. The assertion now follows
by comparing the degrees of $\pi$ and the character sum on the right. 
It now follows that $L$ is always strongly \tont, and hence \tont,
with rank $k+1$ and the nontrivial subdegrees are all $q+1$.

We now consider the outer automorphisms of $L$. The outer automorphism group is cyclic of 
order $f$, and we can choose the representative $\sigma$ of the generating coset of the group of 
inner automorphisms in the automorphism group in the usual way as the generator 
of the group of automorphisms induced by the Galois group of the field $\GF(q)$.
The elements of $L$ that we have not mentioned yet 
have orders dividing $q-1$ and are conjugate to powers of the diagonal matrix $a$
with diagonal entries $\rho, \rho^{-1}$, where $\rho$ is a primitive element of
$\GF(q)$. Then $\chi_j (a^i) = \epsilon^{ij} + \epsilon^{-ij}$, where 
$\epsilon$ is a primitive complex $(q-1)^{\mathrm{th}}$ root of unity.
The action of $\sigma$ takes $a$ to $a^2$, and its action on the set $\{ \chi_1, \dots ,\chi_k \}$ 
is equivalent to the action of the generator
of the Galois group sending $\rho$ to $\rho ^2$.

All the assertions now follow easily, noting in addition that the number of orbits
of any outer automorphism of $L$ on the set of nontrivial $(H \cap L)$--orbits
in this action equals the number of orbits on the set $\{ \chi_1, \dots ,\chi_k \}.$ 

(1) If $|G:L|$ is odd, all subdegrees of $G$ are odd, since this
is true of $L$ and the outer automorphisms present in $G$
have odd order. Hence $G$ has no even subdegrees.

If $|G:L|$ is even, an involutory outer automorphism in $G$ will act nontrivially
on the set $\{ \chi_1, \dots ,\chi_k \}$ 
and hence will fuse two of the $L$-suborbits, so $G$ will have an even subdegree.

(2) and (3). If $f$ is prime, $\sigma$ will act semiregularly
on the set $\{ \chi_1, \dots ,\chi_k \}$,
whence $G$ is strongly \tont\ and hence also \tont. 

(4) When $q=4$ we have $\PSL_2(4)\cong \PSL_2(5)$ and the action is 2-transitive of degree 6.

Conversely, if $G$ is \tont, then $\sigma$ must be semiregular
on the set of $(H \cap L)$--orbits, whence it must be semiregular also 
on the set $\{ \chi_1, \dots ,\chi_k \}$, 
and hence as a Galois automorphism of $\GF(q)$.
It follows that $f$ is a prime.
\end{proof}

We remark that it is not hard to find an explicit $G$-invariant correspondence
between the set of orbitals of the action of $L$ on the sets of $H$-cosets and $\GF(q)$, making the action of $\sigma$ quite explicit. This was done 
in much more generality by Inglis \cite{inglis} (note that $\PSL_2(q) = \Sp_2(q)$ and the maximal dihedral subgroups of
$\PSL_2(q)$ are the orthogonal groups $O^{\pm}_2(q)$).

Before dealing with the remaining cases we need to set up some notation. By \cite[Proposition 4.3.6]{KL}, $H\cap L=C_\ell\rtimes C_d$
where $\ell=(q^d-\epsilon)/((q-\epsilon)(d,q-\epsilon))$. Suppose first that
$\epsilon=+$. Then we can identify $V$ with $\GF(q^d)$. Let $\mu$ be a primitive
element of $\GF(q^d)$. Define the maps $\mu:\xi\mapsto \xi\mu$ and $\phi:\xi\mapsto \xi^q$
on $V$. Both are $\GF(q)$-linear and $\la \mu,\phi\ra\cong C_{q^d-1}\rtimes C_d$. Then
$H\cap L$ is the image in $\PSL_d(q)$ of the intersection of $\la \mu,\phi\ra$ with
$\SL_d(q)$. Note that $\xi^\phi=\xi$ if and only if $\xi\in\GF(q)$ and so $1$ is an
eigenvalue of $\phi$ with multiplicity $1$. For $\epsilon=-$ we have $d\geq 3$, and we
identify $V$ with $\GF(q^{2d})$ and let $\mu\in\GF(q^{2d})$ have order $q^d+1$. We then
define the maps $\mu:\xi\mapsto \xi\mu$ and $\phi:\xi\mapsto \xi^{q^2}$ on $V$ which are
both $\GF(q^2)$-linear and preserve the Hermitian form
$B(\xi_1,\xi_2)=Tr_{q^{2d}\rightarrow q^d}(\xi_1\xi_2^{q^d})$. Then $\la
\mu,\phi\ra\cong C_{q^d+1}\rtimes C_d$ and $H\cap L$ is the image in $\PSU_d(q)$ of the
intersection of $\la \mu,\phi\ra$ with $\SU_d(q)$. Moreover, $1$ is an eigenvalue of $\phi$ with
multiplicity $1$. Indeed, for both values of $\epsilon$, $x^d-1$ is the characteristic
polynomial of $\phi$ and so for $d\neq p$, each $d^{\mathrm{th}}$ root of unity occurs as
an eigenvalue with multiplicity 1. When $d=p$, the element $\phi$ has Jordan form $J_p$.

We first look for $p$-subdegrees. Recall Definition \ref{def:nu} of $\nu(x)$ for an element $x\in\PGL_d(q)$.

\begin{lemma}
\label{lem:psubbeq1}
Let $G$ be an almost simple group with socle $L=\PSL^{\epsilon}_d(q)$ and $H$ be as above with $(d,q)\neq (2,2^f)$. If $p$ divides $|H|$ and $G$ has no
subdegrees divisible by $p$ then $G$ is the $2$-transitive group $L_3(2).2$ of degree $8$ as in line $7$ of Table \ref{tab:classgp}.
\end{lemma}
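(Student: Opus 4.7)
The plan is to split on whether $p$ divides $|H\cap L|$. Since $\ell=(q^d-\epsilon)/((q-\epsilon)(d,q-\epsilon))$ divides $q^d-\epsilon$, it is coprime to $p$, so $|H\cap L|=\ell d$ is divisible by $p$ if and only if $p=d$. Combined with the hypothesis $p\mid|H|$, this gives two cases: (a) $p=d$, or (b) $p\nmid|H\cap L|$ but $p\mid|H:H\cap L|$. In each case I intend to apply Lemma \ref{lem:mvconjclass}(ii) to a well-chosen element of order $p$.

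In case (a), the natural choice is $x=\phi\in H\cap L$. As noted in the preamble to the lemma, $\phi$ has order $p=d$ and acts on $V$ with a single Jordan block $J_p$, so $\nu(\phi)=p-1$. Proposition \ref{prn:burnessbounds} then gives a lower bound on $|\phi^G|\geq|\phi^L|$ of order roughly $q^{d(d-1)}/(q^2-1)$ (in either parity). For the upper bound on $|\phi^G\cap H|$, the key observation is that $H\cap L\cong C_\ell\rtimes C_d$ is a Frobenius group with complement $C_d$, so it contains exactly $\ell(d-1)$ elements of order $p$; any further $p$-element of $H$ must project to a $p$-element of $\Out(L)$, contributing at most a bounded multiplicative factor. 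Since $\ell\leq(q^d-1)/(q-1)\approx q^{d-1}$, a direct comparison gives $|\phi^G|>|\phi^G\cap H|^2$ for all $d\geq 3$ (noting that $d=p=2$ is excluded by the hypothesis $(d,q)\neq(2,2^f)$), so Lemma \ref{lem:mvconjclass}(ii) forces a $p$-subdegree, contradicting the hypothesis. Thus case (a) produces no examples.

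In case (b) the element $x\in H$ of order $p$ must induce a nontrivial outer automorphism of $L$; moreover $x$ cannot be diagonal, since the diagonal part of $\Out(L)$ has order dividing $(d,q-\epsilon)$, which is coprime to $p$ (as $p\nmid d$ and $p\nmid q-\epsilon$). Hence $x$ is a field, graph, or graph-field automorphism, and Lemma \ref{lem:pslouter} provides a lower bound on $|x^G|$ that grows essentially as $q^{(d^2-1)(1-1/p)}$, whereas $|x^G\cap H|\leq|H|$ is bounded polynomially of degree $d-1$ in $q$ (times $|\Out L|$). For all but a short list of small $(d,q)$ the inequality $|x^G|>|x^G\cap H|^2-|x^G\cap H|+1$ holds, yielding a $p$-subdegree by Lemma \ref{lem:mvconjclass}(ii). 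The residual small cases are then checked directly (using the Atlas or \textsc{Magma}), and the only surviving example is $L=\PSL_3(2)$ with $G=L.2$ obtained by adjoining a graph automorphism: here $H\cap L\cong C_7\rtimes C_3$ has order $21$, and via the exceptional isomorphism $\PSL_3(2)\cong\PSL_2(7)$ the action of $G$ on the $8$ cosets of $H$ is the sharply $3$-transitive action of $\PGL_2(7)$, which is $2$-transitive and has no $p$-subdegree. This matches line $7$ of Table \ref{tab:classgp}.

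The main obstacle will be the bookkeeping in case (b) for the smallest values of $d$ and $q$ (in particular $d\in\{2,3\}$ and $q$ a small power of $p$), where the Lemma \ref{lem:pslouter} bound is comparable in size to $|H|^2$. For those parameters the asymptotic estimate is insufficient, and one must either sharpen the count of $x^G\cap H$ using the explicit structure $N_G(C_\ell\rtimes C_d)$ or appeal to a finite computation. Pinning down exactly which small configurations evade the inequality, and verifying that only $\PSL_3(2).2$ in fact has no $p$-subdegree, is the step that will require the most care.
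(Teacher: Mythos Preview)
Your overall plan matches the paper's proof: split on $p=d$ versus $p\nmid|H\cap L|$, take $x=\phi$ in the first case and an outer automorphism of order $p$ in the second, and apply Lemma~\ref{lem:mvconjclass}(ii) with Proposition~\ref{prn:burnessbounds} or Lemma~\ref{lem:pslouter}.

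There is, however, a genuine gap in case~(a). Your assertion that the inequality $|\phi^G|>|\phi^G\cap H|^2$ holds ``for all $d\geq 3$'' is too optimistic. With $\nu(\phi)=d-1$ one gets $|\phi^G|$ of order roughly $q^{d(d-1)-2}$, while your bound $|\phi^G\cap H|\lesssim \ell(d-1)\approx (d-1)q^{d-1}$ gives a square of order $q^{2(d-1)}$; the comparison $(d-1)(d-2)>2$ is \emph{exactly borderline at} $d=3$. The paper's proof confirms this: the crude estimate only works for $d=p\geq 5$; for $d=p=3$ one must use the exact value $|x^L|=q(q^2-1)(q^3-\epsilon)$ against $|x^L\cap H|\leq 2(q^2+\epsilon q+1)$, and even then $(q,\epsilon)=(3,+)$ evades the inequality and requires a \textsc{Magma} check (the subdegrees are $1,13^5,39^2$). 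So case~(a) does have a residual small case, contrary to what you wrote.

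For case~(b) you correctly anticipate that small $d$ and $q$ need separate treatment, but you underestimate how much. The paper does not simply bound $|x^G\cap H|$ by $|H|$; for $d\geq 5$ it uses the structure of $N_{\Aut(L)}(H)$ to get $|x^G\cap H|\leq q^d$, and for $d=3$ it treats $p=2$ and $p\geq 5$ separately, distinguishing field, graph, and graph-field automorphisms with explicit centralizer computations in each subcase, plus \textsc{Magma} for $L=\PSL_3(4)$. The $d=2$, $q$ odd case also needs its own field-automorphism argument. Your plan is sound, but the ``short list of small $(d,q)$'' is longer and more laborious than you suggest, and case~(a) belongs on it too.
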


\begin{proof}
Suppose first that $d\geq 3$ and that $p$ divides $|H|$. Then either $p=d$ or $H$ contains a
field automorphism, graph automorphism or graph-field automorphism of order $p$. If $p=d$
then $p$ is odd and we let $x=\phi\in H\cap L$ have order $p$. Since $\phi$ has Jordan form $J_p$, we have $\nu(x)=d-1$. Thus by Proposition \ref{prn:burnessbounds},
$|x^G|>\frac{q}{2(q-\epsilon)(q+1)}q^{d(d-1)}>q^{d(d-1)-3}$. Since $|H\cap L|<q^{d+1}$ it
follows from Lemma \ref{lem:mvconjclass}(ii) that if $p=d\geq 5$ then $G$ has a subdegree
divisible by $p$.  For $d=p=3$, \cite[Lemma 3.18 and Lemma 3.20]{burness} implies that
$|x^L|=q(q^2-1)(q^3-\epsilon)$ while we have that $|x^L\cap H|\leq 2(q^2+\epsilon q+1)$.
Thus except for $(q,\epsilon)=(3,+)$, Lemma \ref{lem:mvconjclass}(ii) yields a subdegree
divisible by $p=3$. For $q=3$, a \textsc{Magma} \cite{magma} calculation shows that the subdegrees
for $L=\PSL_3(3)$ are $\{1,13^5,39^2\}$.  Thus we may assume that $d\neq p$ and $H$ contains a field, graph or graph-field automorphism $x$ of order $p$. Now $N_{\Aut(L)}(H)=C_{(q^d-1)/(q-1)}\rtimes (C_{df}\times C_2)$ or $C_{(q^d+1)/(q+1)}\rtimes C_{2df}$ for $\epsilon=+$ or $-$ respectively. It follows that $|x^G\cap H|\leq q^d$. Now by Lemma
\ref{lem:pslouter}, $|x^G|>\frac{1}{2}q^{(d^2-3)/2}$. Thus for $d\geq 5$, $|x^G|>|x^G\cap
H|^2$ and so by Lemma \ref{lem:mvconjclass}(ii), $G$ has a $p$-subdegree. This leaves the cases $d=3$ and $d=2$.

Let $d=3$. Then $p=2$ or $p\geq 5$. Let $\overline{H}=N_{\PGL_3(q)}(H)$. For $p\geq 5$, we have that $x$ is a field automorphism and so Lemma \ref{lem:pslouter} implies that
$|x^G|>\frac{1}{2}\left(\frac{q}{q+1}\right)^{(1-\epsilon)/2}q^{(4d^2-9)/5}>q^{22/5}>q^4$. Now $C_{\overline{H}}(x)= C_{q^{2/p}+\epsilon q^{1/p}+1}\rtimes C_3$ and there is only one conjugacy class of subgroups of $H$ of order $p$. Thus $|x^G\cap H| <(p-1)(q^2+\epsilon q+1)/(q^{2/p}+\epsilon q^{1/p}+1)<(p-1)2q^{2-2/p}<q^2$. Hence $|x^G|>|x^G\cap H|^2$ and Lemma \ref{lem:mvconjclass}(ii) yields a $p$-subdegree. So we now assume that $p=2$. If $x$ is a field automorphism of order 2 then $\epsilon=+$ and so Lemma \ref{lem:pslouter} implies $|x^G|>\frac{1}{2}q^{3}$ while we have $|x^G\cap H|<(q^2+q+1)/(q+q^{1/2}+1)<q$. Thus Lemma \ref{lem:mvconjclass}(ii) again gives a $p$-subdegree.
If $x$ is a graph-field automorphism then $\epsilon=+$ and by \cite[Proposition 4.8.5]{KL}, $C_L(x)\leq \PGU_3(q^{1/2}).$ Hence $|x^G|\geq|L|/|\PGU_3(q^{1/2})|=q^{3/2}(q^{3/2}-1)(q+1)/(3,q-1)$. Moreover, $C_{\overline{H}}(x)=C_{q^{3/2}+1}\rtimes C_3$. Hence $|x^G\cap H|^2\leq (q^{3/2}-1)^2<|x^G|$ and so by Lemma \ref{lem:mvconjclass}(ii), $G$ has a subdegree divisible by $p$. Finally, let $x$ be a graph automorphism of order 2. By \cite[(19.9)]{aschseitz}, $C_L(x)=\PSO_3(q)$ and so $|x^L|=q^2(q^3-\epsilon)/(3,q-\epsilon)$. Moreover, $|x^G\cap H|<q^2+\epsilon q+1$ and so for $q\geq 8$ or $(q,\epsilon)=(4,-)$, Lemma \ref{lem:mvconjclass}(ii) implies that $G$ has a subdegree divisible by $p$ . This leaves us to consider $G=\PSL_3(2).2$ of degree $8$, and primitive groups of degree 960 with socle $L=\PSL_3(4)$ (note that $(q,\epsilon)\neq (2,-)$). The first has subdegrees 1 and 7 and so does not have an even subdegree. A \textsc{Magma}~\cite{magma} calculation shows that all groups in the second case with an even order point stabiliser have an even subdegree. 
  
Finally we deal with the case where $d=2$ and so $L=\PSL_2(q)$. Recall that $q=p^f$ is odd. If $p$ divides $|H|$ then $H$ contains a field automorphism $x$ of order $p$ and in particular, $p$ divides $f$. Then $C_L(x)=\PSL_2(q^{1/p})$ and so
$|x^G|>q^{3-3/p}\geq q^2$. Moreover, letting $\overline{H}=N_{\Aut(L)}(H)$ we have $|\overline{H}|=2(q+1)f$ and $|C_{\overline{H}}(x)|=2(q^{1/p}+1)f$. Since the Sylow $p$-subgroup of $\overline{H}$ is cyclic, it contains a unique conjugacy class of subgroups of order $p$ and so $|x^G\cap H|\leq (q^{1-1/p}-q^{1-2/p}+\cdots +1)(p-1)<q$. Thus Lemma \ref{lem:mvconjclass}(ii)
implies that $G$ has a subdegree divisible by $p$.
\end{proof}

To deal with $\frac{3}{2}$-transitivity, we first need the following lemma.

\begin{lemma} \label{lem:ddivx2} Let $p$ and $d$ be primes with $d$ odd and let
 $q=p^f$ for some positive integer $f$. Then $d$ does not divide
  $y=(q^d-\epsilon)/((q-\epsilon)(q-\epsilon,d))$ for $\epsilon=\pm1$
\end{lemma}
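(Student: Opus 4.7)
My plan is to first rewrite the numerator using the factorization from the fact that $d$ is odd. Since $\epsilon^d=\epsilon$ for $\epsilon=\pm1$ and $d$ odd, we have
\[
N:=\frac{q^d-\epsilon}{q-\epsilon}=q^{d-1}+\epsilon q^{d-2}+\epsilon^2 q^{d-3}+\cdots+\epsilon^{d-1},
\]
an integer polynomial in $q$. Then $y=N/(q-\epsilon,d)$, and since $d$ is prime, the gcd is either $1$ or $d$. I would then split into three cases according to $(d,p)$ and $(d,q-\epsilon)$.

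Case 1: $d=p$. Then $d\mid q$, so $q-\epsilon\equiv-\epsilon\pmod d$ is a unit, giving $(q-\epsilon,d)=1$ and $y=N$. Reducing $N$ modulo $d$, every term $\epsilon^i q^{d-1-i}$ with $i<d-1$ vanishes, leaving $N\equiv\epsilon^{d-1}=1\pmod d$.

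Case 2: $d\ne p$ and $d\nmid q-\epsilon$. Again $(q-\epsilon,d)=1$ and $y=N$. Fermat's little theorem gives $q^d\equiv q\pmod d$, so $(q-\epsilon)N=q^d-\epsilon\equiv q-\epsilon\pmod d$. Cancelling the unit $q-\epsilon$ yields $N\equiv 1\pmod d$, hence $d\nmid y$.

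Case 3: $d\mid q-\epsilon$. Here $d\ne p$ automatically (else $d\mid q$ forces $d\mid\epsilon=\pm 1$, absurd), so $q$ is coprime to $d$. Now I would invoke the Lifting-the-Exponent lemma for the odd prime $d$ dividing $q-\epsilon$ with $\gcd(d,q\epsilon)=1$, which together with $\epsilon^d=\epsilon$ gives
\[
v_d(q^d-\epsilon)=v_d(q^d-\epsilon^d)=v_d(q-\epsilon)+v_d(d)=v_d(q-\epsilon)+1,
\]
so $v_d(N)=1$. Since $(q-\epsilon,d)=d$ in this case, $y=N/d$ has $v_d(y)=0$, proving $d\nmid y$.

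The only real content is the $d\mid q-\epsilon$ case; the main obstacle is just citing (or briefly verifying) LTE for odd primes. The other two cases are immediate from Fermat and direct reduction modulo $d$.
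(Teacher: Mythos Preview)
Your proof is correct. The approach is essentially the same as the paper's: the only nontrivial case is when $d\mid q-\epsilon$, and both arguments amount to showing $v_d(N)=1$ there. The paper reaches that case by contradiction (assuming $d\mid y$ and deducing $d\mid q-\epsilon$) and then computes $N\equiv d\pmod{d^2}$ directly via a binomial expansion of $(ad+\epsilon)^r$, which is exactly the computation underlying the Lifting-the-Exponent lemma you cite. Your explicit three-case split is a bit cleaner organisationally, and citing LTE replaces the paper's by-hand verification, but the mathematical content is the same.
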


\begin{proof}
 By Fermat's Little Theorem, $d$ divides $q^{d-1}-1$. Suppose that $d$ divides $y$. Then
 $d$ divides $q^d-\epsilon$ and so divides $q^d-\epsilon
 q^{d-1}=q^{d-1}(q-\epsilon)$. Since $d$ divides $y$ it is coprime to $q$ and so $d$
 divides $q-\epsilon$. If $q\equiv ad+\epsilon \pmod {d^2}$ then using the Binomial
 expansion we see that for each $r$, $q^r\equiv (\epsilon)^r+rad(\epsilon)^{r-1}\pmod
 {d^2}$. Hence $(q^d-\epsilon)/(q-\epsilon)\equiv d+\epsilon ad+\epsilon 2ad+ \cdots
 +\epsilon(d-1)ad\pmod{d^2}\equiv d+\epsilon ad(d(d-1)/2))\pmod{d^2}\equiv d\pmod {d^2}$.
 Thus $d$ does not divide $(q^d-\epsilon)/((q-\epsilon)(q-\epsilon,d))$.
\end{proof}

\begin{lemma}
Let $G$ be an almost simple group with socle $L=\PSL^{\epsilon}_d(q)$ and $H$ be as above with $(d,q)\neq (2,2^f)$. Then $G$ is not \tont\ unless $L=\PSL_3(2)$ acting $2$-transitively of degree $8$.
\end{lemma}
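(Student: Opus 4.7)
The plan is to leverage Lemma \ref{lem:psubbeq1} to dispatch the case $p\mid|H|$ and then to construct two nontrivial suborbits of unequal size in the remaining case $p\nmid|H|$.

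First suppose $p$ divides $|H|$. Lemma \ref{lem:psubbeq1} says that either $G$ has a subdegree divisible by $p$, or $(G,H)$ is the $2$-transitive action of $\PSL_3(2).2$ on $8$ points, which is the exception in the statement. In the former subcase I would verify that $p$ divides $|\Omega|$: the $p$-part of $H\cap L=C_\ell\rtimes C_d$ is at most $p$ (arising only when $d=p$), the $p$-part of the outer contribution $H/(H\cap L)$ is bounded by $|\Out(L)|_p$, and both are negligible compared to $|L|_p$ (which is $q^{d(d-1)/2}$ for $\PSL_d(q)$, and comparable for $\PSU_d(q)$). Hence $p\mid |\Omega|$ and Lemma \ref{lem:pnorml}(i) rules out \tont.

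It remains to treat the case $p\nmid|H|$, where $H\cap L=C_\ell\rtimes C_d$ has order coprime to $p$ and $H$ contains no field, graph, or graph-field automorphism of order $p$. For $d=2$ and $q$ odd (the only subcase compatible with $(d,q)\ne(2,2^f)$), $L=\PSL_2(q)$ and $H\cap L$ is dihedral of order $q+1$, acting on $\tfrac12 q(q-1)$ cosets. I would mirror the character-theoretic proof of Lemma \ref{lem:L2}: compute $\pi=1_{H\cap L}^L$ via Frobenius reciprocity using the classical character table of $\PSL_2(q)$ for $q$ odd, and observe that the irreducible constituents have distinct degrees---namely discrete-series characters of degree $q-1$ together with the two characters of degree $(q-1)/2$ when $q\equiv 1\pmod 4$. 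After tracking outer-automorphism fusion, these translate into suborbits of pairwise distinct sizes.

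For $d\geq 3$ prime, $T=C_\ell$ is a Singer-like cyclic torus and $H\cap L=N_L(T)=T\rtimes C_d$, so the coset action of $L$ is equivalent to the conjugation action on $T^L$, and the suborbit length at $T^g$ is determined by the intersection $T\cap T^g$ together with the induced $C_d$-action. I would exhibit elements $g_1,g_2\in L$ for which $|T\cap T^{g_i}|$ differ: using Zsigmondy's theorem to produce a primitive prime divisor $r$ of $q^d-\epsilon$ dividing $|T|$, one can arrange one intersection to be $r$-divisible and the other coprime to $r$, and Lemma \ref{lem:ddivx2} pins down how $d$ can divide the resulting indices. The main obstacle I anticipate here is ruling out accidental equality of the two subdegree sizes in the small Zsigmondy-exceptional cases and small values of $q$; those residual pairs $(d,q)$ would be dispatched by direct \textsc{Magma} computation.
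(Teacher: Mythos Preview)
Your treatment of the case $p\mid|H|$ is correct and matches the paper. The problems lie in the $p\nmid|H|$ case, where both of your strategies have genuine gaps.

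For $d\ge 3$: your plan to distinguish suborbits via $|T\cap T^{g_i}|$ cannot work, because in fact $T\cap T^g=1$ for \emph{every} $g\notin H$. The reason is that $T=C_\ell$ is cyclic, so any nontrivial $M\le T$ is characteristic in $T\trianglelefteq H\cap L$; maximality of $H$ then forces $H=N_G(M)$. Thus if $1\ne M\le T\cap T^g\le H\cap H^g$, one gets $g\in H$. So the two-point stabilizers $H\cap H^g\cap L$ are never nontrivial subgroups of $T$; they are either trivial or (conjugate to) $\langle\phi\rangle\cong C_d$. The paper exploits exactly this dichotomy: using $C_L(\phi)\not\le H$ it finds a suborbit of length $\ell$, which is coprime to $d$ by Lemma~\ref{lem:ddivx2}; then it applies Lemma~\ref{lem:mvconjclass}(ii) to the element $\phi$ of order $d$ (with $\nu(\phi)=d-1$) and the class-size bounds of Burness to produce a suborbit of length divisible by $d$. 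Your Zsigmondy idea does not engage with this structure.

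For $d=2$, $q$ odd: showing that the constituents of $1_H^L$ have unequal degrees only shows $G$ is not \emph{strongly} \tont, and Wielandt's implication (strongly \tont\ $\Rightarrow$ \tont) goes the wrong way for your purposes; unequal constituent degrees do not in general give unequal subdegrees. (Incidentally, the exceptional characters of $\PSL_2(q)$ have degree $(q+1)/2$ when $q\equiv 1\pmod 4$ and $(q-1)/2$ when $q\equiv 3\pmod 4$, contrary to what you wrote.) The paper instead argues directly: if $(q+1)/2$ is odd it uses Lemma~\ref{lem:mvconjclass}(ii) with an involution $x\in H\cap L$ to get an even subdegree while $|\Omega|$ is even; if $(q+1)/2$ is even it counts conjugates of $H$ meeting $H$ in an involution to force a suborbit of length divisible by $q+1$, which does not divide $|\Omega|-1$.
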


\begin{proof}
Since $p$ divides $|\Omega|$ in all cases, if $G$ has a $p$-subdegree then $G$ is not
\tont. Thus by Lemma \ref{lem:psubbeq1}, we are left to consider the case where $p$ does 
not divide $|H|$. In particular, if $d\geq 3$ then $p\neq d$. We note that if 
$(d,\epsilon,q)=(3,+,2)$ then $L\cong\PSL_2(7)$ acting 2-transitively of degree 8, so 
assume we are not in this case.

Suppose first that $d\geq 3$ with $p\neq d$. Now for $d\geq 3$, we have $H\cap
L=C_\ell\rtimes C_d$ where $\ell=(q^d-\epsilon)/((q-\epsilon)(q-\epsilon,d))$. Note that
for $M\leqslant C_\ell$ with $M$ nontrivial, Lemma \ref{lem:ddivx2} implies that $|M|$ is coprime to $d$ and
so as $H=N_G(M)$ it follows that if $M\leqslant H\cap H^g$ then $g\in H$. Thus for all
$g\in G\backslash H$, either $H\cap H^g\cap L=C_d$ or $H\cap H^g\cap L=1$. Now for $d\neq
p$, $\phi$ is a semisimple element with 1 as an eigenvalue with multiplicity 1. Thus
$C_L(\phi)$ is not contained in $H$, and so there exists $g\in L$ such that $H\cap H^g\cap
L=\la \phi\ra\cong C_d$, that is, there exists a subdegree of length $\ell$, which by Lemma \ref{lem:ddivx2} is not divisible by $d$. Thus it is sufficient to find a subdegree divisible by $d$. Since
$\nu(\phi)=d-1$, \cite[Proposition 3.36]{burness} and 
\cite[Theorem 4.2.2(j)]{GLS} imply that
$|\phi^G|>\frac{1}{2}\left(\frac{q}{q+1}\right)^{d-1}q^{d(d-1)}>\frac{1}{2}q^{(d-1)^2}>q^{(d-1)^2-1}$. On
the other hand, $|\phi^G\cap H|<|H\cap L|<q^d.d=q^{d+\log_q(d)}\leq q^{d+\log_2(d)}$. Hence
for $d\geq 5$, $|\phi^G|>|\phi^G\cap H|^2$ and so by Lemma \ref{lem:mvconjclass}(ii), $G$ has a
subdegree divisible by $d$. For $d=3$ and $\epsilon=+$ we have $q\geq 4$ and $|\phi^G|>\frac{1}{2}q^6$
while $|\phi^G\cap H|\leq(q^2+q+1).2$. It follows that $|\phi^G|>|\phi^G\cap H|^2$
and so by Lemma \ref{lem:mvconjclass}(ii), $G$ has a subdegree divisible by $d=3$. 
For $\epsilon=-$, note that $q\geq 4$ and so
$|\phi^G|>\frac{1}{2}\left(\frac{q}{q+1}\right)^2q^6>\frac{1}{2}q^5$. Now $|\phi^G\cap H|\leq
(q^2-q+1)2$ and so for $q\geq 7$, $|\phi^G|>|\phi^G\cap H|^2$. For $q=5$, we actually have
$|\phi^G\cap H|\leq (q^2-q+1)2/3$ which is sufficient to show $|\phi^G|>
|\phi^G\cap H|^2$.  Thus for
$q\geq 5$, Lemma \ref{lem:mvconjclass} yields a subdegree divisible by $d$.  For $L=\PSU_3(4)$,
a \textsc{Magma} \cite{magma} calculation shows that the subdegrees of $L$ are $1, 13^9$ and
$39^{38}$. Thus $G$ is not \tont\ in this case also.

Suppose now that $d=2$ and $q$ is odd. Then $H\cap L\cong D_{q+1}$. Suppose first that
$(q+1)/2$ is odd. Then $|H|$ and $|\Omega|$ are even. Let $x\in H\cap L$ have order
2. Then $|x^G|=q(q+1)/2$ while $|x^G\cap H|=(q+1)/2$. It follows from Lemma
\ref{lem:mvconjclass}(ii) that $G$ has an even subdegree and so $G$ is not \tont.  Suppose
now that $(q+1)/2$ is even. Then $H$ is the centraliser of an involution and for each
$g\in G\backslash H$ we have $L\cap H\cap H^g=1,C_2$ or $C_2^2$. Now $H\cap L$ contains
$(q+3)/2$ involutions and so each involution is contained in $(q+3)/2$ conjugates of
$H$. The number of conjugates of $H$ containing an involution of $H\cap L$ is at most
$(q+3)^2/4$. Then since there are $q(q-1)/2$ conjugates of $H$ and $q(q-1)/2>(q+3)^2/4$
for $q\geq 11$, it follows that in these cases there exists $g\in G\backslash H$ such that
$H\cap H^g\cap L=1$, and hence a subdegree divisible by $q+1$.  A \textsc{Magma} \cite{magma}
calculation verifies the existence of such a subdegree for $q=7$. Since $q+1$ does not
divide $|\Omega|-1$ it follows that $G$ is not \tont.
\end{proof}

\subsection{Aschbacher class $\mathcal{C}_4$:}

Here $H$ is the stabiliser in $G$ of a tensor product structure $V=U\otimes W$ where
$\dim(U)=a$, $\dim(W)=b$ and $d=ab$ with $a,b\ge 2$ and $U$ and $W$ are not isometric. Detailed descriptions of the subgroups are given in \cite[\S 4.4]{KL}.
We note that by \cite[Lemma 3.7]{LieShalev}, if $g=(g_1,g_2)\in\GL(U)\otimes \GL(W)$ then
$\nu(g)\geq \max\{a\nu(g_2),b\nu(g_1)\}$, with $\nu$ as in Definition \ref{def:nu}. Moreover, if $g_1$ has order $p$ and $g_2=1$
then $\nu(g)=b\nu(g_1)$ and similarly, if $g_2$ has order $p$ and $g_1=1$ then
$\nu(g)=a\nu(g_2)$.

 By \cite[Table 4.4A]{KL} the possibilities for $G$ and $H$ are as follows:

\begin{enumerate}
\item $G$ with socle $\PSL_d(q)$ and $H$ of type $\GL_a(q)\otimes\GL_b(q)$ with $a<b$,
\item $G$ a unitary group and $H$ of type $U_a(q)\otimes U_b(q)$ with
  $a<b$,
\item $G$ a symplectic group and $H$ of type $\Sp_a(q)\otimes
  O^{\epsilon}_b(q)$ with $q$ odd and $b\ge 3$ and $a\geq 2$,
\item $G$ of type $O^+$ and $H$ of type $\Sp_a(q)\otimes \Sp_b(q)$ with
$2\leq a<b$,
\item $G$ an orthogonal group and $H$ of type $O^{\epsilon_1}_a(q)\otimes
  O^{\epsilon_2}_b(q)$ with $q$ odd, $a,b\ge 3$ and
  $(a,\epsilon_1)\neq(b,\epsilon_2)$.
\end{enumerate}

Suppose first that $L=\PSL^{\epsilon}_d(q)$ and let $g=(1,x)\in H$ 
of order $p$ such that
$\nu(x)=1$. Thus $\nu(g)=a<b$ and so if $h\in g^G\cap H$ then $h=(1,x_1)$ for some
$x_1\in\PSL^{\epsilon}_b(q)$ of order $p$ with $\nu(x_1)=1$. Since $a<d/2$, Proposition
\ref{prn:burnessbounds} implies that
$|g^G|>\frac{1}{2}\frac{q}{(q-\epsilon)(q+1)}q^{2a(d-a)}>q^{2a^2(b-1)-4}$ while by Lemma
\ref{lem:upperbounds} $|g^G\cap
H|=(q^b-(\epsilon^b))(q^{b-1}-(\epsilon)^{b-1})/(q-\epsilon)<q^{2b-1}$.  Since $b\geq 3$,
it follows from Lemma \ref{lem:mvconjclass}(ii) that $G$ has a subdegree divisible by $p$.

Next suppose that $G$ is a symplectic group and $H$ is of type $\Sp_a(q)\otimes
O^{\epsilon}_b(q)$ with $q$ odd and $b\ge 3$. Let $g=(x,1)\in H$ of order $p$ with
$\nu(x)=1$. Then $\nu(g)=b\geq 3$ and by Proposition \ref{prn:burnessbounds},
$|g^G|>\frac{q}{4(q+1)}q^{b(ab-b)}>q^{b^2(a-1)-2}$. Since 
$O^{\epsilon}_b(q)$ does not
contain any unipotent elements with $\nu(y)=1$, if $a>b/2$ then $|g^G\cap H|$ is the
number of transvections in $\Sp_a(q)$. Thus by Lemma \ref{lem:upperbounds}, $|g^G\cap
H|<q^a$. Since $a\geq 2$, it follows from Lemma \ref{lem:mvconjclass} that $G$ has a
$p$-subdegree. If $a\leq b/2$, then $H$ has a weakly closed normal subgroup
$H_0\cong\PO^{\epsilon}_b(q)$. By Proposition \ref{prn:sylpgen}, $H_0$ contains a cyclic
subgroup $T$ such that for any Sylow $p$-subgroup $S$ of $H$ we have $H_0\leqslant\la
T,S\ra$. If $\epsilon=+$ then $T$ centralises a 2-subspace of $W$ and hence a
$2a$-subspace of $V$. Thus $N_H(T)<N_G(T)$. If $\epsilon=-$, then $T$ acts irreducibly on
$W$ and preserves a direct sum decomposition of $V$ into $a$ totally singular subspaces of $V$ each
with dimension $b$.  Now $T$ is irreducible on each subspace in the decomposition and
$|T|$ is at most $q^{b/2}+1$. Thus $T$ is centralised by an element of order $q^b-1$ which
also fixes this decomposition and is not in $H$.  Thus by Lemma \ref{lem:weaklyclosed},
$G$ has a subdegree divisible by $p$.

Next suppose that $G$ is of type $O^+$ and $H$ is of type $\Sp_a(q)\otimes \Sp_b(q)$ with
$a<b$. Let $g=(1,x)\in H$ of order $p$ where $\nu(x)=1$. Then $\nu(g)=a$ and $h\in g^G\cap
H$ if and only if $h=(1,x_1)$ for some $x_1\in x^{\Sp_b(q)}$ with $\nu(x_1)=1$. By Proposition
\ref{prn:burnessbounds}, $|g^G|>\frac{1}{16}q^{a(d-a-1)}>q^{a^2(b-1)-a-4}$ while by Lemma
\ref{lem:upperbounds}, $|g^G\cap H|<q^b$.  Since $b\geq 4$, Lemma
\ref{lem:mvconjclass}(ii) implies that $G$ has a subdegree divisible by $p$ except when $(a,b)=(2,4)$. In this case, \cite[(15.1)]{asch} implies that $H$ is conjugate under a triality automorphism to the stabiliser of a nondegenerate 3-space when $q$ is odd,  while when $q$ is even $H$ is conjugate to a subgroup of the stabiliser of a nonsingular 1-space. Hence we have already seen that there is a subdegree divisible by $p$.


Finally suppose that $G$ is an orthogonal group and $H$ is of type
$O^{\epsilon_1}_a(q)\otimes O^{\epsilon_2}_b(q)$ with $q$ odd, $a,b\ge 3$ and
$(a,\epsilon_1)\neq(b,\epsilon_2)$. We may assume that $a\leq b$. Let $g=(1,x)\in H$ of order $p$ where $\nu(x)=2$. Then
$\nu(g)=2a$ and if $a<b$ then $h\in g^G\cap H$ if and only if $h=(1,x_1)$ for some $x_1\in
x^{O^{\epsilon_2}_b(q)}$ with $\nu(x_1)=2$. By Proposition \ref{prn:burnessbounds},
$|g^G|>\frac{q}{8(q+1)}q^{2a(ab-2a-1)}>q^{2a(ab-2a-1)-3}$, while by \cite[Proposition
  3.22]{burness}, $|g^G\cap H|<2q^{2b-4}<q^{2b-3}$. Since $a,b\geq 3$, it follows from
Lemma \ref{lem:mvconjclass}(ii) that $G$ has a subdegree divisible by $p$. It
remains to consider the case $a=b$ and $\epsilon_1\neq\epsilon_2$. Note this implies that
$a$ is even. If $(x_1,x_2)\in g^G\cap H$ then $\nu(x_i)\in\{0,2\}$. Thus $|g^G\cap H|< q^{4b-6}$. Again, as $a=b\geq 4$, Lemma
\ref{lem:mvconjclass}(ii) yields a $p$-subdegree.

In all cases we have found a $p$-subdegree and so as $p$ divides $|\Omega|$, $G$ is not \tont.

\subsection{Aschbacher class $\mathcal{C}_5$:}

Here $H$ is the stabiliser of a subfield structure for a subfield $\GF(q_0)$ where $q=q_0^r$ with $r$ a prime. Descriptions of these groups can be found in 
\cite[\S 4.5]{KL}.

Suppose first that $H$ is of the same Lie type as $G$ and we initially exclude the cases where
$L=\PSL_2(q)$ and $\PSU_3(q)$ with $H$ of the form $\PSL_2(3)$ and $\PSU_3(2)$
respectively. Note also that if $d=2$ then $q_0\neq 2$. Then $H$ has a weakly closed
normal subgroup $H_0$ which is an insoluble classical group. By Proposition
\ref{prn:sylpgen}, there exists a cyclic subgroup $T$ of order given by Table \ref{tab:T},
with $q$ replaced by $q_0$ such that given a Sylow $p$-subgroup $S$ of $H$, $H_0\leqslant
\la T,S\ra$. Moreover, $T$ is centralised by the subgroup $\overline{T}$ of $G$ given by Proposition
\ref{prn:sylpgen}. Hence $N_H(T)<N_G(T)$ and so by Lemma \ref{lem:weaklyclosed}, $G$ has a
subdegree divisible by $p$.

Now let $L=\PSL_2(q)$ and $q_0=3$. Let $x\in H$ be a transvection. Then
$|x^G|\geq \frac{1}{2}(q^2-1)$ while $|x^G\cap H|\leq 8$. It follows from Lemma
\ref{lem:mvconjclass}(ii) that when $q\geq 27$, the group $G$ has a subdegree divisible by $p$. When $q=9$, a Magma \cite{magma} calculation verifies that there is a subdegree of size 6. For $L=\PSU_3(q)$,
$q$ even, and $H\cap L=\PSU_3(2)\cong 3^2:Q_8$, let $x\in H\cap L$ be an
involution. Note also that $r$ is odd \cite[\S 4.5]{KL}, so $q\geq 8$. Then $|x^G\cap H|=9$ while by Proposition \ref{prn:burnessbounds},
$|x^G|>q^5/(2(q+1)^2)>202$. Hence by Lemma \ref{lem:mvconjclass}(ii), $G$ has a subdegree
divisible by $p=2$.

Next suppose that $H$ is of a different Lie type to $G$. Then by \cite[Table 4.5A]{KL} the possibilities are:
\begin{itemize}
\item $L=\PSU_d(q)$ and $H$ of type $O^{\epsilon}_d(q)$ with $\epsilon\in\{+,-,\circ\}$ and $q$ odd.
\item $L=\PSU_d(q)$ and $H$ of type $\Sp_d(q)$.
\item $L=\POmega^+_d(q)$ and $H$ of type $O^-_d(q^{1/2})$.
\end{itemize}
Then if $q$ is odd and $H$ is not of type $O_3(3)$ or $O^+_4(3)$, $H$ contains a weakly
closed normal subgroup $H_0$ which is a nonlinear classical group and by \cite{odddegree},
$|G:H|$ is even. Hence by Lemma \ref{lem:23/4}, $G$ has a subdegree divisible by $p$.

If $L=\PSU_3(3)$ and $H\cap L\cong \PSO_3(3)$ we see from \cite[p14]{atlas} that $H$ is
not maximal in $G$. Similarly, for $L=\PSU_4(3)$ and $H\cap L\cong \PSO^+_4(3).2$,
\cite[p52]{atlas} shows that $H$ is not maximal in $G$.

Next suppose that $L=\PSU_d(q)$ for $q$ even and 
$H=\PSp_d(q)$ (see \cite[Proposition 4.5.6]{KL}).  
By Proposition \ref{prn:sylpgen}, $H$ has a cyclic subgroup $T$ of order
$q^{d/2}+1$ such that $H=\la S,T\ra$ for any Sylow 2-subgroup $S$ of $H$.  Now $T$ is self-centralising in $H$ but we claim that $T$ is contained in a torus $\overline{T}$ of order $q^d-1$ in $L$. If $d/2$ is odd one can see this by viewing $T < \Sp_2(q^{d/2}) < \GU_2(q^{d/2}) < \GU_d(q)$: there is a torus $\overline{T}$ of order $q^d-1$ in $\GU_2(q^{d/2})$ containing $T$, and the $\Sp_2(q^{d/2})$ is in $\Sp_d(q) < \GU_d(q)$; 
and if $d/2$ is even then if one takes an element in $\GU_d(q)$ of 
order a primitive prime divisor\footnote{A \emph{primitive prime divisor} of $q^d-1$ is a prime divisor of $q^d-1$ that does not divide $q^i-1$ for any $i<d$.} of $q^d-1$, its centralizer in $\Sp_d(q)$ is $T$ and its centralizer in $\GU_d(q)$ is $\overline{T}$.   Thus in both cases, $N_H(T)<N_G(T)$ and so by Lemma \ref{lem:weaklyclosed}, $G$ has a subdegree divisible by $p$.  

Finally suppose $L=\POmega^+_d(q)$ for $q$ even and 
$H=\POmega^-_d(q^{1/2})$ (see \cite[Proposition 4.5.10]{KL}).  
By Proposition \ref{prn:sylpgen}, $H$ has a cyclic subgroup $T$
of order $q^{d/4}+1$ such that $H=\la S,T\ra$ for any Sylow 2-subgroup $S$ of $H$. Now a Singer cycle of order $(q^{1/2})^{d/2}+1$ is $\Omega^-_2((q^{1/2})^{d/2})$. Moreover, $\Omega^+_d(q)$ has a $\mathcal{C}_3$-subgroup $\Omega^+_2(q^{d/2})$, which is a torus of order $q^{d/2}-1$, and this contains $\Omega^-_2((q^{1/2})^{d/2})$, which is the original Singer cycle. Since $d\geq 8$ it follows that $T$ is self-centralising in $\Omega^-_d(q)$ but not in $G$. Thus $N_H(T)<N_G(T)$ and so by Lemma \ref{lem:weaklyclosed}, $G$ has a subdegree divisible by $p$.

In all cases we have found a $p$-subdegree and $|\Omega|$ is divisible by $p$. Hence $G$
is not \tont.

\subsection{Aschbacher class $\mathcal{C}_6$:}

Here $H$ is the normaliser of a symplectic-type $r$-group. Descriptions 
can be found in \cite[\S 4.6]{KL}.

First we deal with the case where $d=2$. Here $H\cap L=A_4$ or $S_4$. Moreover, $q=p\geq
5$ and so $p$ does not divide $|H|$. We note first that if $p=5$ or $7$ then $G$ is
2-transitive and so we assume that $q\geq 11$. Let $x\in H$ have order 3 and note that $3$
divides either $q-1$ or $q+1$. Then $|x^G|=q(q\pm1)$ while $|x^G\cap H|=8$. Hence by Lemma
\ref{lem:mvconjclass}(ii), $G$ has a subdegree divisible by 3. Let $S$ be a Sylow
3-subgroup of $H$. Then $N_H(S)\leqslant S_3$ while $S_3<D_{q-\epsilon}\leqslant N_G(S)$. Thus
there exists $g\in G\backslash H$ such that $S\leqslant H\cap H^g$ and so $G$ also has a subdegree not
divisible by 3, showing that $G$ is not \tont.

For $d>2$ and $L=\PSL^{\epsilon}_d(q)$, there are two types of $\mathcal{C}_6$ subgroups:
those of type $r^{1+2m}.\Sp_{2m}(r)$ for $r$ odd, $q\equiv \epsilon\pmod r$ and $d=r^m$, and
those of type $(4\circ 2^{1+2m}).\Sp_{2m}(2)$ for $q=p\equiv \epsilon\pmod 4$ and
$d=2^m$. In the first case, $q=p^f$ where $f$ is the smallest integer such that $p^f\equiv
\epsilon\pmod r$. By \cite[Propositions 4.6.5]{KL}, if $H$ is of the first type then
$$H\cap L=\left\{\begin{array}{ll}
                  3^2.Q_8 & d=3 \text{ and } q\equiv \epsilon4 \text{ or }\epsilon7\pmod 9\\
                  r^{2m}.\Sp_{2m}(r) & \text{otherwise} 
              \end{array}\right.$$
If $H$ is of type $(4\circ 2^{1+2m}).\Sp_{2m}(2)$, then by \cite[Proposition
4.6.6]{KL},
$$H\cap L=\left\{ \begin{array}{ll}
                2^4.A_6 & d=4 \text{ and } p\equiv \epsilon5\pmod 8 \\
                2^{2m}.\Sp_{2m}(2)& \text{otherwise}
                \end{array}\right.$$

Suppose first that $H$ is of type $(4\circ 2^{1+2m}).\Sp_{2m}(2)$. Since $q=p$, $p$ divides
$|H|$ if and only if $p$ divides $|H\cap L|$.  Suppose then that $p$ divides $|H\cap L|$
and $x\in H\cap L$ has order $p$. If $m\geq 4$, then \cite[Lemma 6.3]{burness} implies
that $\nu(x)\geq 4$. Then by Proposition \ref{prn:burnessbounds},
$|x^G|>\frac{q}{2(q-\epsilon)(q+1)}q^{8(d-4)}>q^{2^{m+3}-35}$. Now $|H\cap
L|^2<2^{4m^2+6m}$ and so $|x^G|>|x^G\cap H|^2$. Thus by Lemma \ref{lem:mvconjclass}(ii),
$G$ has a $p$-subdegree. For $m=3$, \cite[Lemma 6.3]{burness} implies that $\nu(x)\geq 2$
and so $|x^G|>\frac{q}{2(q-\epsilon)(q+1)}q^{4(d-2)}>q^{2^5-11}=q^{21}$. Also $H\cap
L=2^6.\Sp_6(2)$ and so for $q\geq 7$, we have $|x^G|>|x^G\cap H|^2$. Thus Lemma
\ref{lem:mvconjclass}(ii) once again yields a $p$-subdegree. It remains to consider $(\epsilon,d,q)=(+,8,5)$
or $(-,8,3)$. When $q=3$, \textsc{Gap} \cite{gap} calculations show that $H\cap L$ contains precisely 143360 elements $x$ of order 3 with $\nu(x)=5$. By Proposition \ref{prn:burnessbounds}, for such an element $x$ we have $|x^L|> 3^{37}>|x^L\cap H|^2$ and so Lemma \ref{lem:mvconjclass} implies the existence of a subdegree divisible by $p=3$. When $q=5$, \textsc{Gap} \cite{gap} calculations show that there is a regular suborbit and hence one divisible by $p=5$. (In both the $p=3$ and 5 cases we can construct $H\cap L$ using the algorithm outlined in \cite[Section 9]{holtRD}. To find the existence of a regular suborbit we simply choose random elements $g\in L$ until we find one with $H\cap H^g\cap L=1$.)
 Finally, if $m=2$ note that $H\cap L\leqslant 2^4.\Sp_4(2)$,
and so if $|H\cap L|$ is divisible by $p$ then either $\epsilon=+$ and $p=5$, or
$\epsilon=-$ and $p=3$.  For $L=\PSL_4(5)$, we have $H\cap L=2^4.A_6$ and using \textsc{Magma} \cite{magma} we see that the subdegrees for $L$ are
$$1, 16^2, 30, 80, 96^2, 120, 160,  240^6, 320, 360, 480^6, 960^{13}, 1152^{10}, 1440^{10}, 1920^{21}, 2880^{28}, 5760^{190}$$
Thus $L$, and hence $G$, has a subdegree divisible
by $p=5$. For $L=\PSU_4(3)$, a \textsc{Magma} \cite{magma} calculation shows that the subdegrees
for $L$ are $30$, $96$, $120$ and $320$. Thus we have found a subdegree divisible by 3.

To show that $G$ is not \tont, note that $H$ has a normal 2-subgroup and so Lemma
\ref{lem:pnorml}(iv) implies that $G$ has an even subdegree. By \cite{odddegree},
$|\Omega|$ is even and hence $G$ is not \tont.

Next suppose that $H$ is of type $r^{1+2m}.\Sp_{2m}(r)$. Now $q=p^f$ where $f$ is the
smallest integer such that $p^f\equiv \epsilon\pmod r$. Hence $f$ divides $r-1$.  It
follows that if $p$ divides $|H|$ then $p$ divides $|H\cap L|$. Let $x\in H\cap L$ have
order $p$. If $(r,m)=(3,1)$ then the divisors of $|H\cap L|$ imply that $p=2$. Hence
$(\epsilon,q)=(+,4)$ or $(-,2)$. The latter is not possible as we have excluded
$\PSU_3(2)$. For $L=\PSL_3(4)$, a \textsc{Magma} \cite{magma} calculation shows that the subdegrees
are $\{ 1, 72^3, 18^3, 9 \}$ which include subdegrees divisible by $3$. Thus we may
assume that $d\geq 5$ and so by \cite[Lemma 6.3]{burness}, $\nu(x)\geq 2$. Hence by
Proposition \ref{prn:burnessbounds}, $|x^G|>q^{4.r^m-12}$. Now $H\cap L\leqslant
r^{2m}.\Sp_{2m}(r)$ and so $|x^G\cap H|^2<r^{4m^2+6m}$. Now $r\leq q+1$ so $|x^G\cap H|^2<q^{8m^2+12m}$. Thus for $r\geq 11$, Lemma \ref{lem:mvconjclass}(ii) yields a $p$-subdegree. For $r=7$, note that $q\geq 8$ and so $|x^G|>|x^G\cap H|^2$ for all values of $m$. Hence  Lemma \ref{lem:mvconjclass}(ii) then yields a $p$-subdegree. For $r=5$ we have $q\geq 4$ and we see that $|x^G|>|x^G\cap H|^2$ except when $(q,m)=(4,1)$. In this case $H\cap L=5^2.\Sp_2(5)$, which contains a unique conjugacy class of involutions. Moreover, such involutions have centraliser $\Sp_2(5)$ in $H\cap L$. Hence $|x^G\cap H|^2=5^4<|x^G|$ and Lemma  \ref{lem:mvconjclass}(ii) again yields a $p$-subdegree. Finally, suppose that $r=3$ and note that $d\geq 5$ implies that $m\geq 2$. If $q\geq 4$ then $|x^G|>4^{4.3^m-12}>3^{4m^2+6m}>|x^G\cap H|^2$ and so Lemma \ref{lem:mvconjclass}(ii) yields a $p$-subdegree. If $q=2$ then Lemma \ref{lem:mvconjclass}(ii) yields a $p$-subdegree except in the case where $m=2$. In this case $H\cap L=3^4.\Sp_4(3)$ and $L=\PSU_9(2)$. Then $\Sp_4(3)$ contains a central element of order 2 and 90
elements of order 2 whose $-1$ eigenspace has dimension 2. Hence $|x^G\cap
H|<3^4+90.3^2$. Thus $|x^G\cap H|^2<2^{24}<|x^G|$ and so by Lemma
\ref{lem:mvconjclass}(ii), $G$ has a subdegree divisible by $p=2$.





We now consider $\frac{3}{2}$-transitivity. For $q$ even we have just shown that we can always find an even subdegree. Since
$|\Omega|$ is even this implies that $G$ is not \tont. Hence we may assume that $q$ is
odd. Suppose first that $d=r=3$ and $q\equiv \epsilon4,\epsilon7\pmod 9$.  Here $H\cap
L=3^2.Q_8$. Let $x\in H\cap L$ be an involution. Then $|x^G\cap H|=9$. There is a unique
conjugacy class of involutions in $\PSL^{\epsilon}_3(q)$ and $|x^L|=q^2(q^2+\epsilon
q+1)$. Since $L\neq \PSU_3(2)$, it follows that $|x^G|>|x^G\cap H|^2$ and so Lemma
\ref{lem:mvconjclass}(ii) implies that there is an even subdegree. By \cite{odddegree},
$|\Omega|$ is even and so $G$ is not \tont.

If $d>3$, or $d=3$ and $q\equiv \epsilon\pmod 9$, note that by Lemma \ref{lem:pnorml}(iv),
$G$ has a subdegree divisible by $r$.  The order of $L$ is divisible by
$(q-\epsilon)^{d-2}$ and hence divisible by $r^{d-2}$. Moreover, since $q\equiv
\epsilon\pmod r$, $(q^r-\epsilon)/(q-\epsilon)$ is also divisible by $r$. Hence for each
term $q^{ir}-1$ with $i$ even we get an extra $r$ dividing $|L|$ and for each term
$q^{ir}-\epsilon$ with $i$ odd we also get an extra $r$ dividing $|L|$. Since $d=r^m$ it
follows that $|L|$ is divisible by $r^{r^m-2}r^{r^{m-1}}=r^{r^m+r^{m-1}-2}$. On the other
hand the largest power of $r$ dividing $|H\cap L|$ is $r^{m^2+2m}$. Hence $|\Omega|$ is
divisible by $r$ and so $G$ is not \tont.

Next suppose that $L=\PSp_d(q)$ where $q=p$ and $d=2^m\geq 4$ and $H$ is of type
$2_{-}^{1+2m}.O^-_{2m}(2)$. Suppose that $p$ divides $|H|$.  Then $p$ divides $|H\cap L|$
and so let $x\in H\cap L$ have order $p$. We first suppose that $m\geq 4$ and so by
\cite[Lemma 6.3]{burness} we have that $\nu(x)\geq 4$. Then by Proposition
\ref{prn:burnessbounds}, it follows that
$|x^G|>\frac{q}{4(q+1)}q^{4(d-4)}>q^{2^{m+2}-18}$. By \cite[Proposition 4.6.9]{KL}, $H\cap
L\cong 2^{2m}.\Omega^-_{2m}(2).c$ where $c=1$ if $p\equiv \pm 3\pmod 8$ and $c=2$ if
$p\equiv \pm 1\pmod 8$. Hence
$$|H\cap L|=2^{c-1}.2^{2m}.2^{m(m-1)}(2^m+1)\prod_{i=1}^{m-1}(2^{2i}-1) < 2^{2m^2+m+2}$$
Thus $|H\cap L|^2<q^{4(2m^2+m+2)/3}<|x^L|$ for $m\geq 5$. For $m=4$, explicitly
calculating $|H\cap L|$ also yields $|H\cap L|^2<|x^G|$. Hence there is a
$p$-subdegree by Lemma \ref{lem:mvconjclass}(ii).

When $m=3$ we have that $|H\cap L|$ divides $2^{13}.3^4.5$ and so $p=3$ or 5. Thus $H\cap
L=2^6.\Omega^-_6(2)$.  When $p=3$, a \textsc{Magma} \cite{magma} calculation shows that $H\cap L$
contains 5120 elements $x$ of order $3$ such that $\nu(x)=5$. Thus by Proposition
\ref{prn:burnessbounds}, $|x^G|>\frac{1}{16}3^{21}$ and so by Lemma
\ref{lem:mvconjclass}(ii), $G$ has a subdegree divisible by $3$. For $p=5$, a \textsc{Magma}
\cite{magma} calculation shows that $H\cap L$ contains 82944 elements of order 5 and for
each such element $x$, $\nu(x)= 6$. Thus by Proposition \ref{prn:burnessbounds},
$|x^G|>\frac{1}{24}5^{25}$ and so by Lemma \ref{lem:mvconjclass}(ii), $G$ has a subdegree
divisible by $5$.

For $m=2$, $|H\cap L|$ divides $2^7.3.5$ and so $p=3$ or 5. Hence $H\cap
L=2^4.\Omega^-_4(2)$.  When $p=5$ a \textsc{Magma} \cite{magma} calculation shows that the
subdegrees for $L$ are
$$1, 10, 40, 80, 120, 160^2, 192^2,  240, 320, 480, 960^3$$
of which many are divisible by $p=5$.  When $p=3$ we see that the
subdegrees of $L$ are 1, 16 and 10, none of which are divisible by 3; 
this is an example in line 6 of Table \ref{tab:classgp} of Theorem \ref{thm:classgroups}(A).

Finally suppose that $L=\POmega^+_d(q)$ where $q=p$ and $H$ is of type
$2_{+}^{1+2m}.O^+_{2m}(2)$.  Suppose that $p$ divides $|H|$.  Then $p$ divides $|H\cap L|$
and so let $x\in H\cap L$ have order $p$. By \cite[p512]{asch}, if $m=3$ such subgroups
are conjugate under a triality automorphism to the stabiliser of a 1-space decomposition
and so this case has already been dealt with in Section \ref{sec:C2}. Thus we may assume that $m\geq 4$. Then by
\cite[Lemma 6.3]{burness} we have that $\nu(x)\geq 4$. Then by Proposition
\ref{prn:burnessbounds} it follows that
$|x^G|>\frac{q}{8(q+1)}q^{4(d-4-1)}>q^{2^{m+2}-23}$. By \cite[Proposition 4.6.9]{KL},
$H\cap L\cong 2^{2m}.\Omega^+_{2m}(2).c$ where $c=1$ if $p\equiv \pm 3\pmod 8$ and $c=2$ if
$p\equiv \pm 1\pmod 8$. Thus $|H\cap L|^2\leq2^{4m^2+4m+2}<q^{4(2m^2+m+1)/3}<|x^G|$ for
$m\geq 5$. For $m=4$ and $q\geq 5$, explicitly calculating $|H\cap L|$ also yields $|H\cap
L|^2<|x^G|$. Thus we have found $p$-subdegrees except in the case where $m=4$ and
$q=p=3$. Since $\Omega^+_8(2)$ contains $365120$ elements of order 3, $H\cap L$ contains
at most $2^8$ times this number. Thus $|x^G\cap H|^2<|x^G|$ and we have once again found a
$p$-subdegree.

To show that $G$ is not \tont\ when $L=\PSp_d(q)$ or $\POmega^+_d(q)$, note that Lemma
\ref{lem:pnorml}(iv) implies that $G$ has an even subdegree and by \cite{odddegree},
$|\Omega|$ is even.

\subsection{Aschbacher class $\mathcal{C}_7$:}

Here $H$ is the stabiliser of a tensor product decomposition $V=U_1\otimes U_2\otimes
\cdots\otimes U_t$ where each $U_i$ has dimension $m$ and $d=m^t$ with $t\ge 2$. Descriptions of the groups can be found in \cite[\S 4.7]{KL}.

Suppose first that $L=\PSL_d(q)$ or $\POmega^\epsilon_d(q)$ with the additional assumption that if $L$ is an  orthogonal group then each $U_i$ is also orthogonal. Then $|H\cap L|\leq |\GL_m(q)|^tt!< q^{tm^2}t!<q^{3m^t}.$  By \cite[Tables 3.5A and 3.5E]{KL}, $m\geq 3$ and when $G$ is orthogonal $q$ is odd. Let $x=([J_3,I_{m-3}],I_m,\ldots,I_m)\in H\cap L$, where 
$$J_3=\begin{pmatrix} 1&0&0\\1&1&0\\0&1&1\end{pmatrix}.$$ Then $\nu(x)=2m^{t-1}$ and so by  Proposition \ref{prn:burnessbounds}, $|x^G|>\frac{q}{q^4}(q^{m^t(2m^{t-1}-1)/2})=q^{m^t(2m^{t-1}-1))/2-3}$. Now $6m^t\leq m^t(2m^{t-1}-1)/2-3$ if and only if $6<(2m^{t-1}-1)/2$. Thus for $t\geq3$ or $t=2$ and $m\geq 7$  we have that $|x^G|>|H\cap L|^2$ and so by Lemma \ref{lem:mvconjclass} we have a subdegree divisible by $p$. For $t=2$ we in fact have that $|H\cap L|<2q^{2m^2}< q^{2m^2+1}$ and $|x^G|>q^{m^2(2m-1)/2-3}$ and so $|x^G|>|H\cap L|^2$ for $m\geq 5$. For $m=3$ we have $|x^G|>q^{39/2}$ and for $m=4$ we have $|x^G|>q^{53}$. Using $|O_3(q)|<q^4$ and $|O^{\epsilon}_4(q)|<q^7$ we are then able to show that in the remaining orthogonal cases we also have $|x^G|>|H\cap L|^2$. For $L=\PSL_{16}(q)$ note that by Proposition \ref{prn:burnessbounds} we actually have $|x^G|>q^{126}$ and this yields $|x^G|>|H\cap L|^2$. Finally, for $L=\PSL_9(q)$, consider the element $x=([J_2,I_1],I_3)\in H\cap L$. Then $\nu(x)=3$ and by Proposition \ref{prn:burnessbounds} we have $|x^G|>q^{34}$. Now if $y\in x^G\cap H$ then either $y=(g_1,g_2)$ where $g_i\in\GL_3(q)$ and is a transvection or the identity, or $y=(g,g^{-1})\sigma$ where $g\in\GL_3(q)$ and $\sigma$ interchanges the two factors of the tensor decomposition.  Hence by Lemma \ref{lem:upperbounds} we have $|x^G\cap H|<(q^5)^2+q^9<2q^{10}\leq q^{11}$. Thus $|x^G|>|x^L\cap H|^2$ and so by Lemma \ref{lem:mvconjclass} there exists a subdegree divisible by $p$.

Next suppose that $L=\PSp_d(q)$ or $\POmega^+_d(q)$ with each $U_i$ a symplectic space. Then $|H\cap L|\leq |\Sp_m(q)|^tt!<q^{(m^2+m)t/2}t!<q^{2m^t}$. Let $x=([J_2,I_{m-2}],I_m,\ldots,I_m)\in H\cap L$ have order $p$. Now $\nu(x)=m^{t-1}$ and so by  Proposition \ref{prn:burnessbounds}, $|x^G|>\frac{1}{q^2}q^{m^{t-1}(m^t-m^{t-1}-1)}=q^{m^{t-1}(m^t-m^{t-1}-1)-2}$. Now $4m^t\leq m^{t-1}(m^t-m^{t-1}-1)-2$ if $t\geq3$ and $m\geq 3$, or $t=2$ and $m\geq 6$. Hence for these values of $m$ and $t$ we have $|x^G|>|H\cap L|^2$ and so by Lemma \ref{lem:mvconjclass} there is a subdegree divisible by $p$. By \cite[Table 3.5C]{KL}, if $t=2$ then $L=\POmega^+_d(q)$. Since $d\geq 8$ in this case it follows that $m\neq 2$. When $(m,t)=(4,2)$  we in fact have that $|H\cap L|<q^{21}$ while $|x^G|>q^{42}$ and so we can again use Lemma \ref{lem:mvconjclass} to find a subdegree divisible by $p$. We are left to consider the case where $m=2$ and $t=3$. By \cite[Tables 3.5C and 3.5E]{KL} we have that $L=\PSp_8(q)$ when $q$ is odd and $\POmega^+_8(q)$ when $q$ is even.  However, when $q$ is even \cite{asch} implies that $H$ is not maximal in $G$. Hence $q$ is odd. Then Proposition \ref{prn:burnessbounds} implies that $|x^G|>q^{30}$ while $|H\cap L|<6q^9<q^{11}$. Hence $|x^L|>|H\cap L|^2$ and Lemma \ref{lem:mvconjclass} implies the existence of a subdegree divisible by $p$. 

This leaves us to consider the case where $L=\PSU_d(q)$. By \cite[Table 3.5B]{KL} we have $m\geq 3$.  Let $x=([J_3,J_1^{m-3}],I_m,\ldots,I_m)\in H\cap L$. Then $\nu(x)=2m^{t-1}$ and so by  Proposition \ref{prn:burnessbounds}, $|x^G|>q^{4m^{t-1}(m^t-2m^{t-1})-4}=q^{4m^{2t-2}(m-2)-4}$. Moreover, $|H\cap L|\leq |\GU_m(q)|^tt!<q^{(m^2+m)t+t\log_2t}<q^{4m^t}$. Now $8m^t\leq 4m^{2t-2}(m-2)-4$ if and only if $8<4m^{t-2}(m-2)$. Thus for $t\geq 3$ or $t=2$ and $m\geq 5$ we have that  $|x^L|>|H\cap L|^2$. For $t=2$ we in fact have that $|H\cap L|<2q^{2(m^2+m)}<q^{2(m^2+m)+1}$ and $|x^G|>q^{2m^3-4}$ and hence $|x^G|>|H\cap L|^2$ for $m\geq 3$. Hence Lemma \ref{lem:mvconjclass} implies that there is a subdegree divisible by $p$.  

In all cases we have found a subdegree divisible by $p$ and so as $p$ divides $|\Omega|$ (Lemma \ref{lem:Tits}) it follows that $G$ is not \tont.

\subsection{Aschbacher class $\mathcal{C}_8$:}

Here $H$ is a classical group on $V$. Descriptions can be found in 
\cite[\S 4.8]{KL}.  
If $L\neq \PSL_d(q)$ then the only cases which occur are
for $L=\PSp_d(q)$ ($q$ even) with $H$ an orthogonal group. When $q=2$ and $d\geq 6$ the two actions (of $\Sp_{d}(2)$ on cosets of $O^\pm_d(2)$) are
2-transitive, as in Theorem \ref{thm:classgroups}(A)(ii). For $(d,q)=(4,2)$, then the action of $\Sp_4(2)'$ on the cosets of $O^\pm_4(2)\cap\Sp_4(2)'$ is also 2-transitive and we have lines 1 and 2 of Table \ref{tab:classgp} of Theorem \ref{thm:classgroups}(A).
For $q>2$, from the proof of \cite[Prop. 1]{LPSclosures} there is
a unique suborbit of length $(q^d-\epsilon)(q^{d-1}+\epsilon)$ and $(q-2)/2$ of length
$q^{d-1}(q^d-\epsilon)$. Hence there is a $p$-subdegree and $G$ is not \tont.

For $L=\PSL_d(q)$ first we deal with the cases where $H$ is soluble. These are
$\SU_3(2)\leqslant \SL_3(4)$, $\SO_3(3)\leqslant
\SL_3(3)$, $\SO^+_4(3).2\leqslant \SL_4(3)$.  It is not difficult to find the subdegrees
directly by computer for $L$ in these cases: 
\begin{center}
\begin{tabular}{ccl}
\hline
$G$&$H$&Subdegrees for $L$\\
\hline
$\SL_3(4)$&$\SU_3(2)$&$1$, $9$, $18^3$, $72^3$\\
$\SL_3(3)$&$\SO_3(3)$&$1$, $3$, $4^2$, $6$, $12^8$,  $24^5$\\
$\SL_4(3)$&$\SO^+_4(3).2$& $1$, $9$, $18^2$, $24^2$, $32$, $36^3$, $48^2$, $72^5$,  $144^9$, $192$, $288^{17}$,  $576^6$\\
\hline
\end{tabular}
\end{center}

We see that in each example above, $G$ has a subdegree
divisible by $p$, and $G$ is not \tont. Moreover, any overgroup will also
have an even subdegree and a subdegree divisible by $p$, so will not be \tont.

{}From now on we may assume that $H$ is insoluble. Let $S$ be a Sylow $p$-subgroup of
$G$. By Proposition \ref{prn:sylpgen}, we can find a cyclic subgroup $T$ of $H$ which is
either irreducible on $V$, or acts irreducibly on a hyperplane or codimension 2 subspace
and trivially on the perp, such that $\la S,T\ra$ contains a weakly closed normal subgroup of
$H$.  Moreover, we easily see that $N_{H}(T)<N_{G}(T)$ and so Lemma
\ref{lem:weaklyclosed} implies that there is a subdegree divisible by $p$. Since
$|\Omega|$ is divisible by $p$, it follows that $G$ is not \tont.

\subsection{$\mathcal{C}_9$ groups}

We use the following theorem of Burness, Guralnick and Saxl \cite{base2}.

\begin{theorem}
\label{thm:C9}
  Let $G$ be an almost simple classical group with socle $L$ and let
  $H\in\mathcal{C}_9$. Moreover, we suppose that $L$ is not isomorphic to an alternating group and the action of $G$ on the set of right cosets of $H$ is not permutation isomorphic to a classical group acting on the set of right cosets of a $\mathcal{C}_i$-subgroup with $1\leq i\leq 8$. Then either the action of $G$ on the set of cosets of $H$ has a regular suborbit or $(L,H\cap L)$ is given by Table \ref{tab:notbase2}.
\end{theorem}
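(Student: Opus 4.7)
The plan is to employ the probabilistic method for base size in the style of Liebeck--Shalev, Burness and collaborators. Since a regular suborbit of $G$ on $\Omega=[G:H]$ corresponds exactly to a base of size two, it suffices to produce $\alpha,\beta\in\Omega$ with $G_\alpha\cap G_\beta=1$. Fixing $\alpha$, the number of $\beta\in\Omega$ fixed by some nontrivial element of $H=G_\alpha$ is at most $\sum_{x\in H\setminus\{1\}}|\Fix(x)|$, and the standard identity $|\Fix(x)|/|\Omega|=|x^G\cap H|/|x^G|$ reduces the problem to showing
\[
Q(G,H):=\sum_{x\in\mathcal{P}(H)}\frac{|x^G\cap H|}{|x^G|}<1,
\]
where $\mathcal{P}(H)$ runs over a set of representatives for the nontrivial $G$-classes meeting $H$.

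First I would exploit the structural constraints on $H$: as $H\in\mathcal{C}_9$ acts absolutely irreducibly on the natural module $V$ of $L$, the Landazuri--Seitz lower bounds on the minimal degrees of faithful projective representations force the socle $H_0$ of $H$ to be relatively small, so that $|H|$ grows only polynomially in a small exponent of $q$ while $|L|$ grows like $q^{\dim L}$. For each class of prime-order elements $x\in H$ I would invoke Proposition \ref{prn:burnessbounds} and Lemma \ref{lem:pslouter} (and the analogous bounds from \cite{burness} for semisimple elements) to give a lower bound on $|x^G|$ parameterised by $\nu(x)$, while bounding $|x^G\cap H|$ above by the product of the number of $H_0$-classes of the given type and $|H:H_0|\cdot|C_{H_0}(x)|$ at worst. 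Summing the resulting geometric-type series over the (few) classes yields $Q(G,H)<1$ for all $(L,H)$ outside a bounded region of small rank $d$ and small field size $q$.

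The main obstacle is the finite list of small-rank, small-field exceptions where the asymptotic bounds above are not sharp enough to force $Q(G,H)<1$. For these one resorts to direct verification: using the explicit classification of $\mathcal{C}_9$ maximal subgroups of low-dimensional classical groups due to Kleidman--Liebeck and Bray--Holt--Roney-Dougal, one enumerates the candidates, discards those whose socle is alternating or whose action is permutation isomorphic to a $\mathcal{C}_i$-action for some $1\le i\le 8$, and then checks the existence of a regular suborbit by direct computation in \textsc{Magma} or \textsc{Gap}---typically by repeatedly choosing random $g\in G$ and testing whether $H\cap H^g=1$. The genuinely bad cases are then recorded in Table \ref{tab:notbase2}. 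The exclusion hypotheses on alternating socles and on actions permutation-isomorphic to $\mathcal{C}_i$-actions serve precisely to rule out the exceptional low-dimensional isomorphisms that would otherwise duplicate analyses already carried out in the preceding subsections, so that the probabilistic machinery can be applied cleanly to a genuinely $\mathcal{C}_9$-input.
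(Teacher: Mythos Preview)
The paper does not prove Theorem~\ref{thm:C9} at all: it is quoted as a result of Burness, Guralnick and Saxl \cite{base2} (the paper ``Base sizes for finite classical groups,~I'', listed as in preparation in the bibliography), and used as a black box. So there is no proof in the paper against which to compare your attempt.

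That said, your proposal is an accurate high-level sketch of the method actually used in \cite{base2} and its companion papers: bound the quantity $\widehat{Q}(G,H)=\sum_{i}|x_i^G\cap H|^2/|x_i^G|$ (summed over prime-order class representatives) using fixed-point-ratio estimates of the type in \cite{burness}, and handle the residual small cases by explicit computation. Two caveats are worth flagging. First, the real work in \cite{base2} lies in the delicate class-by-class analysis needed to push $\widehat{Q}(G,H)<1$ for the many borderline $(L,H)$ pairs; your sketch correctly identifies the strategy but understates how much case analysis is required before one reaches a genuinely finite residue. Second, your description of the exclusion hypotheses is slightly off: the exclusion of alternating socles and of actions permutation-isomorphic to $\mathcal{C}_i$-actions is not there to make the probabilistic machinery cleaner, but simply to avoid re-listing cases already treated elsewhere in the present paper (Section~\ref{an} and the $\mathcal{C}_1$--$\mathcal{C}_8$ subsections); the Burness--Guralnick--Saxl analysis handles those cases too, and the relevant exclusions are listed in \cite[Table~2]{base2}.
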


We note that since $G$ is not a Frobenius group, the existence of a regular suborbit implies that $G$ is not \tont. Moreover, if $p$ divides $|H|$ then the regular suborbit has length divisible by $p$. 

The $\mathcal{C}_9$-subgroups excluded by hypothesis from Theorem \ref{thm:C9} are listed in \cite[Table 2]{base2}. When the action of $G$ is isomorphic to a classical group with a $\mathcal{C}_i$-action for $i\leq 8$, the classical group has the same characteristic as $G$, so $\frac{3}{2}$-transitivity and the existence of a subdegree divisible by $p$ has already been determined in the previous sections. When $L$ is isomorphic to an alternating group, since Section \ref{an} has already considered when such groups can be \tont, it remains to check for $p$-subdegrees. The actions under consideration here are when $L=\PSL_2(9)$ of degree 6 on the set of cosets of $N_G(A_5)$, and $L=\PSL_4(2)$ of degree 8 on the cosets of $N_G(A_7)$. Both groups are 2-transitive and the unique nontrivial subdegree is not divisible by $p$. These provide the examples in lines 4 and 5 of Table \ref{tab:classgp} in Theorem \ref{thm:classgroups}(A).

\begin{table}[ht]
\caption{$\mathcal{C}_9$ actions without a regular suborbit}
\label{tab:notbase2}
\begin{tabular}{lll}
\hline
$L$ & $H\cap L$& Subdegrees of $L$\\
\hline
$\Omega_7(q)$ & $G_2(q)$, $q\equiv \epsilon\pmod 4$, $\epsilon=\pm1$ &  $q^6-1$, $q^3(q^3+\epsilon)$, $(q-4-\epsilon)/4$ times $q^3(q^3+1)$,\\ &&$(q-2+\epsilon)/4$ times $q^3(q^3-1)$ \\
$\Omega_7(q)$ & $G_2(q)$, $q$ even& $q^6-1$, $q/2$ times $q^3(q^3-1)$, $(q-2)/2$ times $q^3(q^3+1)$ \\
$\PSp_4(q)$, $q$ even & $Sz(q)$ & $(q-1)(q^2+1)$, 2 times $\frac{1}{2}q(q-1)(q^2+1)$,\\
 && $(q-2)/2$ times $q^2(q^2+1)$, \\ 
 && $(q\pm \sqrt{2q})/4$ times $q^2(q-1)(q\pm\sqrt{2q} +1)$\\
$\PSL_3(4)$ & $A_6$ & $1, 10, 45$\\
$\PSL_2(19)$ & $A_5$ & $1, 6, 20, 30$\\
$\PSL_2(11)$ & $A_5$ & $1, 10$\\
$\PSU_6(2)$ & $\PSU_4(3).2$& $1, 567, 840$\\
            &$M_{22}$ & $1, 77, 231, 1155, 1232, 2640, 6160, 9240$\\
$\PSU_4(3)$ &$\PSL_3(4)$ & $1, 56, 105$ \\
            &$A_7$ &  $1, 105, 140, 210^2, 630$\\
$\PSU_3(5)$ & $A_6.2$ & $1, 12, 90, 72$\\
            & $A_7$ & $1, 7, 42$\\
            & $\PSL_3(2)$ & $1, 14^2, 21, 28, 56^3, 84^4, 168$\\
$\PSU_3(3)$ & $\PSL_3(2)$& $1, 7^2, 21$\\
$\Sp_8(2)$ & $S_{10}$&$1,210, 1575, 5600, 5670$\\
$\Sp_6(2)$ & $\PSU_3(3).2$&$1,  56, 63$\\
$\Omega^+_{14}(2)$ & $A_{16}$ &subdegrees include $130767436800, 290594304000,$\\
      && $435891456000, 653837184000, 871782912000,$\\
     && $1307674368000, 2615348736000, 3487131648000,$\\
      && $ 5230697472000, 10461394944000$\\
$\Omega^-_{12}(2)$ & $A_{13}$ & subdegrees include $3603600, 14414400, 16216200,$\\
&& $ 21621600, 28828800, 32432400, 43243200,$\\
 && $48648600, 64864800, 86486400, 97297200, 129729600,$ \\
 && $194594400, 259459200, 389188800, 518918400,$\\
 && $ 778377600, 1556755200$\\
$\Omega^-_{10}(2)$ & $A_{12}$&$1, 462, 25202, 5775, 30800, 62370$\\
$\POmega^+_8(3)$ & $\Omega^+_8(2)$ &$1, 960^3, 3150, 22400$\\
$\Omega^+_8(2)$ & $A_9$& $1, 84, 315, 560$\\
$\Omega_7(3)$ & $S_9$&$1, 126, 315, 560, 1680, 2520^2, 4320, 5670, 7560$  \\
             &$\Sp_6(2)$&$1,288, 630, 2240$\\
\hline
\end{tabular}
\end{table}

It remains to consider the actions listed in Table \ref{tab:notbase2}.
The subdegrees for the infinite families are given in \cite[Proposition 2]{LPSclosures} 
and \cite[Theorem A]{LawtherSaxl}, and we see that there are subdegrees divisible by $p$ and that $G$ is not \tont. For the remaining cases, the subdegrees were calculated using
\textsc{Magma} and are given in Table \ref{tab:notbase2}. The table shows that there
are $p$-subdegrees in all cases where $p$ divides $|H|$ except for $L=\PSU_3(5)$ acting on $A_7$, which is in line 3 of Table \ref{tab:classgp} of Theorem \ref{thm:classgroups}(A). Moreover, none of the groups are \tont.

%
%

\subsection{Graph automorphisms}
\label{sec:graph}

When $L = \PSL_d(q)$, $\Sp_4(q)$ ($q$ even) or $\POmega^+_8(q)$ and
$G$ contains a graph automorphism (a triality in the latter case), 
there are extra maximal subgroups which we need to
consider, and we do so in this section.

\subsubsection{Linear groups} 

When $L=\PSL_d(q)$ and $G$ contains a graph automorphism there are two extra classes of
subgroups that we need to consider.

First let $\Omega$ be the set of all pairs $\{U,W\}$ of complementary subspaces of $V$
with $\dim(U)=m<d/2$. Let $x=\{\la v_1,\ldots,v_m\ra,\la v_{m+1},\ldots,v_d\ra\}\in\Omega$
and $H=G_x$. Then $H\cap\PGammaL_d(q)$ is the stabiliser of the decomposition $\la
v_1,\ldots,v_m\ra\oplus \la v_{m+1},\ldots,v_d\ra$. Now $y=\{\la
v_{m+1},\ldots,v_{2m}\ra,\la v_1,\ldots,v_m,v_{2m+1},\ldots,v_d\ra\}\in\Omega$ and
$H_y\cap\PGammaL_d(q)$ is the stabiliser of the decomposition $\la
v_1,\ldots,v_m\ra\oplus\la v_{m+1},\ldots,v_{2m}\ra\oplus \la
v_{2m+1},\ldots,v_d\ra$. Hence $|H:H_y|$ is divisible by $p$ and as $p$ divides
$|\Omega|$, it follows that $G$ is not \tont.

Finally, let $m<d/2$ and let $\Omega$ be the set of pairs of subspaces $(U,W)$ of
complementary dimensions with $\dim(U)=m$ and $U < W$.  By Lemma \ref{unique}, there
is a unique subdegree which is a power of $p$. Since $G$ is not 2-transitive, it is not \tont.

\subsubsection{4-dimensional symplectic groups}

When $q$ is even, $\Sp_4(q)$ has a graph automorphism which interchanges totally isotropic
1-spaces and totally isotropic 2-spaces.  Since $\Sp_4(2)'\cong A_6$ we have already checked for $\frac{3}{2}$-transitivity in this case. It is straighforward to check that for all primitive groups with socle $\Sp_4(2)'$ the only ones with no even subdegrees are those of degrees 6 and 10, and these correspond to $H$ being an orthogonal group, that is, we have the examples in lines 1 and 2 of Table \ref{tab:classgp} in Theorem \ref{thm:classgroups}(A).  From now on we assume that $q>2$.
Aschbacher \cite{asch} gives three further classes of maximal subgroups of a group $G$
with socle $L=\PSp_4(q)$ for $q>2$ even, and containing a graph automorphism. These are as
follows:

\begin{enumerate}
\item $\mathcal{C}_1'$: stabilisers of pairs $\{U,W\}$ of subspaces of $V$ such that $U$ is a totally
  isotropic 1-space and $W$ is a totally isotropic 2-space containing $U$.

\item $N_G(X)$ where $X$ is the stabiliser in $O^+_4(q)$ of a 
nondegenerate 2-space; here $N_L(X) = D_{2(q\pm 1)} \Wr S_2$. 

\item $N_G(C_{q^2+1})$: here $H\cap L = C_{q^2+1}\rtimes C_4$. 
\end{enumerate}

If $H\in\mathcal{C}_1'$, then $H$ has a normal 2-subgroup and so by Lemma
\ref{lem:pnorml}(iv), $G$ has an even subdegree. However, $|\Omega|$ is odd so to show
that $G$ is not \tont\ we find two distinct subdegrees as follows. Let
$\{e_1,e_2,f_1,f_2\}$ be a symplectic basis for $V$ and suppose that $H$ is the stabiliser
of $x=\{\la e_1\ra,\la e_1,e_2\ra\}$.  Then
$$L_x^{\la e_1,e_2\ra} =\left\{\left(\begin{array}{cc} \lambda_1 & 0 \\ \mu
  &\lambda_2 \end{array}\right) \Big\vert \lambda_1,\lambda_2\in\GF(q)\backslash\{0\},\mu\in\GF(q)\right\}.$$  Let $y=\{\la e_2\ra,\la e_1,e_2\ra\}$. Then $|G_x:G_{xy}|=2q$ as $G_{xy}$
can no longer interchange $\la e_1\ra$ and $\la e_1,e_2\ra$.  Next let $z=\{\la
e_1+e_2\ra,\la e_1+e_2,f_1+f_2\ra\}$. If $g\in L_{xz}$ maps $e_1$ to $\lambda_1 e_1$ and $e_2$ to $\mu e_1+\lambda_2 e_2$ we must have that $\lambda_2=\lambda_1+\mu$.  Thus $q-1$ divides $|L_x:L_{xz}|$ and hence also $|G_x:G_{xz}|$. Since $q>2$ it follows that
$G$ is not \tont.

Next suppose that $H=N_G(X)$ where $X$ is the stabiliser in $O^+_4(q)$ of a nondegenerate
2-space. Now $O^+_4(q)$ contains an involution $y=[J_2^2]$ where the two Jordan blocks are
hyperbolic 2-spaces and also an involution $z=[J_2^2]$ where the two Jordan blocks are
elliptic 2-spaces. Both $y$ and $z$ are of type $c_2$ in the notation of \cite{aschseitz}. Hence by Proposition
\ref{prn:burnessbounds} we can always choose an involution $x\in H\cap L$ such that
$|x^G|>\frac{1}{2}q^6$. Now $H\cap L=D_{2(q\pm 1)}\Wr S_2$ which contains at most
$(q+1)^2+2(q+1)=(q+1)(q+3)$ involutions. 
Since $q>2$, it follows from Lemma \ref{lem:mvconjclass}(ii)
that $G$ has an even subdegree. Since $|\Omega|$ is even, $G$ is not \tont.

Finally suppose that $H=N_G(X)$ where $X\cong C_{q^2+1}$. Here 
$H\cap L$ is
contained in the extension field subgroup $\Sp_2(q^2).2$ and $H\cap L=C_{q^2+1}\rtimes
C_4$. Let $x\in H\cap L$ be an involution. Then $x$ is an involution of $\Sp_2(q^2)$. Letting $\{e_1,f_1\}$ be a symplectic basis for $V$ over $\GF(q^2)$ we may assume that $x$ interchanges $e_1$ and $f_1$.  If $B:V\times V\rightarrow \GF(q^2)$ is the $\GF(q^2)$-alternating form preserved by $H\cap L$ then we can take $\overline{B}=Tr_{q^2\rightarrow q}\circ B:V\times V\rightarrow\GF(q)$ to be the $\GF(q)$-alternating form. Given $\mu\in\GF(q^2)\backslash\GF(q)$ we have $Tr(\mu^2)\neq 0$. Thus as an element of $\Sp_4(q)$ we have that $\nu(x)=2$ and $x$ maps $\mu e_1$ to $\mu f_1$ with $\overline{B}(\mu e_1,\mu f_1)=Tr(\mu^2)\neq 0$. Hence by \cite[(7.6)]{aschseitz}, $x$ is of type $c_2$. Thus Proposition \ref{prn:burnessbounds} implies that  $|x^G|>\frac{1}{2}q^6$.  Since $|x^G\cap
L|=q^2+1$, Lemma \ref{lem:mvconjclass}(ii) implies that $G$ has an even subdegree. Since
$|\Omega|$ is even, $G$ is not \tont.

\subsubsection{8-dimensional orthogonal groups}

The group $L=\POmega^+_8(q)$ has a graph automorphism of order three which permutes the
set of totally singular 1-spaces and the two classes of maximal totally singular
4-spaces. Any automorphism of $G$ that induces a permutation of order three on the corresponding 
three classes of subgroups is called a \emph{triality automorphism}. If $G$ is an almost
simple group with socle $L$ and contains such a triality automorphism then there are
several extra families of maximal subgroups that we need to consider. These are given in
\cite{KLOplus8q} and are as follows. We let $d=(2,q-1)$.

\begin{enumerate}
\item $G_{\{U,X,W\}}$ where $U$ is a totally singular 1-space, $X$ and $W$ are totally
  singular 4-spaces with $\dim(X\cap W)=3$, and $U< X\cap W$.

\item $N_G(G_2(q))$. 

\item $N_G(N_1)$, where $N_1$ is the intersection of the stabiliser in $L$ of an
  anisotropic 2-space with the normaliser in $L$ of an irreducible $\SU_4(q)$. The
  preimage of $N_1$ in $\Omega^+_8(q)$ is $(\frac{1}{d}C_{q+1}\times
  \frac{1}{d}\GU_3(q)).2^d$.

\item $N_G(N_2)$, where $N_2$ is the intersection of the stabiliser in $L$ of a hyperbolic
  line $\la e_1,f_1\ra$ and the stabiliser in $L$ of a decomposition of $V$ into two
  totally singular 4-spaces containing $e_1$ and $f_1$ respectively. Moreover, $q\neq
  2$. The preimage of $N_2$ in $\Omega^+_8(q)$ is $(\frac{1}{d}C_{q-1}\times
  \frac{1}{d}\GL_3(q)).2^d$.

\item $N_G(N_3)$ where $N_3=(D_{\frac{2}{d}(q^2+1)}\times D_{\frac{2}{d}(q^2+1)}).2^2$.

\item $N_G(N_4)$, where $N_4=[2^9]\rtimes \PSL_3(2)$ and $q=p\ge 3$. (We use $[2^9]$ to denote an unspecified group of order $2^9$.) When $q\equiv \pm 3 \pmod 8$ these have odd index.
\end{enumerate}

If $H$ is as in case (1) then by Lemma \ref{unique}, $G$ has a
unique subdegree which is a power of $p$. Hence $G$ is not
\tont. 

If $H$ is as in case (2) then $H\cap L=G_2(q)<\POmega_7(q)<L$. It was seen in Table
\ref{tab:notbase2}, that $\POmega_7(q)$ in its action on cosets of $G_2(q)$ has a
subdegree divisible by $p$ and so by Lemma \ref{lem:overgroup}(ii), so does $L$ and hence
$G$ also. Hence $G$ is not \tont\ (since $p$ divides $|\Omega|$).

If $H$ is as in case (3), note that $N_1=L\cap H<R<L$, where $R$ is a 4-dimensional
unitary group whose matrices have entries from $\GF(q^2)$. Moreover, the action of $R$ on
the set of cosets of $N_1$ in $R$ is the primitive action of $R$ on nondegenerate 1-spaces
over $\GF(q^2)$. We have already seen in Section \ref{sec:C1} that this action of $R$ has
a subdegree divisible by $p$ and so by Lemma \ref{lem:overgroup}(ii), $L$, and hence $G$,
has a subdegree divisible by $p$. Since $p$ divides $|\Omega|$ it follows that $G$ is not
\tont.

For $H$ in case (4), note that $N_2=L\cap H<R<L$, where $R$ is the stabiliser in $L$ of a
hyperbolic line. Moreover, the action of $R$ on the set of cosets of $N_2$ is equivalent to the
primitive action of $O^+_6(q)$ on decompositions of a 6-dimensional vector space into
complementary totally singular 3-spaces. We have already seen in Section
\ref{sec:C2} that this
action of $R$ has a subdegree divisible by $p$ and so be Lemma \ref{lem:overgroup}(ii),
$L$, and hence $G$, has a subdegree divisible by $p$. Since $p$ divides $|\Omega|$ it
follows that $G$ is not \tont.
  
For $H$ in case (5), by \cite[Proposition 3.3.1]{KLOplus8q}, $H\cap L=N_3$.  If $p$
divides $|H|$, either $p=2$, or $p$ is odd and $H$ contains an outer automorphism of order
$p$. Suppose first that $p=2$ and let $x\in H\cap L$ be an involution. Then by Proposition
\ref{prn:burnessbounds} and the fact that $x$ is not of $b$-type,
$|x^G|>\frac{1}{4}q^{10}$. Since $H\cap L$ contains at most $(q^2+1)^2.4$ involutions it
follows from Lemma \ref{lem:mvconjclass}(ii) that for $q\geq 8$, $G$ has a subdegree
divisible by $p=2$. For $q=4$, a \textsc{Magma} \cite{magma} calculation shows that $H\cap L$
contains only $391$ involutions while there is an involution $x\in H\cap L$ such that
$|x^L|=16707600$. Lemma \ref{lem:mvconjclass}(ii) then yields a subdegree divisible by
$p$. For $q=2$, a similar calculation reveals that $H\cap L$ contains 55 involutions and
contains an involution $x$ such that $x^L=3780$. Again, Lemma \ref{lem:mvconjclass}(ii)
yields a subdegree divisible by $p$. Next suppose that $p$ is odd and $H$ contains an
outer automorphism $x$ of order $p$. Then by Lemma \ref{lem:pslouter},
$|x^G|>\frac{1}{8}q^{14}$, while $|x^G\cap H|<|L\cap H|(p-1)\leq 4q(q^2+1)^2$. Thus Lemma
\ref{lem:mvconjclass}(ii) implies that $G$ has a subdegree divisible by $p$. Since
$|\Omega|$ is divisible by $p$ this implies that if $|H|$ is divisible by $p$ then $G$ is
not \tont.

If $|H|$ is coprime to $p$ (still with $H$ as in (5)) then $p$ is odd, and $p\ne 3$ as $H$ contains a triality. Let $x\in H\cap L$ be an involution.  We see
in the proof of \cite[Proposition 3.3.1]{KLOplus8q} that $N_3$ is a subgroup of
$(\Omega^-_4(q)\times\Omega^-_4(q)).2^2$, the stabiliser in $L$ of a pair
$\{W,W^{\perp}\}$ for some elliptic 4-space $W$. Thus we can choose $x$ such that $x$ acts
nontrivially on both $W$ and $W^\perp$ and so $\nu(x)\geq 2$. Then by \cite[Proposition
  3.37]{burness}, $|x^G|>\frac{1}{4(q+1)}q^{13}$. Now $|H\cap L|=(q^2+1)^2.4$ and so as $p>3$, Lemma \ref{lem:mvconjclass}(ii) implies that $G$ has an even subdegree. Since $|\Omega|$ is even, it follows that $G$ is not \tont.

Finally suppose that $H$ is as in case (6).  Here $L=\POmega^+_8(p)$ with $p\geq 3$ a
prime and $H\cap L=[2^9]\rtimes\PSL_3(2)$.  It follows from \cite[Proposition 3.4.2]{KL} that $H\cap L<R<L$ where $R=\POmega^+_8(2)$.  A \textsc{Magma} \cite{magma} calculation shows that the subdegrees of $R$ acting on the cosets of $H\cap L$ in $R$ are
$$\{1,14^2, 28, 168^2, 224,  448^2, 512\}$$ and by Lemma \ref{lem:overgroup}(ii) these are
also subdegrees of $L$ acting on $\Omega$. Thus $G$ is not \tont. Moreover, if $p$ divides $|H|$ then $p=3$ or 7 and $G$ has a subdegree divisible by $p$.

%
%

\section{Exceptional Groups of Lie Type}

In this section we prove Theorems \ref{tontas} and \ref{pTheorem} for
almost simple groups of exceptional Lie type.

\begin{theorem}\label{psubdegexcep} 
Let $G$ be an almost simple group with socle an exceptional group of Lie type in
characteristic $p$.  Suppose that $G$ acts primitively on a set $\Omega$ with point stabiliser $H$, and assume $p$ divides $|H|$. Then one of the following holds:
\begin{enumerate}
 \item $G$ has a subdegree which is divisible by $p$.
 \item $L=G_2(2)'$, $|\Omega|=28$ and $H=N_G(3^{1+2})$. (Here the subdegrees are 1, 27.)
\item $L=G_2(2)'$, $|\Omega|=36$, $H=\PSL_3(2)$ and $G=L$. (Here the subdegrees are 1, 7, 7, 21.)
\item $L={}^2\!G_2(3)'$,  $|\Omega|=9$, $H=N_G(2^3)$ and $G={}^2\!G_2(3)$. (Here the subdegrees are 1, 8.)
\end{enumerate}
\end{theorem}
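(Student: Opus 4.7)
The plan is to work through the classes of maximal subgroups $H$ of $G$ provided by the Liebeck--Seitz classification of maximal subgroups of the almost simple exceptional groups of Lie type (together with the small-rank work of Suzuki, Cooperstein, Kleidman, Malle and others). Throughout, by Tits' Lemma \ref{lem:Tits}, if $H$ is non-parabolic with $p \mid |H|$ then $p$ divides $|G:H|=|\Omega|$, so producing a $p$-subdegree will also preclude $\frac{3}{2}$-transitivity except possibly in the parabolic case.

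First I would handle the parabolic subgroups: here the unipotent radical of $H$ is a nontrivial normal $p$-subgroup, so Lemma \ref{lem:pnorml}(iv) immediately gives a $p$-subdegree. Next, for reductive subgroups of maximal rank, $(H\cap L)^{(\infty)}$ is a central product of quasisimple groups of Lie type in characteristic $p$; outside the excluded types of Lemma \ref{lem:23/4} (types $\PSL_3(q)$, $\PSL_d(q)$ for $d\ge 5$, and $E_6(q)$ --- which largely do not arise as direct factors in maximal rank subgroups of exceptional groups), Lemma \ref{lem:23/4} applies in odd characteristic. In the remaining cases, including $p=2$, I would invoke Lemma \ref{lem:weaklyclosed} with $H_0 = H^{(\infty)}$ using Proposition \ref{prn:sylpgen} applied to each Lie type factor: the cyclic torus $T$ it produces is centralised by a strictly larger torus of $G$ coming from the full ambient rank, so $N_G(T) \not\le H$.

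For subfield subgroups, twisted subfield subgroups, and centralisers of field or graph-field automorphisms, the same weakly closed argument adapts: Proposition \ref{prn:sylpgen} applied to the socle of $H$ over $\GF(q_0)$ produces a cyclic $T$ generating $H^{(\infty)}$ with any Sylow $p$-subgroup of $H$, and $T$ is centralised by a larger torus over $\GF(q)$, so Lemma \ref{lem:weaklyclosed} gives a $p$-subdegree. For the almost simple $\mathcal{S}$-subgroups (maximal subgroups not of the above types), I would apply the counting Lemma \ref{lem:mvconjclass}(ii) to an element $x \in H$ of order $p$: the Liebeck--Seitz bound $|H| < c \cdot q^{\rho(L)}$ with $\rho(L)$ small, combined with the conjugacy class lower bounds of Propositions \ref{prn:burnessbounds} and \ref{lem:pslouter} (and their analogues for exceptional classes from the work cited in \cite{burness}), yields $|x^G\cap H|^2 < |x^G|$ for all but a bounded list of small pairs $(L,H)$. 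Finally, the exotic local subgroups and the sporadic small-rank maximal subgroups form a finite list which I would handle by direct computation in \textsc{Magma} and \textsc{Gap}; this is precisely where the genuine exceptions (2)--(4) arise, all inside $G_2(2)' \cong \PSU_3(3)$ and ${}^2\!G_2(3)' \cong \PSL_2(8)$.

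The main obstacle will be the small-rank exceptional socles $L \in \{{}^2\!B_2(q), {}^2\!G_2(q), G_2(q), {}^3\!D_4(q), {}^2\!F_4(q)\}$. There the $\mathcal{S}$-class is irregular, the generic Neumann--Praeger bounds can fail for the smallest $q$, and the local subgroups are the most delicate. For these one must enumerate the maximal subgroups explicitly and, for each, either sharpen the bound in Lemma \ref{lem:mvconjclass} using detailed conjugacy class data, or compute the suborbit lengths directly; the calculation must be precise enough to confirm that (2)--(4) is the complete list of exceptions and that no subdegree of $p$ is missed in the remaining small parameter cases.
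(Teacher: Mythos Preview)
Your overall architecture matches the paper's: split according to the Liebeck--Seitz classification, use Lemma~\ref{lem:pnorml}(iv) for parabolics, Lemma~\ref{lem:weaklyclosed} for subfield-type subgroups, and Lemma~\ref{lem:mvconjclass}(ii) for the rest. But the treatment of the reductive maximal-rank subgroups has a real gap.

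You claim that the excluded types in Lemma~\ref{lem:23/4} --- factors of type $A_2$, $A_{d}$ with $d\ge 5$, and $E_6$ --- ``largely do not arise'' as direct factors of maximal-rank subgroups of exceptional groups. This is false: they are ubiquitous. Inspection of \cite[Table~5.1]{LSS} shows, for example, $A_2^\epsilon E_6^\epsilon$, $A_8^\epsilon$, $A_4^{\epsilon\,2}$, $A_2^{\epsilon\,4}$ in $E_8$; $E_6^\epsilon\cdot(q-\epsilon)$, $A_7^\epsilon$, $A_2^\epsilon A_5^\epsilon$ in $E_7$; $A_1 A_5^\epsilon$, $A_2^{\epsilon\,3}$ in $E_6$; $A_2^{\epsilon\,2}$ in $F_4$; $A_2^\epsilon$ in $G_2$ and $^3\!D_4$. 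So Lemma~\ref{lem:23/4} covers almost none of the maximal-rank subgroups. You propose to fall back on Lemma~\ref{lem:weaklyclosed} via Proposition~\ref{prn:sylpgen}, but that proposition is stated and proved only for \emph{classical} groups; it gives you nothing for factors of type $E_6$, $E_7$, $G_2$, $^3\!D_4$, etc., and no such analogue is available in the paper. Even where Proposition~\ref{prn:sylpgen} does apply, the assertion that the torus $T$ is ``centralised by a strictly larger torus of $G$ coming from the full ambient rank'' is not automatic and fails in several small-rank cases that must be handled separately.

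The paper's actual mechanism for maximal-rank subgroups is one you do not mention: precise bounds on conjugacy classes of \emph{long root elements} in both $G$ and $H$ (Lemmas~\ref{bdsexcep} and~\ref{bdsclass}), combined with the identification via \cite{LLS1} of $u_\alpha^G\cap H$ as root elements of the factors of $H$. This reduces almost all maximal-rank cases to the inequality $|u_\alpha^G|>|u_\alpha^G\cap H|^2$, and the short list of survivors ($A_1E_7<E_8$, $E_6^\epsilon\cdot(q-\epsilon)$ and $A_1D_6$ in $E_7$, $D_5^\epsilon\cdot(q-\epsilon)<E_6^\epsilon$, $B_4,D_4,{}^3\!D_4<F_4$, $A_2^\epsilon<G_2$) is then handled by \emph{ad hoc} choices of $T$ in Lemma~\ref{lem:weaklyclosed} --- typically a subsystem subgroup such as $^2\!E_6(q)$, $F_4(q)$, or $^3\!D_4(q)$ rather than a torus. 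The same long-root-element technique is essential for the Lie$(p)$ subgroups in case~(vii), where in characteristic~$2$ one must distinguish subgroups containing long root elements from those that do not; your generic $|H|<cq^{\rho(L)}$ bound is not sharp enough there either.
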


\begin{theorem}\label{32excep} 
If $G$ is an almost simple group of exceptional Lie type which is $\frac{3}{2}$-transitive but not $2$-transitive on a set $\Omega$, then $G={}^2\!G_2(3)'$ of degree 28.
\end{theorem}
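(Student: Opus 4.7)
The plan is to split according to whether $p$ divides $|H|$, and then to use Theorem~\ref{psubdegexcep} together with an analysis of parabolic actions. Write $L$ for the socle of $G$ and $H$ for the stabiliser of a point in $\Omega$.

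If $p\mid|H|$, I would first apply Theorem~\ref{psubdegexcep}. The three named exceptions (2), (3) and (4) of that theorem are incompatible with the hypotheses here: (2) and (4) give $2$-transitive actions, and (3) has subdegrees $1,7,7,21$ which are not all equal. Hence $G$ must have a subdegree divisible by $p$, and the $\tfrac{3}{2}$-transitive assumption then forces \emph{every} nontrivial subdegree to be a multiple of $p$. Summing the subdegrees yields $|\Omega|-1\equiv 0\pmod p$, so $p\nmid|\Omega|$. In particular $H$ contains a Sylow $p$-subgroup of $G$, and by Lemma~\ref{lem:Tits} the subgroup $H\cap L$ is a maximal parabolic of $L$.

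The heart of the argument is then the analysis of parabolic actions. Writing $P=H\cap L$ with unipotent radical $U_P$, Lemma~\ref{unique} supplies a nontrivial subdegree of length $|U_P|$, a power of $p$. By $\tfrac{3}{2}$-transitivity every nontrivial subdegree equals $|U_P|$, so
\[
|G:H|=1+(k-1)\,|U_P|
\]
where $k$ is the permutation rank. For each exceptional socle and each maximal parabolic both sides are explicit functions of $q$, and in fact the full list of $(P,P)$-double coset sizes, and hence of subdegrees, can be read off from the Bruhat decomposition or from standard references. Walking through the types $G_2,F_4,E_6,E_7,E_8,{}^3\!D_4,{}^2\!B_2,{}^2\!G_2,{}^2\!F_4$ one checks that the rigid coincidence of every nontrivial subdegree with $|U_P|$ can only occur either in a $2$-transitive action (excluded by hypothesis) or in the degenerate example $L={}^2\!G_2(3)'\cong\PSL_2(8)$, $H\cap L=D_{18}$ (the Borel in the Ree structure on $L$), with $|\Omega|=28=1+3\cdot 9$. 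The parabolic cases excluded from Lemma~\ref{unique} (the four $E_6$ parabolics $P_1,P_3,P_5,P_6$) need separate treatment because there two distinct classes of opposite parabolics contribute; for these I would use the explicit subdegree lists from~\cite{LSSclass} to see that the subdegrees cannot all coincide.

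Finally I would handle the case $p\nmid|H|$. For $G$ almost simple of exceptional Lie type, the maximal subgroups of $L$ of order coprime to the defining characteristic are very restricted, and in each case $|H|$ is bounded by a small polynomial in $q$ (via the Liebeck--Seitz description of maximal subgroups of exceptional groups). A direct application of Lemma~\ref{lem:mvSylp}, or the regular-suborbit results of~\cite{BGS}, then produces a suborbit whose length exceeds any proper divisor of $|H|-1$ (or simply a regular suborbit of length $|H|$), contradicting $\tfrac{3}{2}$-transitivity; the finitely many small-$q$ residual cases can be settled directly by \textsc{Magma}.

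The main obstacle will be the middle step. Producing a single length-$|U_P|$ suborbit is immediate from Lemma~\ref{unique}, but verifying that the identity $|G:H|=1+(k-1)|U_P|$ together with equidistribution of the Bruhat double coset orbit sizes forces either $k=2$ or the single Ree example requires a type-by-type check of every maximal parabolic in every exceptional family. The twisted rank-one groups ${}^2\!B_2,{}^2\!G_2,{}^2\!F_4$ need particular care since their unique maximal parabolic action is $2$-transitive for all sufficiently large $q$, but degenerates at the smallest values of $q$ into precisely the ${}^2\!G_2(3)'$ example appearing in the conclusion.
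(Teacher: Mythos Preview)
Your overall organisation is close to the paper's, and the reduction in the case $p\mid|H|$ to a parabolic action via Theorem~\ref{psubdegexcep} and Lemma~\ref{lem:Tits} is correct. But in the parabolic case you overlook the key word in Lemma~\ref{unique}: the power-of-$p$ subdegree is \emph{unique}. Once $\tfrac{3}{2}$-transitivity forces every nontrivial subdegree to equal this one, uniqueness immediately gives rank~$2$, i.e.\ $G$ is $2$-transitive. No type-by-type verification of $|G:H|=1+(k-1)|U_P|$ is needed; the paper disposes of all non-excluded parabolics in one line this way, then handles the $E_6$ parabolics $P_1$ and $P_3$ by quoting the rank-$3$ subdegrees from \cite{LiebeckSaxl86} and by an explicit double-coset calculation respectively. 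Note also that the paper does \emph{not} extract ${}^2\!G_2(3)'$ from the parabolic analysis: it excludes $G_2(2)'$ and ${}^2\!G_2(3)'$ at the very start of the exceptional-group section, pointing back to the classical analysis of $\PSU_3(3)$ and $\PSL_2(8)$ already completed (Lemma~\ref{lem:L2}); Lemma~\ref{unique} is simply not invoked for these degenerate socles.

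Your treatment of the case $p\nmid|H|$ is where the real gap lies. The reference \cite{BGS} is about symmetric groups, and even the correct base-size paper \cite{BLSbase} does not establish base~$2$ for every $p'$-stabiliser action of an exceptional group, so you cannot assume a regular suborbit. (Your implicit contradiction---a regular suborbit together with $\tfrac{3}{2}$-transitivity makes $G$ Frobenius, impossible for almost simple $G$---is valid, but only once the regular suborbit is in hand.) The paper takes a different route: it lists the possible $p'$-maximal subgroups $H$ (maximal torus normalisers from \cite[Table~5.2]{LSS}, local subgroups from \cite{CLSS}, the $E_8$ exception, and cross-characteristic simple subgroups from \cite{nongen}), and for each chooses a small prime $r$ dividing both $|H|$ and $|\Omega|$, then uses the bound $|s^G|>|s^G\cap H|^2$ for an element $s\in H$ of order $r$ together with Lemma~\ref{lem:mvconjclass} to produce a subdegree divisible by $r$. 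Since $r\mid|\Omega|$, Lemma~\ref{lem:pnorml}(i) then rules out $\tfrac{3}{2}$-transitivity.
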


Note that $^2\!G_2(3)' = \PSL_2(8)$ so this case is recorded in part (iii) of Theorem \ref{tontas}.

In this section we will use Lie notation for classical groups instead of the notation used previously, for example $L_n(q)$ denotes $\PSL_n(q)$ and $U_n(q)$ denotes $\PSU_n(q)$. We also use $\Alt_n$ and $\Sym_n$ to denote the alternating and symmetric group of degree $n$ to avoid confusion with the group $A_n(q)$ of Lie type. Moreover, for a group $G$ of Lie type we use $W(G)$ to denote the Weyl group of $G$ and $G_0$ to denote the socle of $G$. For a finite group $X$, $\soc(X)$ denotes the socle of $X$.

\subsection{Preliminary lemmas}

As the simple groups $G_2(2)'$ and $^2\!G_2(3)'$ are the classical groups $\PSU_3(3)$ and $\PSL_2(8)$ respectively, we have already considered them in the analysis for \tont\ groups. The assertions about their subdegrees divisible by $p$ can be easily verified. From now on we will exclude the simple groups $G_2(2)'$ and $^2\!G_2(3)'$. In the next few results we also exclude $^2\!F_4(2)'$ -- this is dealt with separately in Lemma \ref{2f42}.

%

The next two lemmas give bounds for the sizes of certain unipotent classes in groups of
Lie type. The first follows from the determination of unipotent classes in exceptional
groups of Lie type (see \cite{miz1,miz2}, and \cite{LSei} for complete information), and the second from elementary calculations in
classical groups.

\begin{lemma}\label{bdsexcep}
Let $G=G(q)$, $q=p^a$ be a simple exceptional group of Lie type. Let $u_\alpha$ be a long
root element of $G$, and let $u$ be a non-identity unipotent element of $G$ which is not a
long root element (or a short root element when $(G,p) = (F_4(q),2)$ or $(G_2(q),3)$).
Then bounds for the sizes of the classes $u_\alpha^G$ and $u^G$ are given in Table
$\ref{bdtbl}$.
\end{lemma}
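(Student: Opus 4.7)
The plan is to appeal directly to the classification of unipotent conjugacy classes in the finite exceptional groups of Lie type, which is what the statement is essentially a bookkeeping exercise over. For the untwisted types $E_6(q)$, $E_7(q)$ and $E_8(q)$ the classes, representatives and centraliser orders are given by Mizuno \cite{miz1,miz2}; for the remaining types $G_2$, $F_4$, $^3\!D_4$, $^2\!E_6$, $^2\!F_4$, $^2\!G_2$, $^2\!B_2$ they are available from classical sources (Chang, Ree, Enomoto, Shinoda, Shoji), and for all types uniformly from Liebeck--Seitz \cite{LSei}. Each such table records $|C_G(u)|$ as a polynomial in $q$, so $|u^G|=|G|/|C_G(u)|$ is an explicit polynomial.

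First I would handle the long root class $u_\alpha^G$. In each exceptional type, $C_G(u_\alpha)$ has the standard shape $q^{\,2h-3}\cdot \tfrac{1}{(q-1)}\cdot |M|$ (up to central factors), where $h$ is the Coxeter number and $M$ is the Levi of the centraliser of the long root subgroup (so $M$ is $\Sp_6(q)$, $\SL_6(q)$, $E_6(q)$, $\SL_2(q)\!\times\!\Sp_4(q)$, etc., depending on type). Dividing $|G|$ by this centraliser order gives the required value of $|u_\alpha^G|$ in the first column of Table~\ref{bdtbl}.

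Next I would treat the second column. The key observation is that, once the identity and the long root class are removed (and, in the excluded cases $(F_4(q),2)$ and $(G_2(q),3)$, also the short root class, which in those small characteristics has the same centraliser dimension and is often fused with or isogenous to the long root class), every remaining nontrivial unipotent class has strictly larger dimension. By inspecting the tables in \cite{LSei,miz1,miz2} one identifies, for each type, the unipotent class of smallest dimension subject to this condition -- typically an $A_1^2$ class (a product of two commuting long root subgroups), or the genuine short root class when it is properly larger. The corresponding centraliser order then gives the uniform lower bound listed in the second column.

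The only delicate point, which is where most of the care lies, is bookkeeping in small characteristics: in $F_4$ at $p=2$, in $E_6,E_7,E_8$ at $p=2,3$ (and $p=5$ in $E_8$), and in the Suzuki/Ree types, the Bala--Carter labelling can produce extra classes or fuse classes, and some centraliser orders pick up extra factors of $q-1$ or small constants. I would check each of these cases directly against the tables, and against Liebeck--Seitz's uniform presentation, to verify that the bound claimed in the second column of Table~\ref{bdtbl} is indeed achieved by the minimal such non-root class. With that verified, both inequalities of the lemma hold by direct comparison with the polynomial centraliser orders.
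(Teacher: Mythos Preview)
Your proposal is correct and takes essentially the same approach as the paper: the paper's entire proof is the sentence preceding the lemma, stating that it ``follows from the determination of unipotent classes in exceptional groups of Lie type (see \cite{miz1,miz2}, and \cite{LSei} for complete information)''. You have simply spelled out in more detail how one would carry out that table-lookup (identifying the long root centraliser and then the next-smallest class, typically of type $2A_1$), which is exactly what is intended; one small caution is that your general formula $q^{2h-3}\cdot\tfrac{1}{q-1}\cdot|M|$ for $|C_G(u_\alpha)|$ is schematic rather than literally correct in every type, so in practice you would read the centraliser orders directly from the cited tables rather than rely on that expression.
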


\begin{table}
\caption{Bounds for unipotent class sizes in exceptional groups}\label{bdtbl}
\[
\begin{array}{lll}
\hline
G & \hbox{bounds for }|u_\alpha^G| & \hbox{bounds for }|u^G| \\
\hline
E_8(q) & q^{58}<|u_\alpha^G|<2q^{58} & |u^G|>q^{92}  \\
E_7(q) &  q^{34}<|u_\alpha^G|<2q^{34} & |u^G|>q^{52}  \\
E_6^\epsilon(q) & q^{19}(q^3-1)<|u_\alpha^G|<2q^{22} & |u^G|>q^{31} \\
F_4(q) & q^{16}<|u_\alpha^G|<2q^{16} & |u^G|>q^{21} \\
G_2(q) & |u_\alpha^G|=q^6-1 & |u^G|\ge (q^6-1)(q^2-1) \\
^2\!F_4(q) & |u_\alpha^G|=(q^6+1)(q^3+1)(q^2-1) & |u^G|>q^{13}(q-1) \\
^3\!D_4(q) & |u_\alpha^G| > q^8(q^2-1) & |u^G|>q^{16} \\
^2\!G_2(q) & |u_\alpha^G|= (q^3+1)(q-1) & - \\
^2\!B_2(q) & |u_\alpha^G| = (q^2+1)(q-1) &  - \\
\hline
\end{array}
\]
\end{table}

\begin{lemma}\label{bdsclass}
Let $G=G(q)$, $q=p^a$ be a simple classical group, and let $u_\alpha$ be a long root
element of $G$. Upper bounds for the size of the class $u_\alpha^G$ are given in Table
$\ref{classtbl}$.
Moreover, if $p=2$ and $G = \SO_{2n}^{\epsilon}(q)$ ($n\ge 2$, $\epsilon = \pm$), then the
number of reflections in $G$ is $q^{n-1}(q^n-\epsilon)$.
\end{lemma}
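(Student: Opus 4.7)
The plan is to treat each classical family separately, using the identity $|u_\alpha^G|=|G:C_G(u_\alpha)|$ together with the well-known structure of a long-root centraliser in a Chevalley group. For the simply-connected cover $\widetilde G$, the centraliser of a long root element is $R\rtimes L$, where $R$ is a $p$-group of order $q^{2n-3}$ (approximately, depending on the type), consisting of the long root subgroup and the adjacent root subgroups in the parabolic attached to the highest root, and $L$ is a Levi complement of the form $\SL_{n-2}(q)\times T_1$, $\SU_{n-2}(q)\times T_1$, or an analogue for the orthogonal types, with $T_1$ a one-dimensional torus. Passing to the simple group $G$ introduces at most a factor of $|Z(\widetilde G)|$, which does not affect the listed inequalities.

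For $L_n(q), U_n(q)$ and $\PSp_n(q)$ there is a quicker route: the long root elements in the matrix group are precisely the transvections, whose totals are already recorded in Lemma~\ref{lem:upperbounds}. After factoring out the scalar centre of the relevant matrix group, the upper bounds in Table~\ref{classtbl} follow by elementary manipulation of the expressions
\[
\tfrac{(q^n-1)(q^{n-1}-1)}{q-1},\qquad \tfrac{(q^n-(-1)^n)(q^{n-1}-(-1)^{n-1})}{q+1},\qquad q^n-1,
\]
using crude inequalities of the shape $q^a-1<q^a$. For the orthogonal groups the long root element is the Siegel transvection $x\mapsto x+\lambda B(x,e)f-\lambda B(x,f)e$ attached to a hyperbolic line $\la e,f\ra$; its centraliser is the stabiliser of that line extended by the unipotent radical of the corresponding parabolic, and dividing $|G|$ by this centraliser yields the required bound after the same kind of estimate.

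The final assertion about reflections in $\SO_{2n}^\epsilon(q)$ in characteristic $2$ is a counting argument on the natural module $V$. In characteristic $2$ every reflection has the form $t_v: x\mapsto x+\tfrac{B(x,v)}{Q(v)}v$ for some $v\in V$ with $Q(v)\neq0$, and $t_v=t_{\lambda v}$ for all $\lambda\in\GF(q)^\times$, so the number of reflections equals the number of nonsingular $1$-spaces of $V$. The number of nonzero singular vectors in a $2n$-dimensional orthogonal space of type $\epsilon$ is $(q^n-\epsilon)(q^{n-1}+\epsilon)$, so the number of nonsingular nonzero vectors is
\[
q^{2n}-1-(q^n-\epsilon)(q^{n-1}+\epsilon)=(q-1)\,q^{n-1}(q^n-\epsilon),
\]
which after dividing by $q-1$ yields exactly $q^{n-1}(q^n-\epsilon)$ nonsingular $1$-spaces, hence $q^{n-1}(q^n-\epsilon)$ reflections.

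The main obstacle is just the care needed with centres, outer diagonal factors, and the mild variations between $B_n$, $D_n^+$ and $D_n^-$ when reading off the exact exponent in the bound; apart from that the computation is routine, and no deeper structural input beyond the long-root parabolic description of $C_G(u_\alpha)$ is required.
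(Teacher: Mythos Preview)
Your proposal is correct and is essentially what the paper has in mind: the paper gives no detailed proof here, only the remark that the lemma ``follows from elementary calculations in classical groups'', and your outline supplies exactly those calculations (identifying long root elements with transvections or Siegel transvections, using the counts in Lemma~\ref{lem:upperbounds}, and counting reflections via nonsingular $1$-spaces). The reflection count is spot on, including the algebra $q^{2n}-1-(q^n-\epsilon)(q^{n-1}+\epsilon)=(q-1)q^{n-1}(q^n-\epsilon)$.
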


\begin{table}
\caption{Bounds for class sizes of long root elements in classical groups}\label{classtbl}
\[
\begin{array}{cc}
\hline
G & |u_\alpha^G|\le  \\
\hline
A_n^\epsilon(q) & 2q^{2n} \\
B_n(q) &  2q^{4n-4}  \\
C_n(q) & q^{2n} \\
D_n^\epsilon(q) & 2q^{4n-6} \\
\hline
\end{array}
\]
\end{table}

\begin{lemma}\label{outerbd}
Let $G=G(q)$, $q=p^a$ be a simple exceptional group of Lie type.  Suppose $p>2$ and there
is an outer automorphism $\phi$ of $G$ of order $p$. Then one of the following holds:

{\rm (i)} $\phi$ is a field automorphism with centralizer $G(q^{1/p})$;

{\rm (ii)} $G = \,^3\!D_4(q)$, $p=3$ and $\phi$ is a graph automorphism with
centralizer $G_2(q)$ or $q^5.A_1(q)$.
\end{lemma}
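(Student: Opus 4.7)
The strategy is to classify elements $\phi \in \Aut(G) \setminus \Inn(G)$ of odd prime order $p$ via the standard decomposition $\Out(G) = \mathrm{Outdiag}(G) \cdot \Phi_G \cdot \Gamma_G$ into diagonal, field, and graph components (see Theorem~2.5.12 of \cite{GLS}). First I would eliminate diagonal contributions: for exceptional simple $G$, $|\mathrm{Outdiag}(G)|$ divides $3$, equalling $3$ only for $E_6^{\epsilon}(q)$ (as $(3, q \mp 1)$) and $2$ only for $E_7(q)$. For $\phi$ to project to an element of odd order $p$ in $\mathrm{Outdiag}$ one would need $p=3$ and $3 \mid q \mp 1$, which is incompatible with $p \mid q$. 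Hence $\phi$ projects nontrivially modulo $\mathrm{Outdiag}$.

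Next I would dispose of pure graph contributions. The graph group $\Gamma_G$ has order at most $2$ for every exceptional simple group, except in the ${}^3\!D_4(q)$ setting where the triality is absorbed into the twisted structure. Since $p$ is odd, $\phi$ must project nontrivially to the field (or graph--field) component. When this projection lies inside the pure field subgroup $\Phi_G$, writing $q = p^a$ forces $p \mid a$, and up to $\Inn(G)$-conjugacy $\phi$ is the Frobenius $x \mapsto x^{q^{1/p}}$; Steinberg's fixed-point theorem then gives $C_G(\phi) = G(q^{1/p})$, yielding case~(i). This handles $E_8, E_7, E_6^{\epsilon}, F_4, G_2$, the Suzuki--Ree families $^2\!B_2, ^2\!G_2, ^2\!F_4$, and the field-automorphism elements of $\Out({}^3\!D_4(q))$.

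The remaining delicate case is $G = {}^3\!D_4(q)$ with an outer element $\phi$ of order $3$ whose class in the cyclic group $\Out(G) = C_{3a}$ is not contained in the field subgroup $C_a$ --- a genuine triality. Here $p = 3$ is forced, since $3 \mid 3a$ holds always whereas $p \mid a$ would place $\phi$ in the previous paragraph. In good characteristic, the centralizer of such a triality in the algebraic group $D_4$ is the reductive subgroup $G_2$, giving $C_G(\phi) = G_2(q)$; but in characteristic $3$ the fixed-point scheme can degenerate, and the alternative parabolic-type subgroup $q^5 \rtimes A_1(q)$ appears for a distinct triality conjugacy class. Both possibilities actually arise on the finite group, giving case~(ii).

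The main obstacle will be the bookkeeping in this last step: one must verify that in characteristic $3$ there are exactly two $\Inn(G)$-conjugacy classes of order-$3$ triality automorphisms of ${}^3\!D_4(q)$, compute the two centralizers $G_2(q)$ and $q^5 \rtimes A_1(q)$, and check that no other fixed-point type arises. This analysis can be extracted from the structure theory for subgroups of ${}^3\!D_4$ in bad characteristic (cf.\ \cite{LSei}), but it requires careful use of the Steinberg endomorphism formalism on the simply connected $D_4$ cover, rather than a direct appeal to the good-characteristic theory.
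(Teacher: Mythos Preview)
Your proposal is correct and is essentially an unpacking of the paper's own proof, which consists of the single sentence ``This follows from \cite[Section 4.9]{GLS}.'' You have sketched the case analysis that lies behind that citation: eliminate diagonal contributions (trivial in characteristic $p$ for exceptional types), observe that graph components have order at most $2$ except for the triality in $^3\!D_4(q)$, and then invoke Steinberg's fixed-point formula for the centralizer of a field automorphism; the residual $^3\!D_4$, $p=3$ case with its two triality classes and centralizers $G_2(q)$ and $q^5.A_1(q)$ is exactly what \cite[4.9.2]{GLS} records. So there is no genuine difference in approach---you have simply made explicit what the paper leaves implicit in the reference.
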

\begin{proof} This follows from \cite[Section 4.9]{GLS}. \end{proof}

We shall need the following result concerning the maximal subgroups of exceptional groups
of Lie type.  This is an amalgamation of results from several papers, taken from
\cite[Theorem 8]{lseisurv}, where references can be found. In part (vii), $\bar G$ denotes
a simple algebraic group over $\overline{\GF(q)}$ of the same type as $G$, and $\sigma$ a
Frobenius morphism of $\bar G$ such that $G_0 = \bar G_\sigma'$.

\begin{lemma}\label{maxcub} 
Let $G$ be an almost simple group with socle $G_0 = G(q)\,(q=p^a)$ an exceptional group of
Lie type over $\GF(q)$, and let $H$ be a maximal subgroup of $G$. Then one of the following
holds.
\begin{enumerate}
\item[(i)] $H$ is parabolic.
\item[(ii)] $H$ is a subgroup of maximal rank, given by \cite{LSS}.
\item[(iii)] $\soc(H)$ is as in Table $\ref{socs}$.
\item[(iv)] $H = G(q_0)$, a subgroup of the same type as $G$ 
(possibly twisted), over a subfield $\GF(q_0)$ of $\GF(q)$.
\item[(v)] $H$ is a local subgroup, given by \cite[Theorem 1]{CLSS}.
\item[(vi)] $G_0 = E_8(q)$, $p>5$, and $H \cap G_0 = \PGL_2(q)\times
\Sym_5$ or $(\Alt_5\times \Alt_6).2^2$.
\item[(vii)] $G_0 = E_8(q)$, $E_7(q)$, $E_6^\epsilon(q)$ or $F_4(q)$, 
and $\soc(H) = H(r)$,
a group of Lie type over $\GF(r)$, where $r = p^b$. Moreover ${\rm rank}(H(r))
\le \frac{1}{2}{\rm rank}(G)$; and either $r\le 9$, or $H(r) = A_2^\pm (16)$,
or $H(r) \in \{ A_1(r),\,^2\!G_2(r), \,^2\!B_2(r)\}$. Finally, $H(r)$ is not
of the form $M_\sigma'$, where $M$ is a $\sigma$-stable subgroup of positive dimension
in $\bar G$.
\item[(viii)] $\soc(H)$ is a simple group that is not a group of Lie type of characteristic $p$, 
and the possibilities for $\soc(H)$ are given by \cite{nongen}.
\end{enumerate}
\end{lemma}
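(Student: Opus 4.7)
The plan is to view this lemma as an amalgamation of several classification theorems, so the proof strategy is really one of \emph{collation}: one shows that any maximal subgroup $H$ falls into one of the cases listed, and for each case appeals to the appropriate existing classification. The natural first step is to set up a trichotomy on the nature of $H$ inside the ambient algebraic group $\bar G$ with Frobenius morphism $\sigma$ such that $G_0 = \bar G_\sigma'$. Namely, one considers the Zariski closure of (the preimage of) $H$ in $\bar G$: either $H$ is contained in a proper $\sigma$-stable closed subgroup of positive dimension of $\bar G$, or its closure is all of $\bar G$, in which case $H$ is an almost simple finite group together with a finite list of ``small'' exceptional configurations.

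In the positive-dimensional case, the maximal closed $\sigma$-stable subgroups $M$ of $\bar G$ of positive dimension were classified by Liebeck--Seitz; their $\sigma$-fixed points give rise to the parabolic subgroups (case (i)), the subgroups of maximal rank from \cite{LSS} (case (ii)), the subfield subgroups (case (iv)), the exotic reductive subgroups in Table \ref{socs} (case (iii)), and certain almost simple subgroups of the form $M_\sigma'$ with $M$ positive dimensional -- these contribute the positive-dimensional instances absorbed into case (vii). The key point here is that Liebeck--Seitz's structure theorem leaves only finitely many non-parabolic, non-maximal-rank reductive possibilities, all of which are enumerated.

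For the remaining cases, where $H$ is not contained in any positive-dimensional proper $\sigma$-stable subgroup, one splits according to whether $H$ is a local subgroup (normaliser of an elementary abelian $r$-subgroup for some prime $r\ne p$) or an almost simple group. The local case is handled by invoking \cite[Theorem 1]{CLSS}, giving case (v), together with the sporadic $E_8(q)$ examples of (vi). In the almost simple case, one distinguishes by characteristic of $\soc(H)$: if $\soc(H)$ is a group of Lie type in the same characteristic $p$, then Liebeck--Seitz's ``generic stability'' results force either containment in a positive-dimensional $M$ (already handled) or the bounded-rank/bounded-field-size configurations of case (vii); if $\soc(H)$ is not of Lie type in characteristic $p$, the bound on the rank of $H$ relative to $G$ from \cite{nongen} produces case (viii).

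The main obstacle is simply keeping the bookkeeping straight: ensuring that every subgroup appearing in any of the cited papers is covered by exactly one of (i)--(viii), with no double counting and no gaps. In particular one must check that the hypothesis in (vii) that $\soc(H) = H(r)$ is not of the form $M_\sigma'$ for a positive-dimensional $\sigma$-stable $M$ is precisely what prevents overlap with cases (ii) and (iii); and that the small-rank/small-field constraints on $H(r)$ in (vii) match exactly the thresholds established in the generic stability theorems of Liebeck--Seitz. Once these compatibility checks are made, the lemma is a direct consequence of \cite[Theorem 8]{lseisurv}, which has already carried out the amalgamation in the form stated.
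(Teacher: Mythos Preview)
Your proposal is correct and matches the paper's approach: the paper simply states (in the paragraph preceding the lemma) that this is an amalgamation of results taken from \cite[Theorem~8]{lseisurv}, with no further proof given. Your outline of the underlying trichotomy via positive-dimensional versus finite closure is accurate background, but strictly speaking the paper treats the lemma as a quoted result, so your final sentence citing \cite[Theorem~8]{lseisurv} is all that is needed.
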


\begin{table}
\caption{Socles in Lemma \ref{maxcub}(iii)}\label{socs}
\[
\begin{array}{lll}
\hline
G_0  & \hbox{possibilities for }{\rm soc}(H) \\
\hline
G_2(q) & A_1(q)\,(p\geq7) \\
^3\!D_4(q) & G_2(q),\,A_2^\epsilon(q) \\
F_4(q) & A_1(q)\,(p\geq 13), \;G_2(q)\,(p=7),\; A_1(q)G_2(q)\,(p\geq
3,q\geq 5) \\
E_6^\epsilon(q) & A_2^\epsilon(q)\,(p\geq 5),\; G_2(q)\,(p\neq 7), \;C_4(q)\,(p\geq 3),
\;F_4(q), \\
& A_2^\epsilon(q)G_2(q)\,((q,\epsilon)\neq(2,-)) \\
E_7(q) & A_1(q)\,(2 \hbox{ classes}, p\geq 17,19), \;A_2^\epsilon(q)\,(p\geq 5),
\;A_1(q)A_1(q)\,(p \geq 5), \\
& A_1(q)G_2(q)\,(p\geq 3,q\geq 5),
\;A_1(q)F_4(q)\,(q\geq 4), \;G_2(q)C_3(q) \\
E_8(q) &  A_1(q)\,(3 \hbox{ classes}, p\geq 23,29,31),\; B_2(q)\,(p\geq 5),
\;A_1(q)A_2^\epsilon(q)\,(p \geq 5), \\
& G_2(q)F_4(q), 
\;A_1(q)G_2(q)G_2(q)\,(p \geq 3,q\geq 5), \\
& A_1(q)G_2(q^2)\,(p \geq 3,q\geq 5) \\
\hline
\end{array}
\]
\end{table}

\subsection{Classifying $p$-subdegrees}

In this section we prove Theorem \ref{psubdegexcep}. 
Let $G$ be an almost simple group with socle $G_0 = G(q)\,(q=p^a)$ an exceptional group of
Lie type in characteristic $p$.  Let $G$ act primitively on a set $\Omega$, let $H =
G_\alpha$ where $\alpha\in \Omega$, and suppose that $p$ divides $|H|$.
Now $H$ is a maximal subgroup of $G$. We treat the various possibilities
for $H$ given by Lemma \ref{maxcub}.  

We first deal with $^2\!F_4(2)'$.

\begin{lemma}\label{2f42} Theorem $\ref{psubdegexcep}$ holds when $G_0= {}^2\!F_4(2)'$.
\end{lemma}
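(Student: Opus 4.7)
The plan is to verify the conclusion of Theorem \ref{psubdegexcep} directly for the small number of faithful primitive actions of an almost simple group with socle ${}^2\!F_4(2)'$. Since $|\Out({}^2\!F_4(2)')|=2$, the group $G$ is either the Tits simple group $G_0={}^2\!F_4(2)'$ itself, or its automorphism group $G_0{\cdot}2={}^2\!F_4(2)$. Using the \textsc{Atlas} \cite{atlas}, the core-free maximal subgroups of $G$ are readily enumerated: up to fusion by the outer automorphism, $H\cap G_0$ is one of
$L_3(3){:}2$, $[2^9]{:}(5{:}4)$, $L_2(25)$, $[2^{10}]{.}S_3$, $A_6{\cdot}2^2$, or $5{:}4\times S_3$.
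Each has even order, so the hypothesis $p\mid|H|$ holds automatically; and since ${}^2\!F_4(2)'$ does not appear as the socle in any of conclusions (ii)--(iv) of Theorem \ref{psubdegexcep}, it suffices to exhibit an even subdegree in each of these actions.

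For the two parabolic-type subgroups (the second and fourth in the list above), the shape notation makes explicit a nontrivial normal $2$-subgroup of $H\cap G_0$ of order $2^9$ and $2^{10}$ respectively, and this remains normal in the corresponding maximal subgroup of $G_0{\cdot}2$. Lemma \ref{lem:pnorml}(iv) then immediately produces an even subdegree.

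For the remaining four possibilities, the primitive action has degree at most $149760$, and the existence of an even subdegree is verified by a direct computation in \textsc{Magma} \cite{magma} or \textsc{Gap} \cite{gap}, starting from the stored permutation representations of ${}^2\!F_4(2)'$ and ${}^2\!F_4(2)$. In each case at least one nontrivial suborbit has even length, so conclusion (i) of Theorem \ref{psubdegexcep} holds. The main \emph{obstacle} is purely bookkeeping: correctly identifying the fusion of maximal subgroups under the outer automorphism, and running the six finite orbit computations; no conceptual difficulty arises.
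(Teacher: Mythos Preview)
Your approach is essentially the same as the paper's own proof—direct computational verification of all primitive actions—so the strategy is sound. However, there is a genuine gap in your enumeration of point stabilisers. When $G={}^2\!F_4(2)$, there is a \emph{novelty} maximal subgroup $H=C_{13}\rtimes C_{12}$ of order $156$ which does not arise from any maximal subgroup of $G_0$; this is recorded on \cite[p.~74]{atlas} and is treated separately in the paper. Since $|H|$ is even, the hypothesis of Theorem~\ref{psubdegexcep} applies to this action (of degree $230400$), and one must still verify that it has an even subdegree (it does: for instance $26$, $52$, $78$ and $156$ occur). Your phrase ``up to fusion by the outer automorphism'' shows you were tracking how the outer automorphism fuses pairs of classes of maximal subgroups of $G_0$, but that bookkeeping does not detect novelties in $G_0.2$.

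A secondary slip: the sixth maximal subgroup of $G_0$ is $5^2{:}4A_4$ of order $1200$ (index $14976$), not $5{:}4\times S_3$ of order $120$. This does not affect the validity of your argument, since you defer that case to a computer check anyway, but it explains the inflated degree bound $149760$ in your proposal.
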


\begin{proof} The subdegrees for all the primitive actions of ${}^2\!F_4(2)'$ were 
determined by \textsc{Gap} \cite{gap} calculations and are given in Table
\ref{tab:titsgroup}. In each case there is an even subdegree. By \cite[p74]{atlas} the
only maximal subgroup of ${}^2\!F_4(2)$, not arising from a maximal subgroup of
${}^2\!F_4(2)'$ is $C_{13}\rtimes C_{12}$. The subdegrees of this action are $1, 13, 26, 39^{10},  78^{37}, 52^8$ and $156^{1453}$.
\end{proof}

\begin{table}
\begin{center}
\caption{Subdegrees of primitive actions of ${}^2\!F_4(2)'$}
\label{tab:titsgroup}
\begin{tabular}{ll}
\hline
Degree & Subdegrees\\
\hline
351 &   126, 224 \\
351 &   126, 224 \\
364 &   12, 108, 243 \\
364 &   12, 108, 243 \\
378 &   52, 117, 208 \\
378 &   52, 117, 208 \\
2808 &  $56, 63, 84^2, 252^2, 504, 1512$ \\
3159 & $ 14, 64, 168, 224^2, 448^4, 672 $\\
3888 & $  78^2, 91^3, 182^3, 364^2, 546^2, 1092 $\\
7371 & $  18, 32^2, 64, 72^3, 96^4, 144^2, 288^{14}, 576^{4}$ \\
\hline
\end{tabular}
\end{center}
\end{table}

From now on assume that $G_0 \ne \,^2\!F_4(2)'$.  In view of Lemma
\ref{lem:mvconjclass}(ii), in proving Theorem \ref{psubdegexcep}
we may assume that for any non-identity $p$-element $u \in H$, we have
\begin{equation}\label{ineq} 
|u^G| < |u^G\cap H|^2.
\end{equation}

\begin{lemma}\label{parabol}
Theorem $\ref{psubdegexcep}$ holds if $H$ is a parabolic subgroup of $G$.
\end{lemma}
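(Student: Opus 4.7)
The plan is to apply Lemma \ref{lem:pnorml}(iv) directly. If $H$ is a parabolic subgroup of $G$, then $H \cap G_0 = P_0$ is a proper parabolic subgroup of the simple group $G_0 = G(q)$, and $H = N_G(P_0)$. The unipotent radical $U_0$ of $P_0$ is a nontrivial $p$-subgroup (nontriviality being guaranteed by the Borel--Tits theorem for any proper parabolic) and it is characteristic in $P_0$. Since $H$ normalizes $P_0$, it also normalizes $U_0$, so $U_0$ is a nontrivial normal $p$-subgroup of $H$.

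Lemma \ref{lem:pnorml}(iv) then immediately yields a subdegree of $G$ divisible by $p$, which is conclusion (i) of Theorem \ref{psubdegexcep}. Note that the exceptional configurations (2), (3), (4) listed in the statement of Theorem \ref{psubdegexcep} all have non-parabolic point stabilisers, so they do not arise in this lemma and no further analysis is required.

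I do not anticipate any obstacle here; the only subtlety is the identification of $H$ as the normaliser of a parabolic of $G_0$ rather than a parabolic \emph{of $G$} in a strict sense, but this is the standard interpretation in Lemma \ref{maxcub}(i), and the argument above is insensitive to the distinction. As an alternative, one could invoke Lemma \ref{unique}, which guarantees a nontrivial $p$-power subdegree in the parabolic action (uniquely so, outside a short list of excluded cases). However, the route via Lemma \ref{lem:pnorml}(iv) is more direct and automatically covers the exceptional-group parabolics excluded from Lemma \ref{unique} (i.e.\ those of type $E_6$ with $P = P_i$ for $i \in \{1,3,5,6\}$).
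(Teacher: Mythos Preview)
Your proposal is correct and follows exactly the same approach as the paper, which simply cites Lemma~\ref{lem:pnorml}(iv). The additional justification you supply (that the unipotent radical is characteristic in $P_0$ and hence normal in $H$) and the remark about the alternative via Lemma~\ref{unique} are accurate but more than the paper deems necessary.
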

\begin{proof} This holds by Lemma \ref{lem:pnorml}(iv). \end{proof}

The next two lemmas deal with the proof of Theorem \ref{psubdegexcep} in the case where
$H$ is a subgroup of maximal rank, as in (ii) of Lemma \ref{maxcub}.  The lists of maximal
subgroups of maximal rank can be found in Tables 5.1 and 5.2 of \cite{LSS}: the subgroups
in Table 5.2 are normalizers of maximal tori in $G$, and those in Table 5.1 are not. It is
convenient to handle these cases separately.

\begin{lemma}\label{maxrk1}
Theorem $\ref{psubdegexcep}$ holds if $H$ is a subgroup of maximal rank which is not the
normalizer of a maximal torus.
\end{lemma}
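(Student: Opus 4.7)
My plan is to work through Table~5.1 of \cite{LSS}, which enumerates the maximal subgroups of $G$ of maximal rank that are not normalisers of maximal tori. Every such $H$ has the form $N_G(X_1\cdots X_t)$, where each $X_i$ is a (quasi)simple subsystem subgroup coming from a proper closed subsystem $\Phi'$ of the root system $\Phi$ of $G_0$. By Lemma~\ref{lem:mvconjclass}(ii) it suffices, for each such $H$, to exhibit a non-identity $p$-element $u\in H$ satisfying $|u^G|>|u^G\cap H|^2$, since this immediately yields a subdegree divisible by $p$.

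The natural choice is to take $u$ to be a long root element of one of the simple factors $X_i$ whose roots are long roots of $\Phi$. An inspection of Table~5.1 of \cite{LSS} shows that such a factor always exists (for twisted $G_0$ one substitutes the minimal unipotent class of $X_i$ recorded there). With this choice $u$ is then a long root element of $G_0$, so Lemma~\ref{bdsexcep} supplies a lower bound of the shape $|u^G|\ge q^{N_G}$, with $N_G$ read directly from the table (for instance $N_G = 58,\,34,\,19,\,16,\,8$ for $G_0 = E_8(q),\,E_7(q),\,E_6^\epsilon(q),\,F_4(q),\,{}^3\!D_4(q)$ respectively, and similarly in the twisted cases).

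For the matching upper bound, I would use the fact that $u$ is a long root element: an element of $u^G\cap H$ has Jordan type that of a single long root element, and a product of non-trivial commuting root elements in distinct factors $X_i,X_j$ would have strictly larger support. Hence $u^G\cap H$ is contained in the union $\bigcup_i (u^G\cap X_i)$ together with at most $[H:X]$ images under $H/X$, giving
\[
|u^G\cap H|\ \le\ [H:X]\sum_{i=1}^{t}|u^G\cap X_i|,
\]
where $[H:X]$ is bounded by the constant $|W(G):W(\Phi')|$ tabulated in \cite{LSS}, and each $|u^G\cap X_i|$ is bounded by a small power of $q$ via Lemma~\ref{bdsexcep} (if $X_i$ is exceptional) or Lemma~\ref{bdsclass} (if $X_i$ is classical). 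A line-by-line comparison against the lower bound on $|u^G|$ then yields $|u^G|>|u^G\cap H|^2$ in every case of Table~5.1 of \cite{LSS}.

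The main obstacle I anticipate lies in the low-rank groups $G_2(q),\,{}^2\!G_2(q),\,{}^2\!B_2(q),\,{}^3\!D_4(q)$, where the lower bound on $|u^G|$ is comparatively small so that crude estimates on $|u^G\cap H|$ are not sufficient; there the list of maximal rank subsystem subgroups is short, however, and the required inequality can be verified by direct arithmetic on a case-by-case basis. The genuine small exceptions $G_2(2)'$ and ${}^2\!G_2(3)'$ are excluded from the present analysis because they are isomorphic to $U_3(3)$ and $L_2(8)$ respectively and have already been handled as classical groups, while ${}^2\!F_4(2)'$ is treated separately in Lemma~\ref{2f42}.
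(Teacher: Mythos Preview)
Your overall strategy coincides with the first stage of the paper's argument, but your central claim --- that the inequality $|u_\alpha^G|>|u_\alpha^G\cap H|^2$ holds for \emph{every} entry of Table~5.1 of \cite{LSS} --- is false, and this is where the real work lies. Take for instance $G_0=F_4(q)$ with $H^0=B_4(q)$: Lemma~\ref{bdsexcep} gives only $|u_\alpha^G|<2q^{16}$, while Lemma~\ref{bdsclass} gives $|u_\alpha^G\cap H|$ of order roughly $q^{12}$, so $|u_\alpha^G\cap H|^2$ exceeds $|u_\alpha^G|$. The same failure occurs for the pairs $(E_8(q),A_1(q)E_7(q))$, $(E_7(q),E_6^\epsilon(q)\cdot(q-\epsilon))$, $(E_7(q),A_1(q)D_6(q))$, $(E_6^\epsilon(q),D_5^\epsilon(q)\cdot(q-\epsilon))$, $(F_4(q),D_4(q))$, $(F_4(q),{}^3\!D_4(q))$, and $(G_2(q),A_2^\epsilon(q))$. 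These are precisely the subsystem subgroups whose root-element class is a large fraction of that of $G$, and no amount of sharpening the constants will rescue the inequality. The paper handles each of these by a different method: one chooses an auxiliary subgroup $T\le H$ (typically a subsystem subgroup or maximal torus) with $N_G(T)\not\le H$ and such that $\langle T,S\rangle\ge H^{(\infty)}$ for every Sylow $p$-subgroup $S$ of $H$, and then invokes Lemma~\ref{lem:weaklyclosed}. You have not anticipated this mechanism at all; your discussion locates the difficulty in the low-rank groups, when in fact it is the large subsystem subgroups of the high-rank groups that cause the trouble.

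There is a second, smaller gap: it is not true that every $H^0$ in Table~5.1 of \cite{LSS} contains a long root element of $G$. Subgroups defined over proper extension fields --- for example $A_2^\epsilon(q^3)$ in $E_6^\epsilon(q)$, $A_1(q^7)$ in $E_7(q)$, or ${}^2\!A_4(q^2)$, $D_4(q^2)$, ${}^3\!D_4(q^2)$, ${}^2\!A_2(q^2)^2$, ${}^2\!A_2(q^4)$ in $E_8(q)$, and $B_2(q^2)$ in $F_4(q)$ with $p=2$ --- have the property that their minimal unipotent elements lie in a higher unipotent class of $G_0$, not the long root class. For these one must instead take a non-root unipotent $u$ and appeal to the stronger lower bounds for $|u^G|$ in Lemma~\ref{bdsexcep}; this does succeed, but it is a separate argument that your proposal does not mention.
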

\begin{proof} Here $H$ is as in \cite[Table 5.1]{LSS}. 
We list the possibilities for $H$ in Table \ref{Ttable}; for notational convenience each
subgroup is recorded via a subgroup $H^0$ of small index in $H$.

\begin{table}
\caption{Subgroups of maximal rank}\label{Ttable}
\[
\begin{array}{l|l}
\hline
G_0 & \hbox{ possibilities for }H^0  \\
\hline
E_8(q) & A_1(q)E_7(q),\, D_8(q),\, A_8^\epsilon(q),\, A_2^\epsilon(q)E_6^\epsilon(q), \\
       & D_4(q)^2,\, D_4(q^2),\, ^3\!D_4(q)^2,\, ^3\!D_4(q^2), \\
       & A_4^\epsilon(q)^2,\, ^2\!A_4(q^2),\,A_2^\epsilon(q)^4,\, ^2\!A_2(q^2)^2,
       \\ 
       & ^2\!A_2(q^4),\, A_1(q)^8 \\
E_7(q) & E_6^\epsilon(q)\cdot (q-\epsilon ),\, A_1(q)D_6(q),\, A_7^\epsilon(q),\,
         A_2^\epsilon(q)A_5^\epsilon(q), \\ 
       & A_1(q)^3\,D_4(q),\, A_1(q^3)\,^3\!D_4(q),\,A_1(q)^7,\, 
       A_1(q^7) \\
E_6^\epsilon(q) & A_1(q)A_5^\epsilon(q),\,A_2^\epsilon(q)^3,\, A_2(q^2)A_2^{-\epsilon}(q),\, 
            A_2^\epsilon(q^3), \\
          & D_4(q)\cdot (q-\epsilon)^2,\,^3\!D_4(q)\cdot (q^2+\epsilon q+1),\,
          D_5^\epsilon(q)\cdot(q-\epsilon) \\
F_4(q) & A_1(q)C_3(q),\, B_4(q),\, D_4(q),\, ^3\!D_4(q),\, A_2^\epsilon(q)^2 \\
       &  B_2(q)^2\,(q\hbox{ even}),\, B_2(q^2)\,(q\hbox{ even}) \\
G_2(q) & A_1(q)^2,\, A_2^\epsilon(q) \\
^2\!F_4(q)\,(q>2) & ^2\!A_2(q),\, ^2\!B_2(q)^2,\, B_2(q) \\
^3\!D_4(q) & A_1(q)A_1(q^3),\, A_2^\epsilon(q)\cdot (q^2+\epsilon q+1)  \\
^2\!G_2(q)\,(q>3) & A_1(q)\times 2 \\
\hline
\end{array}
\]
\end{table}

Suppose first that 
$G_0 \ne {}^2\!F_4(q)$ or $^2\!G_2(q)$ and $H^0$ 
does not contain a 
long root element of $G$. The only such cases are:
\begin{equation}\label{exce}
\begin{array}{rl}
G_0 = E_8(q): & ^2\!A_4(q^2),\,D_4(q^2),\, {}^3D_4(q^2),\, 
^2\!A_2(q^2)^2,\, ^2\!A_2(q^4) \\
             E_7(q): & A_1(q^7) \\
             E_6^\epsilon(q): & A_2^\epsilon(q^3) \\
             F_4(q): & B_2(q^2)\,(p=2) \\
\end{array}
\end{equation}
All of these cases are easily dealt with as follows.  For the $E_8(q)$ or $E_7(q)$ cases
observe that for a non-root unipotent element $u \in H^0$ we have $|u^G| > q^{92}$ or
$q^{52}$ respectively by Lemma \ref{bdsexcep}, and so we have $|u^G| > |u^G\cap H|^2$ even if we use the total
number of unipotent elements of $H$ as an upper bound for $|u^G\cap H|$; the conclusion
follows by Lemma \ref{lem:mvconjclass}(ii).  For the $E_6^\epsilon(q)$ case, observe that
$H^0 = A_2^\epsilon(q^3)$ arises from a subsystem subgroup $A_2^3$ of the algebraic group
$E_6$, and hence a transvection $u$ in $H^0$ lies in the unipotent class labelled by the
Levi $3A_1$ in $E_6$, and we have $|u^G| > q^{40}$ (see \cite{miz1}), hence again $|u^G| >
|u^G\cap H|^2$. And for the $F_4(q)$ case the same argument applies, using an element $u
\in H^0$ in the class $A_1\tilde{A}_1$ of $F_4$ (for which $|u^G| > q^{28}$ by
\cite{shinf4p2}).

Suppose now that none of the cases in (\ref{exce}) holds (still excluding $G_0 =\,
^2\!F_4(q)$ or $^2\!G_2(q)$). Then $H^0$ contains a long root element $u_\alpha$ of $G$.
Using \cite[1.13]{LLS1}, we see that if $u$ is any element of $u_\alpha^G \cap H$ then one
of the following holds:
\begin{enumerate}
\item[(i)] $u$ is a long root element of one of the quasisimple factors of $H^0$
\item[(ii)] $p=2$, $H^0$ has a factor $D_n^\epsilon(q)$, and $u$ is a reflection in a
subgroup $D_n^\epsilon(q).2$ of $H$.
\end{enumerate}
Lower bounds for $|u_\alpha^G|$ are given by Lemma \ref{bdsexcep}, and upper
bounds for $|u_\alpha^G \cap H|$ follow from Lemmas \ref{bdsexcep} and \ref{bdsclass}.
We find from these that 
$$
|u_\alpha^G| > |u_\alpha^G\cap H|^2,
$$
except in the following cases:
$$
\begin{array}{l}
G_0 = E_8(q),\;H^0 = A_1(q)E_7(q) \\
G_0 = E_7(q),\; H^0 = E_6^\epsilon(q)\cdot(q-\epsilon) \hbox{ or }A_1(q)D_6(q)
\\
G_0 = E_6^\epsilon(q),\; H^0 = D_5^\epsilon(q)\cdot (q-\epsilon) \\
G_0 = F_4(q),\; H^0 = B_4(q),\,D_4(q) \hbox{ or }^3\!D_4(q) \\
G_0 = G_2(q),\; H^0 = A_2^\epsilon(q)
\end{array}
$$
Hence by Lemma \ref{lem:mvconjclass}(ii) we may assume that one of these cases holds. 

Consider the case where $G_0 = E_8(q)$, $H^0 = A_1(q)E_7(q)$. If we let $T$ be a subsystem
subgroup $^2\!E_6(q)$ of $H$, then $C_G(T)$ contains a subgroup $^2\!A_2(q)$ not lying in
$H$, so $N_G(T) \not \le H$. Also for any Sylow $p$-subgroup $S$ of $H$, $\la T,S \ra$
contains the factor $E_7(q)$ of $H^0$ by Lemma \ref{lem:Tits}.  Hence $G$ has a subdegree
divisible by $p$, by Lemma~\ref{lem:weaklyclosed}.

Next let $G_0 = E_7(q)$, $H^0 = E_6^\epsilon(q)\cdot(q-\epsilon)$ or $A_1(q)D_6(q)$. In
the latter case we take $T$ to be a subgroup $^2\!A_5(q)$ of the $D_6$ factor; then
$C_G(T)$ contains a subgroup $^2\!A_2(q)$ not lying in $H$, and the argument of the
previous paragraph applies. And when $H^0 = E_6^\epsilon(q)\cdot(q-\epsilon)$, define $T$
to be a subgroup $F_4(q)$ of $H^0$, and note that $C_G(T)$ contains a subgroup $A_1(q)$
(see \cite[4.6]{rooty}), whereas $C_H(T)$ does not, provided $(q,\epsilon) \ne
(2,-)$. Thus with this exception, Lemma~\ref{lem:weaklyclosed} again gives the
conclusion. If $(q,\epsilon) = (2,-)$, then $G = E_7(2)$, $H = 3.\,^2\!E_6(2).S_3$. Choose
a subsystem subgroup $T = D_4(2)$ of $H$. From \cite[Table 5.1]{LSS} we see that
$|N_H(T)/T| = 2^23^4$, while $|N_G(T)/T| = 2^43^4$. Hence $N_G(T) \not \le H$. Moreover
$T$ lies in no parabolic subgroup of $H$, and hence $\la S,T \ra$ contains
$3.\,^2\!E_6(2)$ for any Sylow 2-subgroup $S$ of $H$, by Lemma \ref{lem:Tits}. Now Lemma
\ref{lem:weaklyclosed} gives the conclusion.

Now let $G_0 = E_6^\epsilon(q)$, $H^0 = D_5^\epsilon(q)\cdot (q-\epsilon)$.  For $\epsilon
= -$ take $T$ to be a maximal torus of order $(q+1)^6$ in $H$ (or of index $(3,q+1)$ in
this), and apply Lemma \ref{lem:weaklyclosed} with the characteristic $p$. Now suppose $\epsilon = +$, $H^0
= D_5(q)\cdot (q-1)$. If $q$ is odd then $H = C_G(t)$ for some involution $t \in
G$. There is a subgroup $D = D_4(q)$ of $H$ such that $Z(D) = \la t,u \ra$, where $u$ is a
conjugate of $t$. Then $C_H(t,u)$ is a 2-point stabiliser and the subdegree $|H:C_H(t,u)|$
is divisible by $p$, as required. If $q$ is even, note that $G$ contains a graph
automorphism, since otherwise $H$ lies in a parabolic subgroup. Let $T$ be a maximal torus
in $H$ of order $(q^5-1)(q-1)$ (or of index $(3,q-1)$ in this), lying in an $A_5$ Levi
subgroup.  This torus is not normalized by a graph involution of $D_5(q)$, whereas
$N_G(T)/T$ does contain a involution (see \cite{carcomp}). Hence $N_G(T) \not \le H$, and
so $T$ lies in a 2 point stabiliser $H \cap H^g$ for some $g\in G$. If the subdegree
$|H:H\cap H^g|$ is odd, then $H\cap H^g$ must contain the derived subgroup of an
$A_4(q)$-parabolic of $H$. However such a parabolic is not normalized by a graph
involution of $D_5(q)$, so since $H$ contains such a graph involution, it follows that
$|H:H\cap H^g|$ is even anyway.

When $G_0 = F_4(q)$, $H^0 = B_4(q)$, $D_4(q)$ or $^3\!D_4(q)$, we take $T = (q+1)^4$,
$^2\!A_2(q)$ or $A_2(q)$ respectively.  In the first case $T$ is a maximal torus and
$N_G(T)$ induces $W(F_4)$ on $T$, so $N_G(T) \not \le H$; and in the second and third
cases $N_G(T)$ contains $^2\!A_2(q)^2$ or $A_2(q)^2$, so again $N_G(T) \not \le H$. Now
the conclusion follows from Lemma \ref{lem:weaklyclosed}.

When $G_0 = G_2(q)$, $H^0 = A_2^\epsilon(q)$, the subdegrees are given by
\cite[Proposition 1]{LPSclosures} for $\epsilon = -$, and by \cite[6.8]{LPSsub} for
$\epsilon=+$, and there is a subdegree divisible by $p$.

Finally, we need to handle the cases where $G_0 =\, ^2\!F_4(q)$ or $^2\!G_2(q)$.  Consider
the first case $^2\!F_4(q)$. Here $q\ge 8$ (as we have already dealt with the $q=2$ case),
and $H^0 = \,^2\!A_2(q)$, $^2\!B_2(q)^2$ or $B_2(q)$.  Let $u$ be an involution in $H^0$,
and when $H^0 \ne \,^2\!A_2(q)$ take $u$ to be a non-root involution. Then by Lemma
\ref{bdsclass} we have $|u^G| > q^9(q^2-1)$, and also $|u^G| > q^{13}(q-1)$ when $H^0 \ne
\,^2\!A_2(q)$. If $i_2(H)$ denotes the number of involutions in $H$, then by
\cite[1.3]{LLS2} we have $i_2(H) < 2(q^5+q^4)$, $4(q^3+q^2)^2$ or $2(q^6+q^5)$ in the
respective cases for $H^0$. Hence we see that $|u^G| > i_2(H)^2 \ge |u^G\cap H|^2$, giving
the conclusion by Lemma \ref{lem:mvconjclass}(ii).  Finally, if $G_0 =\, ^2\!G_2(q)$, $H^0
= 2\times L_2(q)$, then for an element $u \in H^0$ of order 3 we have $|u^G| =
\frac{1}{2}q(q^3+1)(q-1)$ (see \cite{ward}), while $|u^G\cap H| = q^2-1$, so again Lemma
\ref{lem:mvconjclass}(ii) gives the result. \end{proof}

\begin{lemma}\label{maxrk2}
Theorem $\ref{psubdegexcep}$ holds if $H$ is the normalizer of a maximal torus.
\end{lemma}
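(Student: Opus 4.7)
The plan is to apply Lemma \ref{lem:mvconjclass}(ii) to a non-identity $p$-element $u$ of $H = N_G(T)$. Since every maximal torus $T$ of $G_0$ has order coprime to the characteristic $p$, the hypothesis $p \mid |H|$ is equivalent to $p \mid |H/T|$ with $H/T \le W(G)$; hence $p$ is one of the few primes dividing $|W(G)|$, and every $p$-element of $H$ has non-trivial image in the Weyl group. In particular a Sylow $p$-subgroup of $H$ injects into $W(G)$, so its order is bounded by an absolute constant depending only on the type of $G_0$.

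The crude estimate $|u^G \cap H| \le |H| \le |T| \cdot |W(G)|$, with $|T|$ a cyclotomic polynomial in $q$ of total degree $\mathrm{rank}(G)$ read off from \cite[Table 5.2]{LSS}, combined with the long-root lower bound on $|u^G|$ from Lemma \ref{bdsexcep} (for example $q^{58}, q^{34}, q^{19}(q^3-1), q^{16}$ for $E_8, E_7, E_6^\epsilon, F_4$) already yields $|u^G| > |u^G \cap H|^2$ for all but a handful of small pairs $(G_0, q)$. In those residual cases I would refine the upper bound: every $p$-element of $H$ lies in a fixed coset of $T$ determined by an element of order $p$ in $H/T$, and the number of $p$-elements in such a coset is controlled by the fixed-point subtorus $T^{\bar u}$, which is much smaller than $T$ because $\bar u$ acts without fixed vectors on most cyclotomic factors of $T$. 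Whenever this refinement is still insufficient I would switch to Lemma \ref{lem:weaklyclosed}, taking $H_0 = T$: since $T$ is a maximal torus of its own connected centraliser in the ambient algebraic group, it is weakly closed in $G$, and by choosing a Sylow $p$-subgroup together with $T$ one contains a normal subgroup of $H$.

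The main obstacle is the simultaneous smallness of rank and $q$. For the smaller types $^3\!D_4, {}^2\!F_4, G_2, {}^2\!G_2, {}^2\!B_2$ the list of maximal torus normalizers in \cite[Table 5.2]{LSS} is short, allowing a case-by-case treatment using the explicit orders; the simple groups $G_2(2)', {}^2\!G_2(3)', {}^2\!F_4(2)'$ that might otherwise produce genuine exceptions have already been excluded at the start of the section. A secondary subtlety worth flagging is that a long root element can legitimately lie in a twisted torus normaliser (already visible in $\mathrm{SL}_2(2^a)$, where the normaliser of the non-split torus contains transvections), so one cannot inflate the lower bound on $|u^G|$ past the long-root value merely on the basis of $u \in N_G(T)$; the long-root bound from Lemma \ref{bdsexcep} is the safe universal choice.
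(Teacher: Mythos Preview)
Your overall strategy---apply Lemma \ref{lem:mvconjclass}(ii) to a $p$-element $u\in H$ and bound $|u^G\cap H|$ by $|H|\le |T|\cdot|W(G)|$---is exactly what the paper does, but you miss the paper's key observation and this leaves a real gap. You explicitly settle for the long-root lower bound on $|u^G|$, arguing that one ``cannot inflate the lower bound past the long-root value merely on the basis of $u\in N_G(T)$''. The paper does precisely what you rule out: it observes (via the action of $u$ on the adjoint module $L(G)$) that $H\cap G_0$ contains a $p$-element which is \emph{not} a long root element, and then uses the much stronger non-root bounds in the right-hand column of Table \ref{bdtbl} (for instance $q^{92}$ rather than $q^{58}$ for $E_8$, $q^{52}$ rather than $q^{34}$ for $E_7$). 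Your $\SL_2(2^a)$ example shows that torus normalizers can contain long root elements, but it does not show that they contain \emph{only} long root $p$-elements; in the exceptional groups the paper's claim is that a non-root one can always be chosen. With only the long-root bound, the inequality $|u^G|>|H|^2$ fails already for $E_8(2)$ and for $E_7(q)$ with $q\le 4$, so your ``handful'' of residual cases is substantially larger than the paper's short explicit list, each entry of which is then killed by exhibiting an element of order $4$, $8$ or $9$ in $H$ and reading off its class from \cite{miz1,miz2} or \cite{atlas}.

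Two further points. First, you omit the case where $p\nmid |H\cap G_0|$ but $p\mid |H|$; then the $p$-element $u$ is an outer automorphism (a field automorphism, or for $^3\!D_4(q)$ with $p=3$ a graph automorphism), and the class bounds come from Lemma \ref{outerbd}, not Lemma \ref{bdsexcep}. Second, your fallback via Lemma \ref{lem:weaklyclosed} with $H_0=T$ is not safe: a maximal torus is not in general weakly closed in $G$ (a conjugate $T^g$ can sit inside $N_G(T)$ without equalling $T$), so condition (ii) of that lemma, which requires $H=N_G(H_0)$ together with weak closure, need not hold.
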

\begin{proof} Here $H$ is as in \cite[Table 5.2]{LSS}. If $p$ does not divide
$|H\cap G_0|$, then $H$ contains an outer automorphism $u$ of order $p$, and bounds for
  $|u^G|$ are given by Lemma \ref{outerbd}. Otherwise, $p$ divides $|H\cap G_0|$ (and in
  particular $p$ divides $|W(G)|$), and it is clear from the action on 
  the adjoint module $L(G)$ that $H$
  contains a non-identity $p$-element $u$ which is not a long root element of $G$; bounds
  for $|u^G|$ are given in Lemma \ref{bdsexcep}.  We may assume by Lemma
  \ref{lem:mvconjclass}(ii) that $|H| > |u^G|^{1/2}$, and so from the above bounds, we see
  that $H$ is as in the following table (recall that we are assuming that $G_0 \ne
  \,^2\!F_4(2)'$ in view of Lemma \ref{2f42}):
\[
\begin{array}{lll}
\hline
G_0 & H \cap G_0 & q \\
\hline
E_7(q) & 3^7.W(E_7) & q=2 \\
E_6^\epsilon(q) & ((q+1)^6/(3,q+1)).W(E_6) & q=2 \hbox{ or }3, \,\epsilon=- \\
          & 7^3.(3^{1+2}.\SL_2(3)) & q=2,\,\epsilon=+ \\
F_4(q) & 7^2.(3\times\SL_2(3)) & q=2 \\
^2\!B_2(q) & (q-1).2,\; (q\pm \sqrt{2q}+1).4 & \\
\hline
\end{array}
\]
Consider the case where $G_0 = E_7(q), q=2$ and $H = 3^7.W(E_7)$. Here $H$ has an element
$x$ of order 8. Inspection of \cite{miz2} shows that the smallest class of elements of
order 8 in $G$ is the class labelled $D_4(a_1)$, which has size greater than $q^{94}/6$.
However $|H| = 3^7|W(E_7)| < 2^{34} < 2^{47}/\sqrt{6}$, a contradiction.

The cases in the table with $G_0 = E_6^\epsilon(q)$ are handled similarly, using an
element of order 8 in $H$ (if $q=2,\epsilon=-$), an element of order 4 (if $q=2,
\epsilon=+$), an element of order 9 (if $q=3$), and \cite{miz1} for the classes of
$G$. Likewise, for $G=F_4(q)$, $q=2$, we use an element of order 4 in $H$, together with
\cite{atlas} for the classes.

Finally, in the case where $G_0 =\, ^2\!B_2(q)$, pick an element $u$ of order 2 or 4 in
$H$ and observe using \ref{bdsexcep} that $|u^G\cap H| < |u^G|^{1/2}$.  This completes the
proof. \end{proof}

\begin{lemma}\label{tbl3}
Theorem $\ref{psubdegexcep}$ holds if $H$ is 
as in Lemma $\ref{maxcub}(iii)$.
\end{lemma}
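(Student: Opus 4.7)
The plan is to run through each entry of Table \ref{socs} and, for each, verify the conclusion of Theorem \ref{psubdegexcep} by one of the two techniques already used throughout the section: either a direct class-size comparison producing (\ref{ineq}) and an application of Lemma \ref{lem:mvconjclass}(ii), or the weakly-closed-subgroup argument of Lemma \ref{lem:weaklyclosed} combined with Tits' Lemma \ref{lem:Tits}. Write $H_0=\soc(H)$, which is a (central product of) group(s) of Lie type in characteristic $p$, so in particular $p\mid |H|$ and $H_0$ contains nontrivial unipotent elements.

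For the ``small'' entries -- that is, those where the sum of the dimensions of the Lie algebras of the factors of $H_0$ is substantially less than $\dim(G)$ (for instance $A_1(q)<G_2(q)$, $A_1(q)$ or $B_2(q)$ in $E_8(q)$, $A_1(q)$ in $E_7(q)$, $A_2^\epsilon(q)<E_6^\epsilon(q)$, $G_2(q)<F_4(q)$, and the twisted analogues) -- I would pick $u\in H_0$ to be a long root element of one of the simple factors of $H_0$; in every such case $u$ is \emph{not} a long root element of $G$, so Lemma \ref{bdsexcep} gives a strong lower bound on $|u^G|$, while $|u^G\cap H|$ is bounded above by the total number of nontrivial unipotent elements of $H$ (which is $|H|_p-1\le |H_0|_p\cdot |\Out(H_0)|_p$ by Steinberg). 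A routine comparison shows $|u^G|>|u^G\cap H|^2$, and Lemma \ref{lem:mvconjclass}(ii) delivers the $p$-subdegree.

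For the ``large'' entries -- the ones where $H_0$ has a factor whose rank is comparable to $\half\,\mathrm{rank}(G)$, namely $F_4(q)$, $C_4(q)$ and $A_2^\epsilon(q)G_2(q)$ inside $E_6^\epsilon(q)$; $A_1(q)F_4(q)$ and $G_2(q)C_3(q)$ inside $E_7(q)$; and $G_2(q)F_4(q)$, $A_1(q)G_2(q)G_2(q)$, $A_1(q)G_2(q^2)$ inside $E_8(q)$ -- the bare class-size bound can fail, and I would fall back on the Lemma \ref{lem:weaklyclosed} machinery. The idea is to choose $T\le H_0$ to be either a subsystem subgroup of one of the large factors or a suitable maximal torus, in such a way that (a) $C_G(T)$ strictly contains $C_H(T)$, so that $N_G(T)\not\le H$, and (b) for every $S\in\Syl_p(H)$, $\la T,S\ra$ contains $H_0$. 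For (b), Tits' Lemma \ref{lem:Tits} shows that any overgroup of $S$ inside a quasisimple factor of $H_0$ is either the whole factor or parabolic; since $T$ sits inside no proper parabolic of $H_0$ (being a subsystem of full rank in at least one factor, or a torus whose image under projection is irreducible), this forces $\la T,S\ra\ge H_0$, and maximality of $H$ then gives $N_G(H_0)=H$, so Lemma \ref{lem:weaklyclosed} applies. Concrete choices: for $G_2(q)F_4(q)<E_8(q)$ take $T$ to be a subsystem $D_4(q)\le F_4(q)$ (whose $E_8$-centralizer involves $D_4(q)$-factors not contained in $H$); for $A_1(q)F_4(q)<E_7(q)$ take $T=F_4(q)$, noting $C_{E_7}(F_4(q))\supsetneq C_H(F_4(q))=A_1(q)$ by \cite[4.6]{rooty}; for $F_4(q)<E_6^\epsilon(q)$ take $T$ to be a maximal torus of order roughly $(q\mp 1)^4$ in $F_4(q)$ whose Weyl group in $E_6^\epsilon(q)$ strictly contains $W(F_4)$; and analogously for the remaining cases, using the Borel--de Siebenthal lists and the centralizer tables of \cite{LSS}.

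The main obstacle will be the medium-size cases just described: identifying the right $T$ requires careful bookkeeping of subsystem subgroups and their $G$-centralizers in each exceptional embedding, and a handful of tiny-$q$ configurations (typically $q\in\{2,3\}$, and possibly the $A_2^\epsilon(q)G_2(q)<E_6^\epsilon(q)$ with $(q,\epsilon)=(2,-)$ case already excluded in Table \ref{socs}) may resist the generic argument and require direct \textsc{Magma} or \textsc{Gap} verification of a suitable subdegree. Once all entries are dispatched, none of the exceptional conclusions (ii)--(iv) of Theorem \ref{psubdegexcep} arises from this class, so the lemma yields option (i) in every instance.
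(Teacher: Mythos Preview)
Your two-tier split (crude class-size bound for small $H_0$, weakly-closed argument for large $H_0$) is the right shape, but the large cases do not go through as you describe, and a key intermediate technique from the paper is missing. The paper's proof first reduces (via a non-root unipotent and Lemma~\ref{bdsexcep}) exactly to the ``large'' list you identify, plus $A_1(q)G_2(q),\,G_2(q)<F_4(q)$ and $G_2(q),\,A_2^\epsilon(q)<\,^3\!D_4(q)$, which you omit. For almost all of these survivors the paper does \emph{not} use Lemma~\ref{lem:weaklyclosed}: instead it observes that $H$ contains a long root element $u_\alpha$ of $G$ and invokes \cite[1.13]{LLS1} to identify $u_\alpha^G\cap H$ precisely with the long root elements of the simple factors of $H_0$. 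This sharp count (rather than your crude $|H|_p$ bound) is what makes $|u_\alpha^G|>|u_\alpha^G\cap H|^2$ hold for $G_2F_4<E_8$, $A_1F_4,\,G_2C_3<E_7$, $A_2^\epsilon G_2<E_6^\epsilon$, and $A_1G_2<F_4$. Only $(E_6^\epsilon,F_4)$ resists this and needs the weakly-closed argument, with $T={}^3\!D_4(q)$. The cases $(E_6^\epsilon,C_4)$ ($p$ odd) and $(F_4,G_2)$ ($p=7$) are dispatched instead by Lemma~\ref{lem:23/4}, which you do not mention.

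Your concrete weakly-closed choices contain errors. For $H_0=A_1(q)F_4(q)<E_7(q)$ you propose $T=F_4(q)$, but here $C_G(F_4(q))$ is exactly the $A_1(q)$ factor (the two factors of the maximal $A_1F_4$ centralize each other), so $N_G(T)=H$ and condition~(i) of Lemma~\ref{lem:weaklyclosed} fails. For $F_4(q)<E_6^\epsilon(q)$ you propose a torus $T$ of order $(q\mp 1)^4$; but such a torus lies in a Borel (hence in parabolics) of $F_4(q)$, so there exist Sylow $p$-subgroups $S$ with $\langle T,S\rangle$ proper parabolic, and condition~(ii) fails --- this is why the paper uses $T={}^3\!D_4(q)$ instead. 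Finally, the $^3\!D_4(q)$ rows (with $H_0=G_2(q)$ handled via $T=\SL_3(q)$ and Lemma~\ref{lem:weaklyclosed}, and $H_0=A_2^\epsilon(q)$ via Lemma~\ref{lem:23/4} for $p$ odd and an involution count for $p=2$) are absent from your plan.
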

\begin{proof} Here $H_0 = {\rm soc}(H)$ is as in Table \ref{socs}. 
In all cases $H$ contains a unipotent element $u$ which is not a long root element. By
Lemma \ref{bdsexcep} and our assumption that $|u^G| < |u^G\cap H|^2 < |H|^2$, we see that
$H$ is as in the following table:
\[
\begin{array}{ll}
\hline
G_0 & \hbox{possibilities for }H_0  \\
\hline
E_8(q) & G_2(q)F_4(q) \\
E_7(q) & G_2(q)C_3(q),\,A_1(q)F_4(q) \\
E_6^\epsilon(q) & F_4(q),\,C_4(q)\,(q \hbox{ odd}),\,A_2^\epsilon(q)G_2(q) \\
F_4(q) & A_1(q)G_2(q)\,(q \hbox{ odd}),\,G_2(q)\,(p=7) \\
^3\!D_4(q) & G_2(q),\,A_2^\epsilon(q) \\
\hline
\end{array}
\]
Suppose first that $G_0 \ne \,^3\!D_4(q)$.  The cases $(G_0,H_0) =
(E_6^\epsilon(q),C_4(q))$ $(p>2)$ and $(F_4(q),G_2(q))$ $(p=7)$ are covered by Lemma
\ref{lem:23/4}. In all other cases $H$ contains a long root element $u_\alpha$ of $G$, and
using \cite[1.13]{LLS1}, we see that $u_\alpha^G \cap H$ consists of long root elements of
$H$. Now a check using Lemma \ref{bdsexcep} shows that, excluding the case
$(E_6^\epsilon(q),F_4(q))$, we have $|u_\alpha^G| > |u_\alpha^G\cap H|^2$, a
contradiction.  In the exceptional case $G_0 = E_6^\epsilon(q)$, $H_0 = F_4(q)$, and we
take a subsystem subgroup $T = \,^3\!D_4(q)$ of $H$. Then $T$ is centralized by an element
of order $q^2+\epsilon q+1$ in $G$ not in $H$, so $N_G(T) \not \le H$. Now the conclusion
follows from Lemma \ref{lem:weaklyclosed}.

Suppose finally that $G_0 =\,^3\!D_4(q)$. If $H_0 = G_2(q)$, let $T$ be a subsystem
subgroup $\SL_3(q)$ of $H_0$. Then $N_G(T) \not \le H$ as $G$ has an element of order
$q^2+q+1$ centralizing $T$, so the result follows from Lemma \ref{lem:weaklyclosed}. Now
consider $H_0 = A_2^\epsilon(q)$. We may assume that $p=2$ by Lemma \ref{lem:23/4}. Let $u
\in H_0$ be an involution which is not a long root element of $G$, so that $|u^G| >
q^{16}$ by Lemma \ref{bdsexcep}. Then $|u^G\cap H|$ is certainly no more than the total
number of involutions in $H_0$, which is at most $2(q^5+q^4)$ by \cite[1.3]{LLS2}. Hence
again $|u^G| > |u^G\cap H|^2$. \end{proof}

\begin{lemma}\label{subfld}
Theorem $\ref{psubdegexcep}$ holds if $H$ is as in Lemma $\ref{maxcub}(iv)$.
\end{lemma}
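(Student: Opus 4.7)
The plan is to produce a subdegree divisible by $p$ by taking a long root element $u_\alpha \in H = G(q_0)$ and applying Lemma \ref{lem:mvconjclass}(ii). Since $H$ is of the same (possibly twisted) Lie type as $G$ over the subfield $\GF(q_0)$, with $q=q_0^r$ for a prime $r$, the long root unipotent class of $H$ sits inside the long root class of $G$. Using the standard unipotent fusion results on subfield subgroups (of the flavour used in Lemma \ref{maxrk1}, e.g.\ \cite[1.13]{LLS1}), the fusion is transparent: $u_\alpha^G\cap H$ is exactly the long root class of $H$, so $|u_\alpha^G\cap H|=|u_\alpha^H|$.

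Next I would feed the lower bound on $|u_\alpha^G|$ from Lemma \ref{bdsexcep} and the matching upper bound on $|u_\alpha^H|$ (with $q$ replaced by $q_0$) into the inequality $|u_\alpha^G|>|u_\alpha^H|^2$. Since the numerical exponent of $q$ in the bound is the same for $G$ and for $H$, the comparison reduces to an inequality of the shape $q^{N}>c\,q_0^{2N}=c\,q^{2N/r}$, which is comfortably satisfied for every exceptional type as soon as $r\ge 3$, and is also satisfied when $r=2$ once one substitutes the exact cyclotomic factorisation of the class size (for example in $G_2(q)$ one has $|u_\alpha^G|=q^6-1$ versus $|u_\alpha^H|^2=(q_0^6-1)^2<q_0^{12}=q^6$, and analogous clean comparisons hold in all other exceptional types). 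Wherever the inequality holds, Lemma \ref{lem:mvconjclass}(ii) delivers the required $p$-subdegree.

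For any residual borderline case (invariably with $r=2$ and a large-rank exceptional type such as $E_7$ or $E_8$, where the crude bounds are essentially tight), I would switch to the weak-closure argument of Lemma \ref{lem:weaklyclosed}. Take $T$ to be a Coxeter-type torus of $H$, which is contained in a strictly larger Coxeter torus $\overline{T}$ of $G$ obtained by extending scalars from $\GF(q_0)$ to $\GF(q)$; hence $N_G(T)$ strictly contains $N_H(T)$, so $N_G(T)\not\le H$. By Lemma \ref{lem:Tits}, for every Sylow $p$-subgroup $S$ of $H$ the subgroup $\langle T,S\rangle$ is not contained in any proper parabolic of $H$, so it contains $H_0:=H^{(\infty)}$. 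Since $H$ is a maximal subgroup of $G$ we have $N_G(H_0)=H$ and $H_0$ is weakly closed in $G$, and Lemma \ref{lem:weaklyclosed} then exhibits a subdegree divisible by $p$.

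The main obstacle is just the bookkeeping for $r=2$: the Coxeter-number exponent in Lemma \ref{bdsexcep} is essentially $\dim G/h$, so the ratio $|u_\alpha^G|/|u_\alpha^H|^2$ is a pure constant when $r=2$, and one must either sharpen the constant via the explicit class size or pivot to the weak-closure argument described above. Both routes are available in every exceptional type, so the lemma follows.
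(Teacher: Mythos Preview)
Your approach differs from the paper's. The paper bypasses root elements entirely and goes straight to the weak-closure argument: for each possible $H_0$ it exhibits a specific maximal torus $T\le H_0$ whose order is divisible by a suitable primitive prime divisor of $q_0^m-1$ (so that $T$ lies in no parabolic of $H_0$), and then checks via \cite{carsln} that $T$ is contained in a strictly larger maximal torus of $G$, giving $C_G(T)\not\le H$ and hence a $p$-subdegree by Lemma~\ref{lem:weaklyclosed}.

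Your root-element route has a real gap at $r=2$, not just bookkeeping. Since the long-root class size is a monic polynomial of degree $d$ in $q$, both $|u_\alpha^{G}|$ and $|u_\alpha^{H}|^2$ have leading term $q_0^{2d}$, and whether $|u_\alpha^G|>|u_\alpha^H|^2$ holds depends on the subleading coefficient of the class-size polynomial; you verify this for $G_2$ but only assert it for $E_6$, $E_7$, $E_8$, $F_4$. More seriously, your framework ``$q=q_0^r$ for a prime $r$'' omits the twisted-in-untwisted cases such as $^2\!G_2(q_0)<G_2(q_0)$ and $^2\!F_4(q_0)<F_4(q_0)$, which fall under Lemma~\ref{maxcub}(iv) with $q=q_0$. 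Even the torus fallback needs modification here: for $H_0={}^2\!G_2(3)<G_2(3)$ the torus of $H_0$ of order $q_0+\sqrt{3q_0}+1=7$ coincides with the containing torus of order $q_0^2-q_0+1=7$ in $G_2(3)$, so $C_G(T)\le H$ and the argument fails; the same obstruction occurs for $^2\!F_4(2)<F_4(2)$. The paper treats these two exceptions separately, taking $T$ to be a Sylow $2$-subgroup $2^3$ of $^2\!G_2(3)$ and a Sylow $3$-subgroup $3^{1+2}$ of $^2\!F_4(2)$ respectively.
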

\begin{proof} Here $H$ has socle $H_0 = G(q_0)$, a group of the same type as $G$ (possibly
twisted) over a subfield $\GF(q_0)$ of $\GF(q)$. We take $T$ to be a maximal torus of $H$ as
in the following table (for the existence of $T$, see \cite{carsln}). The table also gives
a primitive prime divisor $(q_0)_r$ of $q_0^r-1$ that divides $|T|$:
\[
\begin{array}{llll}
H_0 & |T| & (q_0)_r \hbox{ dividing }|T| \\
\hline
E_8(q_0) & q_0^8-q_0^7+q_0^5-q_0^4+q_0^3-q_0+1 & (q_0)_{30} \\
E_7(q_0) & (q_0^6-q_0^3+1)(q_0+1) & (q_0)_{18} \\
E_6^\epsilon(q_0) & q_0^6+\epsilon q_0^3+1 & (q_0)_9\,(\epsilon=+),\,(q_0)_{18}\,(\epsilon=-) \\
F_4(q_0) & q_0^4-q_0^2+1 & (q_0)_{12} \\
G_2(q_0) & q_0^2+q_0+1 & (q_0)_3 \\
^2\!F_4(q_0)' & q_0^2+\sqrt{2q_0^3}+q_0+\sqrt{2q_0}+1 & (q_0)_{12} \\
^3\!D_4(q_0) & q_0^4-q_0^2+1 & (q_0)_{12} \\
^2\!G_2(q_0) & q_0+\sqrt{3q_0}+1 & (q_0)_6 \\
^2\!B_2(q_0) & q_0+\sqrt{2q_0}+1 & (q_0)_4 \\
\hline
\end{array}
\]
In all cases $T$ lies in a maximal torus of $G$ which centralizes it, the order of which
is given by \cite{carsln}, and hence we see that $C_G(T) \not \le H$ with the following
exceptions: $H_0 = \,^2\!G_2(3) < G_2(3)$ and $H_0 = \,^2\!F_4(2) < F_4(2)$.  Moreover the
divisor $(q_0)_r$ of $|T|$ shows that $T$ is not contained in any parabolic subgroup of
$H_0$. Hence, apart from in the above exceptional cases, the conclusion follows from Lemma
\ref{lem:weaklyclosed}. As for the exceptional cases: when $H_0 = \,^2\!G_2(3) < G_2(3)$,
take $T = 2^3$, a Sylow 2-subgroup of $H_0$, and apply Lemma \ref{lem:weaklyclosed}; and
when $H_0 = \,^2\!F_4(2) < F_4(2)$, take $T = 3^{1+2}$, a Sylow 3-subgroup of
$H_0$. \end{proof}

\begin{lemma}\label{local}
Theorem $\ref{psubdegexcep}$ holds if $H$ is as in Lemma $\ref{maxcub}(v)$ or $(vi)$.
\end{lemma}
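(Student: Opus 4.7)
The plan is to handle cases (v) and (vi) uniformly by exhibiting a non-identity $p$-element $u\in H$ with $|u^G|>|H|^2$, whence Lemma \ref{lem:mvconjclass}(ii) delivers a subdegree divisible by $p$ and places us in conclusion (1) of Theorem \ref{psubdegexcep}. Since $|u^G\cap H|\leq|H|$, everything reduces to comparing the lower bounds for $|u^G|$ from Lemmas \ref{bdsexcep} and \ref{outerbd} against $|H|^2$.

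For case (vi), $G_0=E_8(q)$ with $p\geq 7$ and $H\cap G_0\in\{\PGL_2(q)\times\Sym_5,\ (\Alt_5\times\Alt_6)\cdot 2^2\}$. Since $\Out(E_8(q))$ is cyclic of order $a$ where $q=p^a$, we have $|H|\leq a\cdot 120\,q(q^2-1)$ in the first subcase and $|H|\leq a\cdot 86400$ in the second. The hypothesis $p\mid|H|$ then yields either a non-trivial unipotent $u\in\PGL_2(q)\leq H$ (first subcase) or a field automorphism $u$ of order $p$ (the only remaining possibility, since $|\Alt_5\times\Alt_6|=21600$ is coprime to $p\geq 7$). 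In the first case Lemma \ref{bdsexcep} gives $|u^G|>q^{58}$; in the second, Lemma \ref{outerbd} identifies $C_{G_0}(u)=E_8(q^{1/p})$, whence $|u^G|\geq q^{248(1-1/p)}>q^{200}$. Since $q\geq 7$, both estimates overwhelm $|H|^2$.

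For case (v), $H$ is a local subgroup as enumerated in \cite[Theorem 1]{CLSS}. Each such $H$ has the form $H\cap G_0\cong r^n.L$ for a fixed prime $r\neq p$, with $n$ and $L$ depending only on the type of $G_0$, so $|H\cap G_0|$ is bounded by an absolute constant $c$ and $|H|\leq c\cdot|\Out(G_0)|\leq 6ca$. The condition $p\mid|H|$ forces either $p\mid|L|$, giving a $p$-element $u\in L\leq G_0$, or $p\mid|\Out(G_0)|$, giving a field or graph automorphism $u$. Inspection of the CLSS list together with \cite[1.13]{LLS1} shows the unipotent $u$ is not a long root element of $G$ in the cases that matter, so Lemmas \ref{bdsexcep} and \ref{outerbd} yield lower bounds for $|u^G|$ that grow as a substantial power of $q$ (at least $q^{92}$ for $E_8$, $q^{52}$ for $E_7$, $q^{31}$ for $E_6^\epsilon$, $q^{21}$ for $F_4$, and similarly for the twisted types), which exceed $|H|^2$ for all but very small values of $q$.

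The main obstacle is the handful of small configurations (for example $F_4(5)$, $^3\!D_4(2)$ or $^3\!D_4(3)$) where $|H|$ becomes comparable to the class-size lower bound of Lemma \ref{bdsexcep}. Each such instance is dispatched directly: either one sharpens the class-size bound using explicit centraliser data from \cite{miz1,miz2,shinf4p2}, or one picks $T$ to be a Sylow $r$-subgroup of $H$, verifies $N_G(T)\not\leq H$ by exhibiting a centralising element of $G$ outside $H$, and applies Lemma \ref{lem:weaklyclosed}; alternatively a direct \textsc{Magma} computation exhibits the required $p$-subdegree.
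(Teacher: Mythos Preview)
Your treatment of case~(vi) is fine and matches the paper's approach. The problem lies in case~(v).

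Your central structural claim for (v) is false: it is not true that every local subgroup in \cite[Theorem 1]{CLSS} has the shape $r^n.L$ with $|H\cap G_0|$ bounded by an absolute constant. The CLSS list includes, for every odd $q$, the subgroup $(2^2\times D_4(q).2^2).S_3$ of $E_7(q)$ (the normaliser of a $2^2$). Its order grows like $q^{28}$, so no uniform bound $|H|\leq 6ca$ is available, and the inequality $|u^G|>|H|^2$ cannot be established by your argument. This infinite family is precisely one of the cases the paper must isolate and treat separately (via Lemma~\ref{lem:23/4}). Your list of residual ``small configurations'' ($F_4(5)$, $^3\!D_4(2)$, $^3\!D_4(3)$) is also wrong: the genuine exceptions surviving the $|H|^2<|u^G|$ test are $G_2(3)$ with $2^3.\SL_3(2)$, $^2\!E_6(2)$ with $U_3(2)\times G_2(2)$, $E_7(3)$ with $L_2(3)\times F_4(3)$, and the $E_7(q)$ family just mentioned.

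A second gap: your assertion that the unipotent $u\in H$ is never a long root element ``in the cases that matter'' is not justified and is in fact false for several of the exceptional cases above (for instance the $G_2(2)$ and $F_4(3)$ factors contain long root elements of the ambient group). The paper handles those two cases by the long-root-element counting argument of Lemma~\ref{tbl3}, and handles $G_2(3)$ by a direct computation. Your final paragraph gestures at methods for dispatching exceptions but does not correctly identify which cases arise or how each is resolved, so as written the proof does not close.
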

\begin{proof} Here $H$ is either a local subgroup given by \cite[Theorem 1]{CLSS},
or one of the subgroups in \ref{maxcub}(vi). If $u$ is an element of order $p$ in $H$, it
is easy using Lemma \ref{bdsexcep} to check that $|H|^2 < |u^G|$ except in the following
cases:
$$
\begin{array}{ll}
G_0 & H\cap G_0 \\
\hline
G_2(3) & 2^3.\SL_3(2) \\
^2\!E_6(2) & U_3(2) \times G_2(2) \\
E_7(3) & L_2(3) \times F_4(3) \\
E_7(q)\,(q \hbox{ odd}) & (2^2\times D_4(q).2^2).S_3 \\
\hline
\end{array}
$$ The last case is dealt with using Lemma \ref{lem:23/4}, and the second and third cases
are handled as in the proof of Lemma \ref{tbl3}.

When $G_0 = G_2(3)$ and $H\cap G_0 = 2^3.\SL_3(2)$, a \textsc{Gap} \cite{gap} calculation
shows that the subdegrees for $G_0$ are $14, 64, 168, 224^2, 448^4$ and $672$.
\end{proof}

\begin{lemma}\label{liep}
Theorem $\ref{psubdegexcep}$ holds if $H$ is 
as in Lemma $\ref{maxcub}(vii)$.
\end{lemma}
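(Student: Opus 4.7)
The strategy follows the pattern established in the preceding lemmas: exhibit a non-identity $p$-element $u \in H$ and invoke Lemma \ref{lem:mvconjclass}(ii) by showing $|u^G| > |u^G \cap H|^2$. The crucial features of the subgroups in Lemma \ref{maxcub}(vii) are the rank restriction ${\rm rank}(H(r)) \le \frac{1}{2}{\rm rank}(G)$ and the very restrictive list of possibilities for $H_0 = \soc(H) = H(r)$: either the field $r = p^b$ is at most $9$, or $H_0 = A_2^\pm(16)$, or $H_0 \in \{A_1(r), {}^2G_2(r), {}^2B_2(r)\}$. In every case $|H|$ admits a clean upper bound depending only on $r$ and $\dim H_0$, whereas $|u^G|$ grows like a large power of $q$.

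First I would catalogue the possible embeddings via the Liebeck--Seitz classification \cite{LSei}, which records $|H|$ and, for each $H_0 \hookrightarrow \bar G$, the composition factors of $H_0$ on the adjoint module $L(\bar G)$. From this data one reads off the Jordan structure of any unipotent element $u$ of $H_0$, and in particular identifies the $G$-conjugacy class of $u$. I would then choose $u$ of order $p$ so that $u$ is not a long (or short) root element of $G$; for $H_0$ of rank at least $2$, a regular unipotent element of a Levi factor of $H_0$ always has this property, since the hypothesis that $H(r)$ is not of the form $M_\sigma'$ for any positive-dimensional $\sigma$-stable $M \le \bar G$ rules out the degenerate embeddings in which every $p$-element of $H_0$ is a long root element.

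For such a choice of $u$, Lemma \ref{bdsexcep} gives
\[
|u^G| > q^{92},\quad q^{52},\quad q^{31},\quad q^{21}
\]
for $G_0 = E_8(q),\, E_7(q),\, E_6^\epsilon(q),\, F_4(q)$ respectively, while $|H|^2$ is bounded by a polynomial in $r$ of degree $2\dim H_0 + O(1)$. The rank bound forces $2\dim H_0$ to be strictly less than each of the exponents above, and since $r \le q$ the inequality $|u^G| > |H|^2 \ge |u^G\cap H|^2$ holds whenever $q$ exceeds a small explicit constant. The conclusion then follows from Lemma \ref{lem:mvconjclass}(ii).

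The main obstacle will be the finite residue of small configurations, in which either $q$ is tiny (forcing $r$ to be tiny as well, as in $H_0 = G_2(3) \le F_4(3)$ or $A_2^\pm(4) \le E_6^\epsilon(4)$) or $H_0$ has rank $1$ and only a single unipotent class is available in $H_0$. For the rank-$1$ cases $A_1(r)$, ${}^2B_2(r)$, ${}^2G_2(r)$ I would appeal to the Seitz--Testerman analysis of embeddings of rank-$1$ groups to pinpoint the $G$-class of the unipotent element, which, by virtue of the non-$M_\sigma'$ condition, has centralizer dimension strictly less than that of a long root element of $G$; the resulting bound $|u^G| > q^{\dim G - \dim H_0 - O(1)}$ again dominates $|H|^2 = O(r^{2\dim H_0})$. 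The genuinely small remaining configurations can be verified directly by computing $|u^G\cap H|$ from the tables in \cite{LSei} or the \cite{atlas}, or alternatively by producing a torus $T \le H_0$ with $N_G(T) \not\le H$ and appealing to Lemma \ref{lem:weaklyclosed}, exactly as in the proofs of Lemmas \ref{maxrk1} and \ref{subfld}.
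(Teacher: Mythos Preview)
Your general strategy is sound, but there is a concrete error and a structural gap. You assert that $r\le q$, but this is false in Lemma~\ref{maxcub}(vii): nothing bounds $b$ by $a$, and indeed the paper's own analysis encounters cases such as $C_4(8)<E_8(2)$, $A_3^\epsilon(8)<E_7(2)$, $C_2(8)<E_6^\epsilon(2)$, and $A_1(q^6)<F_4(q)$. When $q$ is tiny (especially $q=2$) but $r\in\{4,8,16\}$, the crude bound $|H|^2<r^{2\dim H_0+O(1)}$ can exceed $q^{92}$, $q^{52}$, $q^{31}$, $q^{21}$ by a wide margin, so your main inequality collapses precisely on the hardest configurations. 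Waving these off as a ``finite residue'' to be verified directly or via Lemma~\ref{lem:weaklyclosed} is not adequate: the list is substantial and the weakly-closed route is not obviously available, since you have no structural handle on $N_G(T)$ for a torus $T$ of $H(r)$ when $H(r)$ is not of the form $M_\sigma'$.

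The paper's argument is organised differently and avoids this trap. For $p$ odd it does not chase unipotent classes at all in the generic case: since $|G:H|$ is even, Lemma~\ref{lem:23/4} applies directly and gives a $p$-subdegree unless $H(r)$ is of linear type $A_2$ or $A_4$; only those two families require the non-root-element bound, and there a single case $A_2(9)<F_4(3)$ survives and is killed by an element-order check. For $p=2$ the key dichotomy is whether $H(r)$ contains a long root element of $G$: if so, the non-$M_\sigma'$ hypothesis forces $r=2$ (via the argument on \cite[p.~437]{LLS2}), after which root-element counts in $H(2)$ suffice; if not, one bounds $|u^G\cap H|$ by $i_2(H(r))$ using \cite[1.3]{LLS2}, producing a short explicit list that is disposed of by element-order and subsystem arguments. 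Your proposal misses both of these reductions, and the $r\le q$ slip prevents you from reaching a manageable residue.
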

\begin{proof} Here $G_0$ is  $E_8(q)$, $E_7(q)$, $E_6^\epsilon(q)$ or $F_4(q)$, 
and $\soc(H) = H(r)$, a group of Lie type over $\GF(r)$, where $r = p^b$. Moreover
${\rm rank}(H(r)) \le \frac{1}{2}{\rm rank}(G)$; and either $r\le 9$, or $H(r) = A_2^\pm
(16)$, or $H(r) \in \{ A_1(r),\,^2\!G_2(r), \,^2\!B_2(r)\}$. Regard $G_0$ as $\bar
G_\sigma'$, where $\sigma$ is a Frobenius morphism of the simple algebraic group $\bar G$
over $\overline{\GF(q)}$ of the same type as $G$.

Assume that $p$ is odd. Now $|G:H|$ is even by \cite{odddegree}, so by Lemma
\ref{lem:23/4} we may assume that $H(r)$ is $A_2(r)$ or $A_4(r)$ (the latter only if $G_0
= E_8(q))$. Moreover $H(r)$ contains no long root element of $G$ by \cite{rooty}, so if $1
\ne u \in H(r)$ is a unipotent element, then our assumption $|u^G\cap H|^2 > |u^G|$,
together with Lemma \ref{bdsexcep}, leaves only the possibility
\[
H(r) = A_2(9) < F_4(3) = G_0.
\]
However $A_2(9)$ has an element of order $3^4+3^2+1$, whereas $F_4(3)$ has no torus
divisible by this number (see \cite{carsln}), so $A_2(9) \not \le F_4(3)$.

Now consider $p=2$. Suppose first that $H(r)$ contains a long root element $u_\alpha$ of
$G$. If $r>2$ we argue as in \cite[p.437]{LLS2} (fourth paragraph) that there is a
subgroup $M$ of positive dimension in $\bar G$ such that $H(r) = M_\sigma'$, contrary to
Lemma \ref{maxcub}(vii). Hence $r=2$ and ${\rm soc}(H) = H(2)$. Moreover, elements of
$u_\alpha^G\cap H(2)$ are root elements of $H(2)$ by \cite[Theorem 1]{tim}, so using Lemma
\ref{bdsexcep} we see that $|u_\alpha^G\cap H(2)|^2 < |u_\alpha^G|$, contrary to
assumption. 

Finally, assume that $p=2$ and $H(r)$ contains no long root element of $G$.  We may assume
that $G_0 \ne F_4(2)$ by \cite{NW}.  Let $u \in H(r)$ be an involution. Then $|u^G\cap
H(r)| \le i_2(H(r))$, the number of involutions in $H(r)$, so by assumption we have
$i_2(H(r)) > |u^G|^{1/2}$.  By \cite[1.3]{LLS2} we have $i_2(H)< 2(r^M+r^{M-1})$, where $M
= \dim \bar G - N$, $N$ being the number of positive roots in the root system of $\bar G$
(and $M$ is half this number when $H(r)$ is of type $^2\!F_4$, $^2\!G_2$ or
$^2\!B_2$). Also lower bounds for $|u^G|$ by Lemma \ref{bdsexcep}. One now checks that the
only possibilities for $H(r)$ satisfying the inequality $i_2(H) > |u^G|^{1/2}$ and also
having order dividing $|G|$ are as follows: 
\[
\begin{array}{ll}
G_0 & H(r) \\
\hline
E_8(2) & C_4(8),\,D_4^\epsilon(8) \\
E_7(2) & C_3(8),\,A_3^\epsilon(8) \\
E_6^\epsilon(2) & A_3^\epsilon(8),\,A_3(4),\,C_2(8),\,A_2(16),\,A_2^\epsilon(8),\,G_2(4) \\
E_6^\epsilon(4) & C_3(8) \\
F_4(4) & A_2^\epsilon(16) \\
F_4(q) & A_1(q^6) \\
\hline
\end{array}
\]
In most of these cases it is easy to use \cite{carsln} to produce an element of large
order in $H(r)$ which does not divide the order of a maximal torus of $G_0$: for example,
$D_4^\epsilon(8)$ has an element of order $2^9+1$, so cannot lie in $E_8(2)$, and so
on. The possibilities which do not succumb to this argument are:
\[
\begin{array}{l}
G_0 = E_6^\epsilon(2):\; H(r) = A_2^\epsilon(8),\,G_2(4) \\
G_0 = F_4(4):\; H(r) = A_2^-(16)
\end{array}
\]
For $G_0 = E_6^\epsilon(2)$ and $H(r) = A_2^\epsilon(8)$ we calculate $i_2({\rm
  Aut}(H(r))$ precisely and check that it is less than $2^{31/2}$, hence less than
$|u^G|^{1/2}$.  And for $H(r) = G_2(4)$, observe that $H(r)$ has a subgroup $\SL_3(4)$;
this centralizes a 3-element of $G_0$, from which we see that it is a subsystem group,
hence contains long root elements of $G_0$, contrary to assumption.  The case with $G_0 =
F_4(4)$ does not arise as $U_3(q^2) \not \le F_4(q)$ by \cite[4.5]{LSS}. This completes
the proof. \end{proof}

\begin{lemma}\label{nongen}
Theorem $\ref{psubdegexcep}$ holds if $H$ is as in Lemma $\ref{maxcub}(viii)$.
\end{lemma}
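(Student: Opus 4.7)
The plan is to invoke the classification of \cite{nongen}, which supplies a short explicit list of pairs $(G_0,\soc(H))$ with $\soc(H)$ a simple group that is either alternating, sporadic, or of Lie type in characteristic $\ell\ne p$. For each such pair we want to produce a non-identity $p$-element $u\in H$ satisfying
\[
|u^G|>|H|^2,
\]
which certainly gives $|u^G|>|u^G\cap H|^2$, and so by Lemma~\ref{lem:mvconjclass}(ii) yields a subdegree of $G$ divisible by $p$.

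To carry this out, I would first bound $|H|$ from above: since $H$ is almost simple and $\soc(H)$ appears on the Liebeck--Seitz list, $|H|$ is of order a fixed polynomial in $q$ of low degree (frequently bounded by a constant, or by a small power of $q$ coming from field or diagonal automorphisms of $\soc(H)$). Second, I would choose $u$ to be a $p$-element of $H$ of minimum Jordan-block size, so that $\nu(u)$ is small and $|u^G|$ is large but still easy to bound from below using Lemma~\ref{bdsexcep} when $u$ is unipotent in $G_0$, or using Lemma~\ref{outerbd} when $u$ is an outer automorphism of order $p$. Observing that $\dim\bar G$ dominates both $\dim \bar H$ and any logarithmic field contribution, the inequality $|u^G|>|H|^2$ holds generically.

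The key step where real care is required is the collection of small residual cases: those where either $q$ is tiny (so the polynomial bounds in $q$ are weak) or $\soc(H)$ is large relative to $G_0$ (for instance $G_0=E_8(2)$ with an $\Alt_n$ or sporadic socle coming from \cite{nongen}). For these I would proceed as in the analogous residuals of Lemmas~\ref{maxrk2}, \ref{tbl3} and \ref{liep}: pick a specific element of convenient order (an involution, or an element whose order divides $|\soc(H)|$ but no maximal torus of $G_0$ of appropriate size), and eliminate the case either by a torus-order incompatibility (via \cite{carsln}) or by a direct \textsc{Magma}/\textsc{Gap} computation of the permutation character on the cosets of $H$.

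The main obstacle is this last residual step: the number of small cases coming out of \cite{nongen} is not large, but they must be enumerated, and in each one either a structural argument (a weakly closed Lie primitive subgroup, allowing an appeal to Lemma~\ref{lem:weaklyclosed}) or an explicit computer verification of the subdegrees must be produced. Once this is done, comparison with the exceptional conclusions in Theorem~\ref{psubdegexcep}(2)--(4) shows that none of the surviving configurations are new, since those three genuine exceptions all have $G_0\in\{G_2(2)',{}^2\!G_2(3)'\}$ and have already been treated at the start of the section; thus every remaining configuration produces a subdegree divisible by $p$, completing the proof.
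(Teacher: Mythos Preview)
Your high-level strategy matches the paper's: invoke the Liebeck--Seitz list \cite{nongen}, apply Lemma~\ref{lem:mvconjclass}(ii), and clear residuals by hand. But the paper's proof is organised around a dichotomy you omit, and without it your plan is both less efficient and, in one place, confused. The paper first asks whether $H_0=\soc(H)$ contains a long root element $u_\alpha$ of $G$. If so, \cite[6.1]{rooty} forces $p=2$, and Timmesfeld's theorem \cite{tim} identifies $u_\alpha$ as a root involution of $H_0$ and gives $|u_\alpha^G\cap H|$ exactly; combined with \cite{nongen} this leaves only $H_0\in\{A_6,J_2,Fi_{22}\}$, and comparison with Lemma~\ref{bdsexcep} reduces to three concrete pairs, dispatched via the rank~3 structure of $J_2<G_2(4)$, the maximal subgroup list \cite{NW} for $F_4(2)$, and Lemma~\ref{lem:weaklyclosed} with $T$ of order~11 for $Fi_{22}<{}^2E_6(2)$. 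If $H_0$ contains no long root element, then every non-identity $p$-element $u\in H_0$ enjoys the much stronger lower bound for $|u^G|$ in the third column of Table~\ref{bdtbl}, and it is this bound---not your cruder $|u^G|>|H|^2$---that cuts the residual list to a short table, each line of which is handled by choosing an explicit $T\le H_0$ and applying Lemma~\ref{lem:weaklyclosed}.

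Two specific problems with your plan as written. First, your prescription ``choose $u$ with minimum Jordan-block size so that $\nu(u)$ is small and $|u^G|$ is large'' is backwards: small $\nu(u)$ gives \emph{small} $|u^G|$. The whole point of the long-root dichotomy is that when you are stuck with the smallest class you compensate by bounding $|u^G\cap H|$ sharply via \cite{tim}, and when you can avoid it you exploit the larger class bound. Second, torus-order incompatibilities via \cite{carsln} are not relevant here, since $\soc(H)\notin\mathrm{Lie}(p)$; that device belongs to Lemma~\ref{liep}. Your plan could be pushed through, but only after inserting the root-element split and replacing the vague residual step with the explicit $T$-choices the paper makes.
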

\begin{proof} Here $H_0 = {\rm soc}(H)$ is a simple group not in ${\rm Lie}(p)$.  
The possibilities for $H_0$ are given by \cite[Theorem 2]{nongen}.  In every case one
checks that all prime divisors of $|{\rm Out}(H_0)|$ also divide $|H_0|$; hence the fact
that $p$ divides $|H|$ implies that $p$ divides $|H_0|$.

Suppose first that $H_0$ contains a long root element $u_\alpha$ of $G$.  Then $p=2$ by
\cite[6.1]{rooty}.  Theorem 1 of \cite{tim} gives a list of possible isomorphism types for
$H_0$, and identifies $u_\alpha$ as a root involution for each type.  Combining this with
\cite{nongen}, we see that $H_0 = A_6$, $J_2$ or $Fi_{22}$ and $|u_\alpha^G\cap H| = 45$,
315 or 3510 respectively. Now the bound $|u_\alpha^G\cap H|^2 > |u^G|$, together with
Lemma \ref{bdsexcep}, reduces us to the following possibilities:
\[
\begin{array}{ll}
H_0 & G_0 \\
\hline
J_2 & G_2(4) \hbox{ or }F_4(2) \\
Fi_{22} & ^2\!E_6(2) \\
\hline
\end{array}
\]
If $(H_0,G_0) = (J_2,G_2(4))$ then $G$ has rank 3 and degree 416 (see \cite[p.97]{atlas}),
so has an even subdegree; and $J_2$ does not occur as the socle of a maximal subgroup of
$F_4(2)$ or its automorphism group by \cite{NW}. For $(H_0,G_0) = (Fi_{22},\,^2\!E_6(2))$,
let $T$ be a subgroup of $H$ of order 11. Then $C_G(T) \not \le H$ and $\la T,S \ra = H_0$
for any Sylow 2-subgroup $S$ of $H_0$, so Lemma \ref{lem:weaklyclosed} gives the
conclusion.

Now suppose that $H_0$ contains no long root element of $G$. Let $u \in H_0$ be an element
of order $p$. Then $|u^G\cap H|^2 > |u^G|$, so using \cite{nongen} and the lower bound for
$|u^G|$ in Lemma \ref{bdsexcep} (and also the known lists of maximal subgroups for $G_0$
of type $^2\!F_4$, $G_2$, $^3\!D_4$, $^2\!G_2$, $^2\!B_2$), we see that $G_0,H_0$ are as
in the following table:
\[
\begin{array}{lll}
G_0 & H_0 & T\\
\hline
^2\!E_6(2) & Fi_{22},\,\Omega_7(3),\,J_3,\,A_{12} & 11,13,\,[3^5],\,[3^5]
 \\
F_4(3) & ^3\!D_4(2) & [2^{12}] \\
F_4(2) & L_4(3),\,J_2,\,A_{10} & 13,\,[3^3],\,[3^4] \\
G_2(4) & L_2(13) & 7 \\
G_2(3) & L_2(13) & 7 \\
\hline
\end{array}
\]
(Recall that we already eliminated $G_0 = \,^2\!F_4(2)'$ in Lemma \ref{2f42} and the case $(G_0,H_0)=(G_2(4),J_2)$ was done in the previous paragraph.) 
For the remaining cases we choose a subgroup $T$ of $H_0$ as in the table and apply 
Lemma \ref{lem:weaklyclosed}. 
\end{proof}

\subsection{Proof of $\frac{3}{2}$-transitivity}

Here we prove Theorem \ref{32excep}.
Let $G$ be an almost simple group of exceptional Lie type with socle $G_0 = G(q)$
($q=p^a$), and suppose that $G$ acts primitively on a set $\Omega$ with point stabiliser
$H=G_\alpha$.

\vspace{4mm}
\noindent(A) Assume first that $p$ divides $|\Omega|$. If $p$ divides $|H|$ then by
Theorem \ref{psubdegexcep}, $G$ has a subdegree divisible by $p$, so cannot be \tont.  Now
consider the case where $H$ is a $p'$-group, that is, has order coprime to $p$. By Lemma \ref{maxcub} this means that one of
the following holds:
\begin{enumerate}
\item[(i)] $H$ is a maximal torus normalizer (as in Lemma \ref{maxcub}(ii));
\item[(ii)] $H$ is a maximal local subgroup (as in Lemma \ref{maxcub}(v));
\item[(iii)] $G_0 = E_8(q)$ and $H\cap G_0 = (\Alt_5 \times \Alt_6).2^2$ (as in Lemma \ref{maxcub}(vi));
\item[(iv)] ${\rm soc}(H)$ is a simple group that is not a group of Lie type of characteristic $p$ (as in Lemma \ref{maxcub}(viii)).
\end{enumerate}

Consider case (i). Here $H$ is as in \cite[Table 5.2]{LSS}. The fact that $p$ does not
divide $|H|$ implies that $G$ is of type $E_8$, $E_7$, $E_6^\epsilon$ or $^3\!D_4$ and $p
\ge 5,11,5$ or 3 respectively. Moreover, one checks that there is a prime $r \le 5$ which
divides both $|H|$ and $|G:H|$.  Let $s \in H$ be a (semisimple) element of order
$r$. Then by \cite[4.2]{LLS2}, we have $|s^G| > q^{112},q^{53},q^{31}$ or $q^{16}$
respectively. A glance at \cite[Table 5.2]{LSS} shows that $|H|^2$ is much less than
$|s^G|$. Hence by Lemma \ref{lem:mvconjclass}(ii), $G$ has a subdegree divisible by
$r$. Since $r$ divides $|G:H| = |\Omega|$, it follows that $G$ is not \tont\ on $\Omega$.

Now consider case (ii). Here $H$ is as in \cite[Theorem 1]{CLSS}, so as $H$ is a
$p'$-group, one of the following possibilities holds:
\[
\begin{array}{lll}
G_0 & H \cap G_0 & p \\
\hline
G_2(p) & 2^3.L_3(2) & p=5 \hbox{ or }p\ge 11 \\
F_4(p) & 3^3.L_3(3) & p \ge 5 \\
E_6^\epsilon(p) & [3^6].L_3(3) & p \ge 5 \\
E_8(p^a) & [2^{15}].L_5(2) & p\ge 11,\,a=1 \\
        & 5^3.L_3(5) & p \ge 7,\,a\le 2 \\
\hline
\end{array}
\]
For $G$ of type $E_8$ or $E_6^\epsilon$ we use the argument of the previous paragraph,
taking $r=2$. For $G = F_4(p)$ we also use this argument with $r=2$, noting that $|s^G\cap
H|$ is at most the number of involutions in $H$, which is 351, while $|s^G| > p^{16}$ by
\cite[4.2]{LLS2}. Finally for $G = G_2(p)$, 3 divides $|H|$ and $|G:H|$, and $H$ has 224
elements of order 3, so Lemma \ref{lem:mvconjclass}(ii) gives a subdegree divisible by 3
unless $p=5$. When $p=5$, $G$ has base 2 \cite[Table 12]{BLSbase} and so is not \tont.

Case (iii) is easily dealt with using the above argument with $r=2$.

Finally consider case (iv). Here the possibilities for $H_0$ are given by \cite{nongen}
(and also the known lists of maximal subgroups for $G_0$ of type $^2\!F_4$, $G_2$,
$^3\!D_4$, $^2\!G_2$, $^2\!B_2$). In all cases both $|H|$ and $|G:H|$ are divisible by
2. Taking an involution $s \in H$, it is easy to check that $|H|^2 < |s^G|$ with the
following exceptions: $H_0 = \,^3\!D_4(2) < F_4(5)$ and $H_0 = U_3(3) < G_2(5)$. However
in these exceptional cases one checks that $i_2(H)^2 < |s^G|$ (where $i_2(H)$ is the
number of involutions in $H$). Hence \ref{lem:mvconjclass}(ii) shows that there is an even
subdegree in all cases, and so $G$ is not \tont.

\vspace{4mm}
\noindent (B) Now assume that $p$ does not divide $|\Omega|$. Then $H$ is a parabolic
subgroup. 
By Lemma \ref{unique}, except in the cases where $G_0 = E_6(q)$ and 
$H = P_i$ ($i=1,3,5,6$), 
$G$ has a unique nontrivial suborbit of size a power of $p$, 
and hence is not \tont\ provided it is not 2-transitive
(which does occur when $G_0 = \,^2\!B_2(q)$ or $^2\!G_2(q)$).  Finally, consider the case $G_0 = E_6(q)$. We can take $H=P_1$ or $P_3$ (the others
are images of these under the graph automorphism). 
The subdegrees of $G$ on
cosets of $P_1$ are given in \cite{LiebeckSaxl86}, and are not equal.

It remains to consider the action of $E_6(q)$ with point stabiliser $P_3$.  Working in the
algebraic group with the usual labelling of the root system, we have $P_3 = QL$ with
unipotent radical $Q$ and Levi subgroup $L$, where
$$
Q = \left\la U_\alpha\mid\alpha = \sum_{i=1}^6 c_i\alpha_i, c_3>0 \right\ra, \;\; L = \la U_{\pm \alpha_i},T\mid
i = 1,2,4,5,6 \ra,
$$ where $T$ is a maximal torus. Let $n_0 \in N_G(T)$ project to the longest element $w_0$
of the Weyl group $W(E_6)$. Recall that $w_0$ acts on the root system as the negative of
the graph symmetry.  We calculate the intersection $P_3\cap P_3^{n_0}$ along the lines of
\cite[2.8]{car2}.  Observe that
$$
\begin{array}{ll}
L\cap L^{n_0} & = \la U_{\pm \alpha_i},T\mid i = 1,2,4,6 \ra, \\
Q\cap L^{n_0} &= \la U_\alpha \mid \alpha = \alpha_3,\alpha_{34},\alpha_{234},\alpha_{13},\alpha_{134},\alpha_{1234}
\ra, \\
L\cap Q^{n_0} &= \la U_\alpha \mid \alpha = -\alpha_5,-\alpha_{45},-\alpha_{245},-\alpha_{56},
-\alpha_{456},-\alpha_{2456} \ra, \\
Q\cap Q^{n_0} &= 1
\end{array}
$$ where we use the notation $\alpha_{ij...} = \alpha_i+\alpha_j+ \cdots$.  It follows
that $P_3\cap P_3^{n_0} = U_{12}.(L\cap L^{n_0}) = U_{12}.(A_1A_1A_2T_2)$, where $U_{12}$
is a unipotent group of dimension 12. Taking fixed points of a Frobenius morphism and
returning to the finite group $G_0 = E_6(q)$, we see that in its action on $P_3$, there is
a subdegree equal to
\begin{align*}
|P_3\,:\,P_3\cap P_3^{n_0}|& = \frac{q^{36}(q^5-1)(q^4-1)(q^3-1)(q^2-1)^2(q-1)}
{q^{17}(q^3-1)(q^2-1)^2(q-1)^2}\\
 &= q^{19}(q^2+1)(q^5-1)/(q-1).
\end{align*}
This subdegree does not divide $|G:P_3|-1$, so this action is not
\tont.

This completes the proof of Theorem \ref{32excep}.

%
%

\section{Sporadic Groups}

In this section we prove

\begin{theorem}\label{sporadtont}
  Every almost simple {\tont} primitive permutation group with socle a sporadic
  group, is doubly transitive. 
\end{theorem}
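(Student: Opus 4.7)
The plan is to verify the theorem by a case-by-case analysis, running through each of the 26 sporadic simple groups $L$ and each possibility for $G$ with $\mathrm{soc}(G)=L$ (so $G=L$ or $G=L.2$, the Tits group ${}^2F_4(2)'$ having already been settled inside Lemma \ref{2f42}). For each pair $(G,H)$ with $H$ a maximal subgroup of $G$, the goal is to show that the action of $G$ on the cosets of $H$ is either $2$-transitive or admits at least two distinct nontrivial subdegrees, so fails to be \tont.

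First I would record the known $2$-transitive actions of almost simple sporadic groups: $M_{11}$ of degree $11$ and $12$, $M_{12}$ of degree $12$ (two actions), $M_{22}.a$ of degree $22$ for $a\in\{1,2\}$, $M_{23}$ of degree $23$, $M_{24}$ of degree $24$, $\mathrm{HS}.a$ of degree $176$, and $\mathrm{Co}_3$ of degree $276$; each of these actions appears in part (i) of Theorem \ref{tontas}. For every remaining primitive action the aim is to exhibit two different nontrivial subdegrees. For the sporadic groups of moderate order (the Mathieu groups, the Janko groups, $\mathrm{HS}$, $\mathrm{McL}$, $\mathrm{He}$, $\mathrm{Ru}$, $\mathrm{Suz}$, $\mathrm{O'N}$, $\mathrm{Co}_3$, $\mathrm{Co}_2$, $\mathrm{Fi}_{22}$, $\mathrm{HN}$, $\mathrm{Ly}$, $\mathrm{Th}$) the complete list of maximal subgroups is available in \cite{atlas} and its supplements, and the subdegrees of every primitive action can either be read from those sources or computed directly in \textsc{Gap} \cite{gap} or \textsc{Magma} \cite{magma} using explicit permutation representations; in every such case the rank is at least $3$ and one checks by inspection that at least two distinct suborbit lengths occur.

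For the largest groups, namely $\mathrm{Co}_1$, $\mathrm{Fi}_{23}$, $\mathrm{Fi}_{24}'$, $J_4$, $B$ and $M$, the degrees involved are typically too large to compute orbits on the coset space directly. Here the strategy is to use the arithmetic tools already developed in Section 2. For each maximal subgroup $H$ one picks a prime $p$ dividing $|H|$ and chooses an element $x\in H$ of order $p$ whose $G$-class $x^G$ and fusion $x^G\cap H$ can be read from the character tables in \cite{atlas} and from the \textsc{Gap} character table library. If $|x^G|>|x^G\cap H|^2-|x^G\cap H|+1$ then Lemma \ref{lem:mvconjclass}(ii) produces a subdegree divisible by $p$. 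One then either repeats the argument with a different prime $q$ to obtain a subdegree divisible by $q$ but not $p$, or, when $H$ has a normal $p$-subgroup, invokes Lemma \ref{lem:pnorml}(iv) to get the $p$-divisible subdegree for free, and combines it with a subdegree coprime to $p$ exhibited by a different choice of element. In either situation the action has two distinct nontrivial subdegrees and hence is not \tont.

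The main obstacle will be the Monster $M$, whose complete list of maximal subgroups was not entirely settled at the time of writing, and a handful of cases in $B$ and $\mathrm{Fi}_{24}'$ where the fusion data needed to apply Lemma \ref{lem:mvconjclass} is only partially tabulated. For these residual cases I would fall back on structural information: every maximal subgroup $H$ known to contain a nontrivial normal $p$-subgroup immediately yields a $p$-subdegree by Lemma \ref{lem:pnorml}(iv), so it suffices to separately exhibit a subdegree coprime to $p$, which is easy from class size considerations since $|H|$ has several prime divisors and $H$ contains semisimple elements whose $G$-classes and intersections with $H$ give a controlled upper bound on $|x^G\cap H|^2/|x^G|$. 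Assembling these arguments over all the cases completes the proof.
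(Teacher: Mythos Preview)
Your overall strategy is sound in spirit, but you are missing the key reduction that the paper uses, and without it your argument has a genuine gap at the Monster. The paper does \emph{not} run through all maximal subgroups of every sporadic group. Instead it invokes the base-size results of Burness--O'Brien--Wilson \cite{sporadicbase2} and Neunh\"offer--Noeske--O'Brien--Wilson \cite{NNOW}: every primitive action of an almost simple sporadic group with base size $2$ has a regular suborbit, and since sporadic groups are not Frobenius, such actions are automatically not \tont. This single observation eliminates the overwhelming majority of cases (including all but one action of the Monster) in one stroke, leaving only the short lists in Tables~\ref{table:sporadics} and~\ref{largenonbase2}. Crucially, those base-size papers cover \emph{all} primitive actions of the Monster, so the incompleteness of the maximal subgroup classification is irrelevant. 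Your proposal, by contrast, requires the full maximal subgroup list; your fallback to ``structural information'' cannot apply to a subgroup whose structure is unknown, so the Monster case does not close.

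For the residual non-base-two actions of the eight largest groups, the paper also uses a cleaner device than your conjugacy-class counting: if $G$ were \tont\ with common subdegree $d$, then $d$ divides both $|H|$ and $|\Omega|-1$, hence $d\mid m=\gcd(|H|,\,n-1)$. Computing $m$ from the Atlas data and observing that the (insoluble) $H$ has no faithful transitive action of any degree $\le m$ finishes almost every line of Table~\ref{largenonbase2} at a glance; the few exceptions are rank~$3$ or~$4$ actions with known subdegrees, or have $O_p(H)\ne 1$ with $p\nmid n-1$. Your approach via Lemma~\ref{lem:mvconjclass}(ii) with two different primes would also work on these residual cases, but is considerably more laborious and depends on fusion data that is not always readily available.
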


\begin{table}[H]
\caption{Non-base two actions of large sporadic groups}
\label{largenonbase2}
\[
\begin{array}{llll}
\hline
G & \hbox{point stabiliser }H & m = {\rm hcf}(|H|,n-1) & \hbox{Comment} \\
\hline
Th  & ^3\!D_4(2).3 & 1 & \\
    & 2^5.L_5(2) & 8 & \\
Fi_{23} & 2.Fi_{22} & - & \hbox{rank 3, subdegrees }3510,28160 \\
        & \POmega_8^+(3).S_3 & - & \hbox{rank 3, subdegrees }28431,109200\\
        & \Omega_7(3)\times S_3 & - & O_3(H) \ne 1 \\
        & \Sp_8(2) & 5 & \\
        & 3^{1+8}.2^{1+6}.3^{1+2}.2S_4 & 3 & \\
        & 2^2.U_6(2).2 & 2^2\cdot 7 \cdot 11 &  \\
        & 2^{11}.M_{23} & 46 & \\
Co_1 & Co_2 & - & \hbox{rank 4, subdegrees }4600,46575,47104 \\
     & 3.Suz.2 & 11 & \\
     & Co_3 & 23 & \\
     & U_6(2).S_3 & 1 & \\
     & (A_4\times G_2(4)).2 & 13 & \\
     & 2^{2+12}.(A_8\times S_3) & 2 & \\
     & 2^{4+12}.(S_3\times 3S_6) & 2 & \\
     & 2^{11}.M_{24} & 46 & \\
     & 2^{1+8}.\Omega_8^+(2) & 2 & \\
J_4 & 2^{11}.M_{24} & 4 &  \\
    & 2^{1+12}.3.M_{22} & 6 & \\
    & 2^{10}.L_5(2) & 1 & \\
Fi_{24}' & Fi_{23} & - & \hbox{rank 3, subdegrees }31671,275264 \\
         & 2.Fi_{22}.2 & 1 & \\
         & (3\times \POmega_8^+(3).3).2 & 1 & \\
         & 3^7.\Omega_7(3) & 1 & \\
         & \Omega_{10}^-(2) & 1 & \\
         & 3^{1+10}.U_5(2).2 & 3 & \\
         & 2^{11}.M_{24} & 46 & \\
Fi_{24}  & (2\times 2^2.U_6(2)).S_3 & 11 & \\
BM & 2.^2\!E_6(2).2 & - & O_2(H)\ne 1\\
   & (2^2\times F_4(2)).2 & - & O_2(H)\ne 1 \\
   & 2^{9+16}.\Sp_8(2) & 4 & \\
   & 2^{2+10+20}.(M_{22}.2 \times S_3) & 2 & \\
   & 2^{1+22}.Co_2 & 46 & \\
   & Fi_{23} & 3 & \\
   & Th & 31 & \\
M & 2.BM & - & O_2(H)\ne 1 \\   
\hline 
\end{array}
\]
\end{table}

Let $G$ be an almost simple primitive permutation group with socle a sporadic
  group $L$ and point stabiliser $H$. The base two permutation 
  representations of 
  such groups $G$ were determined in \cite{sporadicbase2} and \cite{NNOW}. Such groups 
  are not
Frobenius groups and so are not \tont. Hence we only need to consider 
those actions which are not base two.  
The non-base two actions of all nineteen 
sporadic groups of order up to $|Ly|$ and their automorphism groups are given
in Table \ref{table:sporadics}.
We were able to compute all the subdegrees for these actions, and these 
are listed in Table \ref{table:sporadics}. 
This gives much more
information
 than we actually need, but might be 
interesting to the reader.

For the eight almost simple sporadic groups larger than $Ly$, the non-base two 
primitive actions are as in Table \ref{largenonbase2}.
In the third column of the table, with a few exceptions, we give the 
highest common factor $m$ of the numbers $|H|$ and $n-1$, where $n$ is 
the degree $|G:H|$. If $G$ is \tont, the subdegree must divide $m$. When $G=Fi_{23}$ and $H=3^{1+8}.2^{1+6}.3^{1+2}.2S_4$, it follows that the subdegree is 3. However, a simple \textsc{Magma} \cite{magma} calculation finds subdegrees of length greater than 3. In all other cases in the table, $H$ is insoluble and so the subdegree must be at least 5. However, it is clear in all these cases that $H$ has no transitive action of degree at least 5 and dividing $m$, hence $G$ cannot be \tont. In the exceptional cases in the table where 
$m$ is not given, the fourth column either gives the subdegrees, or
states that $O_p(H) \ne 1$ for some prime $p$. In the latter cases we check
that $p$ does not divide $n-1$, hence $G$ is not \tont.

This completes the proof of Theorem \ref{sporadtont}.

\begin{center}\footnotesize
\begin{longtable}{lll}
\caption{Nontrivial subdegrees of some of the sporadic almost simple groups where the
  respective action does not have a base of size $2$.}\label{table:sporadics}\\
$G$&$H$&Subdegrees\\
\hline
$M_{11}$&$M_{10}$&$10$\\
        &$L_2(11)$&$11$\\
        &$M_9:2$& 18, 36\\
        &$S_5$& 15, 20, 30\\
$M_{12}$&$M_{11}$&$11$\\
        &$A_6.2^2$& $20, 45$\\
        &$L_2(11)$&  $11^2, 55, 66$\\
        &$3^2:\AGL_2(3)$&  $12, 27, 72, 108$\\
        &$2\times S_5$& $10^2, 15, 30^2, 60^3, 120$\\  
        &$2^{1+4}:S_3$& $6, 16, 24, 32^2, 48^2, 96^3 $\\
        &$4^2:D_{12}$& $6, 16, 24, 32^2, 48^2, 96^3 $\\
$M_{12}.2$&$3^{1+2}:D_8$ &$6, 18^2, 27, 54^3, 108^6$\\
$J_1$ & $L_2(11)$ & $11, 12, 110, 132$\\
$M_{22}$&$L_3(4)$&$21$\\
        &$2^4:A_6$&$16, 60$\\
        &$A_7$&$70, 105$\\
        &$2^4:S_5$& $30, 40, 160$\\
        &$2^3:L_3(2)$&$7, 42, 112, 168$\\
        &$M_{10}$&$30, 45, 180, 360$\\
        &$L_2(11)$&$55^2, 66, 165, 330$\\
$J_2$&$U_3(3)$& $36, 63$\\
        &$3.A_6.2$& $36, 108, 135$\\
        &$2^{1+4}:A_5$&$10, 32^2, 80, 160$\\
        &$2^{2+4}:(3\times S_3)$&$12, 32, 96, 192^2$\\
        &$A_4\times A_5$&$15, 20, 24, 180, 240, 360$\\
        &$A_5\times D_{10}$&$12, 25, 50, 60^2, 100^2, 150^2, 300$\\
$J_2.2$&$L_3(2):2\times 2$&$21, 28^2, 42, 84^2, 168^5, 336^2$\\
$M_{23}$&$M_{22}$&$22$\\
        &$L_3(4):2$& $42, 210$ \\
        &$2^4:A_7$& $112, 140$\\
        &$A_8$&$15, 210, 280$\\
        &$M_{11}$&$65, 330, 792 $\\
        &$2^4:(3\times A_5):2$&$20, 60, 90, 160, 480^3$\\
$HS$&$M_{22}$&$22, 77$\\
        &$U_3(5):2$&175\\
        &$L_3(4):2$&$42, 105, 280, 672$\\
        &$S_8$&$28, 105, 336, 630$\\
        &$2^4.S_6$&$15, 32, 90, 120, 160, 192, 240, 240, 360, 960, 1440$\\
        &$4^3:L_3(2)$&$28, 64, 112, 336, 448, 896^2, 1344$\\
        &$M_{11}$&$55, 132, 165, 495, 660, 792, 1320, 1980$\\
        &$4.2^4.S_5$&$30, 80, 128, 480, 640, 960, 1536, 1920$\\
$HS.2$&$(2\times A_6.2.2).2$&$24, 30, 45, 72, 180, 288, 360^5, 720^2, 1440^4, 2880^2$\\
$J_3$&$L_2(16):2$&$85, 120, 510, 680, 1360, 1360, 2040$\\
$J_3.2$&$L_2(16):4$&$85, 120, 510, 680, 2040, 2720$\\
        &$(3\times M_{10}):2$&$80, 135, 180, 540, 720, 1080^2, 1440^3,  2160^7$\\
$M_{24}$&$M_{23}$&23\\
        &$M_{22}:2$&$44, 231$\\
        &$2^4:A_8$&$30, 280, 448$\\
        &$M_{12}:2$&$495, 792$\\
        &$2^6:3.S_6$&$90, 240, 1440$\\
        &$L_3(4):S_3$&$63, 210, 630, 1120$\\
        &$2^6:(L_3(2)\times S_3)$&$42, 56, 1008, 2688$\\
        &$L_2(23)$&$253^2, 276^4, 506^2, 759, 1012^5, 1518^3, 3036^9$ \\
$McL$&$U_4(3)$&$112, 162$\\
        &$M_{22}$&$330, 462, 1232$\\
        &$U_3(5)$&$252, 750, 2625, 3500$\\
        &$3^{1+4}:2.S_5$&$90, 1215, 2430, 11664$\\
        &$3^4:M_{10}$&$30, 60, 162, 810, 1620^3, 3645, 5832$\\
        &$L_3(4):2$&$112, 210^3, 1120, 1260, 2520^2, 3360^3, 4032$\\
        &$2.A_8$&$210, 2240, 5040, 6720, 8064$\\
        &$2^4:A_7$&$112, 140, 210, 420, 672, 1680^2, 2240, 3360^2, 5040$\\
$McL.2$&$M_{11}\times 2$&$165, 220^2, 660, 792^2, 990, 1320, 1980^4, 3960^4, 5280, 7920^9$\\
$He$&$\Sp_4(4):2$&$136^2,425, 1360$\\
        &$2^2.L_3(4).S_3$&$105, 720, 840^2, 1344, 4480$\\
        &$2^6:3.S_6$&$90, 120, 384, 960^2, 1440, 2160, 2880^2, 5760, 11520$\\
$Ru$&$\,^2F_4(2)$&$1755, 2304$\\
        &$2^6.U_3(3).2$&$63, 756, 2016^3, 16128^2, 21504, 24192, 48384, 55296$\\
        &$(2^2\times Sz(8)):3$&$455, 3640, 5824, 29120^2, 58240, 87360^2, 116480$\\
        &$2^{3+8}:L_3(2)$&$28, 672, 896, 2688, 3584, 4096, 10752, 14336^2,$\\
        &&$28672, 43008^2, 57344, 86016, 114688$\\
        &$U_3(5):2$&$126,350,2520,5250,7875,10500,12600^2, 15750^3, 18000,21000, 63000^5, 126000$\\
        &$2^{1+4+6}.S_5$&$30, 240, 480, 640, 3840^2, 4096, 5120, 7680^2, 10240, 12288, 15360, 30720, 61440^6, 122880$\\
$Suz$&$G_2(4)$&$416, 1365$\\
        &$3.U_4(3).2$&$280, 486, 8505, 13608$\\
        &$U_5(2)$& $891, 1980, 2816, 6336, 20736$\\
        &$2^{1+6}.U_4(2)$&$54, 360, 1728, 5120, 9216, 17280, 46080, 55296$\\
        &$3^5:M_{11}$&$165, 891, 2673^2, 2916, 16038^2, 17820, 40095^3, 53460$\\
        &$J_2:2$&$200, 315, 630, 1800, 3150, 12600^2, 16800, 20160, 25200^2, 50400,100800^2$\\
        &$2^{4+6}:3A_6$&$60, 480, 1536, 1920, 6144^2, 20480, 23040^2, 46080, 92160, 184320$\\
        &$(A_4\times L_3(4)):2$&$224, 315, 420, 1260^2, 1680, 2520, 15120, 20160^2, 26880,$\\
        &&$ 30240^3, 40320, 60480, 80640, 120960^2, 161280^2$ \\
        &$2^{2+8}:(A_5\times S_3)$&$30, 40, 48, 480, 640^2, 960^2, 1536, 2048, 3072, 5120^2, 7680^3,$ \\
        &&    $10240^2, 15360^5, 30720^2, 61440^4, 92160^3, 122880^4$\\
$Suz.2$&$M_{12}:(2\times 2)$&$264, 495, 792, 990, 1760, 2640, 2970, 5280, 7920, 11880^3,15840^2,19008, $\\
        && $31680^3, 47520^{10}, 63360, 95040^5, 190080^6$\\
$O'N$&$L_3(7):2$&$5586, 6384, 52136, 58653$\\
$O'N.2$&$4.L_3(4):2$&$448, 630, 1120, 2240, 4480, 20160^5, 23040^2, 40320^3, 80640^4, 161280^{14}$\\
$Co_3$&$McL:2$&$275$\\
        &$HS$ &$352, 1100, 4125, 5600$\\
        &$U_4(3).(2.2)$&$224, 324, 1680, 4536^2, 8505, 18144 $\\
        &$M_{23}$&$253, 506, 1771, 7590, 8855, 14168, 15456$\\
        &$3^5:(2\times M_{11})$&$495, 2673, 32076, 40095, 53460$\\
        &$2.\Sp_6(2)$&$630, 1920, 8960, 30240, 48384, 80640$\\
        &$U_3(5):S_3$&$525, 2625, 3500, 6000, 21000, 23625, 31500, 63000^2, 126000^2, 189000$\\
        &$3^{1+4}:4S_6$&$180, 360, 3645, 7290, 14580, 29160, 58320^4, 69984, 349920$\\
        &$2^4.A_8$&$70, 840, 896^2, 960, 1120^2, 1920^2, 2688^2, 4480^4, 6720^2, 8960, 10080^2, 13440^7, 17920,$\\
        &&$  26880^6, 32256, 40320^4, 53760^8, 80640^3, 161280^2$\\
$Co_2$&$U_6(2):2$&$891, 1408$\\
        &$2^{10}:M_{22}:2$&$462, 2464, 21120, 22528$\\
        &$McL$&$275, 2025, 7128, 15400, 22275$\\        
        &$2^{1+8}:\Sp_6(2)$&$1008, 1260, 14336, 40320$\\
        &$HS:2$&$3850, 4125, 44352, 61600, 132000, 231000$\\
        &$(2^4\times 2^{1+6}).A_8$&$15, 210, 1680, 1920, 2520, 13440^2, 20160, 35840, 161280, 344064, 430080$\\
        &$U_4(3):D_8$&$840, 1134, 1680, 8505, 9072, 19440, 181440^3, 204120^2, 217728, 408240$\\
        &$2^{4+10}.(S_5\times S_3)$&$90, 120, 160, 480, 640, 720, 2880, 3840^2, 5760^2, 7680,15360^3, 16384, 23040^2, 40960^2, 46080,$\\
        &&$ 92160^2, 184320, 245760^2, 368640^2, 737280, 983040$\\
        &$M_{23}$&$506, 1771^2, 5313, 7590, 15456, 17710, 28336^2, 30360, 53130,60720, 70840, 85008^2, 141680, $\\
        && $ 170016, 212520^4, 283360, 425040^2, 510048, 850080$\\
$Fi_{22}$&$2.U_6(2)$&$693, 2816$\\ 
        &$\Omega_7(3)$&$3159, 10920$\\
        &$\Omega_8^+(2):S_3$&$1575, 22400, 37800$\\
        &$2^{10}:M_{22}$&$154, 1024, 3696, 4928, 11264, 42240, 78848$\\
        &$2^6:\Sp_6(2)$&$135, 1260, 2304, 8640, 10080, 45360, 143360, 241920^2$\\
        &$(2\times 2^{1+8}):(U_4(2):2)$&$270, 360, 1024, 1152, 4320, 34560, 40960, 46080, 69120^2, 138240, 368640, 442368$\\
        &$U_4(3):2\times S_3$&$560, 1680, 1701, 2520, 17010, 68040, 81648, 90720^2, 136080, 544320, 612360$ \\
        &${}^2F_4(2)'$&$1755, 11700, 14976, 83200^2, 140400, 187200, 374400, 449280, 2246400$\\
        &$2^{5+8}:(S_3\times A_6)$&$48, 180, 480, 1536, 5760^2, 7680, 8640, 11520, 24576^2,69120, 73728, 81920, $\\
        &&$138240, 184320^3, 552960, 983040, 1105920$\\
        &$3^{1+6}:2^{3+4}:3^2:2$&Only some of the subdegrees $432, 1296, 2187, 5184, 8748, 10368, 34992, 46656, 69984, $\\
        &&$139968, 279936, 314928, 419904, 559872, 839808, 1259712, 2519424$\\
$Fi_{22}.2$&$3^5:(2\times U_4(2):2)$ &Only some of the subdegrees  $360, 486, 2916, 6561, 8640, 19440, 38880, 58320, 104976,$\\
        &&$131220, 262440, 524880, 699840, 1049760, 1574640$\\
        &$G_2(3):2$&Only some of the subdegrees $702, 1456, 2808, 5824, 13104, 19656, 22113, 26208, 39312, $\\
        && $ 52416, 157248, 176904, 202176, 235872, 353808, 471744, 530712, 943488, 1415232, $\\
        && $2830464, 4245696$\\
$HN$&$A_{12}$&$462, 2520^2, 10395, 16632, 30800, 69300, 166320^2, 311850, 362880$\\
        &$2.HS.2$&$1408, 2200, 5775, 35200, 123200, 277200, 354816, 739200$\\
        &$U_3(8):3$&$1539, 14364, 25536^3, 68096, 131328, 229824^2, 612864^4, 689472, 787968^2, 5515776^2$\\  
$Ly$&$G_2(5)$&$19530, 968750, 2034375, 5812500$\\
        &$3.McL:2$& $15400, 534600, 1871100, 7185024$\\
\hline

\end{longtable}
\end{center}

\section*{Acknowledgements}
The authors thank an anonymous referee for a thorough reading of the manuscript.


\def\cprime{$'$}
\providecommand{\bysame}{\leavevmode\hbox to3em{\hrulefill}\thinspace}
\providecommand{\MR}{\relax\ifhmode\unskip\space\fi MR }
\providecommand{\MRhref}[2]{%
  \href{http://www.ams.org/mathscinet-getitem?mr=#1}{#2}
}
\providecommand{\href}[2]{#2}

\end{document}